  \crefname{theorem}{Theorem}{Theorems}
  \crefname{thm}{Theorem}{Theorems}
  \crefname{lemma}{Lemma}{Lemmas}
  \crefname{lem}{Lemma}{Lemmas}
  \crefname{remark}{Remark}{Remarks}
  \crefname{prop}{Proposition}{Propositions}
\crefname{notation}{Notation}{Notations}
\crefname{claim}{Claim}{Claims}
  \crefname{defn}{Definition}{Definitions}
  \crefname{corollary}{Corollary}{Corollaries}
  \crefname{section}{Section}{Sections}
  \crefname{figure}{Figure}{Figures}
    \crefname{assumption}{Assumption}{Assumptions}
\newtheorem{thm}{Theorem}[section]
\newtheorem{conj}[thm]{Conjecture}
\newtheorem{lemma}[thm]{Lemma}
\newtheorem{corollary}[thm]{Corollary}
\newtheorem{prop}[thm]{Proposition}
\newtheorem{defn}[thm]{Definition}
\newtheorem{problem}[thm]{Problem}
\numberwithin{equation}{section}
\theoremstyle{definition}
\newtheorem{remark}[thm]{Remark}
\def \min {\text{min}}
\def\cZ{\mathcal{Z}}
\def\cT{\mathcal{T}}
\def\cS{\mathcal{S}}
\def\cR{\mathcal{R}}
\def\cP{\mathcal{P}}
\def\cN{\mathcal{N}}
\def\cM{\mathcal{M}}
\def\cG{\mathcal{G}}
\def\cF{\mathcal{F}}
\def\cE{\mathcal{E}}
\def\cD{\mathcal{D}}
\def\P{\mathbb{P}}
\def\Q{\mathbb{Q}}
\def\E{\mathbb{E}}
\def\C{\mathbb{C}}
\def\R{\mathbb{R}}
\def\Z{\mathbb{Z}}
\def\N{\mathbb{N}}
\def\H{\mathbb{H}}
\def  \p- {p\textunderscore}
\def\eps{\varepsilon}
\DeclareMathOperator{\Pf}{Pf}
\def\diff{\textnormal{d}}
\def \hg {\Phi}
\def\WE{\circ} 
\def\WO{\times}
\def\BO{\color{red}{\times}} 
\def\BE{\color{red}{\circ}} 
\DeclareMathOperator{\var}{Var}
\DeclareMathOperator{\dist}{dist}
\DeclareMathOperator{\cov}{Cov}
\DeclareMathOperator{\m}{\mathbf{m}}
\DeclareMathOperator{\sgn}{sgn}
\def\Ve{V_{\textnormal{even}}}
\def\Vo{V_{\textnormal{odd}}}
\def\Zev{Z_{\textnormal{even}}}
\def\Zodd{Z_{\textnormal{odd}}}
\def\vb{v_{\bullet}}
\def\ub{u_{\bullet}}
\def\Gev{G_{\textnormal{even}}}
\def\Godd{G_{\textnormal{odd}}}
\def\Aev{A_{\textnormal{even}}}
\def\Aodd{A_{\textnormal{odd}}}
\def\Reff{\cR_{\text{eff}}}
\def\br{ x \to \bar y; t}
\def\brp{ x' \to \bar y; t}
\begin{document}

\title{Free boundary dimers: \\
random walk representation and scaling limit}

\author{Nathana\"{e}l Berestycki\thanks{Universit\"{a}t Wien} \and Marcin Lis\footnotemark[1] \and Wei Qian\thanks{ICNRS and Laboratoire de Math\'{e}matiques d'Orsay, Universit\'{e} Paris-Saclay}}

\date{\today}

\maketitle
\begin{abstract}
We study the dimer model on subgraphs of the square lattice in which vertices on a prescribed part of the boundary (the free boundary) are possibly unmatched. Each such unmatched vertex is called a monomer and contributes a fixed multiplicative weight $z>0$ to the total weight of the configuration. A bijection described by Giuliani, Jauslin and Lieb \cite{GJL16} relates this model to a standard dimer model but on a non-bipartite graph. The Kasteleyn matrix of this dimer model describes a walk with transition weights that are negative along the free boundary. Yet under certain assumptions, which are in particular satisfied in the infinite volume limit in the upper half-plane, we prove an effective, true random walk representation for the inverse Kasteleyn matrix.
In this case we further show that, independently of the value of $z>0$, the scaling limit of the height function is the Gaussian free field with Neumann (or free) boundary conditions, thereby answering a question of Giuliani et al.

\end{abstract}

\tableofcontents

\section{Introduction}

\subsection{Free boundary dimers} \label{S:MD}
Let $\cG=(V,E)$ be a finite, connected, planar bipartite graph (in our analysis we will actually only consider subgraphs of the square lattice $\mathbb{Z}^2$).
Let $\partial \cG$ be the set of boundary vertices, i.e., vertices adjacent to the unique unbounded external face, and let $\partial_{\textnormal{free}} \cG  \subseteq \partial \cG$ be
a fixed set called the \textbf{free boundary}.
A \textbf{boundary monomer-dimer cover} of~$\cG$ is a set $M \subseteq E$ such that
\begin{itemize}
\item each vertex in $V \setminus \partial_{\textnormal{free}} \cG$ belongs to \emph{exactly} one edge in $M$,
\item each vertex in $\partial_{\textnormal{free}} \cG$ belongs to \emph{at most} one edge in $M$.
\end{itemize}
We write $\textnormal{mon}(M) \subseteq \partial_{\textnormal{free}} \cG$ for the set of vertices that do not belong to any edge in $M$, and call its elements \textbf{monomers}.
Let $\mathcal{MD}(\cG)$ be the set of all boundary monomer-dimer covers of $\cG$.
We will often call such configurations simply monomer-dimer covers, keeping in mind that monomers are only allowed on the free boundary.
Finally let $\mathcal{D}(\cG)$
be the set of all \textbf{dimer covers}, i.e.\ monomer-dimer covers $M$ such that $\textnormal{mon}(M)=\emptyset$.

We assign to each edge $e\in E$ a weight $w_e \geq 0$,
and to each vertex $v\in \partial_{\textnormal{free}}\cG$ a weight $z_v \geq0$.
The \textbf{dimer model with a free boundary} (or \textbf{free boundary dimer model}) is a random choice of a boundary monomer-dimer cover from $\mathcal{MD}(\cG)$ according to the following probability measure:
\[
\mathbf{P}(M) = \frac{1}{\mathcal Z} \prod_{e \in M } w_e \prod_{v\in\textnormal{mon}(M)} z_v,
\]
where $\mathcal Z$ is the normalizing constant called the \textbf{partition function}. For convenience we will always assume that the graph is {dimerable} meaning that $\cD(\cG) \neq \emptyset$. 
In this work we will only focus on the homogeneous case $w_e= 1$, for all $ e\in E$, and $ z_v=z>0$  for all $ v\in \partial_{\textnormal{free}} \cG $ (with the exception of the technical assumption on the weight of corner monomers described in the next section).
Then
\begin{equation}\label{lawMD}
\mathbf{P}(M) = \frac{1}{\mathcal Z}z^{|\textnormal{mon}(M)|} .
\end{equation}
The \textbf{dimer model} on $\cG$ can be now defined as the free boundary dimer model conditioned on $\mathcal{D}(\cG)$, i.e., the event that there are no monomers.

The main observable of interest for us will be the \textbf{height function} of a boundary monomer-dimer cover which is an integer-valued function defined (up to a constant) on the bounded faces
of $\cG$. Its definition is identical to the one in the dimer model (see \cite{Thurston}). We simply note that the presence of monomers on the boundary does not lead to any
topological complication (i.e., the height function is not multivalued): if $u$ and $u'$ are two faces of the graph, and $\gamma$ and $\gamma'$ are two distinct paths in the dual graph connecting
$u$ and $u'$, the loop formed by connecting $\gamma$ and~$\gamma'$ (in the reverse direction) does not enclose any monomer.
More precisely, we view a configuration $M \in \cM \cD (\cG)$ as an antisymmetric flow (in other words a 1-form) $\omega_M$ on the directed edges of $\cG$ in the following manner: if $e=\{ w,b\}\in M$,
then $\omega_M(w,b) =1$ and $\omega_M (b,w) = -1$ where $b$ is the black vertex of $e$ and $w$ its white vertex (since $\cG$ is bipartite, a choice of black and white vertices can be made in advance).
Otherwise, we set $\omega_M(e) = 0$. Equivalently, we may view $\omega_M$ as an antisymmetric flow on the directed dual edges, where if $e^\dagger$ is the dual edge of $e$
(obtained by a counterclockwise $\pi/2$ rotation of $e$), then $\omega_M(e^\dagger) = \omega_M(e)$. To define the height function we still need to fix a \textbf{reference flow} $\omega_0$
which we define to be $\omega_0= \mathbf E[ \omega_M]$, i.e.,
the expected flow of $M$ under the free boundary dimer measure.
Now, if $u$ and $u'$ are two distinct (bounded) faces of $\cG$, we simply define
$$
h(u) - h(u') = \sum_{e^\dagger \in \gamma} (\omega_M(e^\dagger)-\omega_0(e^\dagger))
$$
where $\gamma$ is any path (of dual edges) connecting $u$ to $u'$. This definition does not depend on the choice of the path since the flow $\omega_M(e^\dagger)-\omega_0(e^\dagger)$ is closed
(sums over closed dual paths vanish), and hence yields a function $h$ up to an additive constant, as desired. Note that our choice of the reference flow automatically guarantees that the height function
is centered, i.e., $\mathbf E(h(u) - h(u'))=0$ for all faces $u$ and $u'$.

We finish this short introduction to the free boundary dimer model with a few words on its history and the nomenclature. In the original model studied in~\cite{HeiLie70,HeiLie72} monomers could occupy any vertex of the graph, and hence the name \textbf{monomer-dimer model}. This generalization poses two major complications from our point of view. Firs of all, the height function is not well defined, and secondly the model does not admit a Kasteleyn solution as was shown in~\cite{Jer}.
From this point of view, it would therefore be natural if the version of the model studied here was called the boundary-monomer-dimer model. However we choose to use the less cumbersome name of free boundary dimers.

\subsection{Boundary conditions}
\label{S:boundary}
We now state conditions on the graph $\cG=(V,E)$ which will be enforced throughout this paper. First, we assume that $\cG$ is a subgraph of the square lattice $\Z^2$, and without loss of generality that $0 \in V$ and is a black vertex. This fixes a unique black/white bipartite partition of $V$.
We also assume that
\begin{itemize}
\item $V$ is contained {in} the upper half plane $\H = \{ z \in \C: \Im (z) \ge 0\}$.

\item $\partial_{\textnormal{free}} \cG= V \cap \R$, so the monomers are allowed only on the real line. Furthermore, we assume $\partial_{\textnormal{free}} \cG$ is a connected set of vertices. The leftmost and rightmost vertices of $V \cap \R = \partial_{\textnormal{free}} \cG$ will be referred to as the \textbf{monomer-corners} of $\cG$.

\item $\cG$ has at least one black {dimer-corner} and one white {dimer-corner} (where a \textbf{dimer-corner} is a vertex $v\in V$ that is not a monomer-corner, and is adjacent to the outer face of $\cG$, and has degree either 2 or 4 in $\cG$).

\end{itemize}

See Figure \ref{fig:Gtriangle} for an example of a domain satisfying these assumptions (ignore the bottom row of triangles for now, which will be described later). We make a few comments on the role of the last assumption that there are corners of both colours. For this it is useful to make a parallel with Kenyon's definition of \textbf{Temperleyan domain} \cite{Kenyon_ci,KenyonGFF}. In that case, this condition ensured that the associated random walk on one of the four possible sublattices of $\Z^2$ (the two types of black and the two types of white vertices) was killed somewhere on the boundary.
As we will see, in our case the random walk may change the lattice from black to white when it is near the real line, resulting in only two different types of walks.
Then the role of the third assumption (at least one dimer-corner of each type) is to ensure that each of the two walks is killed on at least some portion of the boundary (possibly a single vertex). This follows from an observation that the boundary condition of a walk on a black (resp.\ white) sublattice changes from Neumann to Dirichlet (and vice-versa) at a white (resp. black) corner.
See Figure~\ref{fig:GN} for an example of a vertex with Neumann and Dirichlet bondary conditions.

\subsection{Statement of main results}
\label{sec:statement}

The free boundary dimer model as defined above was discussed (with minor modifications) in a paper of Giuliani, Jauslin and Lieb \cite{GJL16}. It was shown there that the partition function $\mathcal Z$ can be computed as a Pfaffian of a certain matrix. Furthermore, a bijection was provided to a non-bipartite dimer model (the authors indicate that this bijection was suggested by an anonymous referee). Hence using Kasteleyn theory the correlation functions can be expressed as Pfaffians of the inverse Kasteleyn matrix $K^{-1}$. The bijection, which is a central tool of our analysis, will be defined in Section~\ref{S:IKM} where we will also recall the precise definition of the Kasteleyn matrix~$K$.

We will now state our first main result which gives a full random walk representation for $K^{-1}$. Suppose that $\cG$ is a graph satisfying the assumptions from the previous section. Fix $z>0$ and assign weight $z$ to every monomer on $\partial_{\textnormal{free}} \cG$ except at either monomer-corner, where (for technical reasons which will become clear in the proof) we choose the weight to be
\begin{equation}\label{zprime}
z' = \frac{z}2 + \sqrt{1 + \frac{z^2}{4}}.
\end{equation}
For $k \in \N = \{0, 1, \ldots \}$, let us call $V_k =V_k(\cG) = \{v \in V: \Im (v) = k \}$, so $\partial_{\textnormal{free}} \cG=V_0$, where $\Im(v)$ denotes the imaginary part of the vertex $v$ seen as a complex number given by the embedding of the graph. Let us call $\Ve =\Ve(\cG) = V_0 \cup V_2 \cup \ldots$ and $\Vo =\Vo(\cG)= V_1 \cup V_3 \cup \ldots$.

\begin{thm}[Random walk representation of the inverse Kasteleyn matrix]\label{T:finitevol_intro}
  There exist two random walks $Z_\textnormal{even}$ and $Z_{\textnormal{odd}}$ on the state spaces $\Ve(\cG)$ and $\Vo(\cG)$ respectively, whose transition probabilities will be described in Section \ref{S:inverseKfinite} (see \eqref{BulkRW} and  \eqref{Rev}), such that the following holds.
Consider the monomer-dimer model on $\cG$ where the monomer weight is $z>0$ on $V_0(\cG)$ except at its monomer-corners where the monomer weight is $z'$, as defined in \eqref{zprime}. Let $K$ be the associated Kasteleyn matrix, and $D = K^* K$, so that $K^{-1} = D^{-1} K^*$. Then for all $u,v \in V$, we have
  \begin{align} \label{RWrep}
  D^{-1}(u,v) =
  \begin{cases}
      G_{\textnormal{odd}} (u,v) & \text{if } u,v\in \Vo, \\
(-1)^{\Re (u-v)}G_{\textnormal{even}} (u,v)     & \text{if } u,v \in \Ve, \\
    0 & \text{otherwise.}
  \end{cases}
  \end{align}
  where $G_{\textnormal{even}}, G_{\textnormal{odd}} $ are the Green's functions of $Z_{\textnormal{even}}$ and $Z_{\textnormal{odd}}$ respectively, normalised by $D(v,v)$.
\end{thm}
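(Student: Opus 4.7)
The natural strategy is to prove the identity by first establishing structural properties of $D = K^*K$ and then identifying its inverse with the Green's function of an appropriate random walk.

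First I would compute $D(u,v) = \sum_w \overline{K(w,u)} K(w,v)$ entry by entry, using the explicit form of $K$ associated to the non-bipartite dimer model coming from the Giuliani--Jauslin--Lieb bijection. The key observation is that in $\Z^2$ a path of length two preserves the parity of the imaginary part of its endpoint, so the sum defining $D(u,v)$ vanishes identically when $\Im(u)$ and $\Im(v)$ have different parities. This already yields the third case in \eqref{RWrep}: $D$ is block-diagonal with respect to the decomposition $V = \Ve \cup \Vo$, and hence so is $D^{-1}$.

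Next I would show that the two diagonal blocks of $D$, after a suitable gauge transformation, are (negatives of) substochastic generators of genuine random walks. One of the blocks should have the correct signs directly, so that $-D(u,v)/D(v,v)$ is a sub-probability measure defining the transition kernel of $Z_{\textnormal{odd}}$. The other block carries alternating signs inherited from the Kasteleyn orientation of $\Z^2$, and the gauge factor $(-1)^{\Re(u-v)}$ appearing in \eqref{RWrep} is precisely what is needed to conjugate these away, so that the resulting matrix is the substochastic generator of $Z_{\textnormal{even}}$. In both cases I would check non-negativity of the transition weights and sub-stochasticity (row-sum $\leq 1$) explicitly in each of the local configurations that can occur: bulk, interior of the free boundary, Dirichlet boundary, and dimer-corners of each colour.

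The step at monomer-corners is the most delicate and explains the choice of $z'$ in \eqref{zprime}. A short computation shows that $z'$ is the positive root of $(z')^2 - z z' = 1$, and this algebraic relation should be exactly what is needed to make the row of $D$ at a monomer-corner factor as $D(v,v)$ times a substochastic measure that is consistent with the transitions coming from a standard boundary vertex on the free boundary. Without this adjustment, the transition weights at corners would fail to match the bulk rule and one could not hope for a clean Green's function interpretation.

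Finally, once each diagonal block of $D$ is identified (after gauge) with a substochastic Laplacian normalised by $D(v,v)$, the theorem follows from the standard fact that the inverse of such a Laplacian is the Green's function of the corresponding killed random walk. I expect the main obstacle to be the bookkeeping of signs: those coming from the original Kasteleyn orientation of $\Z^2$, those introduced by the bijection of \cite{GJL16} along $\partial_{\textnormal{free}}\cG$, and the gauge $(-1)^{\Re(u-v)}$ that separates the two blocks. Most of the labour of the proof will consist in making this sign calculation explicit, case-by-case around each type of boundary vertex, and in verifying that with the modified corner weight $z'$ the resulting transition rules yield a well-defined pair of random walks whose Green's functions satisfy \eqref{RWrep}.
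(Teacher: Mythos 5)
Your outline correctly identifies the block-diagonal structure of $D$ with respect to $\Ve \cup \Vo$, the gauge conjugation by $(-1)^{\Re(u-v)}$ for the even block, and the general strategy of reading off a Green's function from a substochastic Laplacian normalised by $D(v,v)$. However, there is a genuine gap in your treatment of the odd block, and the role you assign to $z'$ is not what actually happens.

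You assert that one of the two blocks (the odd one) has ``the correct signs directly, so that $-D(u,v)/D(v,v)$ is a sub-probability measure''. This fails: for $x \in V_{-1}$ (the bottom row of triangle vertices in $\cG^0$) and for $x \in V_1$, the signed weights $-D(x,y)/D(x,x)$ are neither all of one sign nor do they sum to a number $\le 1$, so $D^{\textnormal{odd}}$ is not a Laplacian in any direct sense, and no local sign gauge fixes this. The paper's proof instead integrates out the row $V_{-1}$ via a Schur complement: writing $D^{\textnormal{odd}}_N = \left(\begin{smallmatrix} A & B \\ B^T & C\end{smallmatrix}\right)$ with $A$ indexed by $V_{-1}$, one has $D_N^{-1}|_{\Vo} = (C - B^T A^{-1} B)^{-1}$, and the new entries $q^N_{u,v} = (B^T A^{-1} B)(u,v)$ are (signed) sums over all excursions of a one-dimensional walk in $V_{-1}$. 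The resulting jumps along $V_1$ are unbounded, and whether the combined weights are nonnegative and substochastic cannot be checked case-by-case locally. It is a nontrivial global identity, and in fact it is \emph{false} for the finite graph $\cG^0$; one must first pass to the intermediate limit $N\to\infty$ obtained by appending $2N$ extra triangles to each end of $V_{-1}$. Only then do the weights $q^\infty_{u,v}$, expressed via the potential kernel of an auxiliary walk on $\Z$ with step law \eqref{D:rw}, satisfy $q^\infty_{u,v}\ge 0$ and $\sum_v q^\infty_{u,v}=1$ (Lemma~\ref{T:eff}); the proof of that lemma is an exact computation of the potential kernel (solving for the decay rate $\gamma$ and the constant $B$), not a pointwise inspection of $D$.

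Consequently your explanation of $z'$ is also incorrect. You propose that $(z')^2 - z z' = 1$ makes the row of $D$ at a monomer-corner factor nicely; in fact $z'$ does not appear in any single row of $D$ on $\cG^0$. It is the \emph{effective} monomer weight produced at the corners by the $N\to\infty$ limit of appended triangles: $\cZ_{N+1}/\cZ_N \to z'$ for the one-dimensional monomer-dimer partition function, which is precisely the content of the auxiliary lemma in Section~\ref{S:IL}. Without the Schur-complement reduction and the $N\to\infty$ limit, the proposed proof cannot produce a genuine random walk $Z_{\textnormal{odd}}$, so the core difficulty of the theorem is left unaddressed.
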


 Here, by normalised Green's function of a random walk (with at least one absorbing state), we mean
 \[
 G(u,v) = \frac 1{ D(v,v)}\E_u\Big( \sum_{k=0}^\infty \mathbf 1_{\{ Z_k =  v \}}\Big),
 \] where $Z$ is the corresponding random walk. We now specify a few properties of the random walks ${\Zev}$ and $Z_{\text{odd}}$ which may be interesting to the reader already, even though the exact definition is postponed until Section \ref{S:inverseKfinite}. Both ${\Zev}$ and $Z_{\text{odd}}$ behave like simple random walk away (at distance more than $2$) from the boundary vertices, but with jumps of size $\pm 2$, so the parity of the walk does not change. Both have nontrivial boundary conditions, including some reflecting and absorbing boundary arcs along the non-monomer part of the boundary $\partial \cG\setminus \partial_{\textnormal{free}} \cG$. Furthermore, both walks are allowed to make additional jumps along their bottommost rows of vertices ($V_0$ for $Z_{\text{ev}}$ and $V_1$ for $Z_{\text{odd}}$). These jumps are symmetric, bounded in the even case but not in the odd case (although they do have exponentially decaying tail). Hence in the scaling limit, these walks would converge to Brownian motion in the upper half plane $\H$ with reflection on the real axis and with whatever boundary conditions are inherited from the Neumann/Dirichlet parts of the other boundary arcs.

 An important property of these random walks that highlights the difference with the setup of~\cite{Kenyon_ci}, is that they can change {colour} of the vertex (in a bipartite coloring of $\H \cap \Z^2$).
However, this can happen only when the walker visits the real line. This in turn means that the entries of the inverse Kasteleyn matrix indexed by two vertices of the same {colour} (which automatically vanish in Kenyon's work) have a natural interpretation in terms of walks that go through the real line (the free boundary). This is a clear analogy with the construction of reflected random walks via the reflection principle for a walk in a reflected domain.
Remarkably, this exact correspondence with reflected random walks is present already at the discrete level of the dimer model with free boundary conditions, and is the reason why the reflected Brownian motion appears in the correlation kernel of the scaling limit of the height function.

\medskip To illustrate this we explain here briefly a simple computation using Kasteleyn theory (for more details see Section~\ref{S:KT}) where this phenomenon is apparent. Let $ e= \{w,b\}$ and $e' = \{w', b'\}$ be two edges of $\mathbb Z^2\cap \mathbb H$ with $w,w'$ white and $b,b'$ black vertices in a fixed chessboard coloring of the lattice.
  Then, writing $\mathcal {M}$ for a random boundary monomer-dimer cover, and using Kasteleyn theory for the dimer representation described in Section~\ref{S:bij}, we have
  \begin{align*}
  \mathbf P( e , e' \in \mathcal{M}) &
  =a \text{Pf}
  \left(
  \begin{array}{cccc}
  0 & K^{-1} (w,b) & K^{-1} (w, w') & K^{-1} (w, b') \\
  &    0         & K^{-1} ( b, w') & K^{-1} ( b, b') \\
  &           & 0              &  K^{-1} (w', b') \\
 &             &               &   0
  \end{array}
  \right)
  \\
  & =a(K^{-1} (w, b)K^{-1} (w', b')  + K^{-1} (b, w') K^{-1} (w, b') - K^{-1} ( w, w') K^{-1} (b, b')),
  \end{align*}
  where the matrix is antisymmetric and $a=K(w,b)K(w',b')$. We also have $\mathbf P( e \in \mathcal{M})=K(w,b)K^{-1} (w, b)$ and $\mathbf P( e' \in \mathcal{M})=K(w',b')K^{-1} (w', b')$,
  which leads to
 \[
  \cov (\mathbf 1_{e \in \mathcal M},\mathbf 1_{e' \in \mathcal{M}}) =a( K^{-1} (b, w') K^{-1} (w, b') - K^{-1} ( w, w') K^{-1} (b, b')).
  \]
  Here, the second term is new compared to Kenyon's computation in \cite{Kenyon_ci}. Furthermore, using our random walk representation, $K^{-1} ( w, w')$ and $K^{-1} (b, b')$ can be interpreted as a derivative of the Green's function of the appropriate walks $\Zev$ and $\Zodd$ evaluated at pairs of vertices of different colors. Then by construction the walks which contribute to these Green's functions must visit the boundary.

 This intuition is what guides us to the next result, which however requires us to first take an infinite volume (thermodynamic) limit where an increasing sequence of graphs eventually covers $\H \cap \Z^2$. We first show that the monomer-dimer model converges in such a limit. For this we need to specify a topology: we view a monomer-dimer configuration on $\H \cap \Z^2$ as an element of $\{0,1\}^{E(\H)}$ where $E(\H)$ is the edge set of $\Z^2 \cap \H$, and equip this space with the product topology (so convergence in this space corresponds to convergence of local observables).

To state the result we will fix a sequence $\cG_n$ of graphs such that $\cG_n$ satisfies the assumptions of Section \ref{S:boundary}, and moreover $\cG_n \uparrow \Z^2 \cap \H$.
For simplicity of the arguments and ease of presentation, we have chosen $\cG_n$ to be a concrete approximation of rectangles, although the result is in fact true much more generally; we have not tried to find the most general setting in which this applies.

 \begin{thm}[Infinite volume limit]\label{P:infinite_vol_intro}
 Let $\cG_n$ be rectangles of diverging \emph{odd} sidelengths (number of vertices on a side) whose ratio remains bounded away from zero and infinity as $n\to \infty$, and such that in the top row the right-hand side half of the vertices is removed.
Let $\mu_n$ denote the law of the free boundary dimer model on $\cG_n$ with monomer weight $z>0$ except at the monomer-corners where the weight is $z'$, as in \eqref{zprime}. Then $\mu_n$ converges weakly as $n \to \infty$ to a law $\mu$ which describes a.s.\ a random {boundary} monomer-dimer configuration on $\Z^2 \cap \H$.
 \end{thm}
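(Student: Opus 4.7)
The plan is to prove weak convergence by showing that all finite-dimensional edge-occupation probabilities converge. Since the product topology on $\{0,1\}^{E(\H)}$ is generated by cylinder events, $\mu_n \Rightarrow \mu$ if and only if, for every finite family of edges $e_1,\ldots, e_k \in E(\H)$, the joint probability $\mu_n(e_1,\ldots, e_k \in \cM)$ converges. Kolmogorov consistency then produces a limit measure $\mu$ whose cylinder marginals are these limits, and the ``boundary monomer-dimer configuration on $\Z^2\cap\H$'' property for $\mu$-almost every configuration follows because it is closed under the product topology (each vertex of $V\setminus\partial_{\textnormal{free}}\cG$ is incident to exactly one edge in $\mathcal M$).

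The first step is to invoke the Pfaffian expression for correlations coming from the Kasteleyn solution of the non-bipartite dimer model obtained via the Giuliani--Jauslin--Lieb bijection of Section~\ref{S:bij}: each correlation $\mu_n(e_1,\ldots,e_k\in\cM)$ is a Pfaffian of a $2k\times 2k$ antisymmetric matrix whose entries are chosen among the $K_n^{-1}(u,v)$, where $u,v$ range over the endpoints of the $e_i$ and the auxiliary vertices introduced by the bijection, times the product of the Kasteleyn weights of the $e_i$. Since a Pfaffian is a polynomial in its entries, it is enough to show that $K_n^{-1}(u,v)$ converges for every fixed pair $u,v\in V(\H)$.

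Using $K_n^{-1}=D_n^{-1}K_n^*$, and observing that the rows of $K_n^*$ are supported on nearest neighbours and their nonzero entries are independent of $n$ once $v$ lies deep inside $\cG_n$, the problem reduces to convergence of a fixed discrete-gradient-type linear combination $\sum_{w\sim v} D_n^{-1}(u,w)K_n^*(w,v)$. By \cref{T:finitevol_intro}, each $D_n^{-1}(u,w)$ equals, up to an explicit sign, the normalised Green's function $G^{(n)}(u,w)$ of $\Zev^{(n)}$ or $\Zodd^{(n)}$ on $\cG_n$. Hence the task becomes proving that this particular linear combination of Green's functions converges as $\cG_n\uparrow \Z^2\cap\H$.

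The main obstacle is that the natural limiting walk $Z^{(\infty)}$ on $\Z^2\cap\H$ (simple random walk in the bulk, with the reflecting boundary transitions inherited from those described below \cref{T:finitevol_intro}, and no killing because the Dirichlet dimer-corners drift to infinity) is recurrent, so the individual $G^{(n)}(u,w)$ grow like $\log n$ and have no pointwise limit; only their discrete gradients are expected to converge, exactly as in the classical construction of the planar potential kernel. I would handle this in two substeps: (i) couple $Z^{(n)}$ with $Z^{(\infty)}$ until exit from a large box around $u$, using a Beurling-type harmonic measure estimate to show that the probability of hitting the shrinking Dirichlet/removed boundary arcs of $\cG_n$ before exit is well controlled, and contributes only to the divergent harmonic component; (ii) leverage the invariance principle for $Z^{(n)}$ (which rescales to reflected Brownian motion in $\H$) to show pointwise convergence of the renormalised difference $G^{(n)}(u,u)-G^{(n)}(u,\cdot)$ to a well-defined potential kernel $a^{(\infty)}(u,\cdot)$ of $Z^{(\infty)}$, by the usual martingale and asymptotic-expansion argument. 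Since the combination $\sum_{w\sim v} G^{(n)}(u,w)K_n^*(w,v)$ is a discrete gradient applied to $G^{(n)}(u,\cdot)$, the $n$-dependent additive piece $G^{(n)}(u,u)$ cancels and the sum reduces to a finite linear combination of the convergent potential-kernel differences $a^{(\infty)}(u,w)-a^{(\infty)}(u,w')$. This yields convergence of every entry of $K_n^{-1}$, hence of all finite edge correlations, and therefore the claimed weak convergence of $\mu_n$ to some measure $\mu$ on boundary monomer-dimer configurations of $\Z^2\cap\H$.
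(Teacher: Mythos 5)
Your high-level reduction is exactly the one used in the paper: finite-dimensional convergence via Pfaffians of the inverse Kasteleyn matrix, the decomposition $K_n^{-1} = D_n^{-1} K_n^*$, the random walk representation of $D_n^{-1}$ from \cref{T:finitevol_intro}, and the observation that only discrete gradients of the (divergent) Green's functions need to converge, towards a potential kernel. The martingale argument you sketch for transferring between finite boxes and the infinite half-plane is also in the paper's spirit.

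Where your proposal falls short is precisely the step it waves through: constructing the potential kernel of $Z^{(\infty)}$ and proving pointwise convergence of $G^{(n)}(u,u)-G^{(n)}(u,w)$ towards its differences. You invoke ``the usual martingale and asymptotic-expansion argument'' and an invariance principle, but the paper's explicit point of departure is that the usual argument \emph{does not apply}: the classical construction of the planar potential kernel relies on Fourier analysis for sums of i.i.d.\ steps, which is unavailable here because the effective walk has unbounded, position-dependent jumps along $V_1$ and can change sublattice upon touching the real line. An invariance principle gives macroscopic (scaling-limit) control but not the lattice-scale summability of $\sum_t |\tilde p_t(x,o)-\tilde p_t(x',o)|$ needed to even define the potential kernel as a convergent series. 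The paper instead derives a uniform on-diagonal heat-kernel bound $\tilde p_s(x,o)\lesssim 1/s$ from a Nash inequality (via the isoperimetric inequality $(I_2)$), and then builds a tailor-made coordinatewise mirror coupling of two copies of the walk from $u$ and $u'$ — not of $Z^{(n)}$ with $Z^{(\infty)}$ — achieving a polynomial non-coupling bound $\P(T>t)\lesssim (\log t)^a t^{-1/2}$. Combining the two gives the summable estimate $|\tilde p_t(x,o)-\tilde p_t(x',o)|\lesssim t^{-3/2}(\log t)^a$ which is what actually defines the potential kernel and gives the a priori gradient bound. Your Beurling-type harmonic measure estimate also plays a different role than what the paper uses: the paper instead handles the general boundary geometry through an effective resistance estimate (Lemma~\ref{L:hitline}) controlling the expected local time at $o$ after an unsuccessful coupling. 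So the architecture is right, but the heart of the proof — why the potential kernel exists and how quantitatively to compare Green's functions to it — is not supplied by your proposal and cannot be borrowed from the i.i.d.\ theory.
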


We note that the particular type of domains chosen in this statement guarantees that both the odd and even walks mentioned above are killed on a macroscopic part of the upper rows of $\cG_n$ (the odd walk is killed on the left-hand side half and the even walk on the right-hand side half of its uppermost row).
We stress the fact that the limiting law $\mu$ depends on the monomer weight $z>0$.
As mentioned before, we can associate to the monomer-dimer configuration in the infinite half-plane a {height function} which is defined on the faces of $\H \cap \Z^2$, up to a global additive constant. The last main result of this paper shows that in the scaling limit, this height function converges to a \textbf{Gaussian free field} with \textbf{Neumann} (or free) \textbf{boundary conditions}, denoted by $\hg^{\text{Neu}}$. We will not define this in complete generality here (see \cite{BPnotes} for a comprehensive treatment). We will simply point out what is concretely relevant for the theorem below to make sense. Given a simply connected domain $\Omega$ with a smooth boundary, $\hg^{\textnormal{Neu}}_{\Omega}$ may be viewed as a stochastic process indexed by the space $\cD_0(\Omega)$ of smooth test functions $f: \Omega \to \R$ with compact support and with zero average (meaning $\int_\H f(z) dz = 0$). The latter requirement corresponds to the fact that $\hg$ is only defined modulo a global additive constant. The law of this stochastic process is characterised by a requirement of linearity (i.e. $(\hg^{\textnormal{Neu}}_{\Omega}, a f + b g) = a (\hg^{\textnormal{Neu}}_{\Omega}, f) + b (\hg^{\textnormal{Neu}}_{\Omega}, g)$ a.s. for any $f, g \in \cD_0(\Omega)$ and $a, b \in \R$), and moreover $(\hg^{\textnormal{Neu}}_{\Omega}, f)$,  $(\hg^{\textnormal{Neu}}_{\Omega}, g)$ follow centered Gaussian distributions with covariance
$$
\cov((\hg^{\textnormal{Neu}}_{\Omega}, f),(\hg^{\textnormal{Neu}}_{\Omega}, g)) = \iint_{\Omega^2} f(x) g(y) G^{\textnormal{Neu}}_\Omega(x,y) dx dy,
$$
where $G^{\textnormal{Neu}}_\Omega(x,y)$ is a Green's function in $\Omega$ with Neumann boundary conditions. (Note that by contrast to the Dirichlet case, such Green's functions are not unique and are defined only up to a constant.) In the case of the upper-half plane $\Omega= \H$, the Green's function is given explicitly by
$$
G^{\textnormal{Neu}}_\H(x,y) = - \log |x-y| - \log | x - \bar y|.
$$

Informally, pointwise differences $\hg^{\textnormal{Neu}}_{\H}(a) - \hg^{\textnormal{Neu}}_{\H} (b)$ for $a,b\in \H$ (which do not depend on the choice of the global additive constant) are centered Gaussian random variables with covariances
\begin{equation}\label{GFF_differences_Green}
\E[ (\hg^{\textnormal{Neu}}_{\H}(a_i) - \hg^{\textnormal{Neu}}_{\H}(b_i))(\hg^{\textnormal{Neu}}_{\H}(a_j) - \hg^{\textnormal{Neu}}_{\H}(b_j)) ] \!=\! - \log \left|\frac{(a_i - a_j)(b_i - b_j)(\bar a_i - a_j) (\bar b_i - b_j)}{ (a_i - b_j)(b_i - a_j)(\bar a_i - b_j)(\bar b_i - a_j)}\right|.
\end{equation}
Note that our Green's function is normalised so that it behaves like $1 \times \log (1/ | x-y| )$ as $y -x \to 0$. Naturally, \eqref{GFF_differences_Green} must be understood in an integrated way since pointwise differences are not actually defined.

We may now state the announced result. For $\delta>0$ (the mesh size), let $h^\delta$ denote the height function (defined up to a constant) of the free boundary dimer model $\mu$ with weight $z$ in the infinite
half-plane $\H \cap \delta\Z^2$ (rescaled by $\delta$). We identify $h^\delta$ with a function defined almost everywhere on $\H$ by taking the value of $h^\delta$ to be constant on each face, and view $h^\delta$ as a random distribution (also called a random generalized function) acting on smooth compactly supported functions $f$ on $\H$ with zero average, i.e., satisfying $\int_\H f(a)da =0$ (see Section~\ref{S:final} for details).

\begin{thm}[Scaling limit]
  \label{T:NGFF_intro}
  Let $f_{1}, \ldots, f_k \in \cD_0(\H)$ be arbitrary test functions. Then for all $z>0$, as $\delta \to 0$,
  $$
  (h^\delta, f_i)_{i=1}^k \to\Big (\frac1{\sqrt{2}\pi}\hg^{\textnormal{Neu}}_{\H}, f_i\Big)_{i=1}^k
  $$
  in distribution.
\end{thm}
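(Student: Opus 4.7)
The plan is to establish convergence by the method of moments. First I would fix test functions $f_1,\ldots,f_k \in \cD_0(\H)$ and write each pairing $(h^\delta, f_i)$ as an integral of $h^\delta(u)-h^\delta(u_0)$ against $f_i$ (the zero-average condition makes the additive constant irrelevant). By expanding the height difference as a sum of signed dimer indicators along a dual path joining $u$ to $u_0$, the vector $((h^\delta, f_i))_{i=1}^k$ becomes a linear functional of the centered indicators $\mathbf 1_{e \in \mathcal M} - \mathbf E[\mathbf 1_{e \in \mathcal M}]$ summed against a discrete kernel converging to $f_i$. Joint moments of these indicators are given, via Kasteleyn theory applied to the non-bipartite dimer representation of Section~\ref{S:bij}, as Pfaffians of minors of $K^{-1}$, which by the standard Pfaffian-to-determinant reduction decompose into Wick-like sums of products of two-point quantities of the form $K^{-1}(w,b)K^{-1}(w',b')$, $K^{-1}(w,b')K^{-1}(w',b)$ and, crucially, the new same-colour pair $K^{-1}(w,w')K^{-1}(b,b')$.

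Second, I would insert the random walk representation of Theorem~\ref{T:finitevol_intro}: since $K^{-1}=D^{-1}K^*$ and $D^{-1}$ is a Green's function of $Z_\text{even}$ or $Z_\text{odd}$ normalized by $D(v,v)$, each entry $K^{-1}(x,y)$ is a discrete harmonic-derivative of one of these Green's functions in an appropriate lattice direction determined by the black/white/parity class of $x$. Taking a suitable finite difference corresponding to the ``edge direction'' of $(w,b)$ and $(w',b')$ produces, up to the normalization $D(v,v)$, the discrete gradient of $G_\text{even}$ or $G_\text{odd}$. At this point the problem is reduced to asymptotics of $G_\text{even}$ and $G_\text{odd}$ and their derivatives at macroscopically separated points.

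Third, I would establish an invariance principle: because both $Z_\text{even}$ and $Z_\text{odd}$ behave like a simple random walk on $2\Z^2$ away from the real line, with reflecting boundary on $V_0$ (resp.\ $V_1$) in the sense indicated after Theorem~\ref{T:finitevol_intro} and Dirichlet killing on the far boundary of $\cG_n$ which recedes to infinity, they converge under diffusive rescaling to a Brownian motion in $\H$ reflected on $\R$. Consequently $\delta^{-1}G_\text{even}(\lfloor x/\delta\rfloor, \lfloor y/\delta\rfloor)$ and its odd counterpart converge, after the correct normalization by $D(v,v)$ (which I expect to contribute the explicit constant $1/(\sqrt{2}\pi)$), to the Neumann Green's function $G^\text{Neu}_\H(x,y)=-\log|x-y|-\log|x-\bar y|$. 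The key point, and the conceptual heart of the argument, is that the \emph{same-colour} entries $K^{-1}(w,w')$ and $K^{-1}(b,b')$ correspond to trajectories of $Z_\text{even}$ or $Z_\text{odd}$ that touch $V_0\cup V_1$, and by the reflection principle they encode precisely the image-charge term $-\log|x-\bar y|$ which upgrades the Dirichlet log to the Neumann log. Summing the four $K^{-1}$-pairs over the Pfaffian expansion therefore reproduces exactly \eqref{GFF_differences_Green} up to the scalar $1/(2\pi^2)$ in the covariance, i.e.\ the factor $1/(\sqrt{2}\pi)$ announced in the theorem.

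Fourth, I would extend pairwise convergence to all joint moments, obtaining Gaussianity of the limit. The Pfaffian expansion naturally produces the Wick pairing: odd moments vanish and even moments factor into sums over pair matchings, because the ``diagonal'' terms $K^{-1}(w,b)$ contribute to the mean and are subtracted off when one centers, while the leading contributions scale like $\delta^{2k}$ and are captured by pair matchings. Combined with the Gaussian convergence this identifies the joint law of $((h^\delta, f_i))_{i=1}^k$ with the Gaussian vector $((\frac{1}{\sqrt{2}\pi}\hg^\text{Neu}_\H, f_i))_{i=1}^k$. The main obstacle will be step three: obtaining quantitative estimates on $G_\text{even}$ and $G_\text{odd}$, and on their discrete gradients, that are uniform up to the reflecting boundary $\R$, so that the boundary behaviour is correctly resolved and the image-charge term appears with the right coefficient. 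Auxiliary obstacles are controlling the non-homogeneous boundary rows (including the exponential-tail jumps of $Z_\text{odd}$ on $V_1$ and the special monomer-corner weight $z'$), and verifying that discrete gradients of the $G$'s converge to the corresponding derivatives of $G^\text{Neu}_\H$ in an integrable way so that summation against the discretizations of $f_i$ yields the stated integral covariances.
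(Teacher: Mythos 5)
Your overall architecture -- express height differences via dimer indicators, invoke Kasteleyn theory to write joint moments as Pfaffians of minors of $K^{-1}$, insert the random walk representation $K^{-1}=D^{-1}K^*$, extract asymptotics, and close by a Wick-pairing/moment argument -- is indeed the structure of the paper's proof. However, step three contains a genuine gap, and you also underestimate a second obstacle at the end.

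Concerning step three: an invariance principle for $\Zev$ and $\Zodd$ would give weak convergence in path space, but what is needed here is much finer, namely pointwise asymptotics of the \emph{potential kernel} (not the Green's function -- in the infinite half-plane the walks are recurrent, so $\Gev$, $\Godd$ are infinite, and one must first even \emph{construct} a potential kernel for a walk that is not a sum of i.i.d.\ increments, which rules out the standard Fourier route), and in fact asymptotics of the discrete \emph{gradient} of that potential kernel with quantitative error $o(\delta^{1+\eps})$ uniformly over macroscopic points. Weak convergence in path space does not yield this. The paper goes through a substantial separate argument: Nash-type on-diagonal heat kernel bounds, a coordinatewise mirror coupling satisfying a $t^{-1/2}$ failure estimate (Section \ref{S:IVL}), and -- most importantly -- an intermediary \emph{coloured random walk} (Section \ref{S:PKscaling}) that encodes the class-switching mechanism on the real line. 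The precise coefficient of the image-charge term in $G^{\textnormal{Neu}}_\H$ arises because, after many hits of $\R$, the effective walk is equally likely to be of either class; this ``mixing to $1/2$'' is exactly what the coloured-walk comparison makes quantitative (Propositions \ref{P:eff_col} and \ref{P:col_half}). Your proposal correctly identifies that the same-colour entries correspond to reflected trajectories, but the statement that an invariance principle would yield the correct coefficient is not supported; this is where the real work lies, and it is not a consequence of the invariance principle alone.

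There is also a second obstacle your proposal glosses over, which the paper resolves in Section \ref{S:final}: the pointwise $k$-point moment formula (Proposition \ref{P:kpoint_dimer}) holds uniformly only over points at pairwise distance $\ge \delta^\beta$. To integrate against the $f_i$ and obtain the moments of $(h^\delta,f_i)$, one must control the contribution from the thin region where points come within $\delta^\beta$ of each other. The paper does this by a Cauchy--Schwarz/H\"older argument, resting on a $\log^C(1/\delta)$ variance bound for height differences, which in turn rests on the a priori gradient bound of Proposition \ref{P:aprioriPK}. This step is not ``standard'' and was famously left implicit in some earlier works; your proposal should at least flag it.

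Finally, the claim that the normalization by $D(v,v)$ is what produces the constant $1/(\sqrt 2\pi)$ is not quite right; that constant emerges from the combination of the $\frac{2}{\pi}$ prefactor in the potential kernel asymptotics (Corollary \ref{C:PKscaling}), the factor $1/4$ from $D(v,v)=4$ in the bulk, the $\alpha/\beta$-edge pairing trick and associated cancellations, and the Riemann-sum-to-integral passage in Proposition \ref{P:kpoint_dimer}.
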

Note that, maybe surprisingly, the scaling limit does not depend on the value of $z>0$ (we discuss this in more detail in Section~\ref{S:heur}).
We also wish to call the attention of the reader to the normalising factor $1/ (\sqrt 2\pi)$ in front of $\hg$ on the right-hand side of Theorem \ref{T:NGFF_intro}. It is equal to the one appearing in the usual dimer model in which the centered height function has zero (Dirichlet) boundary conditions.
We note that comparisons with other works such as \cite{KenyonGFF,BLR16} should be made carefully, since the normalisation of the Green's function and of the height function may not be the same: for instance, Kenyon takes the Green's function to be normalised so that $G(x,y) \sim 1/(2\pi) \log 1/|x-y|$ as $y \to x$, so his GFF is $1/\sqrt{2\pi}$ ours (ignoring different boundary conditions). Also, in Kenyon's work \cite{Kenyon_ci}, the height function is such that the total flow out of a vertex is 4 instead of 1 here (so his height function is 4 times ours), while it is $2\pi$ in \cite{BLR16} (so their height function is $2\pi$ times ours).
Adjusting for these differences, there is no discrepancy between the constant $1/(\sqrt{2}\pi)$ on the right-hand side of Theorem \ref{T:NGFF_intro} and the one in \cite{Kenyon_ci} and \cite{KenyonGFF}.

\subsection{Heuristics: reflection and even/odd decomposition}
\label{S:heur}

As noted before, Theorem \ref{T:NGFF_intro} may be surprising at first sight, when we consider the behaviour of the model in the two extreme cases $z=0$ and $z = \infty$. Indeed, when $z=0$, the
free boundary dimer model obviously reduces to the dimer model on $\H$, in which case the limit is a Dirichlet GFF. When $z= \infty$, all vertices of $V_0$ are monomers, so the model reduces to a dimer model on $(V_1 \cup V_2 \cup \ldots )\simeq \H \cap \Z^2$. Hence, the limit is also a Dirichlet GFF in this case. However, the result above says that for any $z$ strictly in between these two extremes, the limit is a Neumann GFF.

The result (and the reason for this {arguably} surprising behaviour) may be heuristically understood through the following \textbf{reflection argument}. Let $\cG$ be a large finite graph approximating $\H$ and satisfying the assumptions of Section \ref{S:boundary}. Let $\tilde \cG$ be a copy of $\cG$ shifted by $i/2$, so with a small abuse of notation, $\tilde \cG = \cG + i /2$, and let $\bar \cG$ be the same graph to which we add its conjugate (reflection through the real axis). We also add vertical edges crossing the real axis of the form $(k-i/2, k+i/2)$ for each $k \in V_0$. Given a monomer-dimer configuration on $\cG$, we can readily associate a monomer-dimer configuration on $\bar\cG$ by reflecting it in the same manner. In this way, a monomer in $k+ i/2$ necessarily sits across another monomer in $k-i/2$ for any $k \in V_0$. Such a pair of monomers can be interpreted as a dimer on the edge $(k-i/2, k+i/2)$ and once we have phrased it this way the resulting configuration is just an \emph{ordinary dimer configuration} on $\bar \cG$ (which however has the property that it is reflection symmetric). It follows that its height function (defined on the faces of $\bar \cG$) is even, i.e., $h(f) = h (\bar f)$ for every face $f$ (where $\bar f$ is the symmetric image of $f$ about the real axis). Moreover, a moment of thought shows that monomer-dimer configurations on $\cG$ are in bijection in this manner with the set $\cE \cD(\bar \cG)$ of even (symmetric) dimer configurations on $\bar \cG$, and that under this bijection the image of the law \eqref{lawMD} is given by
\begin{equation}\label{dim_ref}
\P(M) =\frac1{\bar{\mathcal Z}} z^{|\text{mon}(M)|}
\end{equation}
(where for a dimer configuration $M \in \cD(\bar \cG)$, $\text{mon} (M) $ is the set of vertical edges of $M$ crossing the real axis), conditioned on the event $\cE\cD(\bar \cG)$ of being even,
where $\bar{\mathcal Z}$ is the partition function of the dimer model on $\bar \cG$.

Now, suppose e.g.\ that $\cG$ is such that $\bar \cG$ is piecewise Temperleyan \cite{Russkikh} (meaning that $\bar \cG$ has two more white convex corners than white concave corners). This happens for instance if $\cG$ is a large rectangle with appropriate dimensions.
By a result of Russkikh \cite{Russkikh}, in this case and if $z = 1$, the (centered) height function associated with the dimer model \eqref{dim_ref} converges to a Gaussian free field with Dirichlet boundary condition in the scaling limit.

It is reasonable to believe that this convergence holds true even when $z \neq 1$. For instance, when the monomer weights alternate between $z$ and 1 every second vertex, then whatever the value of $z$, the dimer model has a Temperleyan representation (see \cite{KPWtemperley}, \cite{BLR_torus}).
Then by considerations related to the imaginary geometry approach (see \cite{BLR16}), this convergence to the Dirichlet GFF is universal provided that the underlying random walk converges to Brownian motion (this will be rigourously proved in the forthcoming work \cite{BerestyckiLaslierRusskikh}). In particular, given these results, we should get convergence to the Dirichlet GFF for the height function even when $z \neq 1$: indeed, when we modify the weight of all the edges crossing the real line, random walk will still converge to Brownian motion. So far, this discussion concerned the (unconditioned) dimer model on $\bar \cG$ defined in \eqref{dim_ref}. Once we start conditioning on $\cE \cD(\bar \cG)$ it might be natural to expect that the scaling limit should be a ``Dirichlet GFF conditioned to be even", though this is a highly degenerate conditioning.
Nevertheless, this conditioning makes sense in the continuum, and in fact its restriction to the upper half plane gives the Neumann GFF, as we are about to argue. Indeed, for a full plane GFF $\hg_{\C}$ restricted to $\mathbb H$, it is easy to check that one has the decomposition
\begin{align}\label{eq:even_odd}
\hg_{\C} = \tfrac1{\sqrt{2}} (\hg_\H^{\text{Neu}} + \hg_\H^{\text{Dir}})
\end{align}
where $\hg^{\text{Neu}}_\H , \hg^{\text{Dir}}_\H$ are independent fields on $\H$ with Neumann and Dirichlet boundary conditions on $\R$ respectively. This follows immediately from the fact that any test function can be written as the sum of an even and odd functions, and this decomposition is orthogonal for the Dirichlet inner product $(\cdot, \cdot)_{\nabla}$ on $\cD_0(\C)$. Therefore, conditioning $\hg_\C$ to be even amounts to conditioning on $\hg_\H^{\text{Dir}}$ to vanish everywhere, meaning that $\hg_\C$ (restricted to the upper half plane) is exactly equal to $\hg^{\text{Neu}}_\H/\sqrt{2}$. (See Exercise 1 of Chapter 5 in \cite{BPnotes} for details.)

\medskip \indent We note that while this argument correctly predicts the Neumann GFF as a scaling limit of the height function, it is however also somewhat misleading as it suggests that the limit of $h^\delta$ is not $(1/\sqrt{2}\pi) \hg^{\text{Neu}}_{\H}$ as in Theorem \ref{T:NGFF_intro}, but is smaller by a factor $1/\sqrt{2}$, i.e., $1/(2\pi) \hg^{\text{Neu}}_{\H}$.

\medskip \indent To understand this discrepancy, we now explain why the additional factor turns out to be an artifact of a Gaussian computation and does not arise in the discrete setup. A convincing one-dimensional parallel can be that of Gaussian and simple random walk bridges. Indeed, consider bridges of $2n$ steps starting and ending at $0$, with symmetric Bernoulli and Gaussian jump distributions with variance one.
Now condition the walks to be symmetric around time $n$, i.e. $X(n\pm k) = X(n\mp k)$. Again, the Gaussian conditioning is singular but can be easily made sense of using Gaussian integrals.
Restricted to the time interval $[0,n]$, the conditioned simple random walk bridge is just a simple random walk with the same step distribution as the original bridge. However, the conditioned Gaussian walk has step distribution
with variance $1/2$ as a result of the conditioning. In particular, in the diffusive scaling limit, the former walk converges to standard Brownian motion whereas the latter to $1/\sqrt{2}$ times the standard Brownian motion. The framework of the current paper is more similar to the simple random walk case as discrete height functions are its ``two-dimensional-time" analogs. This concludes the discussion giving the heuristics for Theorem \ref{T:NGFF_intro}.

\subsection{A conjecture on the boundary-touching level lines}\label{subsec:conjecture}

In the study of the dimer model, a well known conjecture of Kenyon concerns the superposition of two independent dimer configurations. It is easy to check that such a superposition results in a collection of loops (including double edges) covering every vertex. This observation is attributed to Percus \cite{Percus}. These loops are the level lines of the difference of the two corresponding dimer height functions. Kenyon's conjecture (stated somewhat informally in \cite{KenyonWilson} for instance) is that the loops converge in the scaling limit to CLE$_4$, the conformal loop ensemble with parameter $\kappa = 4$ (defined in \cite{Sheffield_exploration}, see also \cite{SheffieldWerner}). This is strongly supported by the fact that in the continuum, CLE$_4$ can be viewed as the level lines of a (Dirichlet) GFF with a specified variance (a consequence of a well known coupling between the GFF and CLE$_4$ of Miller and Sheffield, see \cite{BTLS} for a complete statement and proof). Major progress has been made recently on this conjecture through the work of Kenyon \cite{Kenyon_CI_DD}, Dub\'edat \cite{DubedatCLE} and Basok and Chelkak \cite{BasokChelkak}, and the only remaining ingredient of the full proof is to show precompactness
of the family of loops in a suitable metric space.

\begin{figure}
\begin{center}
\includegraphics[height=5.cm]{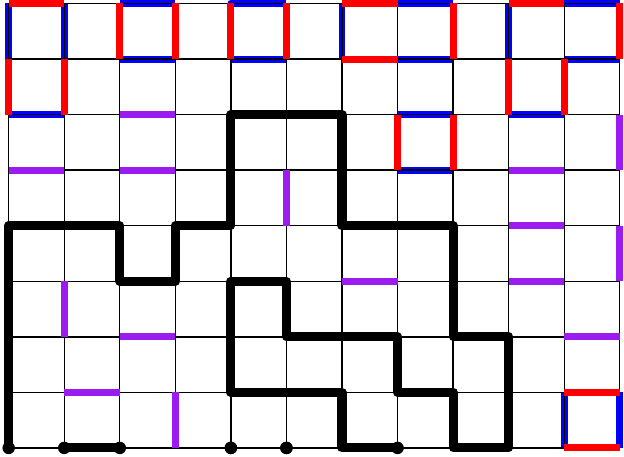}
\includegraphics[height=5.cm]{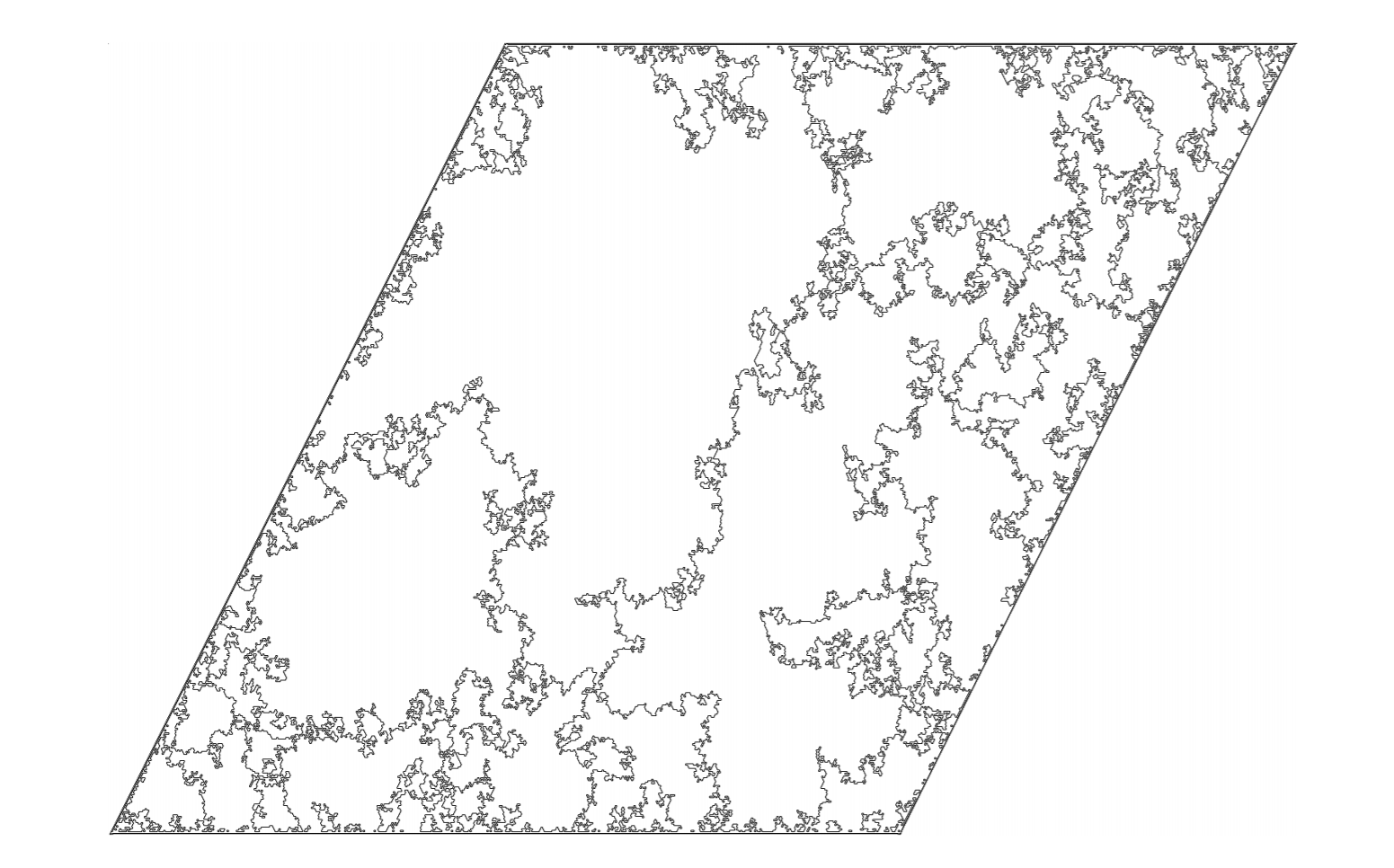}
\caption{Left: A superposition of two monomer-dimer configurations, respectively blue and red. Double edges are in purple. The boundary-touching level lines of the height-function is the collection of arcs joining monomers to monomers marked in bald black. Right: A simulation of ALE by B. Werness.}
\label{F:double}
\end{center}
\end{figure}

It is natural to ask if any similar phenomenon occurs when we superpose two independent monomer-dimer configurations sampled according to the free boundary dimer model, say in the upper half-plane. For topological reasons, this gives rise to a gas of loops as above but also a collection of curves connecting monomers to monomers (and hence the real line to the real line). See Figure \ref{F:double} for an example. An obvious question is to describe the law of this collection of curves in the scaling limit. By analogy with the above, and in view of our result (Theorem \ref{T:NGFF_intro}), it is natural to expect that these curves converge in the scaling limit to the level lines of a GFF with Neumann boundary conditions on the upper-half plane. The law of these curves was determined by Qian and Werner \cite{QianWerner_Neumann} to be the {ALE} process (ALE stands for Arc Loop Ensemble. It is a collection of arcs that can be connected into loops, but here we will not be interested in this aspect and will only see them as arcs.). ALE is one possible name for this set, but more precisely it is equal to the branching SLE$_4(-1,-1)$ exploration tree targeting all boundary points, and is also equal to the (gasket of) BCLE$_4(-1)$ in \cite{MR3708206} and $\mathbb{A}_{-\lambda, \lambda}$ in  \cite{BTLS}.

This leads us to the following conjecture:

\begin{conj} For any $z>0$, in the scaling limit, the collection of boundary-touching curves resulting from superimposing two independent free boundary dimer models converges to the \textbf{Arc Loop Ensemble} ALE in the upper half-plane.
\end{conj}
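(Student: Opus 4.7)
The plan is to port the structural blueprint developed toward Kenyon's CLE$_4$ conjecture into our free-boundary setting, replacing the Dirichlet GFF by the Neumann GFF produced in Theorem~\ref{T:NGFF_intro} and replacing CLE$_4$ by ALE through the continuum identification of Qian--Werner \cite{QianWerner_Neumann}.

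The starting point is a level-line picture at the discrete level. If $M_1, M_2$ are independent free boundary dimer configurations, then every vertex of $\Z^2 \cap \H$ has degree $0$, $1$ or $2$ in the superposition $M_1 \cup M_2$: vertices of degree $1$ are precisely the vertices that are monomers in exactly one of $M_1, M_2$, and such vertices necessarily lie on $\partial_{\textnormal{free}}$. Hence $M_1 \cup M_2$ decomposes into (a) isolated vertices and double edges, (b) loops, and (c) simple paths whose endpoints are monomers on the real line; this last family is exactly the collection of arcs in the conjecture. A standard computation of height-function increments along dual paths crossing such a curve (in the spirit of \cite{KenyonWilson}) shows that the boundary-to-boundary arcs are the level lines of $h_1 - h_2$ at a prescribed height $\pm c$, for an explicit constant $c$ depending on the normalisation.

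The candidate scaling limit is then pinned down by Theorem~\ref{T:NGFF_intro}. Independence of $M_1$ and $M_2$, combined with Gaussianity of the limit, upgrades marginal convergence to joint convergence of $(h_1^\delta, h_2^\delta)$ to $\tfrac{1}{\sqrt{2}\pi}(\hg_1^{\textnormal{Neu}}, \hg_2^{\textnormal{Neu}})$ with the two limits independent, so that $h_1^\delta - h_2^\delta \to \tfrac{1}{\pi}\hg^{\textnormal{Neu}}_\H$ in law. The main theorem of \cite{QianWerner_Neumann} asserts that boundary-touching level lines of a Neumann GFF at the Miller--Sheffield heights $\pm \lambda$ form the gasket of BCLE$_4(-1)$, i.e.\ ALE. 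Matching the discrete height $c$ to $\lambda$ in our normalisation is a routine check of constants analogous to \cite{MR3708206,BTLS}.

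The principal obstacle is precompactness of the family of arcs, exactly as in the original Kenyon conjecture: no discrete RSW-type or regularity estimate is currently available for level lines of the dimer height function, and this missing step is the sole remaining ingredient of the CLE$_4$ conjecture after \cite{DubedatCLE,BasokChelkak}. A second substantial task is to promote the convergence of $h_1^\delta - h_2^\delta$ to convergence of a family of discrete holomorphic observables coupling individual arcs to the field, along the Grassmannian / $s$-holomorphicity lines of \cite{DubedatCLE,BasokChelkak}, adapted to the non-bipartite Kasteleyn matrix arising from the Giuliani--Jauslin--Lieb bijection. Here the reflected random walk representation of Theorem~\ref{T:finitevol_intro} should be the correct microscopic object, since it packages $K^{-1}$ as a walk that reflects off the free boundary and is the natural discrete counterpart of the reflected Brownian kernel that parametrises ALE in \cite{QianWerner_Neumann}. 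Granting precompactness and convergence of observables, identifying the subsequential limit as ALE would finally proceed by characterising it uniquely via its one-point boundary connection probabilities or the law of a single branch targeting a marked boundary point, using the BCLE$_4(-1)$ construction of \cite{MR3708206}.
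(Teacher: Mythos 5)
The statement you are addressing is labelled a \emph{conjecture} in the paper; no proof is given, and the surrounding discussion makes clear the authors leave it open. There is therefore no argument in the paper to compare against, and the question is simply whether your proposal closes the gap. It does not, and to your credit you say so explicitly.

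What you lay out is a sensible roadmap that ports the blueprint of Kenyon's CLE$_4$ conjecture to the free-boundary setting: the discrete combinatorics (vertices of degree $0$, $1$, or $2$ in $M_1\cup M_2$, with degree-$1$ vertices being single-monomers on $\partial_{\textnormal{free}}$, so that the components are isolated points, double edges, loops, and boundary-to-boundary arcs) is correct; the identification of arcs as level lines of $h_1^\delta - h_2^\delta$ is plausible; and the scaling $h_1^\delta - h_2^\delta \to \tfrac{1}{\pi}\hg^{\textnormal{Neu}}_\H$ follows from Theorem~\ref{T:NGFF_intro} together with independence and Gaussianity of the limit. The continuum target (ALE as the boundary-touching level-line ensemble of the Neumann GFF, via Qian--Werner and its BCLE$_4(-1)$ description) is the right one.

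The two obstacles you name are, however, genuine and are precisely why this remains a conjecture rather than a theorem. First, there is no precompactness/tightness estimate for the family of discrete level lines; as you note, this is also the sole remaining ingredient in the CLE$_4$ conjecture after the work of Dub\'edat and of Basok--Chelkak, and nothing in the present paper supplies it. Second, the holomorphic-observable machinery of Dub\'edat and Basok--Chelkak is built on bipartite Kasteleyn theory, and one would have to rebuild it for the non-bipartite Kasteleyn matrix arising from the Giuliani--Jauslin--Lieb bijection; the reflected random-walk representation of Theorem~\ref{T:finitevol_intro} is indeed the natural discrete object to start from, but the analysis is not carried out anywhere. Since neither step is done, the proposal is a well-informed strategy sketch, not a proof, and it should be presented as such.
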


\subsection{Folding the dimer model onto itself}

The discussion in Sections \ref{S:heur} and \ref{subsec:conjecture} lead naturally to another conjecture which we now spell out. In Section \ref{subsec:conjecture} we explained a conjecture pertaining to the superposition of two \emph{independent} monomer-dimer configurations sampled according to the free boundary dimer model. But there is at least one other natural way to superpose two such configurations that are not independent: namely, when they come from the \emph{same} full plane dimer model. In fact, there are two ways to do the folding, depending on whether we shift by $i/2$ or not.

Let us explain this more precisely. Let us define the graph $\hat \cG$ which is obtained by adding to $\cG$  its reflection with respect to the real axis. The vertices of $\cG$ on the real axis (i.e., $V_0$) are not reflected: we only keep one copy of them in $\hat \cG$. (By contrast, in the graph $\bar \cG$, where $\cG$ is shifted by $i/2$ prior to reflection, these vertices are duplicated).

Now, consider an (infinite volume) dimer cover $M$ on $\hat \cG$, viewed as a subset of edges where every vertex has degree 1, and consider the superposition $\hat \Sigma$ obtained by superposing $M$ with itself via a reflection through the real line: thus,
 $$\hat \Sigma = M|_{\H} \cup (-M)|_{- \H}.$$
  Then $\hat \Sigma$ is a subgraph of degree two (including double edges), except for vertices on $V_0 \subset \R$ which in $M$ are connected to a vertical edge. Thus $\hat \Sigma$ is exactly of the same nature as the graph in Figure~\ref{F:double}. It is not hard to see that the ``height function'' $h_{\hat \Sigma}$ (really defined only up to a global additive constant) naturally associated with $\hat \Sigma$ converges in the fine mesh size limit to $(1/\pi) \hg^{\text{Neu}}_\H$: this is because at the discrete level, the corresponding height function $h_{\hat \Sigma}(f)$ at a face $f \subset \H$ can be viewed as $h_M(f) + h_M( \bar f)$ (where $h_M$ is the height function associated with $M$), and $h_M$ is known to converge to $(1/\sqrt{2} \pi)\hg_\C$~\cite{BdT_quadritilings}. These considerations lead us to the following conjecture:

\begin{conj} \label{C:refl}In the scaling limit, the collection of boundary-touching curves in $\hat \Sigma$ converges to the \textbf{Arc Loop Ensemble} ALE in the upper half-plane.
\end{conj}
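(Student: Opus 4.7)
The strategy I would follow for Conjecture~\ref{C:refl} closely parallels the Dub\'edat / Basok--Chelkak approach to Kenyon's CLE$_4$ conjecture, with two adjustments: the target continuum object is a BCLE of the Neumann GFF rather than the Dirichlet GFF, and the discrete folding replaces the i.i.d.\ superposition. The first task is to check at the discrete level that the boundary-touching arcs of $\hat\Sigma$ are level lines of the folded height function $h_{\hat\Sigma}(f) = h_M(f) + h_M(\bar f)$: each such arc arises from a vertical edge of $M$ crossing $\R$, and a crossing of it shifts $h_{\hat\Sigma}$ by a fixed integer. The informal remark in the paper that $h_{\hat\Sigma} \to (1/\pi)\hg^{\textnormal{Neu}}_\H$ in the fine-mesh limit would then be made rigorous by combining the Boutillier--de~Tili\`ere convergence $h_M \to (1/(\sqrt{2}\pi))\hg_\C$ for the full-plane dimer model with the orthogonal even/odd decomposition~\eqref{eq:even_odd}, observing that $f \mapsto h_M(f) + h_M(\bar f)$ is exactly twice the projection onto the Neumann summand, which produces the factor $2 \cdot (1/\sqrt{2})\cdot (1/(\sqrt{2}\pi)) = 1/\pi$.

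The second step is to identify any subsequential scaling limit of the family of boundary-touching arcs with the boundary-touching level lines of the limiting Neumann GFF. By Qian--Werner~\cite{QianWerner_Neumann} this collection is exactly ALE, equivalently BCLE$_4(-1)$ produced by the Miller--Sheffield imaginary geometry coupling applied to $\hg^{\textnormal{Neu}}_\H$. Following the CLE$_4$ blueprint, I would construct discrete martingale observables along candidate exploration curves (e.g.\ discrete holomorphic spinors, or bosonic observables of the form $\E[\exp(i\alpha (h_{\hat\Sigma},f))]$) and pass them to the continuum. The random walk representation of the inverse Kasteleyn matrix afforded by Theorem~\ref{T:finitevol_intro} together with the infinite volume convergence in Theorem~\ref{P:infinite_vol_intro} is a natural source of such observables, since it expresses dimer correlations in terms of hitting distributions of $\Zev$ and $\Zodd$. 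These walks converge in the scaling limit to Brownian motion in $\H$ reflected on $\R$, which is precisely the input needed to recover the SLE$_4(-1,-1)$ driving function underlying ALE.

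The principal obstacle, exactly as in Kenyon's original CLE$_4$ conjecture, is precompactness of the family of arcs in a topology strong enough to identify subsequential limits with the continuum object (for instance in the Aizenman--Burchard / Kemppainen--Smirnov framework). Uniform RSW-type arm estimates are required both in the bulk and near the free boundary $\R$. A natural reduction here is to observe that $\hat\Sigma$ is a Percus-type superposition of two correlated dimer configurations which, away from $\R$, is locally identical to a superposition of two independent full-plane dimers. Thus any precompactness input supplied by future progress on the bulk Kenyon conjecture should feed directly into this problem, and the additional work lies in controlling the behaviour of arcs as they approach $\R$. The boundary control should be accessible through Theorem~\ref{T:finitevol_intro}, in which the reflecting boundary condition on $V_0$ for $\Zev$ and $\Zodd$ is manifest. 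Consequently, I expect this conjecture to be resolvable essentially as soon as the corresponding bulk precompactness question for the Kenyon CLE$_4$ conjecture is, complemented by suitable RSW-type estimates at the free boundary.
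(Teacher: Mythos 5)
The statement you are addressing is a \emph{conjecture} in the paper, not a theorem: the paper offers no proof, only a heuristic discussion and the computation $h_{\hat\Sigma}(f) = h_M(f) + h_M(\bar f) \to (1/\pi)\hg^{\textnormal{Neu}}_\H$. Your submission is accordingly a research program rather than a proof, and you say as much when you defer the precompactness and identification steps to future progress on Kenyon's CLE$_4$ conjecture. There is therefore no proof in the paper to compare against.

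As a program, it is a sensible and faithful transposition of the Dub\'edat / Basok--Chelkak blueprint, and your constant check is correct: combining $h_{\hat\Sigma}(f)=h_M(f)+h_M(\bar f)$, the convergence $h_M\to(1/(\sqrt{2}\pi))\hg_\C$ from \cite{BdT_quadritilings}, and the even/odd decomposition \eqref{eq:even_odd} gives $h_{\hat\Sigma}\to(1/\pi)\hg^{\textnormal{Neu}}_\H$, which agrees with the paper's remark. The intended continuum target (ALE, i.e.\ the boundary-touching level lines of $\hg^{\textnormal{Neu}}_\H$ as identified by Qian--Werner) is also the right one.

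There is, however, one concrete mismatch in your observable step worth flagging. Theorem~\ref{T:finitevol_intro} and the infinite-volume Theorem~\ref{P:infinite_vol_intro} describe the inverse Kasteleyn matrix of the \emph{free boundary dimer model} $\mu$, whose local structure is governed by the effective walks $\Zev$ and $\Zodd$ with reflecting jumps along $\R$. Conjecture~\ref{C:refl}, by contrast, folds a single \emph{full-plane} dimer configuration $M$ on $\hat\cG$ onto itself; the Kasteleyn walk for $M$ is the ordinary Kenyon walk on $\Z^2$, not the effective walks of this paper. The restriction $M|_\H$ is not the free boundary dimer measure $\mu$ for any value of $z$: in the full-plane model, the probability of a prescribed set of downward-matched vertices on $V_0$ is not of the form $z^{\#\textnormal{monomers}}$ but carries nontrivial correlations coming from the lower half-plane; and the conditioned reflection picture in Section~1.4 concerns $\bar\cG$ (shifted by $i/2$) rather than $\hat\cG$. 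So the random-walk representation of $K^{-1}$ cannot be imported directly as a source of observables here; transferring it to the folded configuration would require its own argument. Combined with the openly acknowledged absence of precompactness and of a suitable martingale observable, this means your proposal should be recorded as a plausible program, not a proof.
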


We remark that it is also meaningful to fold a dimer configuration on $\bar \cG$ (rather than $\hat \cG$ above) onto itself via reflection through the real line. In that case, one must erase the vertical edges straddling the real line and view the corresponding dimers as pairs of monomers. The resulting superposition $\bar \Sigma$ is a subgraph of degree two, including multiple edges and double points (on $V_0 \subset \R + i /2$). In particular there are no boundary arcs in $\bar \Sigma$, except for degenerate lines connecting every monomer to itself. For the same reason as above, the height function $h_{\bar \Sigma}$ associated to $\bar \Sigma$ may be viewed as $h_M(f) - h_M(\bar f)$ and so converges in the scaling limit towards $(1/\pi) \hg^{\text{Dir}}_\H$. Analogously to Conjecture \ref{C:refl}, we conjecture that the loops of $\bar \Sigma$ converge to CLE$_4$.

\subsection{Connection with isoradial random walk with critical weights}

The following remark was suggested by an anonymous referee. There is a special value of the fuagcity parameter $z$, namely
\begin{equation}\label{E:z}
z^2 = \tan (\pi/8)
\end{equation}
such that the \emph{even} walk $\Zev$ coincides (after a small change in the embedding) with the random walk on isoradial graphs with critical weights considered in the work of  Kenyon~\cite{Kenyon_iso}. To see this, one can notice that the even walk $\Zev$ is equivalent to a random walk on two upper-half planes (or more precisely, square lattices on these half planes) welded together via a row of triangles. See Figure \ref{fig:isoradial}. Such a graph has an isoradial embedding and the corresponding critical weights have weight 1 on the square lattice edges, and weight $z$ given by \eqref{E:z} on the remaining triangle edges, as follows from elementary calculations. In that case, convergence of the derivative of the potential kernel (i.e., part of the inverse Kasteleyn matrix) would follow from Theorem 4.3 in \cite{Kenyon_iso}.

\begin{figure}
\begin{center}
 \includegraphics[scale=0.4]{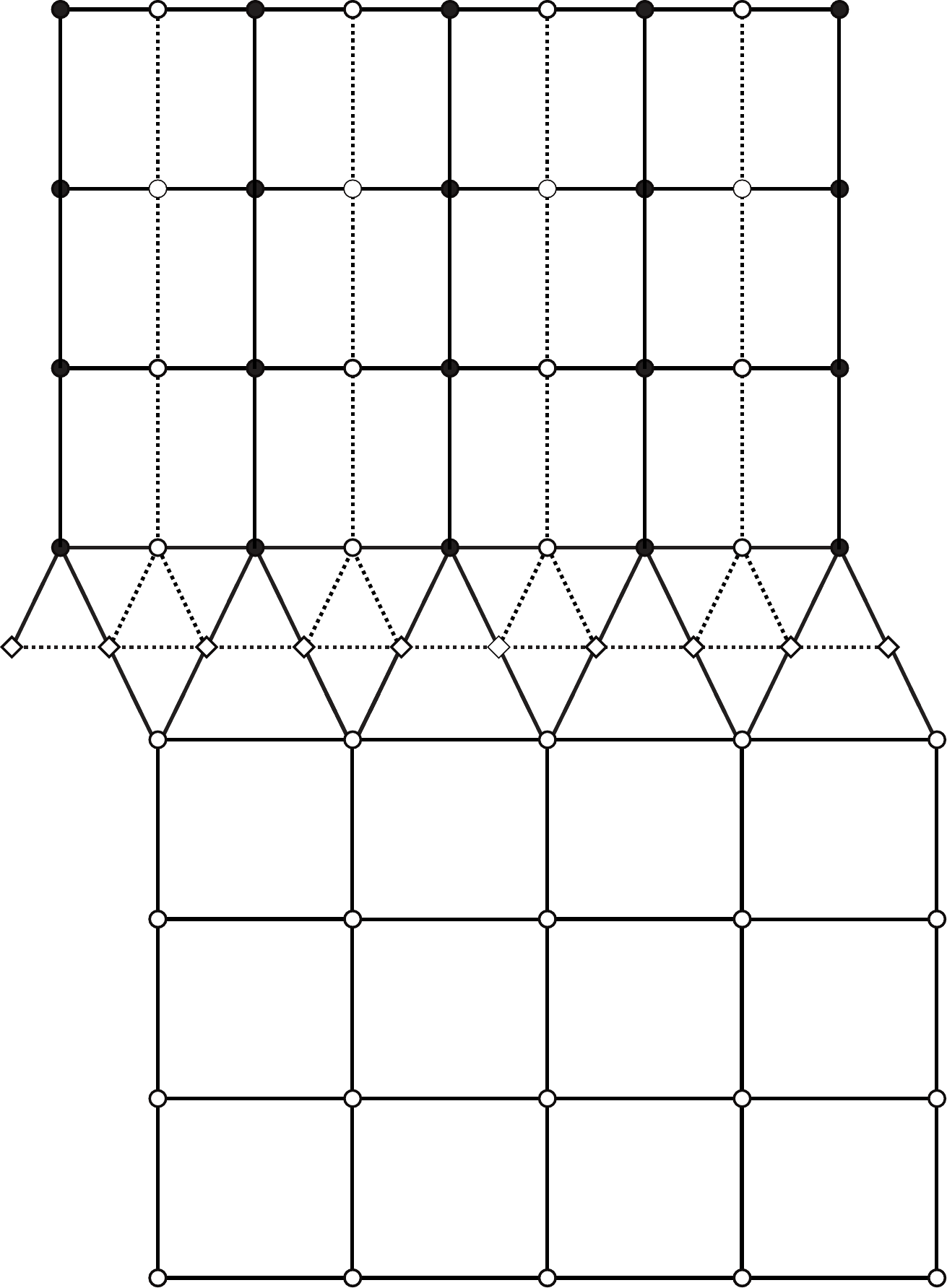}
\caption{
After reflecting the white sublattice of $V_{\textnormal{even}}$ (dashed lines), a graph composed of two square lattice half-planes glued together by a row of interlacing isosceles triangles is formed (solid lines).
If the angle between the legs of the triangle is equal to $\pi/4$, the graph is isoradial and the walk $Z_{\textnormal{even}}$ for $z^2 = \tan (\pi/8)$ is the same as the walk studied by Kenyon in~\cite{Kenyon_iso}.
}
 \label{fig:isoradial}
\end{center}
\end{figure}

\subsection{Outline of the paper and structure of proof}

This paper is organized as follows:
In Section~\ref{S:IKM} we first describe and slightly generalize the bijection from~\cite{GJL16} between the {free boundary dimer model} on $\cG$ and the standard dimer model on an
augmented (nonbipartite) graph $\cG^0$ (as in Figure~\ref{fig:Gtriangle}), which is the starting point of our paper.
We then define a Kasteleyn orientation on $\cG^0$, the associated Kasteleyn matrix $\tilde K$, and finally we choose a convenient complex-valued gauge changed Kasteleyn matrix $K$ (this gauge is closely related to the one of Kenyon~\cite{Kenyon_ci}
and allows one to interpret $K$ as a discrete Dirac operator).
Kasteleyn theory (which we recall later on in the paper) says that the correlations of the dimer model on $\cG^0$ (and hence also of the free boundary dimer model on $\cG$) can be computed from
the inverse Kasteleyn matrix $K^{-1}$.

\paragraph{Section \ref{S:IKM}}With an intention of developing its random walk representation,
we therefore begin analyzing the inverse Kasteleyn matrix when $\cG$ is a subgraph of the square lattice with appropriate boundary conditions described in Section~\ref{S:boundary}. To this end we look at the matrix $D =K^*K$, whose off-diagonal entries we interpret as (signed) transition weights.
These weights away from $\partial_{\textnormal{free}} \cG$ (which is a subset of the real line) are positive and hence define proper random walks as in~\cite{Kenyon_ci}.
However, the description of $D$ as a Laplacian matrix associated to a random walk breaks down completely for vertices on the three bottommost rows of $\cG^0$ (as in Figure~\ref{F:V01}).
We stress the fact that the level of complication is considerably higher for transitions between odd rows (that will lead to the definition of the walk $Z_{\textnormal{odd}}$).
Indeed, as mentioned in Figure~\ref{F:V01}, for even rows the arising walk $Z_{\textnormal{even}}$ can be relatively easily understood as a proper random walk reflected on the real line after taking into account
a \emph{global} sign factor appearing in $D$ (which leads to the formula in the second line of~\eqref{RWrep}).

Therefore the remainder of Section~\ref{S:IKM} is devoted to the random walk representation for $K^{-1}$, which is one of the main contributions of this paper.
The main idea is to ``forget'' the steps of the signed walk induced by $D$ taken along the row $V_{-1}$, or more precisely to only specify the trajectory of a path away from $V_{-1}$ and combine together all paths that agree with this choice.
The hope is that the resulting projected signed measure on trajectories contained in $V_{\textnormal{odd}}=V_1\cup V_3\cup \ldots$
with (unbounded) steps from $V_1$ to $V_1$ is actually a true probability measure. Remarkably (in our opinion),
we show that this is indeed the case; this phenomenon is what really lies behind the random walk representation of Theorem \ref{T:finitevol_intro}.
To achieve this, an additional (intermediate) limiting procedure is required.
To be precise, we first pretend that the rows $V_0$ and $V_{-1}$ of $\cG^0$ are infinite. This is done by defining graphs $\cG^N$, where $2N$ additional triangles are appended on both sides of $\cG^0$,
and then taking the limit $N\to \infty$. This allows us to perform exact computations for the transition weights from $V_1$ to $V_1$ by analysing the potential kernel of the auxiliary one-dimensional walk on $\Z$
defined in Section~\ref{sec:auxiliary}. The required positivity of the combined weights and the identity stating that these weights sum to one as we sum over all possible jump locations, stated in Lemma~\ref{T:eff}, is the result of an exact (and rather long) computation involving the potential kernel of this auxiliary walk.

This intermediary limit is also the technical reason for the introduction of the modified monomer weight $z'$, which arises as the limiting weight of the peripheral monomers on $\cG^0$.
Finally, in Section~\ref{S:inverseKfinite} we use the notion of  \textbf{Schur complement} of a matrix as a convenient tool to implement the idea of combining all the walks with given excursions away from (the now infinite) row $V_{-1}$.
All in all, at the end of Section~\ref{S:IKM} a random walk representation of $K^{-1}$ is developed and Theorem~\ref{T:finitevol_intro} is proved.

\paragraph{Section~\ref{S:IVL}.} The goal of this section is to prove Theorem~\ref{P:infinite_vol_intro}, i.e., to establish the infinite volume limit of the model when a sequence of graphs exhausts $\H\cap \Z^2$.
By Kasteleyn theory, it is enough that the inverse Kasteleyn matrix has a limit. This will be shown using the random walk representation established in Section~\ref{S:IKM}. Essentially, the main goal is to show that in the infinite volume limit, the difference of the Green function associated to the random walk $\Zev$ or $\Zodd$ at two fixed vertices $x,y$ converge to the {difference of the} \textbf{potential kernel} of the corresponding infinite volume walk. In fact, the very definition of this potential kernel is far from clear and occupies us for a sizeable part of this section. For the usual simple random walk on the square lattice, the definition of the potential kernel (see e.g. \cite{LawLim}) relies on precise estimates for the random walk coming from the exact computation of the Fourier transform of the law of random walk. Such an exact computation is clearly impossible here, since the effective walks cannot be viewed as a sum of i.i.d. random variables. We overcome this obstacle by developing a general method (which we think may be of independent interest) to define the potential kernel of a recurrent random walk and prove convergence of Green function differences towards it. The main idea is to proceed by \textbf{coupling}. We note that a similar idea has also been recently advocated by Popov (see Section 3.2 of \cite{popov}); but the approach in \cite{popov} also takes advantage of some properties and symmetries which are not available here. Instead, our starting point is the robust estimate of Nash (see e.g. \cite{Barlow}) characterising the heat kernel decay. With our approach, only a weak (polynomial of any order) bound for the probability of non coupling suffices to show the existence of the potential kernel. An immediate byproduct of our quantitative approach (which is crucial for us) is the proof of the desired convergence of Green function differences towards the {differences of the} potential kernel, obtained in Proposition \ref{P:greenPK_gen}.

\paragraph{Section \ref{S:PKscaling}} In Section \ref{S:PKscaling} we move on to describe the scaling limit (now in the limit of fine mesh size) for the potential kernel of the effective walks $\Zev$ and $\Zodd$. A key idea is to say that when such a walk hits the real line, it will hit it many times and therefore has a probability roughly 1/2 to end up at a vertex with even (resp.\ odd) horizontal coordinate once it is reasonably far away from the real line. This idea eventually leads us to asymptotic formulae for the potential kernel which depends on the parity of the horizontal coordinate of a point (see Theorem \ref{T:PKscaling}). To achieve this, we introduce an intermediary process which we call \textbf{coloured random walk}, which is a random walk on (twice) the usual square lattice, but which can also carry a colour (representing, roughly speaking, the actual parity of the effective walk). This colour may change only when the walk hits the real line, and then does so with a fixed probability $p$. The proof of Theorem \ref{T:PKscaling} relies on first comparing our effective walk to the coloured random walk (Proposition \ref{P:eff_col}) and then from the coloured walk to half of the potential kernel of the usual simple random walk (Proposition \ref{P:col_half}).

\paragraph{Section~\ref{S:SL}} We are now finally in a position to start the proof of Theorem~\ref{T:NGFF_intro}. From Theorem \ref{T:PKscaling} we obtain a scaling limit for the inverse Kasteleyn matrix of the (infinite volume) free boundary dimer model.
After recalling Kasteleyn theory in the nonbipartite setting, we then  compute the scaling limit of the pointwise moments of height function differences on $\H$ in Section~\ref{S:moments}.
The argument is based on Kenyon's original computation~\cite{Kenyon_ci} but with substantial modifications coming from the fact that we use Pfaffian formulas instead of the determinantal formulas for bipartite graphs. This leads to different expressions which fortunately simplify asymptotically (for reasons that are related but distinct from those in \cite{Kenyon_ci}). This leads to the formula in Proposition \ref{P:kpoint_dimer}, which is an asymptotic expression for the limiting joint moments of pointwise height differences, with an explicit quantification of the validity of the limiting formula (needed in the following).
To finish the proof of the result, we transfer this result in Section~\ref{S:final} into one about the scaling limit of the height function as a random distribution. This is essentially obtained by integrating the result of Proposition \ref{P:kpoint_dimer}, but extra arguments are needed for the case when some of the variables of integration are close to one another.

\begin{remark}
\label{rem:referee}
An alternative strategy for establishing the scaling limit of the inverse Kasteleyn matrix, suggested by an anonymous referee, would be the following. It would suffice to concentrate on one of the two types of walks (say $\Zev$, which is simpler to define than its counterpart $\Zodd$) and analyse its potential kernel in the manner indicated above in order to derive the scaling limit of $K^{-1} (u, \cdot)$ where $u \in \Ve$ is a given even vertex. Once this is done, discrete holomorphicity and antisymmetry can be invoked to obtain asymptotics of $K^{-1}$ on the remaining vertices, and with the same error bounds (using a discrete version of the Poisson formula for the derivative of harmonic functions).

We have chosen not to implement this strategy for the following reasons. On the one hand, the asymptotic analysis of $\Zev$ (and in particular its potential kernel) is as difficult as it is for $\Zodd$.  As this is probably the most challenging part of the analysis,  there would be no real simplification in considering $\Zev$ only. On the other hand, the exact random walk representation of $K^{-1}$ seems interesting in its own right, especially since it shows a connection with reflected random walks even at the discrete level.

\end{remark}

We end the introduction by mentioning the following problem. The dimer model on special families of bipartite planar graphs is famously related, through various measure preserving maps, to other classical models of statistical mechanics like spanning trees (see e.g.~\cite{dimer_tree}), the double Ising model~\cite{dubedat,bdt} or the closely related double random current model~\cite{DumLis}. This indicates the following direction of study.
\begin{problem}
 Analyse the boundary conditions in these classical lattice models induced by the presence of monomers in their dimer model representations.
\end{problem}


\paragraph{Acknowledgements.} N.B.'s work was supported by: EPSRC grant EP/L018896/1, University of Vienna start-up grant, and FWF grant P33083 on ``Scaling limits in random conformal geometry''. The authors are also grateful for an invitation to visit the D\'epartement de Math\'ematiques et Applications at Ecole Normale Sup\'erieure in April 2018, where part of this work took place, and during which N.B. was an invited professor. The hospitality of that department, and the stimulating atmosphere of the discussions (including with D. Chelkak, who helped us with the computation \eqref{gamma}) are gratefully acknowledged. {We also thank the anonymous referee who brought to our attention the alternative strategy outlined in Remark~\ref{rem:referee}, and the connection to isoradial walks in \eqref{E:z}.}

\section{(Inverse) Kasteleyn matrix} \label{S:IKM}

\subsection{Dimer representation}\label{S:bij}
In \cite{GJL16} a representation of the free boundary dimer model was given in terms of a dimer model on an augmented (nonbipartite) graph where a row of triangles is appended to $\partial_{\textnormal{free}}\cG$ (see Figure~\ref{fig:Gtriangle}).

\begin{figure}
\begin{center}
 \includegraphics[scale=0.9]{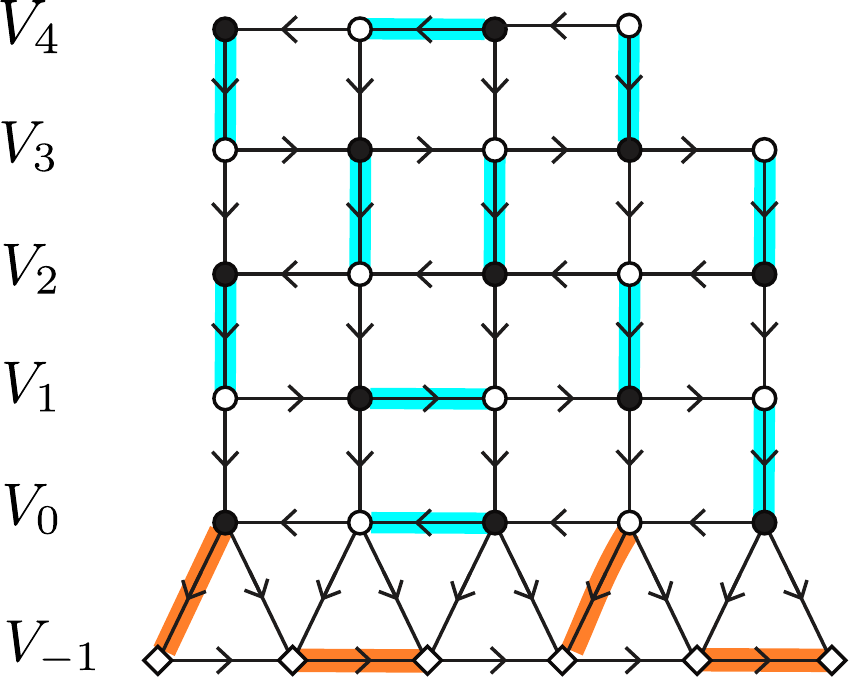}
\caption{An augmented non-bipartite graph $\cG^{0}$ and its Kasteleyn orientation. The graph is constructed from a piece of the square lattice $\cG$ with $\partial _m \cG=V_0$ by adding the bottom row of triangles.
In this case $\cG$ has two black monomer-corners, two black dimer-corners, and two white dimer-corners. The additional row of triangles (here $\cT_k$ with $k=9$) simulates the presence of monomers in the free boundary dimer model by means of a standard dimer model. This particular choice of $k$ forces and even number of monomers as $k-\lfloor k/2\rfloor +1$ is even.
This is expressed as a measure-preserving bijection between $\mathcal{D}(\cG^0)$ and $\mathcal{MD}(G)$ with a proper choice of weights}
 \label{fig:Gtriangle}
\end{center}
\end{figure}
We first slightly generalize the result contained in \cite{GJL16} in order to account for the case when $\partial_{\textnormal{free}} \cG$ is not the whole boundary of $\cG$.
To this end, for $k>0$, let $\cT_{k}'$ be the graph composed of $k$ triangles glued together in a manner like the bottom part of the graph in Figure~\ref{fig:Gtriangle} where we assume that
the left-most triangle is $\bigtriangledown$ for $k$ even and $\bigtriangleup$ for $k$ odd. Let $\cT_k$ be $\cT'_k$ with all top horizontal edges removed,
and let $\cT_0$ be a single edge interpreted as one non-horizontal side of a triangle.
Let $\partial \cT_{k}$ be the upper row of vertices in~$\cT_k$, and note that $| \partial \cT_{k}|=\lfloor k/2\rfloor +1$.
\begin{lemma}\label{L:mon_dim}
For every $k\geq0$ and every choice of $W \subseteq \partial \cT_k$ with $|W| = | \partial \cT_{k}|=\lfloor k/2\rfloor +1\ (\textnormal{mod}\ 2)$, there exists exactly one dimer cover of
$\cT_k\setminus W$, where with a slight abuse of notation, $\cT_k\setminus W$ is $\cT_k$ with the vertices in $W$ and all adjacent edges removed.
\end{lemma}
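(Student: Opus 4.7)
The plan is to prove this by induction on $k$. The base case $k=0$ is immediate: $\cT_0$ is a single edge, only $W=\emptyset$ is admissible by the parity condition, and the unique dimer cover is the edge itself. For the inductive step I would focus on the leftmost top vertex $u_0 \in \partial\cT_k$ and the leftmost bottom vertex $v_0$ of $\cT_k$. Because the top horizontal edges have been deleted, their neighbourhoods are sharply restricted: for $k$ odd (leftmost triangle $\bigtriangleup$) both $u_0$ and $v_0$ have exactly two neighbours in $\cT_k$ (namely $\{v_0,v_1\}$ and $\{u_0,v_1\}$ respectively, where $v_1$ denotes the next bottom vertex), while for $k$ even (leftmost triangle $\bigtriangledown$) the vertex $u_0$ has $v_0$ as its unique neighbour.

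I would then split on whether $u_0 \in W$. If $u_0 \notin W$, the vertex $u_0$ must be covered, and in both parities this forces the dimer $\{u_0, v_0\}$: for $k$ odd, the only alternative $u_0$--$v_1$ would isolate $v_0$; for $k$ even, $u_0$ has no other neighbour. Removing this forced dimer, the residual graph is isomorphic via the obvious coordinate shift to $\cT_{k-2}$, and $W$ is relabelled as a subset $W' \subseteq \partial\cT_{k-2}$ of the same size. If instead $u_0 \in W$, I delete $u_0$: for $k$ odd the vertex $v_0$ is then forced to be paired with $v_1$ (its only remaining neighbour), and after deleting $\{u_0, v_0, v_1\}$ the residual graph is isomorphic to $\cT_{k-3}$; for $k$ even the graph $\cT_k \setminus \{u_0\}$ itself is isomorphic to $\cT_{k-1}$. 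In each subcase $W \setminus \{u_0\}$ becomes a valid $W' \subseteq \partial\cT_{k'}$, and a short parity check (relying on $|V(\cT_k)| = k+2$ so that the number of vertices remaining to be matched has the right parity) confirms that the inductive hypothesis applies to $(\cT_{k'}, W')$.

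Combining the forced dimers together with the unique dimer cover of $\cT_{k'} \setminus W'$ produced by the inductive hypothesis yields a dimer cover of $\cT_k \setminus W$, and since every step was uniquely determined, the cover is unique. The two branches of the case analysis are mutually exclusive and together cover every admissible $W$, so the uniqueness is genuine. The only real work is the combinatorial verification of the isomorphisms between the residual graphs and the smaller strips $\cT_{k'}$, which is a routine check from the explicit triangular-strip coordinates; everything else follows mechanically from forced-dimer reasoning and parity bookkeeping.
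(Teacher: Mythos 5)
Your proof is correct in essence and follows the same peeling strategy as the paper's, but with a finer decomposition. You peel a single top vertex $u_0$ at a time, so each step drops $k$ by $1$, $2$, or $3$; the paper instead looks at the first two vertices of $\partial\cT_{k+5}$ simultaneously and reduces to $\cT_{k}$, $\cT_{k+1}$, or $\cT_{k+2}$ (a drop of $3$, $4$, or $5$). Your finer case split ($u_0\in W$ versus $u_0\notin W$, then the parity of $k$) is arguably a little cleaner, since each branch involves only one or two forced dimers and a single residual-graph isomorphism, and the parity bookkeeping is immediate from $|V(\cT_k)|=k+2$.

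The one gap is in the base cases. As written, your inductive step fails at $k=1$: there the only admissible $W$ is $W=\{u_0\}$ (the single top vertex of the triangle $\cT_1$), and your odd-$k$, $u_0\in W$ branch deletes $u_0$, forces the dimer $\{v_0,v_1\}$, and then appeals to $\cT_{k-3}=\cT_{-2}$, which is not defined. You should include $k=1$ as a second base case (it is trivial: removing the apex of $\cT_1$ leaves the single edge $\{v_0,v_1\}$, which covers itself). Once $k=0,1$ are both checked directly, every reduction from $k\ge 2$ lands in $\{0,\ldots,k-1\}$ (the drop by $3$ only occurs for $k$ odd, hence $k\ge 3$; the drop by $1$ only for $k$ even, hence $k\ge 2$), and the strong induction closes. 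The paper sidesteps this by checking all of $k=0,\ldots,4$ by hand before peeling from $\cT_{k+5}$.
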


\begin{proof}
The statement for $k=0,1,2,3,4$ can be easily checked by hand.
Let $\{v_1,v_2\}$ be the first two vertices of $\partial \cT_{k}$.
We now consider cases for $\cT_{k+5}$:
\\
\textbf{case I.} $|W\cap \{v_1,v_2\}|\in \{0,2\}$. Then, there is exactly one choice of dimers on the first two triangles which corresponds to either situation.
One hence reduces the problem to the case of $\cT_{k+1}$.
\\
\textbf{case II.} $|W\cap \{v_1,v_2\}|=1$. Then, there is exactly one choice of dimers on the first triangle for $k$ even and on the first three triangles for $k$ odd which correspond to either situation.
One hence reduces the problem to the case $\cT_{k+2}$ for $k$ even and $\cT_{k}$ for $k$ odd.
\end{proof}

This lemma implies that by gluing the graph $\cT_k$ (with a proper choice of $k$) to $ \cG$ so that $\partial_{\textnormal{free}} \cG$ and $\partial \cT_k$ match, and by considering the dimer model on this extended graph $\cG^0$
(with dimer weight $z_v$ for the two non-horizontal edges of the triangle incident on a vertex $v\in \partial_{\textnormal{free}} \cG$) one can simulate the free boundary dimer model on $\cG$ with monomers on $\partial_{\textnormal{free}} \cG$ and with monomer weight $z_v$ for $v\in \partial_{\textnormal{free}}\cG$.
Indeed, it is enough to interpret the set $W$ from the lemma above as the set of vertices of $\cG$ that belong to a dimer, and $\cT_k\setminus W$ as the set of monomers.

In other words, there is a measure preserving bijection between $\mathcal{D}(\cG^0)$ and $\mathcal{MD}(\cG)$. Note that if $\cG$ has a dimer cover (which we assume in this article),
then one has to take $ k-\lfloor k/2\rfloor +1$ even, as there has to be an equal number of white and black monomers.

\subsection{Kasteleyn orientation, Kasteleyn matrix and gauge change}

A Kasteleyn orientation of a planar graph is an assignment of orientations to its edges such that for each face of the graph, as we traverse the edges surrounding this face one by one in a counterclockwise direction, we encounter an odd number of edges in the opposite direction (see e.g.\ \cite{toninelli_notes}). For graphs as defined in Section~\ref{S:boundary} we make the following choice (see Figure \ref{fig:Gtriangle}): every vertical line is oriented downwards (including the non-horizontal sides of triangular faces at the bottom). The orientation of horizontal edges alternates: in odd rows (starting at row $-1$): edges are oriented from left to right, whereas in even rows (starting at row 0) they are oriented from right to left.

Given a Kasteleyn orientation, the standard Kasteleyn matrix  $\tilde K(x,y)$ is taken to be the signed, weighted adjacency matrix: that is, $\tilde K (x,y) = \pm1_{x \sim y} w_{(x,y)}$ where the sign is $+$ if and only if the edge is oriented from $x$ to $y$, and the weight $w_{(x,y)}$ is 1 for horizontal and vertical edges (including on $V_{-1}$), and $z$ for the nonhorizontal sides of triangular faces.  However, it will be useful to perform a change of gauge, as follows. For every $k \ge 0$ even, and for every $x \in V_k$, we multiply by $i$ the weight of every edge adjacent to $x$. In particular, every horizontal edge in $V_k$ with $k$ even receives a factor of $i$ twice coming from both of its endpoints, whereas each vertical edge receives a factor of $i$ exactly once. We define the gauge-changed Kasteleyn matrix $K(x,y)$ to be the resulting matrix. Formally,
\begin{equation}\label{def:Kasteleyn}
K(x,y) = \tilde K(x,y) i^{1_{x \in \Ve} + 1_{y \in \Ve}}.
\end{equation}
For instance, if $x \in V_{0}$ is not on the boundary, then $x$ has five neighbours. Starting from the vertical edge and moving counterclockwise, the weights $K(x,y)$ are given by $-i, -1, iz, iz, 1$.

\subsection{Towards the inverse Kasteleyn matrix} \label{S:D-1}

Let $D=K^*K$. In this section we explain the key idea involved in computing $D^{-1}$, and thus ultimately $K^{-1}$. The matrix $D$
already played a crucial role in \cite{Kenyon_ci}, where Kenyon
observed that it reduced to the Laplacian on the four types of sublattices of the square grid.

We will follow a similar approach but, as we will see, the immediate interpretation of $D$ as a Laplacian breaks down in the rows $V_{-1}, V_0$ and $V_1$. Nevertheless, admitting the formal sum-over-all-paths identity~\eqref{eq:Greenformal}, we will be able to make a guess on the structure of $D^{-1}$. This will ultimately lead us to the identification of $D^{-1}$ as the Green's function of a certain \textbf{effective} random walk (or, in fact, a pair of effective random walks) which appear in the statement of Theorem \ref{T:finitevol_intro}.

Therefore, the purpose of this section is mostly to explain the heuristic principles guiding the proof, and to introduce the relevant objects and the notation. Once this framework is defined we will start with the actual proof in Section \ref{S:IL}. We will complete the rigorous computation of $D^{-1}$ (and therefore the proof of Theorem \ref{T:finitevol_intro}) in Section~\ref{S:inverseKfinite}.

\begin{figure}
\begin{center}
\includegraphics{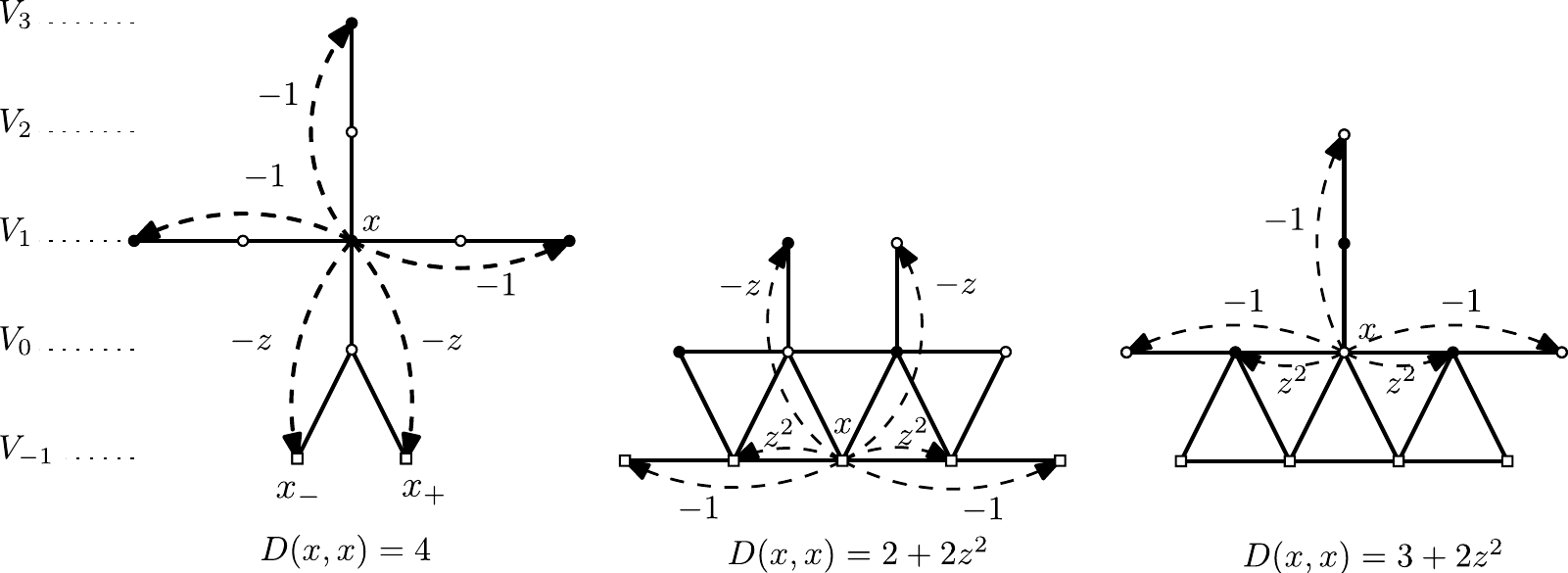}
\caption{Three types of vertices $x$ where the transition weights of $D=K^*K$ are signed. The arrows indicate the corresponding value of $D(x,y)$. Note the following crucial observations: First, in the rightmost case (when $x\in V_0$), the absolute values of the transition weights sum up to the diagonal term. Moreover, the transition weight is negative if and only if
the size of the step is odd (more precisely equal to one). 
A similar observation holds in the central picture (when $x\in V_{-1}$) if one ignores the transition weights that lead back to~$V_1$. This is the basis for the construction of Section~\ref{S:D-1} and
the definition of the auxiliary random walk on $\Z$ from \eqref{D:rw}. Our approach is to ``forget'' what the walk does when it stays in $V_{-1}$ and resum over all trajectories contained in $V_{-1}$ and
with the same endpoints in $V_1$. 
 }
\label{F:V01}
\end{center}
\end{figure}

We now fix a finite arbitrary graph $\cG$ that satisfies the conditions of Section \ref{S:boundary}.
We first compute $D$ explicitly. Note that if $x \in V_k$ with $k \ge 2$, the entries of $D$ are computed in a way identical to Kenyon \cite{Kenyon_ci}. Namely, the diagonal term is
$$D(x,x) = K^*K(x,x) = \sum_{y \sim x} K^*(x,y) K(y,x) = \sum_{y \sim x} | K (y,x)|^2 = \deg(x).$$
Moreover,
the off-diagonal terms are nonzero if and only if $y$ is at distance two from $x$, but not diagonally (the diagonal cancellation is a consequence of the Kasteleyn orientation), i.e., if $y$ is a neighbour of $x$ on one of the sublattices $2\Z \times 2 \Z$, $(2\Z +1) \times (2\Z + 1)$, $2\Z  \times (2\Z + 1)$ or $(2\Z +1) \times 2\Z $ in which case one can check as above that $D(x,y) =-1$.
Therefore away from the boundary $\partial_{\textnormal{free}} \cG$,
in the same way as in \cite{Kenyon_ci}, $D$ is the Laplace operator associated to a simple random walk on each of the sublattices, up to a multiplicative constant. The Temperleyan boundary conditions are then naturally associated with certain boundary conditions for $D$ on these sublattices.

Complications for such an interpretation arise when $x \in V_{-1} \cup V_{0} \cup V_1$. See Figure~\ref{F:V01} for the nonzero entries of $D$ in these cases. Notice that now it is not necessarily true that
the diagonal term $D(x,x)$ is (up to a sign) the same as the sum of the off-diagonal entries on the row corresponding to $x$, or in other words, the transition weights $d_{x,y}$ in \eqref{eq:dweight} do not sum up to $1$. Moreover, some of them are negative.
While this seems like a very serious obstacle for describing the behaviour of the operator $D^{-1}$ in the scaling limit, we nevertheless show in the next section how we can recover an effective random walk for which $D$ really is the Laplacian.

More precisely, $D^{-1}$ can be formally viewed as a sum of weights of paths of all possible lengths,
where the weight of a path is the product of (signed) transition weights of individual jumps.  That is, formally,
\begin{equation}\label{eq:Greenformal}
D^{-1} (u,v) = \frac1{D(v,v)} \sum_{\pi: u \to v} w(\pi),
\end{equation}
where
\begin{align} \label{eq:dweight}
w(\pi)=\prod_{(x,y) \in \pi} d_{x,y} \qquad \text{with} \qquad d_{x,y}=- \frac{D(x,y)}{D(x,x)}.
\end{align}
For $x$ in the bulk, $d_{x,y} =1/4$ for each $y$ which is neighbour of $x$ on the sublattice of twice larger mesh size containing $x$, and is $0$ otherwise, which is the same as the transition probability of a simple random walk on that sublattice.

Let us now point out that the transition weights between an even row and an odd row are always $0$. Compared to the odd rows, the construction for even rows is much simpler. As seen in Figure~\ref{F:V01},
for $x\in V_0$ which is not an extremity of the row $V_0$ or at distance one from the extremities,  $D(x,x)$ is in fact equal to the sum of $|D(x,y)|$ for all $y\not=x$.
We can therefore view $|d_{x,y}|$ for $x\in V_{0}$ as the transition weights of a random walk that is reflected on row $V_0$ (and can make jumps of size one and two on that row). When $x$ is one of the extremities of $V_0$ or is at distance one from the extremities, the values of $D(x,x)$ and $D(x,y)$ allow us to interpret it as  a killing or a reflection of the random walk at the boundary (see Section~\ref{S:1D} and in Figure~\ref{fig:GN} for more details).
When we take into account the signs of $d(x,y)$ in \eqref{eq:dweight}, this gives rise to a global sign factor which depends only on $u$ and $v$ can be seen in the second line of~\eqref{RWrep}.

The rest of this section is devoted to the more complicated task of giving a random walk representation to $D^{-1}$ restricted to the vertices in odd rows $V_{\textnormal{odd}}$.
We now describe the main idea.
We will manage to give a meaning to the right hand side of \eqref{eq:Greenformal} by fixing a specific order of summation. We will later on prove that this definition really does give us the inverse of $D$, and we will also find a random walk interpretation to this definition. We emphasise this because the signs are not constant, and hence the order of summation is a priori relevant to the value of the sum.
Essentially we will compute the sum in \eqref{eq:Greenformal} by ignoring the details of what the path does when it visits $V_{-1}$. That is, we will identify two paths if they enter $V_{-1}$ at the same place in $V_{1}$ and leave $V_{-1}$ at the same places in $V_1$ for each visit to $V_{-1}$, and we will be able to estimate contributions to \eqref{eq:Greenformal} coming from each such equivalence class.

An important observation (see Figure~\ref{F:V01}) here is that for each $x\in V_{-1}$ which is not the extremity of $V_{-1}$ or at distance one from the extremities, the diagonal term $D(x,x)$ is  equal to the sum of $|D(x,y)|$ for all $y\in V_{-1}$ not equal to $x$. Note that $D(x,y)$ is non zero for $y=x\pm1$ or $y=x \pm 2$. This allows us to express the weight of the paths which stay in $V_{-1}$ as the weight of a random walk with steps $\pm 1$ and $\pm 2$ on $V_{-1}$. For $x\in V_{-1}$ which is equal to the extremity of $V_{-1}$ or at distance one from the extremities, the values of $D(x,x)$ and $D(x,y)$  again allow us  to interpret it as  a killing or a reflection of the random walk at the boundary (see Section~\ref{S:1D} and in Figure~\ref{fig:GN} for more details).
One can therefore associate a Green's function $g(\cdot, \cdot)$ with the random walk on $V_{-1}$ with transition probabilities
\begin{align} \label{eq:pxy}
p_{x,y}=|d_{x,y}|.
\end{align}

For $x\in V_1=V_1(\cG^0)= V_1(\cG)$, let $x_-$ and $x_+$ be the left and right vertex in $V_{-1}=V_{-1}(\cG^0)$ two steps away from $x$.
We fix $u,v \in V_1$ and let $u_\bullet \in \{ u_-, u_+\}$ and $v_\bullet \in \{ v_-, v_+\}$. We define $\cP^{1}_{u_\bullet, v_\bullet} $ {to} be the set of paths from $u_\bullet$ to $v_\bullet$ which are contained in $V_{-1}$. Observe that if $\pi \in \cP^{1}_{\ub,\vb}$, then $\pi$ makes jumps of size $\pm 1$ or $\pm 2$, and that each odd jump contributes a negative weight to \eqref{eq:Greenformal} whereas each even jump contributes a positive weight.
Since $\pi$ goes from $u_{\bullet}$ to $v_{\bullet}$ the parity of the number of even and odd jumps is fixed and depends only on the distance between $u_{\bullet}$ and $v_{\bullet}$ in $V_{-1}$. Hence
$$
w(\pi)= (-1)^{\Re(\vb - \ub)}\prod_{(x,y) \in \pi} |d_{x,y}|,
$$
where $d_{x,y}$ is defined in \eqref{eq:dweight}.

Going further: if $\cP^{1}_{\ub, v}$ is the set of paths going from $\ub $ to $v$ and staying in $V_{-1}$ (except for the last step, which must be from $v_\pm$  to $v$), then
\begin{align} \label{eq:P1}
 \sum_{\pi \in \cP^1_{\ub, v}} w(\pi) &= (-1)^{\Re(v_+ - \ub)} (g(\ub, v_+)- g(\ub, v_-) ) \frac{z}{2+ 2z^2} 
\end{align}
where the last term accounts for the weight $- D(v_\pm, v) / D(v_\pm, v_{\pm})$ of the last step from $V_{-1}$ to $V_1$.
Finally, let $\cP^1_{u,v}$ be the set of paths from $u$ to $v$ which stay in $V_{-1}$ except for the first and last step (which necessarily are from $V_1$ to $V_{-1}$ and vice versa). Using \eqref{eq:P1} we have
\begin{align}
 \sum_{\pi \in \cP^1_{u, v}}w(\pi) =\frac {z^2} {8+8z^2} (-1)^{\Re(v-u)} (g(u_+,v_+) - g(u_+,v_-)-g(u_-,v_+)+g(u_-,v_-)) =:\tfrac14q_{u,v},
   \label{q}
\end{align}
where the additional term $\frac z4 $ compared to \eqref{eq:P1} accounts for the weight $- D(u, u_{\pm}) / D(u, u) $ of the first step from $V_1$ to $V_{-1}$.
The factor $\frac14$ in the definition of $q_{u,v}$ is included for later convenience.

Recall that our intention is to interpret the quantities $q_{u,v}$ as transition probabilities between vertices in $V_1$. In particular we would wish $q_{u,v}$ to be positive and sum up to (something less than) one
(since the other three transition weights induced by $D$ from a vertex in the bulk of $V_1$ to $V_1$ and $V_3$ are equal to $3/4$).
Unfortunately, in the setting described so far, we were unable to do so (this is because the exact analysis of the random walk on $V_{-1}(\cG^0)$ with its particular boundary conditions is not easy). However, a nice solution to this problem, which effectively gets rid of boundary conditions, is the following construction. We note that this construction is the reason for the appearance of the special
monomer weight $z'$ at the monomer-corners in the statement of our results.

\subsection{An intermediate limit} \label{S:IL}

To overcome the issue raised above, we introduce an intermediate limiting procedure in our model.
To this end, let $\cG^N$ be the graph $\cG^0$ to which we append $2N$ triangles on either side of $\cG$ along $V_{-1}$ and $V_0$ (see Figure~\ref{fig:GN} for an example). We assign weights $1$ to every edge except if it belongs to a triangle and is not horizontal, in which case we assign weight $z$. Since we assumed that $\cG$ has a dimer cover, it is easy to see that $\cG^N$ also has at least one dimer cover. We can hence talk about the dimer model on $\cG^N$ with the specified weights.

\begin{figure}
\begin{center}
\includegraphics[scale=1]{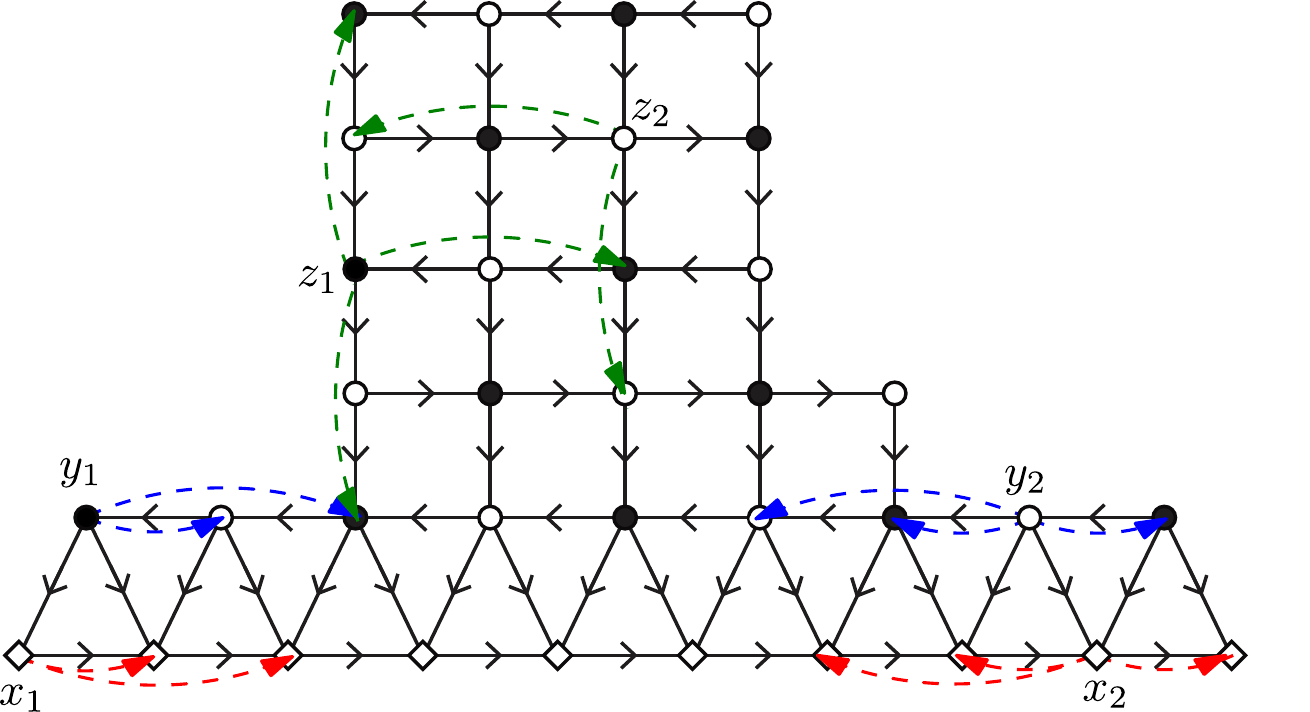}
\caption{A graph $\cG^2$ (there are $4$ additional triangles appended on each side of $\cG^0$) with different types of vertices and transitions marked. The transitions weights given by $D(\cdot,\cdot)=D_N(\cdot,\cdot)$ are $-1$ for long arrows and $z^2$ for short arrows. The diagonal terms
are $D(x_1,x_1)=1+z^2$, $D(y_1,y_1)=1+2z^2$, $D(x_2,x_2)=D(y_2,y_2)=2+2z^2$, $D(z_1,z_1)=3$, $D(z_2,z_2)=4$.
The black vertex $z_1$ has Neumann boundary conditions for the associated walk, since the total weight of outgoing transitions is also $3$.
The white vertex $z_2$ has Dirichlet boundary conditions since the total outgoing weight is $2<4$}
\label{fig:GN}
\end{center}
\end{figure}

Using Lemma \ref{L:mon_dim}, we can also rephrase this dimer model as a free boundary dimer model on $\cG$ to which we add a segment of $N$ edges to the left and right of $\partial_{\textnormal{free}} \cG$.
The first observation is that the monomer-dimer configuration on $\cG^N$ restricted to $\cG$, in the limit $N\to \infty$ has the law of the free boundary dimer model with
weight $z'$ from~\eqref{zprime} at the monomer-corners. This is an immediate consequence of the following elementary lemma.
\begin{lemma} Let $\cZ_N$ be the partition function of the monomer dimer model on a segment of $\mathbb{Z}$ of length $N$ with monomer weight $z$ and
edge weight $1$.
Then, as $N \to \infty$,
\[
\frac{\cZ_{N+1}}{\cZ_N} \to z', \quad  \text{where} \quad z' = \frac{z}2 + \sqrt{1+\frac{z^2}4}.
\]
\end{lemma}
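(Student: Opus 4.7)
The plan is to derive a simple linear recurrence for $\cZ_N$ by conditioning on the status of the rightmost vertex of the segment, and then extract the limit from the explicit solution of this recurrence. Enumerate the vertices of the segment as $1,2,\dots,N$, so that a monomer-dimer configuration is a matching of a subset of these vertices by nearest-neighbour edges, with each unmatched vertex (a monomer) carrying weight $z$ and each edge carrying weight $1$. Partition the configurations according to whether vertex $N$ is a monomer or is covered by the dimer $\{N-1,N\}$: in the first case the contribution factorises as $z\cdot\cZ_{N-1}$, and in the second case as $1\cdot\cZ_{N-2}$. This yields
\begin{equation*}
\cZ_{N} \;=\; z\,\cZ_{N-1} + \cZ_{N-2}, \qquad N\ge 2,
\end{equation*}
with initial conditions $\cZ_0=1$ and $\cZ_1=z$.

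Next I would solve this linear recurrence. The characteristic polynomial is $x^2-zx-1=0$, whose roots are
\begin{equation*}
z'=\tfrac{z}{2}+\sqrt{1+\tfrac{z^2}{4}}, \qquad z''=\tfrac{z}{2}-\sqrt{1+\tfrac{z^2}{4}},
\end{equation*}
exactly matching the value of $z'$ in the statement. Since $z>0$, one has $z'>0>z''$ and $z'z''=-1$, so in particular $|z''|=1/z'<z'$. Writing $\cZ_N=A(z')^N+B(z'')^N$ and using the two initial conditions to solve for $A,B$, a short computation gives
\begin{equation*}
A=\frac{z-z''}{z'-z''}>0,
\end{equation*}
since the numerator equals $z+\sqrt{1+z^2/4}-z/2=z/2+\sqrt{1+z^2/4}=z'>0$ and the denominator is $2\sqrt{1+z^2/4}>0$.

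Finally, because $A>0$ and $|z''/z'|<1$, dividing $\cZ_{N+1}$ by $\cZ_N$ and letting $N\to\infty$ gives
\begin{equation*}
\frac{\cZ_{N+1}}{\cZ_N} \;=\; \frac{A(z')^{N+1}+B(z'')^{N+1}}{A(z')^N+B(z'')^N} \;=\; z'\cdot\frac{1+(B/A)(z''/z')^{N+1}}{1+(B/A)(z''/z')^{N}} \;\longrightarrow\; z',
\end{equation*}
which is the claimed limit. There is no substantive obstacle here; the only point requiring minor care is to verify that the coefficient $A$ of the dominant eigenvalue is nonzero, which is immediate from the initial conditions.
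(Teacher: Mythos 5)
Your proof is correct and takes essentially the same approach as the paper: derive the recursion $\cZ_{N}=z\,\cZ_{N-1}+\cZ_{N-2}$, solve it via the characteristic equation to get an explicit formula, and read off the limit from the dominant root. Your coefficient $A=\frac{z/2+\beta}{2\beta}=\frac12+\frac{z}{4\beta}$ (with $\beta=\sqrt{1+z^2/4}$) matches the coefficient of $(\frac{z}{2}+\beta)^N$ in the paper's closed form, so the two arguments are identical in substance.
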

\begin{proof}
It is enough to solve the recursion $\cZ_{N+1} = z \cZ_N +\cZ_{N-1}$ to get that
\[
\cZ_N= \Big (\frac12 -\frac{z}{4 \beta} \Big)\Big(\frac{z}2- \beta\Big)^N+\Big(\frac12 +\frac{z}{4 \beta}\Big)\Big(\frac{z}2+ \beta\Big)^N,
\]
where $\beta = \sqrt{1+\frac{z^2}4}$.
\end{proof}

Let $K_N$ be the Kasteleyn matrix of $\cG^N$ and let $D_N=(K_N)^*K_N$. The statement above and Kasteleyn theory imply that the inverse Kasteleyn matrix $K_N^{-1}$ restricted to $\cG^0$
converges as $N\to \infty $ to the inverse Kasteleyn matrix $(K')^{-1}$ for the free boundary dimer model on $\cG^0$ with monomer weights $z'$ at the monomer-corners.

\subsection{An auxiliary walk on $\Z$} \label{sec:auxiliary}
It will be convenient to consider a random walk on $V_{-1}(\mathbb Z^2) \simeq \Z$ with transition probabilities given by
\begin{equation}\label{D:rw}
p^{\infty}_{x,x\pm1}= \frac{z^2}{2+2z^2}=:1/2-p, \qquad p^{\infty}_{x,x\pm2}= \frac{1}{2+2z^2}=:p.
\end{equation}
In other words, this is the infinite volume version of the walk from~\eqref{eq:pxy}.
Now, while the Green's function of this walk is infinite since the walk is recurrent, its differences makes sense in the form of the \textbf{potential kernel} (see \cite{LawLim}, Section 4.4.3) given by
\begin{align}\label{D:pk}
\alpha_k = \sum_{n=0}^{\infty} (p_n(0)-p_n(k)) = \lim_{N\to \infty}\Big ( \sum_{n=0}^N p_n(0) -  \sum_{n=0}^N p_n(k) \Big),
\end{align}
where $p_n(k)= \sum_{i=0}^n \mathbb{P}_0( X_i = k )$ with $X$ being the random walk with jump distribution \eqref{D:rw}.
Using the potential kernel, for $u,v \in V_1(\Z^2)\simeq \Z$, we can now define the infinite volume version of the transition weight $q_{u,v}$ from~\eqref{q} by
\begin{align} \label{qinfty}
q^\infty_{u,v}= \frac {z^2} {2+2z^2} (-1)^{k+1}  (  2\alpha_k - \alpha_{k+1} - \alpha_{k-1}),
\end{align}
where $k = \Re (v-u)$. Note that the sign is opposite to that in \eqref{q}. To is due to the $-p_n(k)$ term in the definition of the potential kernel.

The next result is one of the crucial observations in this work.
\begin{lemma}[Effective transition probabilities]
\label{T:eff} For all $z>0$, and any pair of vertices $u,v \in V_1(\Z^2)$, we have
\[
q^{\infty}_{u,v} \ge 0 \quad \text{ and } \quad \sum_{v \in V_1(\Z^2)} q^{\infty}_{u,v} = 1.
\]
Moreover, $q^{\infty}_{u,v} \to 0$ exponentially fast as $|u-v|\to \infty$.
\end{lemma}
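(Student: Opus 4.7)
The plan is to derive an explicit closed-form formula for $q^\infty_{u,v}$ via Fourier analysis; from such a formula, all three assertions of the lemma follow immediately. I would begin by invoking the standard Fourier representation of the potential kernel for a symmetric one-dimensional walk (see e.g.\ \cite{LawLim}): with $\phi(\theta) = (1-2p)\cos\theta + 2p\cos(2\theta)$ denoting the characteristic function of the walk defined in \eqref{D:rw}, one has
$$\alpha_k = \frac{1}{2\pi}\int_{-\pi}^{\pi}\frac{1-\cos(k\theta)}{1-\phi(\theta)}\,d\theta,$$
which is well-defined since $1-\phi(\theta) \asymp \theta^2$ near zero. The product-to-sum identity $\cos((k+1)\theta) + \cos((k-1)\theta) - 2\cos(k\theta) = 2\cos(k\theta)(\cos\theta - 1)$ then gives
$$2\alpha_k - \alpha_{k+1} - \alpha_{k-1} = -\frac{1}{\pi}\int_{-\pi}^{\pi}\frac{\cos(k\theta)\,(1-\cos\theta)}{1-\phi(\theta)}\,d\theta.$$

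The crux of the proof is the following algebraic factorisation: substituting $\cos(2\theta) = 2\cos^2\theta - 1$ and solving the resulting quadratic in $c = \cos\theta$ (whose discriminant turns out to be the perfect square $(1+6p)^2$) shows that
$$1-\phi(\theta) = (1-\cos\theta)\bigl(1+2p+4p\cos\theta\bigr).$$
The factor $1-\cos\theta$ cancels, and the change of variable $\psi = \pi - \theta$ (which maps $\cos\theta \mapsto -\cos\psi$ and $\cos(k\theta) \mapsto (-1)^k\cos(k\psi)$, thereby absorbing the $(-1)^{k+1}$ prefactor in \eqref{qinfty}) yields
$$q^\infty_{u,v} = \frac{\tfrac{1}{2}-p}{\pi}\int_{-\pi}^{\pi}\frac{\cos(k\psi)}{(1+2p)-4p\cos\psi}\,d\psi, \qquad k = \Re(v-u).$$
Writing $x = e^{i\psi}$ and computing by partial fractions (equivalently by residues), with $r \in (0,1)$ the smaller positive root of $2pr^2 - (1+2p)r + 2p = 0$ (whose discriminant $(1+2p)^2 - 16p^2 = (1-2p)(1+6p)$ is strictly positive throughout $p \in (0,\tfrac{1}{2})$, with $r \in (0,1)$ by a direct check), one obtains the Fourier series
$$\frac{1}{(1+2p) - 4p\cos\psi} = \frac{1}{2p(1-r^2)}\sum_{k\in\Z} r^{|k|+1}\,e^{ik\psi},$$
so that reading off the $k$-th Fourier coefficient gives the closed form
$$q^\infty_{u,v} = \frac{(1-2p)\,r^{|k|+1}}{2p(1-r^2)}.$$

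All three claims follow from this formula at once. Positivity and exponential decay in $|u-v|$ are manifest since $r \in (0,1)$ and $p \in (0,\tfrac{1}{2})$. Summing the geometric series gives
$$\sum_{v \in V_1(\Z^2)} q^\infty_{u,v} = \frac{(1-2p)\,r}{2p(1-r^2)}\cdot\frac{1+r}{1-r} = \frac{(1-2p)\,r}{2p(1-r)^2},$$
which equals $1$ precisely because $r$ satisfies $2pr^2 - (1+2p)r + 2p = 0$. I expect the main obstacle to be the bookkeeping of signs (the various $(-1)^k$ factors from \eqref{qinfty}, from the trigonometric identity, and from the substitution $\theta \mapsto \pi-\theta$); once the factorisation $1-\phi(\theta) = (1-\cos\theta)(1+2p+4p\cos\theta)$ has been noticed, the rest reduces to standard Fourier analysis.
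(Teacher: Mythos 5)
Your proof is correct, and it takes a genuinely different route from the paper's. The paper invokes Theorem~4.4.8 of \cite{LawLim} to get the a priori form $\alpha_k = |k|/\sigma^2 + A + O(e^{-\beta|k|})$, then uses harmonicity of $\alpha$ away from the origin to identify the exponential correction as $B\gamma^{|k|}$ where $\gamma$ solves the characteristic equation of the walk (their $\gamma$ equals $-r$ in your notation), and finally pins down $B$ by computing $\alpha_1$ through a first-step/strong-Markov decomposition (their equations \eqref{eq:pq} and \eqref{rec_alpha1}), after which positivity and the normalisation $\sum_k q_k = 1$ are verified algebraically. You instead start from the Fourier representation of the potential kernel, which is legitimate here since the walk is aperiodic (indeed $1+2p+4p\cos\theta \geq 1-2p > 0$ on $[-\pi,\pi]$, so $1-\phi$ vanishes only at $\theta=0$). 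The factorisation $1-\phi(\theta)=(1-\cos\theta)(1+2p+4p\cos\theta)$ — which is not used or noticed in the paper — is the key structural observation; after cancelling the $(1-\cos\theta)$ singularity against the discrete Laplacian numerator, everything reduces to expanding a Poisson-kernel-type function in Fourier series. Your route is arguably cleaner: it is self-contained (no first-step recursion, no separate computation of $\alpha_1$), it yields the explicit formula $q_k = \frac{(1-2p)r^{|k|+1}}{2p(1-r^2)}$ directly, and positivity, exponential decay, and normalisation all drop out at once. I checked the algebra: the discriminants you claim are correct, $\gamma = -r$, your formula agrees with the paper's $(-1)^{k+1}\Delta\alpha_k$ (using $2p(1-r)=r(1-2pr)$, which is the defining relation for $r$), and the geometric-series sum equals one precisely because of $2pr^2-(1+2p)r+2p=0$. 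What the paper's approach buys is a more probabilistic narrative (the appearance of the hitting probability $q=\gamma+1$), which perhaps gives some intuition; what your approach buys is brevity and an explicit formula that makes the three assertions of the lemma manifest rather than requiring separate verification.
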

Before we give the proof note that neither of these two facts is at all clear from the definitions of $q^{\infty}_{u,v}$. Together they imply that we can think of $q^\infty_{u,v}$ as the step distribution of some \textbf{effective random walk} on $V_1$. Later in Proposition~\ref{P:GreenPK1d}, we will prove that $q^{\infty}_{u,v}$ is the limit of $q_{u,v}$ from \eqref{q} on $\cG^N$ when $N\to \infty$. As mentioned before, the proof of Lemma~\ref{T:eff} is just an exact computation of the potential kernel $\alpha$, and a conceptual understanding of why it is true is the subject of Problem~\ref{prob:1}.

\begin{proof}[Proof of Lemma~\ref{T:eff}]
The proof is based on an exact formula for the potential kernel $\alpha$ of the walk on $\mathbb{Z}$ defined by \eqref{D:rw}. To start with, by Theorem 4.4.8 from~\cite{LawLim} we know that
\[
\alpha_k = \frac{|k|}{\sigma^2} + A + O(e^{-\beta | k|})
\]
for some constants $\beta>0$ and $A\in \R$, and where $\sigma^2=1+6p$ is the variance of the walk with $p$ as in~\eqref{D:rw}.
Moreover, $\alpha$ is harmonic (except at $k=0$) with respect to the Laplacian of the walk \eqref{D:rw}. This implies that the $O(e^{-\beta | k|})$ term is of the form $\sum_i B_i\gamma_i^{|k|}$ for some constants $B_i$
and $\gamma_i$ satisfying $|\gamma_i|<1$ and
\[
1=(1/2-p)(\gamma_i+\gamma_i^{-1})+p(\gamma_i^2+\gamma_i^{-2}).
\]
We solve
and get only one such $\gamma=\gamma_i$ equal to
\begin{align} \label{gamma}
\gamma= \sqrt{\Big( \frac12 +\frac1{4p}\Big)^2-1} -\frac12 -\frac1{4p} \in (-1,0)
\end{align}
(the second solution is $\gamma=1$ and does not satisfy $|\gamma|<1$).
We can therefore write
\[
\alpha_k = \frac{|k|}{\sigma^2} + A + B \gamma^{|k|}
\]
for some constants $A$ and $B$.
Using that $\alpha_0=0$ by definition, we get $A=-B$ and hence
\begin{align} \label{eq:PKformula}
\alpha_k= \frac{|k|}{1+6p} -B+B\gamma^{|k|}.
\end{align}
We still need to compute $B$ which is equivalent to computing $\alpha_1$. Let $X$ be the walk with transition probabilities~\eqref{D:rw}.
Let $\tau=\inf \{n>0: X_n >0 \}$, and
\[
q = \P_0(X_\tau=1) \qquad \text{and} \qquad 1-q= \P_0(X_\tau=2).
\]
Then, by considering the possible four different first steps ($+1,-1,+2,-2$) of $X$ and using translation invariance and the strong Markov property, we get that
\begin{align*}
q=(\tfrac12-p)+(\tfrac12-p)((1-q)+q^2)+ p(q(1-q)+q^3+(1-q)q),
\end{align*}
which simplifies to
\begin{align}\label{eq:pq}
p q^2 +\big (\frac12-p\big )(q -1)=0.
\end{align}
One can check that $q=\gamma+1$.
Moreover, using the symmetry of jumps of $X$ and the Markov property for the walk, we get the equation (again considering the
first four steps in the same order)
\begin{align}
\alpha_1&= 1 + (\tfrac12-p)(-\alpha_1)+(\tfrac12-p)[q\alpha_1+(1-q)(-\alpha_1)] \nonumber \\
&+p[(1-q)\alpha_1+q(-\alpha_1)]+p[(q^2+(1-q))\alpha_1+q(1-q)(-\alpha_1)].\label{rec_alpha1}
\end{align}
To justify \eqref{rec_alpha1}, one starts from the definition of $\alpha_1$ in \eqref{D:pk} as the limit as $N \to \infty$ of the expected difference of number of visits by time $N$ to the sites 0 and $1$. We first apply the simple Markov property at the first step, and depending on the outcome of the first step, apply the strong Markov property at the next time $\tau$ (after time 1) that the walk returns to 0 or 1, taking care of the contribution coming from the event $\{\tau>N\}$. We then let $N \to \infty$. There is no problem in doing so, first because the sequence $\{\alpha_n\}_{n \ge  0}$ is bounded, which lets us use the dominated convergence theorem, and second because the contribution coming from the event $\{\tau>N\}$ to the difference between the number of visits at 0 and 1 by time $N$ is bounded by 1. Details are left to the reader.

Together with~\eqref{eq:pq}, \eqref{rec_alpha1} gives
\begin{align}
\alpha_1=\frac{1}{(1 + 2 p (-1 + q)) (2 - q)} = \frac{1}{(1 + 2 p\gamma) (1 - \gamma)}.
\end{align}
and hence from~\eqref{eq:PKformula} we obtain
\begin{align}\label{B}
B=\frac{4p}{(\gamma-1)(6p+1)(2p\gamma+1)} \leq 0.
\end{align}

We can now define
\[
q_k = (-1)^{k+1} \frac{z^2}{2+2z^2}  \Delta \alpha_k =(-1)^{k+1} (\tfrac12 -p)  \Delta \alpha_k ,
\]
where $\Delta  \alpha_k = 2\alpha_k - \alpha_{k+1} - \alpha_{k-1} $ is the Laplacian of simple random walk.
Then $q_k=q^\infty_{u,v}$ whenever $|u-v|=k$.
Using \eqref{eq:PKformula}, we have
\begin{align} \label{eq:expdec}
(-1)^{k+1}  \Delta \alpha_k = \begin{cases} -B  |\gamma|^{|k|} (2-\gamma-\gamma^{-1}) \geq 0& \text{for } k\neq 0, \\
						   \frac2{1+6p}-2B(1-\gamma) \geq 0& \text{for } k =0.
 \end{cases}
\end{align}
and hence the total transition weight is
\begin{align}
\sum_{k\in \mathbb Z} q_{|k|}&= (\tfrac12 -p)\Big(-2B (2-\gamma-\gamma^{-1}) \frac{-\gamma}{1+\gamma}-2B(1-\gamma) +\frac2{1+6p} \Big) \nonumber\\
\label{l2}
&= (\tfrac12 -p)\Big( -4B\frac{1-\gamma}{1+\gamma}+\frac2{1+6p}\Big).
\end{align}
Using \eqref{gamma} and \eqref{B}, it can be checked that the last expression is equal to one for all $0<p<1/2$ (equivalently all $z>0$).
Exponential decay of $q_k$ is clear from~\eqref{eq:expdec}.
\end{proof}

\subsection{Random walk representation of $D^{-1}$}
\label{S:inverseKfinite}

Here we finally establish a rigorous version of \eqref{eq:Greenformal} using the ingredients from the previous sections.
Recall that $K_N$ is the Kasteleyn matrix of the graph $\cG^N$ and $D_N = (K_N)^* K_N $. We will be mostly interested in the restriction of $D_N^{-1}$ to the vertices of $\cG$.
Observe that $D_N$ can be written as a block-diagonal matrix if we consider vertices respectively in the odd or even rows. Hence to invert $D_N$ it will suffice to invert each of these blocks separately.
We call $D_N^{\textnormal{odd}}$ (resp. $D_N^{\textnormal{even}}$) the matrix $D_N$ {restricted to $\Vo(\cG^N) \cup V_{-1}(\cG^N)$} (resp. $\Ve(\cG^N)$).

We first focus on the odd case (the even case is much easier as explained before), and for now we will write $D_N$ for $D_N^{\textnormal{odd}}$. The key idea will be to use the \textbf{Schur complement formula}.
To be more precise, we observe that $D_N$ has the block structure
$$
D_N = \left(
\begin{array}{cc}
   A & B\\
   B^T &C
\end{array}
\right),
$$
where $A$ is indexed by the special row $V_{-1}$, and $C$ is indexed by all the other rows $\Vo $. Hence $B$ and $B^T$ can be thought of as a ``transition matrices'' between $V_{-1}$ and $V_{\textnormal{odd}}$. Note that these matrices depend on $N$ but we don't write this explicitly to lighten the notation.
We define the Schur complement of $A$ to be the matrix
\begin{equation}\label{Schurdef}
D_N/A : =  C - B^T A^{-1} B.
\end{equation}
With this definition, the restriction of $D_N^{-1}$ to $\Vo$ is simply given by
\begin{equation}\label{Schurcomplinverse}
D_N^{-1} |_{\Vo} = (D_N/A)^{-1}.
\end{equation}

We now outline how we proceed.
\begin{itemize}
  \item We first write $A^{-1}$ in terms of the Green's function for the random walk on $V_{-1}(\cG^N)$ with transition probabilities as in \eqref{eq:pxy}.

  \item This gives us a formula for the Schur complement $D_N/A$ via \eqref{Schurdef}. We then use that for $N$ sufficiently large, this Schur complement can be viewed as a (genuine) Laplacian for a random walk. The proof of this statement is postponed until Section~\ref{S:1D}.

  \item As a consequence of \eqref{Schurcomplinverse}, this gives a formula for the inverse of $D_N$ as a Green's function of a genuine random walk.
  \item Finally, as the number $N$ of triangles appended to $\cG^0$ tends to infinity, on the one hand, the above analysis shows that the inverse Kasteleyn matrix (restricted to $\Vo$) can be written in terms of the Green's function of a random walk with jumps along the boundary. On the other hand as mentioned before, the free boundary dimer model becomes equivalent to the same model on $\cG^0$ with modified monomer weights  $z' $ as in \eqref{zprime} at the monomer-corners.

  \item The results of this section are summarised below as Corollary \ref{C:finitevol}.
\end{itemize}

We start with the computation of $A^{-1}$. To this end let
  $$
  g^N(u,v) = \mathop{\sum_{\gamma: u \to v}}_{ \gamma \subseteq V_{-1}(\cG^N)} \prod_{e= (x,y) \in \gamma}p^N_{x,y}
  $$
  be the Green's function of the random walk on $V_{-1}(\cG^N)$ with transition probabilities $p^N_{x,y}$ defined for $\cG^N$ as in \eqref{eq:pxy}.
  Note that this is well defined since the walk is killed on both the left and right extremities of $V_{-1}(\cG^N)$ (see Figure~\ref{fig:GN} for the exact form of transition probabilities at the extremities $x_1,x_2$).

\begin{lemma}
  \label{L:Ainv} Let $u,v \in V_{-1} = V_{-1} (\cG^N)$. Then
 \[
  A^{-1} (u,v) =\frac{1}{A(v,v)} (-1)^{\Re(u-v)} g^N({u,v}).
  \]
\end{lemma}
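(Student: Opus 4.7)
The plan is to view $A^{-1}$ as the Green's function of a genuine random walk by absorbing the sign pattern of $A$ into a diagonal gauge. Concretely, I would introduce the diagonal involution $S$ on $V_{-1}(\cG^N)$ with entries $S(u,u) = (-1)^{\Re(u)}$ (so $S^2 = I$) and set $B := SAS$, whose entries are $B(u,v) = (-1)^{\Re(u-v)} A(u,v)$. Since $A^{-1} = S B^{-1} S$, the lemma reduces to showing
\[
B^{-1}(u,v) \;=\; \frac{g^N(u,v)}{A(v,v)}.
\]

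The crux is to verify that $B = D_0\,(I - P^N)$, where $D_0 = \mathrm{diag}(A(u,u))$ and $P^N(u,v) = p^N_{u,v}$ is the transition matrix \eqref{eq:pxy}. Diagonals agree by construction. For the off-diagonal entries one needs $B(u,v) = -|A(u,v)|$ whenever $u \neq v$, i.e.\ $\mathrm{sign}(A(u,v)) = -(-1)^{\Re(u-v)}$. This is precisely the sign rule from Figure~\ref{F:V01} transposed to the restricted matrix $A = D_N|_{V_{-1} \times V_{-1}}$: the only nonzero off-diagonal entries correspond to jumps of size $1$ or $2$ along $V_{-1}$, and the rule ``$d_{u,v} < 0$ iff step size is odd'' from \eqref{eq:dweight} forces $A(u,v) > 0$ when $|\Re(u-v)| = 1$ and $A(u,v) < 0$ when $|\Re(u-v)| = 2$. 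Both subcases produce the required sign.

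Once the identity $B = D_0(I - P^N)$ is in place, the walk $p^N$ is the killed random walk on $V_{-1}(\cG^N)$ already discussed in the paragraph preceding the lemma, with killing at its two extremities. Hence $(I - P^N)$ is invertible and $(I - P^N)^{-1}(u,v) = g^N(u,v) = \sum_{\gamma\colon u \to v}\prod p^N_{x,y}$, so $B^{-1} = g^N D_0^{-1}$. Conjugating back by $S$ gives
\[
A^{-1}(u,v) \;=\; (-1)^{\Re(u)+\Re(v)}\, \frac{g^N(u,v)}{A(v,v)} \;=\; \frac{(-1)^{\Re(u-v)}}{A(v,v)}\, g^N(u,v),
\]
as claimed.

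The main obstacle I anticipate is the sign check: although Figure~\ref{F:V01} supplies the desired pattern, it must be confirmed directly from the gauge-changed Kasteleyn matrix \eqref{def:Kasteleyn}, and one must separately verify that no other off-diagonal entries of $A$ appear (in particular, that $D_N(u,v) = 0$ for $u,v \in V_{-1}$ with $|u-v| \geq 3$). Once this bookkeeping is done the rest is a standard linear-algebraic manipulation of the random-walk representation of Laplacian inverses.
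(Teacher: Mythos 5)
Your proof is correct and takes essentially the same route as the paper's: the paper's terse argument—that $|A|$ (meaning the sign-adjusted matrix with diagonal $A(u,u)$ and off-diagonal $-|A(u,v)|$) is the Laplacian of the walk with transitions $p^N_{u,v}$, and that the sign of $A(u,v)$ is $+$ for step size $1$ and $-$ for step size $2$—is exactly your conjugation $B=SAS$ spelled out in full. One small correction: the sign rule ``$d_{u,v}<0$ iff the step is odd'' is read off from Figure~\ref{F:V01} and the preceding computation of $D$, not from \eqref{eq:dweight} itself (which is only the definition of $d_{x,y}$); and the fact that $D_N(u,v)=0$ for $u,v\in V_{-1}$ with $|u-v|\ge 3$ is also part of that same computation, so both points you flag are indeed covered by the material leading up to the lemma.
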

\begin{proof}
This follows from the fact that $|A|$ is the Laplacian for the random walk described above, and moreover (as mentioned before) the sign of the transition weights induced by $A$
is negative if the step is of size $\pm 1$ and positive otherwise (step size $\pm2$). This follows from the definition of $D_N$ and the Kasteleyn matrix.
\end{proof}

We now explain how this yields an interpretation for the Schur complement $D_N/A$ as a (genuine) Laplacian for a random walk in the bulk $V_{\textnormal{odd}}(\cG)$ with jumps along the boundary
$V_1(\cG)$.
For $u, v \in V_1 = V_1(\cG^N) = V_1(\cG)$, we define
\begin{align} \label{eq:qN}
q^N_{u,v} = (B^T A^{-1} B) (u,v).
\end{align}
Recalling that $D_N(v,v) = A(v,v) = 2 + 2z^2$ for $v \in V_{-1}(\cG^0)$ and $N\geq 1$, a straightforward computation using Lemma \ref{L:Ainv} shows that
\begin{equation}\label{qNjumps}
q^N_{u,v} = \frac {z^2} {2+2z^2}(-1)^{\Re(v-u)}  \left ( (g^N(u_+, v_+) - g^N(u_+, v_-)) - (g^N(u_-, v_+) - g^N(u_-, v_-) ) \right),
\end{equation}
where again $u_\pm, v_\pm$ are the left and right vertices in $V_{-1}$ at distance two from $u$ and $v$ respectively.

Recall the definition of $q^\infty_{u,v}$ from \eqref{qinfty}, and let $q^N_{u,v}$ be the transition weights defined by~\eqref{q} for the graph $\cG^N$.
The next results, whose proof will be given in Proposition~\ref{P:GreenPK1d} of the next section, implies that for $N$ large enough, $q^N_{u,v}$ become actual transition probabilities.

\begin{lemma}
  \label{L:locallimitqN}
    Let $u, v \in V_1= V_1(\cG^N) = V_1(\cG^0)$. Then $q^N_{u,v}\to q^{\infty}_{u,v}$ as $N \to \infty$ pointwise. In particular, for $N$ sufficiently large,
    \begin{align} \label{eq:largeN}
    q^N_{u,v} > 0 \qquad \text{ and } \qquad \sum_{v \in V_1} q^N_{u,v} < 1.
    \end{align}
\end{lemma}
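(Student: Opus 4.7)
The plan is to establish pointwise convergence $q^N_{u,v} \to q^\infty_{u,v}$ as $N \to \infty$, and then deduce the two inequalities in \eqref{eq:largeN} from Lemma~\ref{T:eff}. Comparing the formula \eqref{qNjumps} for $q^N_{u,v}$ with the definition \eqref{qinfty} of $q^\infty_{u,v}$, the convergence reduces to showing that, for any fixed $x_1, x_2, y \in V_{-1}(\Z^2)$, the difference of killed Green's functions $g^N(x_1, y) - g^N(x_2, y)$ converges as $N \to \infty$ to the corresponding difference $\alpha_{y-x_2} - \alpha_{y-x_1}$ of the potential kernel of the bulk walk \eqref{D:rw}. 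This is the classical link between Green's functions of killed symmetric recurrent one-dimensional walks and their potential kernel; it is the content of Proposition~\ref{P:GreenPK1d} of the next section, which I would take here as a black box. Regrouping the four terms in \eqref{qNjumps} into a difference of two Green's function differences, and using translation invariance of the infinite-volume walk \eqref{D:rw}, a short computation then identifies the limit with \eqref{qinfty}.

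Given pointwise convergence, the positivity $q^N_{u,v}>0$ for $N$ large follows immediately from $q^\infty_{u,v}>0$, which is part of Lemma~\ref{T:eff}. For the sub-stochasticity $\sum_{v\in V_1} q^N_{u,v}<1$, pointwise convergence alone is insufficient since the sum is over an infinite set; however, Lemma~\ref{T:eff} also provides exponential decay $q^\infty_{u,v} \leq C e^{-c|v-u|}$, and I would argue that the same exponential tail bound holds uniformly in $N$ for $q^N_{u,v}$ by bounding each $g^N$ in \eqref{qNjumps} via standard one-dimensional heat-kernel/hitting-time estimates, which are insensitive to the killing at the far-away endpoints of $V_{-1}(\cG^N)$. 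Combining this uniform tail estimate with pointwise convergence on bounded windows of $v$, dominated convergence yields $\sum_v q^N_{u,v} \to \sum_v q^\infty_{u,v} = 1$ as $N \to \infty$. The strict inequality for large but finite $N$ then follows from a probabilistic interpretation: $\sum_v q^N_{u,v}$ represents the total effective weight of excursions from $V_1$ into $V_{-1}(\cG^N)$ and back to $V_1$ before being killed, and since the auxiliary walk on the finite interval $V_{-1}(\cG^N)$ has strictly positive probability of hitting the killing endpoints in finitely many steps, this weight is strictly below its infinite-volume limit $1$.

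The main obstacle is the convergence of the Green's function differences to the potential kernel differences, deferred to Proposition~\ref{P:GreenPK1d}. This is nontrivial because the bulk walk \eqref{D:rw} has both nearest-neighbor and next-nearest-neighbor jumps, so the classical Fourier-based computation of the potential kernel (as in \cite{LawLim}) is less direct; the exact formula for $\alpha$ derived in the proof of Lemma~\ref{T:eff} via the recursion \eqref{rec_alpha1} served as the infinite-volume input, and an analogous but quantitative analysis of the finite-$N$ killed Green's function is what is ultimately needed here.
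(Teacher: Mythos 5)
Your argument for the pointwise convergence $q^N_{u,v}\to q^\infty_{u,v}$ is essentially the paper's: regroup the four terms of \eqref{qNjumps} into differences of the killed Green's function and invoke Proposition~\ref{P:GreenPK1d}. The positivity conclusion is also fine (though, strictly speaking, Lemma~\ref{T:eff} only states $q^\infty_{u,v}\ge 0$; the strict positivity needed here comes from the explicit formula~\eqref{eq:expdec} with $B<0$, which is in the proof of that lemma).

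Your treatment of the sub-stochasticity $\sum_{v\in V_1}q^N_{u,v}<1$ contains a genuine error. You write that ``pointwise convergence alone is insufficient since the sum is over an infinite set,'' and then build a dominated-convergence argument with uniform tail bounds, concluding that $\sum_v q^N_{u,v}\to \sum_v q^\infty_{u,v}=1$. But the index set in the lemma is $V_1=V_1(\cG^N)=V_1(\cG^0)$, which is \emph{finite}: the $2N$ extra triangles in $\cG^N$ are appended only along $V_{-1}$ and $V_0$, so $V_1$ does not change with $N$ and is just the (finite) row of $\cG$. Hence the sum is a finite sum, pointwise convergence immediately gives convergence of the sum, and no uniform tail control or dominated convergence is needed. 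More importantly, your claimed limit is wrong: the limit is $\sum_{v\in V_1(\cG^0)}q^\infty_{u,v}$, which is \emph{strictly less} than $\sum_{v\in V_1(\Z^2)}q^\infty_{u,v}=1$ precisely because $V_1(\cG^0)\subsetneq V_1(\Z^2)$ and every omitted $v$ carries strictly positive weight $q^\infty_{u,v}>0$. That strict inclusion is the whole point, and it is what the paper records in the remark following the proof. Your appeal to a ``probabilistic interpretation'' of leakage at the killing boundary of $V_{-1}(\cG^N)$ is then both unnecessary and circular — the probabilistic reading of $q^N$ is exactly what this lemma is trying to establish, and the leakage you describe vanishes as $N\to\infty$, so it could not by itself account for a limit strictly below $1$.
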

\begin{proof}
The convergence follows immediately from \eqref{qNjumps} and the convergence result in Proposition~\ref{P:GreenPK1d}.
Condition \eqref{eq:largeN} is a consequence of Lemma~\ref{T:eff}.
\end{proof}
Note that the second inequality is strict since the sum is taken over  $V_1(\cG)\subsetneq V_1(\Z^2 \cap \H)$.
 Now let $N$ be sufficiently large that~\eqref{eq:largeN} holds true, and consider a transition matrix between vertices in $u,v\in V_\textnormal{odd}$ given by
 \begin{equation}\label{BulkRW}
 R_N(u,v) =   I(u,v) - \frac{1 }{ C(u,u)}\left (C(u,v)-q^N_{u,v} \mathbf 1_{\{u,v \in V_1\}} \right),
 \end{equation}
 where $I$ is the identity.
 Note that
 \[
 R_N(u,v) \ge 0 \qquad \text{ and } \qquad  \sum_{v} R_N(u,v) \le 1
 \]
 so that $R_N$ is a substochastic matrix. Indeed, this follows from the definition of $C=D_N|_{V_{\textnormal{odd}}}$ and \eqref{eq:largeN}.
 Also note that this holds even when $u$ is one of the two corners, i.e., the left and right extremities of $V_1(\cG)$.
In other words, we may add a cemetery absorbing point $\partial $ to the state space and declare $R_N(x, \partial) = 1- \sum_y R_N(x, y) \ge 0$. This turns $R_N$ into {the transition matrix of} a proper random walk on the augmented state space $\Vo \cup \{\partial\}$, which is absorbed at $\partial$. We let $Z^N$ be the random walk on $\Vo \cup \{\partial\}$ whose transition probabilities are given by $R_N(x,y)$. We call this random walk the \textbf{effective (odd) bulk random walk}.

The interest of introducing the transition matrix $R_N$ of this effective bulk random walk is that its associated Laplacian gives us the Schur complement $D_N/A$: that is, for $u,v \in \Vo$, we have
\begin{equation}\label{E:effbulk_Schur}
(D_N/A) (u,v)  = C(u,u)( I(u,v) - R_N(u,v)),
\end{equation}
which follows from the definition of the Schur complement~\eqref{Schurdef}, \eqref{eq:qN} and the definition of $R_N$.

From this formula and the Schur complement formula \eqref{Schurcomplinverse}, it is immediate to deduce the following proposition, which says that the inverse of $D_N^{\textnormal{odd}}=D_N$ (i.e., the inverse of $(K_N)^*K_N$ restricted to bulk odd vertices) is given by the Green's function of the effective bulk random walk.
Recall that $C(v,v)=D_N(v,v)$.
  \begin{prop}
Let $u,v \in \Vo(\cG) $. Then for all $N$ sufficiently, large we have
 $$
(D_N^{\textnormal{odd}})^{-1} (u,v) = G_{\textnormal{odd}}^N(u,v),
 $$
 where $G_{\textnormal{odd}}^N$ is the (normalised) Green's function associated to $R_N$, i.e.,
 \begin{equation}\label{godd}
 G^N_{\textnormal{odd}}(u,v) = \frac1{D_N(v,v)} \E_u\Big( \sum_{t=0}^\infty \mathbf 1_{\{Z^N_t = v\}} \Big).
 \end{equation}
\end{prop}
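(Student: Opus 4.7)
The plan is to combine the Schur complement inversion formula \eqref{Schurcomplinverse} with the factorization \eqref{E:effbulk_Schur} of $D_N/A$ and then expand the resulting matrix inverse as a geometric series of powers of $R_N$, which by definition yields the normalised Green's function of the effective bulk random walk $Z^N$.

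First, I would note that the block structure of $D_N=D_N^{\textnormal{odd}}$ together with the Schur complement formula gives
\[
(D_N^{\textnormal{odd}})^{-1}\big|_{\Vo}=(D_N/A)^{-1}.
\]
Second, rewriting \eqref{E:effbulk_Schur} as a matrix product, one has $D_N/A=\Lambda(I-R_N)$, where $\Lambda$ is the diagonal matrix on $\Vo$ with entries $\Lambda(u,u)=C(u,u)=D_N(u,u)$. Inverting this factorisation gives
\[
(D_N/A)^{-1}(u,v)=\bigl((I-R_N)^{-1}\Lambda^{-1}\bigr)(u,v)=\frac{1}{D_N(v,v)}\,(I-R_N)^{-1}(u,v).
\]

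Third, I would expand $(I-R_N)^{-1}$ as a Neumann series $\sum_{k\geq 0}R_N^k$, and interpret each entry probabilistically as
\[
\sum_{k=0}^{\infty}R_N^k(u,v)=\E_u\Big(\sum_{t=0}^{\infty}\mathbf{1}_{\{Z^N_t=v\}}\Big),
\]
which matches the definition \eqref{godd} of $G^N_{\textnormal{odd}}(u,v)$ once the prefactor $1/D_N(v,v)$ is included.

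The step that needs actual justification (rather than bookkeeping) is the convergence of the Neumann series, equivalently the invertibility of $I-R_N$. Since $R_N$ is a substochastic matrix on the finite state space $\Vo$ (this follows from Lemma~\ref{L:locallimitqN} and the construction of $R_N$), it suffices to show that from every vertex of $\Vo$ the walk reaches the cemetery $\partial$ with positive probability in finitely many steps. For a starting vertex $u\in V_1$, this follows from the strict inequality $\sum_{v\in V_1}q^N_{u,v}<1$ in Lemma~\ref{L:locallimitqN}, which uses the fact that the summation is taken over $V_1(\cG)\subsetneq V_1(\Z^2\cap\H)$ together with the exponential decay of $q^{\infty}_{u,v}$. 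For vertices in higher odd rows, the assumption of Section~\ref{S:boundary} that $\cG$ possesses at least one white dimer-corner ensures that the off-diagonal entries of $C=D_N|_{\Vo}$ do not fully absorb the diagonal at some boundary vertex (this translates into a Dirichlet condition for one of the sublattice walks, as explained in Section~\ref{S:boundary} and Figure~\ref{fig:GN}). By irreducibility of $Z^N$ on the connected set $\Vo$ away from the absorbing boundary, every state communicates with a killing site in finitely many steps, so the spectral radius of $R_N$ is strictly less than one and the Neumann series converges. Putting the three steps together yields the claimed identity $(D_N^{\textnormal{odd}})^{-1}(u,v)=G^N_{\textnormal{odd}}(u,v)$.
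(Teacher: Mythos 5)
Your proof is correct and follows essentially the same route as the paper, which treats the proposition as an immediate consequence of the Schur complement identity \eqref{Schurcomplinverse} and the factorisation \eqref{E:effbulk_Schur}; you simply spell out the intermediate steps. The one thing you add that the paper leaves implicit is the justification that the Neumann series for $(I-R_N)^{-1}$ actually converges (equivalently that the spectral radius of $R_N$ is strictly below one); your probabilistic argument via absorption is valid, though a more direct route is to observe that $D_N=K_N^*K_N$ is positive definite (since $\cG^N$ is dimerable and hence $K_N$ is invertible), so $D_N/A=\Lambda(I-R_N)$ is positive definite, and conjugating by $\Lambda^{1/2}$ shows all eigenvalues of $I-R_N$ are positive, hence those of the nonnegative matrix $R_N$ lie in $[0,1)$.
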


We now address the even case, and write $D_N=D_N^{\textnormal{even}}$. We introduce a ``sign'' diagonal matrix $S(x,x) = (-1)^{\Re(x)}$.
Then, the matrix
$$
\tilde D_N: = S^{-1} D_N S
$$
is positive on the diagonal and negative off-diagonal. Moreover, we have
$$
\tilde D_N^{-1}(u,v) =  G_{\textnormal{even}}^N(u,v)
$$
where
$$
G_{\textnormal{even}}^N(u,v) = \frac{1}{D_N(v,v)} \E_u \Big( \sum_{t=0}^\infty \mathbf1_{\{ \tilde Z_{t} = v\}} \Big),
$$
where $\tilde Z$ is a random walk on $V_{\textnormal{even}}(\cG^N)$ with the transition probabilities:
\begin{equation}\label{Rev}
\tilde R^{N}(x,y) = \frac{| D_N(x,y)|}{D_N(x,x)} \mathbf 1_{x \neq y}.
\end{equation}
The fact {that} the even case is much simpler than the odd one can be seen here since $\tilde R^{N}(x,y)$ is actually a transition matrix of a true random walk on $V_{\textnormal{even}}(\cG^N)$.
Indeed, (see Figure~\ref{fig:GN} for an illustration)
\begin{itemize}
\item in the bulk of $V_{\textnormal{even}}(\cG^N) \setminus V_0(\cG^N) $, the walk jumps by $\pm 2$ in each direction with probability $1/4$ each,
\item On the boundary $\partial G \cap V_{\textnormal{even}}(\cG^N) $, the walk makes jumps according to the local boundary conditions which are either Dirichlet or Neumann,
\item On $V_0(\cG^N)\cap V_0(\cG)$ it may jump horizontally by $\pm 1$ with probability $z^2/(3 + 2z^2)$ or by $\pm 2$ with probability $1/(3 + 2z^2)$, and vertically by $+2$ also with probability $1/(3 + 2z^2)$.
This is consistent with the fact that $D(x,x) = 3 + 2z^2$ for $x \in V_0(\cG)$,
\item On $V_0(\cG^N)\setminus V_0(\cG)$ except at its endpoints, it may jump horizontally by $\pm 1$ with probability $z^2/(2 + 2z^2)$ or by $\pm 2$ with probability $1/(2 + 2z^2)$.
This is consistent with the fact that $D(x,x) = 2 + 2z^2$ for $x \in V_0(\cG^N)\setminus V_0(\cG)$,
\item At the the endpoints of $V_0(\cG^N)$, it has transition probabilities as the vertices $y_1,y_2$ in Figure~\ref{fig:GN}.
\end{itemize}

All in all we obtain that
\begin{equation}\label{gev}
 D_N^{-1}(u,v) = (-1)^{\Re(v-u)}G_{\textnormal{even}}^N(v,u).
\end{equation}
Now a moment of thought shows that there is no problem in letting $N \to \infty$ in this expression. This is because the random walk associated with $R_N$ is absorbed on some portion of the boundary $\partial \cG \setminus \partial_{\textnormal{free}} \cG$, as described in Section~\ref{S:boundary}.

Hence we deduce that
\begin{equation}
\lim_{N\to \infty}  D_N^{-1}(u,v) = (-1)^{\Re(u-v)} G_{\textnormal{even}} (u,v).
\end{equation}
where $G_{\textnormal{even}} (u,v)$ is the Green's function on $\cG^\infty$ (that is, the graph $\cG^0$ to which infinitely many triangles have been added on either side of $V_0$) associated with the random walk on $\cG^\infty$ whose transition probabilities are given by \eqref{Rev}.

At the same time, when $N \to \infty$, the free boundary dimer model on $\cG^N$, restricted to $\cG^0$, becomes equivalent to a free boundary dimer model on $\cG^0$ where the monomer weights on the extreme vertices (corners) of $V_0$ have been given the weight $z'>0$ as in~\eqref{zprime}.

We now summarise the results obtained in this section.

\begin{corollary}\label{C:finitevol}
  Consider the free boundary dimer model on $\cG^0$ where the monomer weight $z>0$ on $V_0(\cG)$ except at its monomer-corners where the monomer weight is $z' $ as in~\eqref{zprime}. Let $K$ be the associated Kasteleyn matrix, and $D = K^* K$. Then for all $u,v\in V(\cG) $, we have
  $$
  D^{-1}(u,v) =
  \begin{cases}
      G_{\textnormal{odd}} (u,v) & \text{ if } u,v\in \Vo (\cG) ,   \\
(-1)^{\Re (v-u)}G_{\textnormal{even}} (u,v)     & \text{ if } u,v\in \Ve(\cG),  \\
    0 & \text{ otherwise.}
  \end{cases}
  $$
  where $G_{\textnormal{odd}}, G_{\textnormal{even}}$ are the normalised Green's functions associated with the effective (odd and even) bulk random walks described in \eqref{BulkRW} and  \eqref{Rev} respectively, normalised by $D(v,v)$.

In particular, the inverse Kasteleyn matrix is given by $K^{-1} = D^{-1} K^*$.
\end{corollary}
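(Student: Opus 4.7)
The plan is to combine the two parity-specific analyses already carried out in this section and pass to the limit $N\to\infty$. The starting point is the observation from Section~\ref{S:IL} that, as $N\to\infty$, the dimer model on $\cG^N$ restricted to $\cG$ converges in law to the free boundary dimer model on $\cG^0$ with monomer weight $z$ everywhere on $V_0(\cG)$ except at the two monomer-corners, where the weight is the $z'$ from \eqref{zprime}. By Kasteleyn theory (valid on any fixed finite subgraph) this convergence of measures translates into pointwise convergence of the inverse Kasteleyn matrices, $K_N^{-1}(u,v)\to K^{-1}(u,v)$ for all $u,v\in V(\cG^0)$, where $K$ is the Kasteleyn matrix of $\cG^0$ with the modified corner weight; and hence also $D_N^{-1}|_{V(\cG^0)}\to D^{-1}$ with $D=K^*K$.

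Because the Kasteleyn matrix $K$ (even after the diagonal gauge change \eqref{def:Kasteleyn}) only connects vertices of opposite bipartite colour, the product $D=K^*K$ is block-diagonal with respect to the $\Ve/\Vo$ decomposition, which immediately yields the ``otherwise'' case $D^{-1}(u,v)=0$ for $u,v$ of different parities. It thus remains to identify the two diagonal blocks.

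For the odd block, fix $N$ large enough that Lemma~\ref{L:locallimitqN} gives $q^N_{u,v}>0$ and $\sum_{v\in V_1(\cG)}q^N_{u,v}<1$. The proposition preceding this corollary then identifies $(D_N^{\textnormal{odd}})^{-1}(u,v)$ with the normalised Green's function $G_{\textnormal{odd}}^N$ of the substochastic effective bulk walk $Z^N$ defined by \eqref{BulkRW}. Off the row $V_1(\cG)$ the transition kernel of $Z^N$ does not depend on $N$, while on $V_1(\cG)$ the rates $q^N_{u,v}$ converge pointwise to $q^\infty_{u,v}$ (Lemma~\ref{L:locallimitqN}) and possess exponentially decaying tails (Lemma~\ref{T:eff}), furnishing uniform integrability. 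Since the boundary assumptions of Section~\ref{S:boundary} guarantee that $Z^N$ is absorbed on a fixed nonempty portion of $\partial\cG\setminus\partial_{\textnormal{free}}\cG$ uniformly in $N$, the Green's function on this fixed finite state space depends continuously on its transition kernel, and we conclude $G_{\textnormal{odd}}^N(u,v)\to G_{\textnormal{odd}}(u,v)$, the normalised Green's function of the limiting walk $Z_{\textnormal{odd}}$.

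The even block is easier: the sign gauge $S(x,x)=(-1)^{\Re x}$ already turns $D_N^{\textnormal{even}}$ into a genuine Laplacian for the honest random walk $\tilde Z^N$ with transitions \eqref{Rev}, producing \eqref{gev} directly. The same reasoning (the transitions are $N$-independent on $\cG$ and the walk is absorbed on a fixed portion of $\partial\cG$) yields $G_{\textnormal{even}}^N(u,v)\to G_{\textnormal{even}}(u,v)$, with the prefactor $(-1)^{\Re(v-u)}$ carried along. Assembling the two limiting diagonal blocks with the vanishing of cross-parity entries gives the claimed formula for $D^{-1}$, and $K^{-1}=D^{-1}K^*$ then follows from $D=K^*K$ together with invertibility of $K$ (ensured by the dimerability of $\cG^0$). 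The only delicate point in the entire argument is the stability of the odd-block Green's function under the $N\to\infty$ limit, and this has already been reduced to the two exact computations embodied in Lemmas~\ref{T:eff} and~\ref{L:locallimitqN}.
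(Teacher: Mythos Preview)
Your proof is correct and follows essentially the same approach as the paper: the block-diagonal structure disposes of the mixed-parity case, the odd block is obtained from the preceding proposition together with the convergence $q^N\to q^\infty$ of Lemma~\ref{L:locallimitqN}, the even block via the sign gauge and \eqref{gev}, and the identification with the modified-corner model comes from the $N\to\infty$ limit of Section~\ref{S:IL}. The only small refinement worth noting is that for the even walk the state space $\Ve(\cG^N)$ genuinely grows with $N$ (the walk may wander into the appended triangles along $V_0$), so the limiting Green's function $G_{\textnormal{even}}$ is that of the walk on $\cG^\infty$ rather than on $\cG$; the paper mentions this explicitly, but your argument via absorption on a fixed piece of $\partial\cG$ still gives the required convergence.
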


This result implies Theorem~\ref{T:finitevol_intro} with the walks $Z_\textnormal{even}$ and $Z_\textnormal{odd}$ explicitly defined as above.

%
%


%

\subsection{Convergence to potential kernel of the auxiliary walk} \label{S:1D}
In this section we prove the convergence statement from Lemma~\ref{L:locallimitqN}.

To this end, let $N\ge 1$ and consider a random walk $(\tilde X_n, n \ge 0)$ on $[-N-1, \ldots, N+1] $ (we note that the role of $N$ is slightly different here compared to the definition of $q^N_{u,v}$, as for simplicity of notation we do not account for the length of $V_{0}(\cG)$) where the transition probabilities $\tilde p_{u,v}$ in $[-N+1, N-1]$ coincide with those of the random walk $X$ from \eqref{D:rw}. At $\pm N$ and $\pm (N+1)$ the walk has the following boundary conditions (see the vertices $x_1,x_2$ in Figure~\ref{fig:GN} ):
\begin{itemize}
\item
the chain is absorbed at $\pm (N+1)$
\item
at $\pm N$ the transitions are those of $X$ but reflected (e.g., at $u = N$, the only possible transitions are to $v = N-1$ and $v = N-2$ with weights given by twice those in \eqref{D:rw}). Note that the boundary conditions are completely symmetric, so that $\tilde p_{u, v}= \tilde p_{|u|, |v|}$ if $\sgn(u) \sgn(v) =1$.
\end{itemize}
Let
$$\tilde g_N(u, v) = \E_u (\tilde L_{T_\partial}( v)),$$
where $\tilde L_t (v) = \sum_{s=1}^t \mathbf 1_{\{\tilde X_s = v\}}$ is the local time of $\tilde X$ at $v$ by time $t$, and $T_\partial$ is the killing time of $\tilde X$ (first hitting time of $\pm (N+1)$). We will check here that Green's function differences converge to the potential kernel, in the following sense:

\begin{prop}\label{P:GreenPK1d}
As $N \to \infty$,
\begin{equation}\label{Conv1D}
\tilde g_N(u,v) - \tilde g_N(u', v) \to  -( \alpha(u, v) - \alpha(u',v))
\end{equation}
where $\alpha(u, v) $ is the potential kernel from \eqref{D:pk} (that is, $\alpha(u,v) = \alpha_{|u- v|}$).
\end{prop}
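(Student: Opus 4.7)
The plan is to treat $\tilde g_N(\cdot, v) + \alpha(\cdot, v)$ as a discrete harmonic function in the interior and evaluate it via the optional stopping theorem applied along the reflected walk $\tilde X$.

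Fix $v$ in a compact set and set $f_N(u) := \tilde g_N(u, v) + \alpha(u, v)$. The potential kernel satisfies $(P_X - I)\alpha(\cdot, v) = \delta_v$ directly from the defining sum \eqref{D:pk}, where $P_X$ denotes the transition kernel of the infinite walk $X$. On the other hand, writing $g_N(u,v) := \tilde g_N(u,v) + \delta_{u,v}$, the Markov property for $\tilde X$ gives $(I - \tilde P_N) g_N(\cdot, v) = \delta_v$, where $\tilde P_N$ is the reflected-walk kernel. Since $\tilde P_N$ coincides with $P_X$ at every $u$ in the interior set $I_N := \{u : |u| \le N-2\}$ (because the jumps of size at most $2$ of the infinite walk starting from such $u$ stay inside the reflected walk's state space $[-(N+1), N+1]$), we conclude that $f_N$ is $P_X$-discrete-harmonic on $I_N$, i.e.\ $(I-P_X)f_N(u,v)=0$ for $u\in I_N$.

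Next, I would apply the optional stopping theorem to the $\tilde P_N$-martingale $f_N(\tilde X_{n\wedge \tau})$ with $\tau := T_\partial$. Since $f_N$ is bounded on the finite state space $[-(N+1), N+1]$ and $\tau < \infty$ a.s., this yields
\begin{equation*}
f_N(u) = \P_u(\tilde X_\tau = N{+}1)\,\alpha(N{+}1, v) + \P_u(\tilde X_\tau = -(N{+}1))\,\alpha(-(N{+}1), v) + r_N(u),
\end{equation*}
where $r_N(u)$ collects the corrections coming from the failure of $f_N$ to be exactly $\tilde P_N$-harmonic at the reflecting vertices $\pm N$. Using the exponential decay $|\gamma| < 1$ in the explicit formula \eqref{eq:PKformula} for $\alpha$, one checks that $r_N$ converges to a limit independent of $u$ for $u$ in any compact set, and hence cancels in differences. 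Inserting the asymptotic $\alpha(\pm(N{+}1), v) = (N{+}1 \mp v)/\sigma^2 + A + O(\gamma^N)$ from \eqref{eq:PKformula}, together with the gambler's-ruin estimate $\P_u(\tilde X_\tau = \pm(N{+}1)) = \tfrac12 \pm u/(2N) + O(1/N^2)$ (which follows from optional stopping applied to $\tilde X_n$ itself, once one accounts for the bounded bias introduced by the reflections), a direct computation gives
\begin{equation*}
f_N(u) - f_N(u') = -\frac{v(u-u')}{N \sigma^2} + o(1) \longrightarrow 0 \quad \text{as } N \to \infty,
\end{equation*}
which is equivalent to the desired convergence \eqref{Conv1D}.

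The principal obstacle is the rigorous treatment of the reflecting boundary $\{\pm N\}$: at these vertices $f_N$ is not $\tilde P_N$-harmonic and $\tilde X_n$ is not a true martingale, so both the Poisson representation and the hitting-probability estimate acquire corrections. The key input for controlling them is that, by standard exit-time estimates for symmetric one-dimensional random walks, the expected number of visits of $\tilde X$ to the two-vertex set $\{\pm N\}$ before absorption is $O(1)$ uniformly in $u$ ranging over any compact set, so these corrections are bounded; by the symmetry of the reflected walk together with the exponential decay in \eqref{eq:PKformula}, their $u$-dependence is $o(1)$ and thus disappears in the difference $f_N(u) - f_N(u')$.
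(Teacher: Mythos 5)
Your proposal is correct and takes essentially the same route as the paper: you apply optional stopping to the $\tilde P_N$-martingale built from the potential kernel (your $f_N(\tilde X_{n\wedge\tau})$ together with its boundary local-time correction is precisely the paper's $\tilde M_n=\alpha(\tilde X_n,v)-\tilde L_n(v)+\tilde A_n'$), and reduce the problem to showing that the correction term coming from the failure of harmonicity at the reflecting vertices $\pm N$ has vanishing $u$-dependence. The one place where the paper gives a sharper justification is exactly that last step, where it factors $\tilde\E_u\bigl(L^{|\tilde X|}_{T_\partial}(N)\bigr)$ through $\P_u(T_N<T_\partial)$ and appeals to renewal theory rather than your vaguer invocation of ``symmetry''; note also that your stated gambler's-ruin expansion $\tfrac12\pm u/(2N)+O(1/N^2)$ is not exact because the reflections introduce a drift at $\pm N$, but neither imprecision is fatal since the argument only needs $\P_u(\tilde X_\tau=N+1)-\P_{u'}(\tilde X_\tau=N+1)\to 0$ together with a $u$-uniform bound on the expected boundary local time.
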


\begin{proof}
To begin, recall that if $(X_n, n \ge 0)$ is the walk on $\Z$ with transitions given by $p_{u,v}$ in \eqref{D:rw}, and if $v$ is fixed, then $\alpha (x, v)$ is harmonic (for $X$) in $x$ except at $x =v$. More precisely, $M_n = \alpha (X_n, v) - L_n(v)$ is a martingale.

Suppose first that $v = 0$, and consider the walk $\tilde X$ instead of $X$. We claim that by symmetry,
\begin{align} \label{eq:MN}
\tilde M_n = \alpha (\tilde X_n, 0) - \tilde L_n(0) + \tilde A_n; \ \ 0 \le n \le T_\partial
\end{align}
is a martingale, where for some constant $c \in \R$,
$$
\tilde A_n = c (\tilde L_n (-N) + \tilde L_n(N)) = c L^{|\tilde X|}_n (N)
$$
is the local time of $\tilde X$ at the reflecting part of the boundary, or equivalently the local time of the absolute value $|\tilde X|$ at $N$ by time $n$; note that by assumption $|\tilde X|$ is itself a Markov chain. To be more precise, $c$ can be computed as
$$c = \E_{N}(\alpha (\tilde X_1, 0)) - \alpha (N, 0).$$
Applying to \eqref{eq:MN} the optional stopping theorem at $T_\partial$ (which is allowed since this only involves a finite number of possible values for $\tilde X_n$), and since $\alpha ( -N-1, 0) = \alpha (N+1, 0) = \alpha_{N+1}$ by symmetry of $\alpha$,
we get
$$
\tilde \E_u(\tilde M_{T_\partial}) = \alpha_{N+1} - \tilde g_N(u, 0) + \tilde \E_{u} ( \tilde A_{T_\partial}).
$$
On the other hand, starting from $u$, $\tilde M_0 = \alpha (u, 0)$, hence
\begin{equation}
\label{onepoint}
\alpha (u, 0) = \alpha_{N+1}- \tilde g_N(u, 0) + c\tilde \E_{|u|} ( L^{|\tilde X|}_{T_\partial} (N)).
\end{equation}
Therefore, applying \eqref{onepoint} also at a different vertex $u'$ and taking the difference,
we get
\begin{equation}\label{twopoints}
\alpha (u, 0) - \alpha (u', 0) = -(\tilde g_N(u, 0) - \tilde g_N (u', 0)) + c \big[ \tilde \E_{|u|} ( L^{|\tilde X|}_{T_\partial} (N)) - \tilde \E_{|u'|} ( L^{|\tilde X|}_{T_\partial} (N))\big].
\end{equation}

We now aim to take $N \to \infty$, and show that the last term of \eqref{twopoints} vanishes, which would imply the result of Proposition \ref{P:GreenPK1d} in the case $v = 0$.
Crucially, by the Markov property of $|\tilde X|$, the last term on the right hand side of \eqref{onepoint} depends only on $u$ in so far as the probability to reach $N$ before $T_\partial$ depends on $u$. That is,
$$
\tilde \E_{|u|} ( L^{|\tilde X|}_{T_\partial} (N)) = \tilde \P_u ( T_N < T_\partial) \E_{N} ( L^{|\tilde X|}_{T_\partial} (N)).
$$
Furthermore, note that
\begin{itemize}
  \item Each time the walk is at $\pm N$, there is a fixed positive chance that the walk will be absorbed before returning to the boundary (and a vanishing chance that it will reach the other end of the boundary), hence $ \E_{N} ( L^{|\tilde X|}_{T_\partial} (N))$ converges to a fixed limit as $N \to \infty$.

  \item As $N \to \infty$, and $u, u'$ are fixed, then $\lim_{N \to \infty}\tilde \P_u ( T_N < T_\partial)$
      exists and does not depend on $u$. This can be seen e.g. from renewal theory.
\end{itemize}
Taken together, these two points imply that the limit as $N \to \infty$ of $\tilde \E_{|u|} ( L^{|\tilde X|}_{T_\partial} (N))$ exists and does not depend on $u$. This concludes the case $v = 0$.

In the general case where $v$ is arbitrary and fixed, we still get a martingale
$$
\tilde M_n = \alpha (\tilde X_n, v) - \tilde L_n(v) + \tilde A'_n, \ \quad 0 \le n \le T_\partial
$$
but the form of the error $\tilde A'_n$ needs to be slightly adjusted compared to $\tilde A_n$, since the values
\begin{equation}\label{c}
 \E_{x}(\alpha (\tilde X_1, v)) - \alpha (x, v)
\end{equation}
are no longer the same for $x = N$ and $x= -N$. To deal with this and later arguments, we remark that we can replace $v$ by $0$ in the following manner:
\begin{lemma}\label{L:sign}
  As $|x| \to \infty$, then
  $$\alpha (x, v) = \alpha (x, 0) - \sgn(x)\frac{|v|}{\sigma^2} + o(1).$$
\end{lemma}
\begin{proof}
  This is straightforward from \eqref{eq:PKformula}.
\end{proof}
In particular, using Lemma \ref{L:sign}, the limits of \eqref{c} for $x = -N$ and $x = N$ exist and coincide with $c$. Consequently, we deduce that the error term in the martingale $\tilde M_n$ has the form
\begin{equation}\label{AvsAp}\tilde A'_n = (c+ o(1)) (\tilde L_n (-N) + \tilde L_n(N)) = (c+ o(1)) L^{|\tilde X|}_n (N) = (1+ o(1)) \tilde A_n,
\end{equation}
where the $o(1)$ term tends to 0 as $N \to \infty$ (but is not random and does not depend on $n$).
Applying the optional stopping theorem at $T_\partial$ to the martingale $\tilde M_n$, we deduce (using \eqref{AvsAp} and Lemma~\ref{L:sign} one more time) that
\begin{align}
\alpha (u, v) & =
\E_u ( \alpha (\tilde X_{T_\partial}, v)) - \tilde g_N(u, v) + \tilde \E_{u} (\tilde A'_{T_\partial}) \nonumber \\
& = \E_u ( \alpha (\tilde X_{T_\partial}, 0) - \sgn(\tilde X_{T_\partial}) \frac{|v|}{\sigma^2} +o(1)) - \tilde g_N(u,v) + (1+ o(1)) \E_u(\tilde A_{T_\partial}) \nonumber\\
& = \alpha_{N+1} - \frac{|v|}{\sigma^2} \E_u ( \sgn(\tilde X_{T_\partial})) +o(1)  - \tilde g_N(u,v) + (1+ o(1)) \E_u(\tilde A_{T_\partial}). \label{onepoint_asym}
\end{align}
Now, it is clear that as $N \to \infty$,
$$
\E_u (  \sgn(\tilde X_{T_\partial})) \to 0
$$
since by recurrence of $X$ there is probability tending to one to hit zero before $T_\partial$, after which the sign is equally to be positive or negative by symmetry. We deduce from \eqref{onepoint_asym} that as $N \to \infty$,
\begin{equation}\label{onepoint_asym2}
  \alpha(u, v) = \alpha_{N+1} + o(1) - \tilde g_N(u,v) + (1+ o(1)) \E_u(\tilde A_{T_\partial}).
\end{equation}
Since we have already verified that the limit of $\E_u(\tilde A_{T_\partial})$ as $N \to \infty$ exists and does not depend on $u$, we conclude the proof of Proposition \ref{P:GreenPK1d} by taking the difference in \eqref{onepoint_asym2} for $u$ and $u'$ and letting $N \to \infty$.
\end{proof}

\section{Infinite volume limit} \label{S:IVL}

In the previous section we showed that $D_N^{-1}$ (and hence $K_N^{-1}$) has a limit as $N \to \infty$ which is given in terms of two Green's functions $\Godd$ and $\Gev$ associated to random walks on $V_{\mathrm{odd}}(\cG)$ and $V_{\mathrm{even}}(\cG)$ which may jump along $V_1(\cG)$ and $V_0(\cG)$, and with various boundary conditions (Dirichlet or mixed Neumann--Dirichlet) on $\partial \cG \setminus (V_0(\cG) \cup V_1(\cG))$. Let us also denote these Green's functions by $\Godd^{\cG}$ and $\Gev^{\cG}$ to emphasize their dependence on $\cG$.

The purpose of this section is to take an infinite volume limit as $\cG$ tends to the upper half-plane. 
In this limit the Green's functions $\Godd^{\cG}$ and $\Gev^{\cG}$ diverge (corresponding to the fact that the limiting bulk effective random walk is recurrent). However, we can still make sense of its potential kernel. Hence the inverse Kasteleyn matrix, which is obtained as a derivative of these Green's functions, has a well defined pointwise limit.

The argument for this convergence as $\cG$ increases to the upper half plane are essentially the same for both the odd and even walks. As will be clear from the proof below, the arguments rely only on the fact that (a) the two walks coincide with the usual simple random walk (with jumps of size 2) away from the real line, (b) they are reflected on the real line with some jump probabilities that decay exponentially fast with the jump size (in fact, in the even case the jumps are bounded), and (c) they can `switch colour' with positive probability along the real line. This terminology will be explained below. For these reasons, and in order to avoid unnecessarily cumbersome notation, we focus in this section solely on the \emph{odd walk} (the argument works literally in the same way for the even case, and can in fact be made a little easier).

\subsection{Construction of the potential kernel in the infinite volume setting}

We write $\Gamma$ for the weighted graph corresponding to {the odd effective random walk}. Thus, the vertex set $V$ of $\Gamma$ can be identified (after translation so that $V_1 \subset \R$) with $(\Z \times 2\Z) \cap \H$ and its edges $E$ are those of $(2\Z)^2$, plus those of $(2\Z+1) \times (2\Z)$, plus additional edges connecting these two lattices along the real lines. In reality, it will be easier to consider a symmetrised version of $\Gamma$ obtained by taking the vertex set to be $V \cup \bar V$ and the edges to be $E \cup \bar E$, where $\bar V$ and $\bar E$ are the complex conjugates of $V$ and $E$. We will still denote this graph by $\Gamma$. Throughout this and the next section the random walks we will consider will take values in this symmetrised graph. Note that $\Gamma$ is not locally finite: any vertex on the real line has infinite degree, but the total weight out of every vertex is finite (and is equal to 1). We recall that when away from the real line, the random walk on $\Gamma$ looks like simple random walk on the square lattice \emph{up to factor 2}: the transitions from a point $x\in \Z^2$ away from $\R$ are to the four points $x \pm 2e_1$ or $x\pm 2e_2$, where $(e_1, e_2)$ is the standard basis of $\Z^2$. On the real line, the effective random walk can make jumps of any size, but the jumps are symmetric and the transition probabilities have an exponential tail.
Note that the odd effective random walk only jumps between vertices of the same colour in the bulk, and can possibly change colour only on the real line. In the current section, we will also use the word \textbf{class} to denote the notion of colour.
Finally, we say that two vertices in $\Gamma$ have the same \textbf{parity} (or \textbf{periodicity}) if the differences of their vertical and horizontal coordinates are multiples of $4$.

Our first goal will be to show that differences of Green's functions evaluated at two different vertices of the same class for the walk killed when leaving a large box, converge (when the box tends to infinity) to differences of the \textbf{potential kernel} of the walk on the infinite graph $\Gamma$. Our first task will be to define this potential kernel. For the usual simple random walk on $\Z^2$ this is an easy task because the asymptotics of the transition probabilities are known with great precision. In turn this is because simple random walk can be written as a sum of i.i.d.\ random variables making it possible to use tools from Fourier analysis: see Chapter 4 of \cite{LawLim} for a thorough introduction. The walk on $\Gamma$ obviously does not have this structure, and in fact it seems that there are few general tools for the construction of the potential kernel for walks on a planar graph beyond the i.i.d.\ case. The coupling arguments we introduce below may therefore be of independent interest.

Let $P$ denote the transition matrix of simple random walk on $\Gamma$, and let $\tilde P = (I+P)/2$ be that of the associated lazy chain. The rationale for considering this version is that, on the one hand, it gets rid of periodicity issues, while on the other hand, it only modifies the Green's function by a constant factor: e.g., on a transient graph, $\tilde G(x,y) = 2 G(x,y)$ for any $x,y$, if $G$ and $\tilde G$ are the corresponding Green's functions (this is because the jump chains are the same, and the lazy chain stays on average twice as long at any vertex as the non-lazy chain).

The basic idea for the definition of the potential kernel will be the following.
Let $X$ and $X'$ denote (lazy) random walks started respectively from two vertices $x$ and $x'$ of the same class, and suppose that they are coupled in a certain way so that after a random time $T$ (which may be infinite), $X$ and $X'$ remain equal forever on the event that $T< \infty$: that is,
\begin{equation}\label{eq:couplingtime}
X_{T+s } = X'_{T+s}, \qquad s \ge 0.
\end{equation}
We will define a coupling (its precise definition will be given below) that depends on a time-parameter $t$ such that for this particular value of $t$,
\begin{equation}\label{eq:tailcoupling}
  \P( T > t) \lesssim (\log t)^a t^{-1/2}
\end{equation}
for some $a>0$ whose value will not be relevant. (Note that this inequality should not be understood as saying something about the tail of $T$, since $T$ depends on $t$; indeed $T$ might be infinite with positive probability). In fact, a much weaker control of the form $\P( T > t) \lesssim t^{-\eps}$, for some $\eps>0$, would be sufficient for the definition of the potential kernel alone, as will be apparent from the argument below. We however insist on \eqref{eq:tailcoupling} in order to get good a priori bound on the potential kernel (see Proposition \ref{P:aprioriPK}). As we will see, the goal of this coupling will be to compare $\tilde p_t(x, o)$ to $\tilde p_t(x',o)$ which is why $T$ is allowed to depend on $t$, and why we only require $T$ to be less than $t$ with high probability (but we do not care what happens on the event $\{T> t\}$).
Here and later on, $o$ denotes an arbitrary fixed vertex.

We first argue that we can get a good a priori control on the transition probabilities $\tilde p_t(x, o)$. Let $A \subset \Z \times 2 \Z$ be a finite set. By ignoring the long range edges which may leave $A$ through the real line, and using the standard discrete isoperimetric inequality on $\Z^2$ (Loomis-Whitney inequality, Theorem 6.22 in \cite{LyonsPeres}) it is clear that
$$
\sum_{x \in A, y \in A^c} w_{x,y} \gtrsim |A|^{1/2}
$$
where $w_{x,y}$ is the weight of the edge $(x,y)$ in $\Gamma$. This means that $\Gamma$ satisfies the two-dimensional isoperimetric inequality $(I_2)$ (we here use the notation of \cite{Barlow}). Consequently, by Theorem 3.7, Lemma 3.9 and Theorem 3.14 of \cite{Barlow}, $\Gamma$ satisfies the two-dimensional \textbf{Nash inequality}, $(N_2)$. Therefore, if $q_s^x(\cdot)$ denote the transition probabilities of the continuous time walk on $\Gamma$, normalised by its invariant measure, we have by Theorem 4.3 in \cite{Barlow} that
$$
q^x_s(x) \lesssim 1/s
$$
and since $q_s^x$ is maximised on the diagonal, we deduce that
\begin{equation}\label{aprioriHK}
\tilde p_s(x,o) \lesssim 1/s,
\end{equation}
where the implied constant is uniform in $x,o$ and $s \ge 1$.

Now suppose we have a coupling satisfying \eqref{eq:couplingtime} and \eqref{eq:tailcoupling}. We will explain why this implies that
\begin{equation}\label{eq:PKconvergence}
  \sum_{t=0}^\infty (\tilde p_t(x,o) - \tilde p_t(x',o))
\end{equation}
converges. We couple the walks starting from $x, x'$ according to \eqref{eq:couplingtime}. Obviously, on the event $\{T \le t/2\}$, $X_t = o$ if and only if $X'_t = o$, and thus
\begin{align}
  | \tilde p_t(x, o) - \tilde p_t(x',o) | & \le 2\P (T \ge t /2)\max_{y}\tilde p_{t/2}(y, o)\nonumber \\
  & \lesssim t^{-3/2} (\log t)^a \label{HKdiff}
\end{align}
which is summable, whence the series \eqref{eq:PKconvergence} converges.

\begin{defn}
  \label{D:potkerninf}
  We set
  \[
  \tilde a(x,o) - \tilde a(x', o)  = - \sum_{t=0}^\infty ( \tilde p_t(x,o) - \tilde p_t(x',o)).
  \]
  By convention we define $\tilde a(o,o) = 0$ and so this recipe may be used to define $\tilde a(x,o)$ provided that $x$ and $o$ are of the same class (by summing increments along a given path from $x$ to $o$).
   (As the choice of a path from $x $ to $o$ does not matter before the limit in the series is taken, this is well defined.) Since $x$ and $o$ are arbitrary vertices of the same class, this defines $\tilde a(\cdot, \cdot)$ everywhere on this class.\footnote{The arguments in this section rely on thinking of $\tilde a (\cdot, \cdot)$ as a function of the first variable while the second is frozen, which is why we prefer to use $x$ for the first variable and $o$ for the second. In the next section, both variables will start playing a more symmetric role and we will switch to $x$ and $y$.}

   If also \eqref{eq:tailcoupling}
   holds for \emph{one} pair $x, x'$ not of the same class, then this defines $\tilde a(\cdot ,\cdot)$ over the entire graph.
\end{defn}

Note also that due to the fact that $\pi(x) =1$ is a \emph{constant} reversible measure on $\Gamma$ (hence $\tilde p_k(x,y) = \tilde p_k(y,x)$), the potential kernel is symmetric: $\tilde a(x,y) = \tilde a(y,x)$ for any $x,y$. We will not however need this property in the following.

In the next subsection we describe a concrete coupling which will be used for the construction of the potential kernel. We call this the \textbf{coordinatewise mirror coupling}, which is a variation on a classical coupling for Brownian motion in $\R^d$. We will then use this coupling again to obtain \emph{a priori} estimates on the potential kernel.

Before describing this coupling and justifying \eqref{eq:tailcoupling}, we first state and prove a lemma which will be useful in many places in the the following and which gives a subdiffusive estimate on the walk. Let $\text{dist}$ denote the usual $\ell^1$ distance (graph distance) on $\Z^2$.

\begin{lemma}
  \label{L:subdiffusive} Let $x $ be a vertex of $\Gamma$ and let $T_R = \inf \{ n\ge 0: \textnormal{dist}(X_n, x) \ge R\}$. Then for every $c_1>0$ there exists $c_2>0$ such that for any $n\ge 1$, and for any $R \ge c_1 \sqrt{n} \log n$,
  $$
  \P(T_R \le n) \lesssim \exp ( - c_2 (\log n)^2).
  $$
\end{lemma}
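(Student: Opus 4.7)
The plan is to view $X$ on the symmetrised graph $\Gamma$ as a two-dimensional martingale with sub-exponential increments, and apply a Bernstein-type concentration inequality after truncating rare large jumps.

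I first record the two basic properties of the increments $Y_k := X_k - X_{k-1}$. Conditionally on $\mathcal F_{k-1}$, all transitions are symmetric: in the bulk (away from the real line) $X$ performs simple random walk with steps $\pm 2$, while on the real line the horizontal jumps follow the symmetric law $q^\infty$ of Lemma~\ref{T:eff}, and (thanks to the symmetrisation described above) the vertical jumps $\pm 2i$ are also symmetric. Hence $\E[Y_k\mid\mathcal F_{k-1}]=0$, so each coordinate of $X$ is a martingale. Moreover, by Lemma~\ref{T:eff} together with the deterministic bound $|Y_k|\le 2$ in the bulk, the increments have uniformly exponential tails:
\[
\P(|Y_k|\ge \ell \mid \mathcal F_{k-1}) \le C_0\, e^{-c_0 \ell}, \qquad \ell \ge 0,
\]
for constants $C_0, c_0>0$ independent of $k$ and of the past.

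Next, I truncate: set $\bar Y_k := Y_k\,\mathbf 1_{\{|Y_k|\le K\log n\}}$ for a constant $K = K(c_1)$ to be chosen later. By a union bound, the probability that any truncation happens by time $n$ is at most $n\,C_0\, e^{-c_0 K\log n}= C_0\, n^{1-c_0 K}$, which for $K$ large enough is $\le e^{-(\log n)^2}$ for all $n$ large. On the complementary (good) event, each coordinate of $X_k-x$ is a martingale with increments bounded by $b := K\log n$ and predictable quadratic variation $V_n \le C n$ (since $\E[|Y_k|^2\mid\mathcal F_{k-1}]$ is uniformly bounded by the exponential tail). Bernstein's inequality for martingales with bounded increments then yields, for each coordinate $i\in\{1,2\}$,
\[
\P\!\left(\max_{k\le n}|X_k^{(i)}-x^{(i)}|\ge R/2\right)
\le 2\exp\!\left(-\frac{(R/2)^2}{2\bigl(V_n+(K\log n)\,R/6\bigr)}\right).
\]
For $R=c_1\sqrt n\,\log n$, the additive term $(K\log n)\,R/6 \asymp \sqrt n\,(\log n)^2$ is of smaller order than $V_n\lesssim n$, so the denominator is at most $3Cn$ for $n$ large, and the right-hand side becomes $2\exp(-c_2(\log n)^2)$ with $c_2=c_2(c_1)>0$. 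A union bound over the two coordinates and the truncation event completes the proof.

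\paragraph{Main obstacle.}
The key structural observation is the martingale property on the symmetrised graph; without symmetrising, the vertical coordinate on $\Gamma$ acquires an upward drift at $V_1$ (the walker cannot move below the real line), which would force us to bound separately the time spent at $V_1$ via a Skorokhod-type reflection identity for a lazy reflected simple random walk. With the symmetrisation already built into the setup, this difficulty disappears and the remainder is a routine Bernstein estimate, whose only quantitative subtlety is calibrating the truncation scale $K\log n$ against $R=c_1\sqrt n \log n$ so as to extract the exponent of $(\log n)^2$.
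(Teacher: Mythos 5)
Your overall route (symmetry gives a martingale, then truncate rare large jumps and apply a Bernstein/Freedman-type concentration bound) is exactly what the paper does. However, there is a genuine quantitative error in the truncation scale, and as written your argument does not yield the claimed $e^{-c_2(\log n)^2}$ bound.

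You truncate the increments at level $K\log n$ for a \emph{constant} $K=K(c_1)$, and then assert that the union-bound failure probability $C_0\, n^{1-c_0 K}$ is $\le e^{-(\log n)^2}$ for $K$ large and $n$ large. This is false: for any fixed $K$, $n^{1-c_0 K}= e^{-(c_0 K-1)\log n}$ decays only polynomially in $n$, while $e^{-(\log n)^2}$ decays superpolynomially, so the inequality $n^{1-c_0 K}\le e^{-(\log n)^2}$ actually fails for all $n$ sufficiently large (it would require $K\gtrsim \log n$). Consequently, after you add the truncation failure event to the Bernstein bound, the dominant term is the polynomial $n^{-(c_0 K - 1)}$ and not the required $e^{-c_2(\log n)^2}$; the lemma as stated would not follow.

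The fix is the paper's choice: truncate at $(\log n)^2$ rather than $K\log n$. Then the union-bound term becomes $n\,C_0\, e^{-c_0(\log n)^2}\lesssim e^{-c(\log n)^2}$ as needed, and in the Freedman/Bernstein denominator the cross-term $b R/6$ becomes $\asymp \sqrt n\,(\log n)^3$, which is still $o(n)$ and hence still dominated by the quadratic-variation term $V_n\lesssim n$, so the exponent remains $\asymp (\log n)^2$. Everything else in your argument (the symmetrised graph making both coordinates martingales, the exponential tail of the boundary jumps from Lemma~\ref{T:eff}, the uniform bound on the conditional second moments, and the union bound over coordinates) is sound and matches the paper's approach; the paper simply handles the vertical coordinate separately as a lazy simple random walk, which is a presentational rather than substantive difference.
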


\begin{proof}
One possibility would be to use a result of Folz \cite{Folz} (based on work of Grigor'yan \cite{Grigoryan} in the continuum) which shows that an on-diagonal bound on the heat kernel $p_t(x,x)$ and $p_t(y,y)$ implies a Gaussian upper bound on the off-diagonal term $p_t (x,y)$. However, it is more elementary to use the following martingale argument.
We may write $X_n = (u_n, v_n)$ in
coordinate form. Since $(v_n)$ is a lazy simple random walk on the integers, the proof is elementary in this case (and of course also follows from the more
complicated estimate below). We therefore concentrate on bounding  $\sum_{i=1}^n\P (|u_i| \ge R)$.
We bound $\P(|u_i| \ge R)$ for $1\le i \le n$ as follows: either there is one jump larger than say $K=(\log n)^2$ by time $n$ (this has probability at most $n \exp ( - c(\log  n)^2)$ by a union bound and exponential tail of the jumps) or if all the jumps are less than $K$, then $u$ coincides with a martingale $\bar u$ such that all its jumps are bounded by $K$ in absolute value: indeed, we simply replace every jump of $u$ greater than $K$ in absolute value by a jump of the same sign and of length $K$. Since the jump distribution \eqref{q} is symmetric, the resulting sum $\bar u_n$ is again a martingale. Furthermore, $\bar u_n$ is a martingale with bounded jumps. We may apply Freedman's inequality \cite[Proposition (2.1)]{Freedman} to it which implies (since the quadratic variation of $\bar u$ at time $1\le i\le n$ is bounded by $b \lesssim n$),
  \begin{equation}\label{mart_displ}
  \P( |\bar u_i| \gtrsim \sqrt{n} \log n) \lesssim \exp \left( - c \frac{n (\log n)^2}{ (\log n)^2 \sqrt{n} \log n + n} \right) \lesssim \exp ( - c (\log n)^2).
  \end{equation}
The result follows by summing over $1\le i \le n$.
\end{proof}

\subsection{Coordinatewise mirror coupling}

Let $x, x'$ be two vertices of the graph $\Gamma$ of the same class, and let $\tilde X,\tilde X'$ be two (lazy) effective random walks started from $x$ and $x'$ respectively. In the coupling we will describe below, it will be important to first fix the vertical coordinate (stage 1). The coupling ends when we also fix the horizontal coordinate (stage 4). In between, we have two short stages (possibly instantaneous), where we make sure the class is correct (stage 2) followed by a so-called ``burn-in'' phase where the walks get far away from the real line in parallel (stage 3). This depends on a parameter $r$, which is a free choice. (When we prove \eqref{eq:tailcoupling} we will choose $r$ to be slightly smaller by logarithmic factors than $\sqrt{t}$).

We need to do so while respecting the natural \textbf{parity} (i.e., periodicity) of the coordinates we are trying to match. We will use the laziness to our advantage in order to deal with the potential issues arising from the walks not being of the same parity.

Note the following important property of $\tilde P$. At each step, the walk moves with probability 1/2. Conditionally on moving, the horizontal coordinate moves with probability 1/2, and otherwise the vertical coordinate moves (and in that case it is equally likely to go up or down by two); since we symmetrised $\Gamma$ note also that $\tilde p(x, x+ y)$ and $\tilde p(x, x-y)$ are always equal, for all $x,y \in  \Z^2$ (i.e., the jump distribution is symmetric). We will need a fair coin $\textsf{C}$ to decide which of the two $\textsf{C}$oordinates moves (if moving), and another fair coin $\textsf{L}$ to decide whether the walk is $\textsf{L}$azy or moves in this step.

\paragraph{Stage 1: vertical coordinate.} Suppose that $\tilde X_t = (u_t,v_t), \tilde X'_t = (u'_t,v'_t)$ are given. We now describe one step of the coupling. If $v_t = v'_t$ move to stage 2. If $v_t \neq v'_t$ then we consider the following two cases. In any case, we start by tossing $\textsf{C}$.
   If heads, then we plan for both $\tilde X$ and $\tilde X'$ to move their horizontal coordinates, and if tails, for both their vertical coordinates.

\begin{enumerate}
  \item Case 1: $v_t - v'_t =2 \mod 4$.
   Suppose $\textsf{C}$ is tails so the parity of vertical coordinate has a chance to be improved. Then we toss $\textsf{L}$. Depending on the result, one stays put and the other moves, or vice versa (either way the vertical coordinates are of the same parity after, and will stay so forever after). If instead $\textsf{C}$ was heads, so horizontal coordinate moves for both walks, then they move simultaneously or stay put simultaneously, and move independently of one another if at all.

   \item Case 2: $v_t - v'_t = 0 \mod 4$. Suppose $\textsf{C}$ is tails, so the vertical coordinates have a chance to be improved or even matched. Then we toss $\mathrm{L}$ and according to the result they both move simultaneously or stay put simultaneously. If moving at all, we declare the change in $v_t$ and the change in $v'_t $ to be opposite one another: thus, $v_{t+1} = v_t \pm 2$ with equal probability, whence $v'_{t+1} = v'_t \mp 2$.
       If however $\textsf{C}$ is heads (so the horizontal coordinates move), then the walks move simultaneously or stay put simultaneously, and  move independently of one another if at all.

\end{enumerate}

We leave it to the reader to check that this is a valid coupling (all moves are balanced and according to the transition probabilities $P$ if moving, and altogether each walk moves or stays put with probability 1/2 as desired).
As mentioned, once the parity of the vertical coordinates of the walks is matched (meaning the difference in vertical coordinates is even), it will remain matched forever.

Note also that once the vertical parity is matched ($v_t - v'_t = 0 \mod 4$), conditionally on the vertical coordinate moving (which is then the case for both walks simultaneously), the direction of movements is opposite: in other words, the positions of the vertical coordinates $v_t$ and $v'_t$ throughout time and until they match are mirrors of one another, with a reflection axis which is a horizontal line $L_1$. This line can be described as having a vertical coordinate equal to the average of $v_t$ and $v'_t$ at the first time $t$ that the parity of $v_t$ and $v'_t$ matches (note that $L_1$ goes via $(2\Z)^2$). In particular, the two coordinates $v_t$ and $v'_t$ \emph{will match after the first hitting time $T_1$ of the line $L_1$.}  By the end of the first stage, the two walks sit on the same horizontal line. This will remain so forever.

\paragraph{Stage 2: setting class and/or periodicity.} We now aim to match the horizontal coordinate.
If also $u_t  = u'_t$ the coupling is over and we let $\tilde X'_{t+1} = \tilde X_{t+1}$ chosen according to $\tilde P(\tilde X_t, \cdot)$. However the two walks might not be in the same class at that point, even if they started in the same class at the beginning of stage 1 (their class might change if one hits the real line but not the other during that stage).
During stage 2, we will make sure the walks become of the same class if they were not at the beginning of that stage (amounting to $u_t - u'_t$ even), and we will also make sure that they become of the same ``parity'' or ``periodicity'', meaning $u_t - u'_t = 0 \mod 4$. If that is the case already at the beginning of this stage, we can immediately move on to the next stage.

Otherwise, as before, suppose that $\tilde X_t = (u_t,v_t), \tilde X'_t = (u'_t,v'_t)$ are given, and suppose that $v_t = v'_t$. (In particular, $v_t = 0$ if and only if $v'_t = 0$.) We proceed as follows. As before, in any case we start by tossing $\textsf{C}$.
   If heads, then we plan for both $\tilde X$ and $\tilde X'$ to move their horizontal coordinates, and if tails, for both their vertical coordinates. In the latter case, we will use the same moves for both $\tilde X$ and $\tilde X'$, so we only describe what happens if the move is horizontal.

\begin{enumerate}

\item If $u_t - u'_t$ is odd, and $v_t = v'_t \neq 0$, then the walks move simultaneously and in parallel.

\item In all other situations, one walk will stay put while the other moves, or vice-versa, depending on the outcome of $\textsf{L}$.

\end{enumerate}

We make a few comments. First, note that with every visit to the real line there is a fixed positive chance to have $u_t - u'_t = 0 \mod 4$ and hence to end this stage. Also, if $u_t - u'_t$ is even to begin with, then there is also a fixed positive chance to end the stage right away.


\paragraph{Stage 3: burn-in.} In stage 3 of the coupling, we let the walks evolve in parallel (i.e., with the same jumps) until they are at distance $r$ from the real line. We will later choose $r$ as a function of $t$ (see~\eqref{rparamchoice}), which explains our comment under~\eqref{eq:tailcoupling} that $T$ depends on $t$. This is a valid choice of coupling since they will hit the real line simultaneously.  At the end of stage 2, the walks are on the same horizontal line and of the same ``periodicity'' meaning that they are 0 mod 4 apart. This will remain so until the end of stage 3.

\paragraph{Stage 4: horizontal coordinate.} As before, suppose that $\tilde X_t = (u_t,v_t), \tilde X'_t = (u'_t,v'_t)$ are given, and suppose that $v_t = v'_t$. (In particular, $v_t = 0$ if and only if $v'_t = 0$.) If also $u_t  = u'_t$ we let $\tilde X'_{t+1} = \tilde X_{t+1}$ chosen according to $\tilde P(\tilde X_t, \cdot)$. Otherwise we proceed as follows; we only describe a way of coupling the walks until hitting the real line; if coupling has not occurred before then we say that $T = \infty$. As before, in any case we start by tossing $\textsf{C}$. If tails, we let both walk evolve vertically in parallel. Otherwise, the walks will move their horizontal coordinates or stay put simultaneously depending on the result of $\textsf{L}$.
If both walks move horizontally, then let $u_t$ and $u'_t$ move in opposite manners, i.e., $u_{t+1} - u_t = - (u'_{t+1} - u'_t)$. This is possible by symmetry of the jump distribution $P$ (even on the real line).


\medskip Again, we leave it to the reader to check that what we have described in stages 2,3 and 4 forms a valid coupling. We note that any movement in the vertical coordinate is replicated across both walks, whatever the cases, and so the match created in stage 1 is never destroyed. Note also that once the walks are 0 mod 4 apart, this remains the case until hitting the real line.
%
Therefore the movement of the horizontal coordinates of both walks in stage 2 of this coupling will also be mirror off one another, with the mirror being a vertical line $L_2$ whose horizontal coordinate is the average of $u_t$ and $u'_t$ at the end of stage 3. We call $T_1, \ldots, T_4$ the end of each four stage respectively (with $T_4 $ being infinity if the walks hit the real line first).

\subsection{Suitability of coupling (proof of \eqref{eq:tailcoupling})}
\label{S:suitability}

In order to use the above coupling to construct the potential kernel of the walk on $\Gamma$, we need to verify two points.
We will consider two cases: the main one is that $x $ and $x'$ are of the same class and $\dist (x, x') = 2$. The other case is if $x, x'$ are on the real line and $\dist(x, x') = 1$. By Definition~\ref{D:potkerninf}, these two cases allow us to define the potential kernel over the entire graph. We will focus on the first case since it is a bit more involved than the second (which can be checked in a similar manner). We will first need to verify \eqref{eq:tailcoupling}, which requires that the two walks coincide with high probability at time~$t$.

We will check that each stage lasts less than $t/4$ with overwhelming probability (meaning with error probability satisfying \eqref{eq:tailcoupling}).

\textbf{Stage 1.} We may assume without loss of generality that $v_0 - v'_0 =0 \mod 4$ since otherwise it takes a geometric number of attempts until that is the case. Note then that $\P( T_1> t/4)$ is bounded by the probability that the random walk avoids the (horizontal) reflection line $L_1$ of stage 1 for time $t/4$. As the vertical coordinate performs a lazy simple random walk on the integers (with laziness parameter $3/4$) this is bounded by the probability that a random walk on the integers starting from 1 (or more generally a random value with geometric tails, as discussed above) avoids 0 for at least $\gtrsim t$, which is bounded by $\lesssim 1/\sqrt{t}$ by gambler's ruin arguments (see e.g. Proposition 5.1.5 in \cite{LawLim}).

\textbf{Stage 2.} Let $k = \text{dist} (x, \R) (= |v|)$. If the walks remained of the same class during the first stage, then stage 2 is over in a time which has a geometric tail so \eqref{eq:tailcoupling} holds trivially. On the other hand, if they did change class during the first stage, it is necessary to hit the real line again (and then wait for an extra time with geometric tail).

At the end of stage 1, the walk is on the reflection line $L_1$ which has vertical coordinate $v + O(1)$ and so is again at distance $k + O(1)$ from the real line. Let $T_\R$ denote the hitting time of $\R$. Then by Proposition 5.1.5 in \cite{LawLim} again,
$$
\P (T_\R > t/4) \lesssim \tfrac{k}{\sqrt{t}}.
$$
On the other hand, the probability that $\tilde X_{T_1}$ and $\tilde X'_{T_1}$ changed class during the first phase is bounded by $\lesssim 1/k$ again by gambler's ruin (since it requires touching the real line before the reflection line $L_1$), and so
$$
\P( T_2 -T_1> t/4; \tilde X_{T_1} \not\sim \tilde X'_{T_1}) \lesssim \tfrac1k \times \tfrac{k}{\sqrt{t}} = \tfrac1{\sqrt{t}},
$$
where $\not\sim$ denotes being of different class.
This implies \eqref{eq:tailcoupling} for $T_2- T_1$.

\textbf{Stage 3.} Here we will need to choose the parameter $r$ appropriately. We will take it to be
\begin{equation}\label{rparamchoice}
r = \frac{\sqrt{t}}{(\log t)^b},
\end{equation}
where $b>0$ can be chosen as desired. Note that every $r^2$ units of time, if the walk starts in the strip $S$ of width $r$ around the real line, it has a positive probability, say $p$, of leaving $S$ (where $p$ does not depend on the starting point of the walk). Thus for $j \ge 1$,
$$
\P( T_3 - T_2 > jr^2) \le (1-p)^j.
$$
Hence
$$
\P( T_3 - T_2 > t/4) \le \exp ( - c t/r^2)
$$
so that if  $b >1$ is any number,  the right hand side above is $\lesssim 1/\sqrt{t}$, as desired.

\textbf{Stage 4.} To prove the corresponding bound in stage 4, we need the following lemma which shows (up to unimportant logarithmic terms) the horizontal displacement accumulated in the first stage has a Cauchy tail. This corresponds of course to the well known fact that the density of Brownian motion when it hits a fixed line has exactly a Cauchy distribution.

\begin{lemma}
  \label{L:hor_displacement_stage1}
  We have
    \begin{equation}\label{E:displ}
  \P ( \sup_{t \le T_1}|u_{t} - u_0 | \ge k ) \lesssim \frac{(\log k)^2}{k}.
  \end{equation}

\end{lemma}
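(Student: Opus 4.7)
The plan is to bound the quantity by combining a diffusive short-time estimate (using Lemma~\ref{L:subdiffusive}) with a Cauchy-type tail bound for $T_1$, choosing the time cutoff carefully.

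Recall that $T_1$ is the first hitting time by $\tilde X$ of the horizontal line $L_1$, which sits at vertical distance $O(1)$ from $v_0$ (since $|v_0 - v'_0|=O(1)$ and $L_1$ is effectively their midline). A key observation is that, although $\tilde X$ is coupled with $\tilde X'$, the marginal law of $\tilde X$ is exactly that of the lazy random walk $\tilde P$ on $\Gamma$: an inspection of the three cases in Stage~1 shows that in each of them, conditional on the coordinate coin $\textsf{C}$, the walk $\tilde X$ moves or stays according to an unbiased rule, so the overall law is unchanged. Moreover, since horizontal jumps do not change the vertical coordinate, $v_t$ itself is a Markov chain on $\{0,2,4,\ldots\}$ whose transitions away from $V_1$ agree with those of the lazy simple random walk on $2\Z$ (jump $\pm 2$ with probability $1/8$ each, stay with probability $3/4$), while at $v_t=0$ there is still a fixed positive probability $p_*>0$ of jumping to $+2$. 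Thus $v_t$ can be compared with a lazy simple random walk reflected at $0$, for which by standard gambler's ruin / reflection principle the hitting time of a fixed level at distance $O(1)$ satisfies $\P(T_1 > n) \lesssim 1/\sqrt{n}$.

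Set $n = n_k := k^2 / (C \log k)^2$ where $C$ is a large constant to be chosen. Step one: on the event $\{T_1 \le n\}$, we have $\sup_{t \le T_1}|u_t - u_0| \le \sup_{t\le n}|u_t - u_0| \le \sup_{t\le n}\textnormal{dist}(\tilde X_t, \tilde X_0)$, so that
\[
\{\sup_{t \le T_1}|u_t - u_0|\ge k,\, T_1\le n\}\subseteq \{T_R \le n\}
\]
with $R=k$ in the notation of Lemma~\ref{L:subdiffusive}. For this lemma to apply we need $k \ge c_1 \sqrt{n}\log n$; since $\sqrt{n}\log n \le 2k/C$ for $k$ large, the condition holds as soon as $C \ge 2c_1$. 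Then Lemma~\ref{L:subdiffusive} yields
\[
\P(T_k \le n)\lesssim \exp(-c_2(\log n)^2)\lesssim \exp(-c_2'(\log k)^2) = o(1/k).
\]
Step two: on the complementary event, by the preceding paragraph,
\[
\P(T_1 > n)\lesssim 1/\sqrt{n} \lesssim (\log k)/k.
\]
Combining the two contributions gives
\[
\P\bigl(\sup_{t\le T_1}|u_t-u_0|\ge k\bigr)\lesssim \frac{\log k}{k} + \exp(-c_2'(\log k)^2)\lesssim \frac{\log k}{k} \le \frac{(\log k)^2}{k},
\]
which is in fact slightly stronger than the claimed bound.

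The only potentially delicate point is the comparison of $v_t$ with a reflected lazy simple random walk (to justify the Cauchy tail on $T_1$): the transition probabilities of $v_t$ on $V_1$ differ from the bulk ones, but since $v_t$ is a Markov chain on $\{0,2,4,\ldots\}$ with reflection at $0$ and positive ``escape'' probability $p_* > 0$ from $0$, one can couple it with an ordinary lazy simple random walk reflected at $0$ (for instance by viewing the excursions away from $0$, which are independent of the visits to $0$), and apply the standard one-dimensional hitting time tail estimate. The rest is a straightforward optimization of the time cutoff $n_k$ already carried out above.
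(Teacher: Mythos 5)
Your proof is correct and uses a genuinely different decomposition from the paper's. The paper introduces an auxiliary line $L'$ at distance $A=\lfloor k/(\log k)^2\rfloor$ below the reflection line $L_1$, and splits according to whether the walk exits the resulting strip at $L'$ first (probability $\asymp 1/A = (\log k)^2/k$ by gambler's ruin) or reaches horizontal distance $k$ while still inside the strip; the latter is then handled by further splitting on $\{T_k\le n\}$ (Lemma~\ref{L:subdiffusive}) versus $\{\tau>n\}$ (exponential decay of the strip exit time), with $n=k^2/(\log k)^2$. You avoid the strip entirely: you split directly on $\{T_1\le n\}$ versus $\{T_1>n\}$, use Lemma~\ref{L:subdiffusive} on the first event exactly as the paper does, and bound the second by the one-dimensional Cauchy-type hitting-time tail for the vertical coordinate started at distance $O(1)$ from $L_1$. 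Both routes are valid; yours is shorter and in fact yields the slightly sharper bound $O((\log k)/k)$. Your check that the Stage~1 coupling does not alter the marginal law of $\tilde X$ is the right thing to verify and is correct.

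One inaccuracy worth flagging: you describe $v_t$ as a Markov chain on $\{0,2,4,\ldots\}$ reflected at $0$. That would be the half-plane walk, but as stated at the start of Section~\ref{S:IVL} the coupling operates on the \emph{symmetrised} graph $\Gamma$ with vertex set $V\cup\bar V$, and in Case~2 of Stage~1 one explicitly has $v_{t+1}=v_t\pm 2$ with equal probability irrespective of the sign of $v_t$. So $v_t$ ranges over all of $2\Z$, with no reflection, and the final paragraph you added about coupling with a reflected walk is unnecessary: the vertical coordinate is already (up to a modified laziness at $\R$) a lazy simple random walk on $2\Z$, and Proposition~5.1.5 of \cite{LawLim} applies directly, as the paper itself does in Section~\ref{S:suitability} for the tail of the same $T_1$. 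This mischaracterisation does not affect the bound $\P(T_1>n)\lesssim 1/\sqrt n$, so the argument stands.
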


(The factor of $(\log k)^2$ is not optimal in the right hand side of \eqref{E:displ} but is sufficient for our purposes.) We now use this to derive a bound for $\P( T_4 - T_3 > t/4)$. From the construction of the coupling and Lemma \ref{L:hor_displacement_stage1}, we see that at the beginning of stage 4, the walk is at a distance from the (vertical) reflection line $L_4$ which has the same tail as in Lemma~\ref{L:hor_displacement_stage1} (this is because the additional discrepancy accumulated during stage 2 is easily shown to have geometric tail). Let us condition on everything before time $T_3$, and call $k$ the distance of the walk $X$ at time $T_3$ to the reflection line. Let $T_\R$ denote the hitting time of the real line and let $T_L$ denote the hitting time of the reflection line $L_4$. Set $s = t/4$ for convenience. Then we can bound the tail of $T_L$ in terms of the usual simple random walk on the lattice (without extra jumps on the real line).
Indeed, until time $T_\R$, the walk coincides with the usual \textbf{lazy} simple random walk on the square lattice. Writing $\Q$ for the law of the latter random walk, we have
\begin{align*}
 \P( T_4 - T_3 > s \mid \cF_{T_3}) & \le\P ( T_L> s, T_\R > s \mid \cF_{T_3}) + \P ( T_L > T_\R, T_\R \le s \mid \cF_{T_3})\\
& \le \Q ( T_L >s, T_\R > s) + \Q ( T_\R < T_L)\\
& \lesssim \Q( T_L>s)  + \tfrac{k}{r}\\
& \lesssim k ( \tfrac1{\sqrt{t}} + \tfrac1{r}).
\end{align*}
To go from the second line from the third line, we used that the walk starts at distance $r$ from the real line and Proposition 5.1.5 in \cite{LawLim}, and to go the last line we also used that same result. Taking expectations (we only use the above bound if $k \le r$ so that the right hand side is less than one, and we use the trivial bound 1 for the probability on the left-hand side otherwise), we see that
$$
\P( T_4 - T_3 > t/4) \le \E(X \mathbf1\{ X \le r\}) ( \tfrac1{\sqrt{t}} + \tfrac1{r}) + \P( X >r)
$$
where $X$ has a tail bounded by Lemma \ref{L:hor_displacement_stage1}. By Fubini's theorem,
\begin{equation}\label{eq:finalstage}
\P( T_4- T_3 > t/4)  \lesssim \tfrac{(\log t)^{3+b}}{\sqrt{t}}
\end{equation}
so we get \eqref{eq:tailcoupling} with $a = 3+b$. Since $b>1$ is arbitrary, $a>4$ is arbitrary.

It therefore remains to give the proof of Lemma \ref{L:hor_displacement_stage1}.

\begin{proof}[Proof of Lemma \ref{L:hor_displacement_stage1}]
  Let $L= L_1$ be the (horizontal) reflection line. We wish to show that $\P ( \sup_{t \le T_L} |u_{t} - u_0 | \ge k) \lesssim (\log k)^2/k$. Without loss of generality we assume that $v_0 < v'_0$ so $\tilde X$ starts below $L$, and $u_0 = 0$.
  Let $L'$ be a line parallel to $L$ below $L$, at distance $A $ from it, where $A = \lfloor k / ( \log k)^2\rfloor $. Let $\cS$ denote the infinite strip in between these two lines. Let $T = T_L$ denote the hitting time of $L$ and let $T'$ denote the hitting time of $L'$, and let $\tau = T \wedge T'$ denote the time at which the walk leaves the inside of the strip $\cS$. Let $T_k$ denote the first time at which $|u_t | \ge k$.
  Then
  \begin{align*}
  \P ( \sup_{t \le T}|u_{t} | \ge k)  &\le \P ( T' < T , \sup_{t\le T}| u_t| \ge k) + \P ( T' > T, \sup_{t \le T}| u_t | \ge k)\\
  & \le \P( T'< T) + \P( T_k \le \tau).
  \end{align*}
  Now, the event $T'< T$ concerns only the vertical coordinate which (ignoring the times at which it doesn't move which are irrelevant here) is simple random walk on $\Z$. Hence $\P( T'<T) = 1/A \lesssim ( \log k)^2/k$ by the gambler's ruin estimate in one dimension for simple random walk.

  It remains to show that $\P( T_k \le \tau) = o ( (\log k)^2/k)$. We split the event into two events, and show both are overwhelmingly unlikely.
  We observe that for $T_k \le \tau$ to occur, one of the following two events must occur: either (i) $T_k \le n := k^2/(\log k)^2$, or (ii) $\tau > n$. Let $E_1$ be the first event and let $E_2$ be the second one.
  Then by Lemma \ref{L:subdiffusive},
  \begin{equation}
  \P( E_1) \lesssim \exp ( -  c(\log k)^2)
  \end{equation}
  for some constant $c>0$. As for the second event $E_2$, we note that every $A^2 $ units of time there is a positive chance to leave $\cS$ (this is a trivial consequence of the fact that the vertical coordinate is lazy random walk on $\Z$, with the laziness parameter equal to $1/2 + 1/4 = 3/4$), hence
  $$
  \P(E_2) \le \exp ( - c \tfrac{n}{A^2}) = \exp ( -  c (\log k)^2)
  $$
  since $A = k/ (\log k)^2$ and $n = k^2 /(\log k)^2$. Thus
  $$
  \P( T_k \le \tau)  \le \P(E_1) + \P(E_2) \lesssim \exp ( - c (\log k)^2) = o ( k^{-1}),
  $$
and \eqref{E:displ} follows.
\end{proof}

\subsection{A priori estimate on the gradient of the potential kernel}

The purpose of this section is to show the following estimate. This will be useful both for proving that Green's functions differences converge to differences of the potential kernel in the limit of large box $\cG_n$, $n\to \infty$, but also as an input to the proof of the scaling limit result for the height function, where such an a priori estimate is needed for the inverse Kasteleyn matrix.

\begin{prop}
  \label{P:aprioriPK}
  Let $o,x, x'$ be any vertices of $\Gamma$ such that $\text{dist}(o, x) = R$ and such that $x,x'$ are of the same class with $\dist(x, x')  =2$. Then for $R \ge 2$,
  $$
  |\tilde a(x, o) - \tilde a(x',o) | \lesssim \frac{(\log R)^{c}}{R}
  $$
  where $c = a+2 > 6$, and $a>4$ is as in \eqref{eq:tailcoupling}.
\end{prop}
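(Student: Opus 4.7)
The plan is to bound the series from Definition~\ref{D:potkerninf},
\[
\tilde a(x, o) - \tilde a(x', o) = -\sum_{t=0}^\infty \bigl(\tilde p_t(x, o) - \tilde p_t(x', o)\bigr),
\]
by splitting at a threshold $t_\star := R^2/(\log R)^4$ and treating the two regimes separately.

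For the tail $t > t_\star$, I would invoke the coupling estimate \eqref{HKdiff} established just above Definition~\ref{D:potkerninf}, which yields $|\tilde p_t(x, o) - \tilde p_t(x', o)| \lesssim t^{-3/2}(\log t)^a$. Since $\dist(x, x') = 2$ and $x, x'$ lie in the same class, the coordinatewise mirror coupling of Section~\ref{S:suitability} applies directly and produces such an estimate with the $a$ determined in the verification \eqref{eq:finalstage}. Summing then gives
\[
\sum_{t > t_\star} t^{-3/2}(\log t)^a \;\lesssim\; t_\star^{-1/2}(\log t_\star)^a \;\lesssim\; \frac{(\log R)^{a+2}}{R},
\]
exactly matching the target bound with $c = a+2$.

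For the head $t \le t_\star$, I would use the triangle inequality to bound by $\tilde p_t(x,o) + \tilde p_t(x',o)$ and show each summand is super-polynomially small in $R$. Since $\dist(o, x), \dist(o, x') \ge R/2$, the event $\{X_t = o\}$ forces the walk to have traveled distance at least $R/2$, which is a large deviation on the scale $t \le t_\star \ll R^2$. A direct invocation of Lemma~\ref{L:subdiffusive} does not suffice because its bound $\exp(-c(\log t)^2)$ depends on $t$ and degenerates for small $t$. Instead I would re-run the Freedman-style martingale argument from the proof of Lemma~\ref{L:subdiffusive}, but with the truncation parameter chosen as $K = (\log R)^2$ (depending only on $R$, not $t$). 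After truncating away jumps of size exceeding $K$ (an event of probability $\lesssim t_\star e^{-cK} \lesssim R^2 e^{-c(\log R)^2}$, super-polynomially small), Freedman's inequality applied to the horizontal and vertical coordinates separately yields
\[
\tilde p_t(x, o) \;\le\; \P_x\bigl(|X_t - x| \ge R/2\bigr) \;\lesssim\; \exp\!\bigl(-c_0 (\log R)^2\bigr)
\]
uniformly for $t \le t_\star$, splitting into the sub-regimes $n \le KR$ (where $KR$ dominates the denominator in Freedman's bound, giving $\exp(-cR/(\log R)^2)$) and $KR \le n \le t_\star$ (where $b \asymp n$ dominates, giving $\exp(-cR^2/n) \le \exp(-c(\log R)^4)$). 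Summing over $t \le t_\star$ then produces a contribution bounded by $t_\star \exp(-c_0(\log R)^2)$, which is super-polynomially small and negligible compared with $(\log R)^{a+2}/R$.

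The main technical difficulty is precisely this head regime: one must obtain a bound on $\tilde p_t(x,o)$ that is super-polynomial in $R$ \emph{uniformly} in $t$ up to $t_\star$, which requires recalibrating the Freedman truncation to depend on $R$ rather than $t$. Once this quantitative bound is in hand, combining the two regimes delivers the proposition.
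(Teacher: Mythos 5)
Your proposal follows the same two-regime decomposition as the paper: split the potential-kernel series at a threshold of order $R^2/\operatorname{polylog}(R)$, use the coupling bound \eqref{HKdiff} for large times, and subdiffusivity for small times. Your tail estimate is exactly the paper's. The one place where you depart is the head estimate, and your stated reason for departing is actually based on a misreading: you worry that Lemma~\ref{L:subdiffusive} gives $\exp(-c(\log s)^2)$ with $s$ the summation index and that this degenerates for small $s$, but the lemma controls the \emph{hitting time} $T_R$, and $\{T_R \le s\}$ is increasing in $s$. So one applies the lemma once, with $n$ equal to the threshold $t$ (not the summation index $s$), and gets $\tilde p_s(x,o) \le \P_x(T_{R/2} \le s) \le \P_x(T_{R/2} \le t) \lesssim \exp(-c_2(\log t)^2)$, which is already uniform over $s \le t$. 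This is what the paper does when it quotes the bound $\exp(-(\log t)^2)$: no recalibration of the truncation parameter is needed. That said, your alternative argument --- re-running the Freedman truncation with $K = (\log R)^2$ depending on $R$ rather than on $n$, then treating the two sub-regimes $n \le KR$ and $KR \le n \le t_\star$ --- is correct and yields the same super-polynomial bound; it is just extra machinery for a step the lemma already delivers. (A side remark on the constants: your threshold $t_\star = R^2/(\log R)^4$ gives $t_\star^{-1/2}(\log t_\star)^a \lesssim R^{-1}(\log R)^{a+2}$ matching the stated $c = a+2$; the paper's $t = R^2/(\log R)^2$ gives $R^{-1}(\log R)^{a+1}$, a slightly stronger bound than what is claimed. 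Either way the proposition holds, since what matters is only that $c$ is some finite power.)
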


\begin{proof}
  Let $ t = R^2/(\log R)^2$. We note that for $s \le t$,
  \begin{align*}
  \tilde p_s(x, o)  &\le \P_o( d (X_s,o) \ge R) \\
  & \le \P_x( | u_s- u| \ge R/2) + \P ( | v_s - v | \ge R/2).
  \end{align*}
  Both terms are easily estimated. The first term is estimated by Lemma \ref{L:subdiffusive} which shows it is bounded $\exp( - (\log t)^2)$.
 The same estimate holds (and is of course easier) for the vertical coordinate, since this is simply lazy simple random walk (with laziness parameter $3/4$). Naturally this argument also holds with $x'$ in place of $x$. Thus
 \begin{equation}
 \label{PKsum1}
\Big| \sum_{s=0}^t \tilde p_s(x, o) - \tilde p_s(x',o) \Big | \lesssim \exp( - (\log R)^2).
 \end{equation}

 On the other hand, for $s \ge t$, we recall that
$$
|\tilde p_s(x,o) - \tilde p_s(x',o)| \le s^{-3/2} (\log s)^a,
$$
by \eqref{eq:tailcoupling}. Summing over $s  \ge  t$,
 \begin{equation}
 \label{PKsum2}
\Big| \sum_{s=t}^\infty \tilde p_s(x, o) - \tilde p_s(x',o) \Big | \lesssim
t^{-1/2} (\log t)^a \lesssim R^{-1} ( \log R)^{a+2}.
 \end{equation}
Combining with \eqref{PKsum1} this finishes the proof with $c = a+2$ as desired.
\end{proof}

\subsection{Convergence of Green's function differences to gradient of potential kernel}

Let $B_R = B(0, R)$. Define the unnormalised Green's function
\[
\tilde G_R (x,o) = \E_x\Big ( \sum_{n =0}^\infty \mathbf1{\{ \tilde X_n = o ,  \tau_R > n \}} \Big),
\]
where $\tau_R$ is the first time that the (lazy) walk $\tilde X$ leaves $B_R$.
We will prove the following proposition:

\begin{prop}
  \label{P:greenPK}
  As $R \to \infty$, for any fixed $x,x'$ of the same class, and any fixed $ y$,
  $$
  \tilde G_R (x, o) - \tilde G_R (x', o) \to  -( \tilde a(x,o) - \tilde a(x',o)).
  $$
  As a consequence, the same convergence is true also for the nonlazy walk $X$ instead of $\tilde X$.
\end{prop}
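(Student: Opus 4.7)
The plan is to reduce the proposition to a quantitative coupling statement via an explicit identity relating $\tilde G_R$ and $\tilde a$. As a first step, I would check, using Definition~\ref{D:potkerninf}, the reversibility $\tilde p_t(y,z)=\tilde p_t(z,y)$, and the null-recurrence property $\tilde p_N(y,o)\to 0$, that the potential kernel satisfies
\[
\tilde P\,\tilde a(\cdot,o)(y)-\tilde a(y,o)=\mathbf 1_{\{y=o\}}.
\]
Consequently $M_n:=\tilde a(\tilde X_n,o)-L_n(o)$, with $L_n(o)=\sum_{k=0}^{n-1}\mathbf 1_{\{\tilde X_k=o\}}$, is a martingale. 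Summing the gradient bound of Proposition~\ref{P:aprioriPK} along a lattice path of length $\lesssim R$ from $o$ to $y$ (using once the ``one pair not of the same class'' part of Definition~\ref{D:potkerninf} to handle a class change) yields $|\tilde a(y,o)|\lesssim(\log R)^{c+1}$ uniformly for $\dist(y,o)\lesssim R$; together with $\E_x\tau_R\lesssim R^2$ this justifies optional stopping of $M_n$ at $\tau_R$ and gives the key identity
\[
\tilde G_R(x,o)=\E_x[\tilde a(\tilde X_{\tau_R},o)]-\tilde a(x,o).
\]
Subtracting this identity at $x$ and $x'$, the proposition reduces to proving that $\E_x[\tilde a(\tilde X_{\tau_R},o)]-\E_{x'}[\tilde a(\tilde X'_{\tau_R'},o)]\to 0$ as $R\to\infty$.

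I would control this difference with the coordinatewise mirror coupling of Section~\ref{S:suitability} applied to $(\tilde X,\tilde X')$ started from $(x,x')$, with its free parameter $r$ tuned to an intermediate time $t$ chosen below. Let $T$ be the coupling time. On $\{T\le \tau_R\wedge \tau_R'\}$ the two walks coincide after time $T$ and in particular exit $B_R$ at the same time and location, contributing $0$ to the difference; on the complement the integrand is uniformly bounded by the a priori estimate above. Hence
\[
\Big|\E_x[\tilde a(\tilde X_{\tau_R},o)]-\E_{x'}[\tilde a(\tilde X'_{\tau_R'},o)]\Big|\lesssim (\log R)^{c+1}\,\P(T>\tau_R\wedge \tau_R').
\]
Choosing $t=R^2/(\log R)^K$ with $K$ large, I would bound $\P(T>\tau_R\wedge \tau_R')\le \P(T>t)+\P(\tau_R\le t)+\P(\tau_R'\le t)$. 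By~\eqref{eq:tailcoupling} the first term is $\lesssim (\log R)^{a+K/2}/R$, and by Lemma~\ref{L:subdiffusive} (applied with $n=t$) the other two are $\lesssim \exp(-c(\log R)^2)$ as soon as $R\gg \sqrt t\log t$, which holds for $K$ large enough. The product is therefore $(\log R)^{O(1)}/R\to 0$, finishing the convergence for $\tilde X$.

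The main obstacle is precisely the balance in the last estimate: since $|\tilde a|$ itself grows logarithmically throughout $B_R$ and that growth cannot be avoided, the argument hinges on \eqref{eq:tailcoupling} providing a coupling tail that decays faster than any fixed power of $\log R$ times $1/R$, which is exactly what the detailed four-stage analysis of Section~\ref{S:suitability} delivers. Finally, the convergence for the nonlazy walk $X$ follows from the exact identity $\tilde G_R(x,o)=2 G_R^X(x,o)$ (which holds because the jump chain of $\tilde X$ is $X$ together with i.i.d.\ Bernoulli$(1/2)$ laziness at each step of $\tilde X$); defining the potential kernel of $X$ by $a:=\tilde a/2$, one obtains the desired statement by halving the limit established for $\tilde X$.
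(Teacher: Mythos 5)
Your proposal follows essentially the same route as the paper: it hinges on the identity $\tilde G_R(x,o)=\E_x[\tilde a(\tilde X_{\tau_R},o)]-\tilde a(x,o)$ (the paper's Lemma~\ref{L:LL}), followed by an application of the coordinatewise mirror coupling to show the boundary term cancels in the difference. Your more explicit decomposition via the intermediate time $t=R^2/(\log R)^K$ is a spelled-out version of the paper's ``reasoning as in \eqref{eq:finalstage}'' with $r=R/(\log R)^2$, and the translation to the nonlazy walk via $\tilde G_R=2G_R$ and $a=\tilde a/2$ is exactly what Corollary~\ref{C:infinitevol} uses.

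There is, however, one genuine gap in your justification of the key identity. You assert that the a priori bound $|\tilde a(y,o)|\lesssim(\log R)^{c+1}$ on $B_R$ together with $\E_x\tau_R\lesssim R^2$ ``justifies optional stopping.'' The version of the optional stopping theorem that uses $\E_x\tau_R<\infty$ also requires \emph{bounded increments} of the martingale $M_n=\tilde a(\tilde X_n,o)-L_n(o)$, and this fails here: along the real line the walk makes jumps with unbounded support (only exponential tails), so $|\tilde a(\tilde X_{n+1},o)-\tilde a(\tilde X_n,o)|$ is not uniformly bounded, and moreover $\tilde X_{\tau_R}$ may land well outside $B_R$, so the claimed uniform bound $(\log R)^{c+1}$ on $\tilde a(\tilde X_{\tau_R},o)$ is not literally correct either. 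The paper's Lemma~\ref{L:LL} handles this by stopping first at the bounded time $\tau_R\wedge n$, then passing to the limit $n\to\infty$ via dominated convergence, with the domination supplied by a bound on $\E_x|\tilde X_{\tau_R}|$ obtained from the exponential tail of the jump distribution at the last step before exit. The same care is needed in your final estimate: on the failure event you should bound $\E_x[|\tilde a(\tilde X_{\tau_R},o)|\,;\,T>\tau_R\wedge\tau_R']$ rather than pull out a deterministic $(\log R)^{c+1}$, again using the exponential tail to control the overshoot. These are local fixes and do not change the structure of your argument, but as written the optional-stopping step is not valid.
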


The proof is based on ideas similar to Proposition 4.6.3 in \cite{LawLim}. We first recall the following lemma which (in the case of finite range irreducible symmetric random walk would be  Proposition 4.6.2 in \cite{LawLim}):

\begin{lemma}\label{L:LL} For any $x, o$, we have
  $$
  \tilde G_R (x, o) =\E_x ( \tilde a ( \tilde X_{\tau_R},o ) ) -\tilde a(x,o).
  $$
\end{lemma}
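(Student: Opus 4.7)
The plan is to show that the potential kernel satisfies the discrete Poisson equation
$$(\tilde P - I)\,\tilde a(\cdot, o)(x) \;=\; \mathbf{1}_{x = o}$$
(where $\tilde P$ acts on the first variable) and then to apply the optional stopping theorem to a natural Doob-type martingale. This is the standard strategy behind Proposition~4.6.2 of \cite{LawLim}, but verifying its hypotheses is nontrivial here since the walk on $\Gamma$ has unbounded jumps along the real line and $\tilde a$ is only defined as a conditionally convergent difference of heat kernels.

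First I would derive the Poisson equation. Starting from Definition~\ref{D:potkerninf}, one writes
$$\sum_y \tilde P(x,y)\,[\tilde a(y,o) - \tilde a(x,o)] \;=\; \sum_y \tilde P(x,y) \sum_{t=0}^{\infty} \bigl( \tilde p_t(x,o) - \tilde p_t(y,o)\bigr),$$
swaps the two summations, and recognises the inner sum after the swap as the telescoping series $\tilde p_0(x,o) - \lim_{T\to\infty} \tilde p_{T+1}(x,o)$, which equals $\mathbf{1}_{x = o}$ because the heat kernel bound~\eqref{aprioriHK} forces $\tilde p_T(x,o) \to 0$. The exchange is legitimate by Fubini: summing the gradient bound of Proposition~\ref{P:aprioriPK} along a lattice path joining $y$ to $o$ gives $|\tilde a(y,o) - \tilde a(x,o)| \lesssim (\log(2 + |y-x|))^{c+1}$, and this is more than compensated by the exponential decay of the jump distribution of $\tilde P(x,\cdot)$ along the real line (off the real line the support of $\tilde P(x,\cdot)$ is finite).

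With the Poisson equation in hand, the Markov property yields that
$$M_n \;:=\; \tilde a(\tilde X_n, o) \;-\; \sum_{k=0}^{n-1} \mathbf{1}_{\tilde X_k = o}$$
is a martingale for the canonical filtration $(\cF_n)$ with $M_0 = \tilde a(x,o)$ under $\P_x$. Applying the optional stopping theorem at the bounded stopping time $\tau_R \wedge N$ gives $\E_x[M_{\tau_R \wedge N}] = \tilde a(x,o)$ for every $N \ge 1$. The local time at $o$ converges monotonically as $N \to \infty$, and monotone convergence combined with the almost sure finiteness of $\tau_R$ (from any vertex of the finite set $B_R$ there is a uniform positive chance of exiting in bounded time) produces the expected term $\E_x[L_{\tau_R}(o)] = \tilde G_R(x,o) < \infty$.

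The main technical obstacle is passing to the limit in $\E_x[\tilde a(\tilde X_{\tau_R \wedge N}, o)]$. For $n < \tau_R$ the walk lies in the finite set $B_R$ on which $\tilde a(\cdot,o)$ is bounded, so the only genuinely dangerous contribution comes from the single exit step, where a long horizontal jump along the real line can send $\tilde X_{\tau_R}$ far outside $B_R$. Using the logarithmic growth $|\tilde a(y,o)| \lesssim (\log(2+|y-o|))^{c+1}$ (again obtained by summing the gradient estimate of Proposition~\ref{P:aprioriPK} along a lattice path) together with the exponential tail of the jump distribution on $\R$ gives uniform integrability of the family $\bigl(\tilde a(\tilde X_{\tau_R \wedge N}, o)\bigr)_{N \ge 1}$, so one may pass to the limit to obtain $\E_x[\tilde a(\tilde X_{\tau_R \wedge N}, o)] \to \E_x[\tilde a(\tilde X_{\tau_R}, o)]$. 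Rearranging the resulting identity $\E_x[\tilde a(\tilde X_{\tau_R}, o)] - \tilde G_R(x,o) = \tilde a(x,o)$ yields the claim.
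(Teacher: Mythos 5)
Your proposal is correct and follows essentially the same route as the paper: optional stopping of the martingale $M_n = \tilde a(\tilde X_n, o) - L_n(o)$ at the bounded time $\tau_R \wedge N$, then monotone convergence on the local-time term and dominated convergence/uniform integrability on $\tilde a(\tilde X_{\tau_R\wedge N}, o)$, the latter controlled (exactly as in the paper) by the logarithmic growth of $\tilde a$ from Proposition~\ref{P:aprioriPK} together with the exponential tail of the boundary jumps governing the single possibly-long step at time $\tau_R$. You additionally spell out the Poisson equation $(\tilde P - I)\tilde a(\cdot,o)(x) = \mathbf 1_{\{x=o\}}$, which the paper simply asserts by declaring $M_n$ a martingale; this is a worthwhile detail since $\tilde a$ is defined only via a conditionally convergent series, so harmonicity off $o$ is not automatic. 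One small caveat on that step: to invoke Fubini you need absolute summability of the double series $\sum_{y}\sum_{t}\tilde P(x,y)\,|\tilde p_t(x,o) - \tilde p_t(y,o)|$, not merely the cited pointwise growth bound on $|\tilde a(y,o)-\tilde a(x,o)|$; however, the estimates in the proof of Proposition~\ref{P:aprioriPK} (the coupling tail bound~\eqref{eq:tailcoupling} combined with the subdiffusive estimate) are in fact term-by-term bounds, so the absolute version holds by the same argument, and the exponential jump tails close the gap as you say.
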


\begin{proof}
The proof is simply an application of the optional stopping theorem for the martingale $M_n = \tilde a(\tilde X_n, o ) - L^{\tilde X}_n (o)$, where $L_n^{\tilde X} (o) = \sum_{m=0}^n 1_{\{\tilde  X_m = o \}}$ denote the local time of $\tilde X$ at $o$ by time $n$. The application of the optional stopping is first done at time $\tau_R \wedge n$ which is bounded. The limit when $n \to \infty$ can be taken by dominated convergence for the first term and monotone convergence for the second. In fact, for the application of the dominated convergence theorem, one must be a little more careful than with simple random walk, since when leaving $B_R$, there is an unbounded set of possibilities for $\tilde X_{\tau_R}$. However the jump probabilities decay exponentially and $\tilde a(x, o)$ grows at most like $(\log | x - o |)^{c}$ as $x \to \infty$ by Proposition \ref{P:aprioriPK} (note here that $x,x'$ and $o$ is fixed while $R \to \infty$). This makes the application of the dominated convergence justified.
We give full details of this argument for the sake of completeness.

 To this end, note that $|\tilde X_{n \wedge \tau_R}| \leq 2|\tilde X_{\tau_R} |$ almost surely. Let $B'_R\subseteq B_R$ be the set of vertices connected by an edge to the outside of $B_R$, and let $\tau'_R$ be the first hitting time of $B'_R$. By the strong Markov property we get
\begin{align*}
\E_x(|\tilde X_{\tau_R}|) &=\sum_{z\in B'_R} \P_x(\tilde X_{\tau'_R}=z)  \E_z(|\tilde X_{\tau_R}| )
\end{align*}
and
\begin{align*}
 \E_z(|\tilde X_{\tau_R}| )\leq  \P_z(\tau_R=1) \E_z(|\tilde X_{\tau_R}| \mid \tau_R =1 ) + \P_z(\tau_R>1) \max_{w\in B_R} \E_w(|\tilde X_{\tau_R}|).
\end{align*}
Plugging the latter into the former and taking the maximum over $z\in B'_R$, we obtain
for all $x\in B_R$,
\begin{align*}
\E_x(|\tilde X_{\tau_R}|) &\leq \max_{z\in B'_R}  \E_z(|\tilde X_{\tau_R}| \mid \tau_R =1 )  +\max_{z\in B'_R}  \P_z(\tau_R>1) \max_{w\in B_R} \E_w(|\tilde X_{\tau_R}|).
\end{align*}
Finally, taking maximum over $x\in B_R$, we arrive at
\begin{align*}
\E_x(|\tilde X_{\tau_R}|) \leq \frac{1}{1-\max_{z\in B'_R} \P_z(\tau_R>1)}\max_{z\in B'_R } \E_z(|\tilde X_{\tau_R}| \mid \tau_R =1 ).
\end{align*}
The quantities on the right hand side are clearly finite due to exponentially decaying probabilities for the jumps of $\tilde X$ and the fact that $z\in B'_R$. Moreover the maximums are taken over a finite set.
This together with the fact that  $\tilde a(x, o) \lesssim (\log | x - o |)^{c} \lesssim |x|+|o|$ as $x\to \infty$ completes the proof.
\end{proof}

\begin{proof}[Proof of Proposition \ref{P:greenPK}]
  By Lemma \ref{L:LL}, we have
  $$
  \tilde G_R(x, o) - \tilde G_R(x',o) = - (\tilde a (x,o) - \tilde a (x',o)) + \E_x ( \tilde a(\tilde X_{\tau_R},o)) - \E_{x'} (\tilde a (\tilde X_{\tau_R},o )),
  $$
  so it suffices to prove
  \begin{equation}\label{goalPK}
  \E_x ( \tilde a(\tilde X_{\tau_R},o)) - \E_{x'} (\tilde a (\tilde X_{\tau_R},o )) \to 0
  \end{equation}
  as $R \to \infty$. This will follow rather simply from our coupling arguments, where we will choose the parameter $r$ in the stage 3 of the coupling to be $R / (\log R)^2$.

Reasoning as in \eqref{eq:finalstage}, we see that
  \begin{equation}\label{couplingball}
  \P_x( \tau_R < T) \lesssim \frac{( \log R)^d}{R}
  \end{equation}
  for some $d>0$
  as $R \to \infty$ while $x, x'$ are fixed of the same class.
   Since we already know from Proposition \ref{P:aprioriPK} that $a(x, o)$ grows at most like $\log (|x-o|)^{c}$, \eqref{couplingball} implies that the difference of expectations in the left hand side of \eqref{goalPK} is at most $O(( \log R)^{c+d}/R )$ and so tends to zero as $R \to \infty$.
\end{proof}

We will now consider random walks which are killed on a portion of the boundary of a large box $\Lambda_R$ but may have different (e.g., reflecting) boundary conditions on other portions of the boundary. We will show that the same result as Proposition \ref{P:greenPK} holds provided that the Dirichlet boundary conditions are, roughly speaking, macroscopic. More precisely, let $\Lambda_R \subset \Z^2$ be such that $B(0, R) \subset \Lambda_R $. Let $\partial \Lambda_R$ denote its (inner) vertex boundary,
and let $\partial_{D}\Lambda_R$ denote a subset of $\partial \Lambda_R$.
Suppose that $\tilde X^\Lambda$ is a (lazy) random walk with transitions given by $\tilde p(x,y)$ if $x, y \in \Lambda_R$ and suppose that the walk is
absorbed on $\partial_{D}\Lambda_R$. We suppose that $\partial_{D}\Lambda_R$ is such that from every vertex in $\Lambda_R$, $\partial_{D}\Lambda_R$
contains a straight line segment of length $\alpha R$, and at distance at most $\alpha^{-1} R$ from $x$, where $\alpha>0$ is a (small) positive constant. Note that these assumptions are satisfied for the domains $\cG_n$ we consider (consider blue/yellow vertices separately) in~Theorem~\ref{P:infinite_vol_intro}. Indeed, the (approximate rectangles) $\cG_n$ are constructed in such a way the both the the odd and even effective bulk random walks are killed on half of the upper side of $\cG_n$.

We do not specify the transition probabilities for $\tilde X^\Lambda$ when it is on $\partial \Lambda_R \setminus \partial_{D} \Lambda_R$. Let $\tilde G^{\Lambda_R} (x,o) = \E_x (\sum_{n = 0}^\infty \mathbf 1_{\{\tilde X^{\Lambda_R}_n = o \}})$ denote the corresponding unnormalised Green's function.

\begin{prop}
  \label{P:greenPK_gen}
  As $R \to \infty$, for any fixed $x,x'$ of the same class and any fixed  $o$,
  $$
  \tilde G^{\Lambda_R} (x, o) - \tilde G^{\Lambda_R} (x', o) \to -(\tilde a(x,o) - \tilde a(x',o)).
  $$
  As a consequence, the same convergence is true also for the nonlazy walk $X$ instead of $\tilde X$.
\end{prop}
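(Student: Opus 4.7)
The plan is to adapt the proof of Proposition \ref{P:greenPK} to the mixed boundary setting by combining the coordinatewise mirror coupling of Section \ref{S:suitability} with the observation that, since the starting points $x, x'$ are fixed while $\partial\Lambda_R$ recedes to infinity, the coupling of two walks started from $x$ and $x'$ completes well before either walk can reach $\partial\Lambda_R$. The reflecting portion of the boundary therefore plays essentially no role in the relevant time window.

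I first couple two lazy walks $\tilde X$ and $\tilde X'$ on the infinite graph $\Gamma$ via the coordinatewise mirror coupling with $r = \sqrt{t}/(\log t)^b$ and $t = R^2/(\log R)^{2b}$ for some fixed $b > 1$, and I couple each of $\tilde X^{\Lambda_R}, \tilde X'^{\Lambda_R}$ to the corresponding infinite-graph walk so that they agree until the first exit $\sigma$ from $\Lambda_R$. Writing $T$ for the coupling time and $E := \{T \leq t,\, \sigma > t\}$, the analysis of Section \ref{S:suitability} gives $\P(T > t) \lesssim (\log R)^a/R$, while Lemma \ref{L:subdiffusive} combined with the fact that $\partial\Lambda_R$ is at distance $\gtrsim R$ from the fixed points $x, x'$ yields $\P(\sigma \leq t) \lesssim \exp(-c(\log R)^2)$; hence $\P(E^c) = O((\log R)^{a+b}/R) \to 0$. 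On $E$, the two walks $\tilde X^{\Lambda_R}$ and $\tilde X'^{\Lambda_R}$ coincide from time $T$ onwards and in particular share the same killing time $\tau_D = \tau'_D$, so
\[
L^{\tilde X^{\Lambda_R}}_{\tau_D}(o) - L^{\tilde X'^{\Lambda_R}}_{\tau'_D}(o) = L^{\tilde X}_T(o) - L^{\tilde X'}_T(o).
\]
I then apply the optional stopping theorem to the martingale $M_n = \tilde a(\tilde X_n, o) - L^{\tilde X}_n(o)$ at the bounded time $T \wedge t$ (justified as in Lemma \ref{L:LL} using the polylogarithmic bound of Proposition \ref{P:aprioriPK} and the exponential decay of jump sizes), subtract the analogous identity for $x'$, and use that $\tilde X_T = \tilde X'_T$ on $E$ to arrive at
\[
\E\bigl(\mathbf 1_E (L^{\tilde X}_T(o) - L^{\tilde X'}_T(o))\bigr) = -\bigl(\tilde a(x, o) - \tilde a(x', o)\bigr) + o(1).
\]

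The principal technical obstacle is controlling the residual contribution to $\tilde G^{\Lambda_R}(x,o) - \tilde G^{\Lambda_R}(x',o)$ coming from $E^c$: since $L^{\tilde X^{\Lambda_R}}_{\tau_D}(o)$ is unbounded, a crude bound by $\P(E^c)$ alone does not suffice. I will resolve this via Cauchy--Schwarz, using that $L^{\tilde X^{\Lambda_R}}_{\tau_D}(o)$ is dominated by a geometric random variable with mean $O(\log R)$ (each excursion from $o$ has a fixed positive chance of being absorbed on $\partial_D\Lambda_R$ before returning, thanks to the macroscopic segment assumption on $\partial_D\Lambda_R$), and hence has second moment $O((\log R)^2)$; since $\P(E^c) = O(R^{-1+o(1)})$, this produces an error of $O(R^{-1/2 + o(1)})$, giving the claimed convergence. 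Finally, the non-lazy version follows because lazy and non-lazy Green's functions differ by the factor $2$ and the potential kernels scale identically.
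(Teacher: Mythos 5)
Your argument is correct and rests on the same underlying idea as the paper's --- the coordinatewise mirror coupling completes with high probability before the boundary $\partial\Lambda_R$ can influence the walks --- but the implementation differs in ways worth noting. The paper stops the full-plane coupled walks at $S_R$ (first exit from $B_R$), writes $\tilde G^{\Lambda_R}(x,o) - \tilde G^{\Lambda_R}(x',o)$ as the $B_R$ Green's function difference plus a tail term, and invokes Proposition~\ref{P:greenPK} directly for the first piece, so the only new work is the tail, which it bounds by a first moment via the strong Markov property at $S_R$: coupling-failure probability $\lesssim (\log R)^{d}/R$ times a conditional expected number of further visits $\lesssim \log R$ from Lemma~\ref{L:hitline} and the macroscopic-segment assumption. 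You instead work with a deterministic time $t = R^2/(\log R)^{2b}$, re-derive the optional-stopping identity for the potential-kernel martingale rather than reusing Proposition~\ref{P:greenPK}, and control the $E^c$ contribution by Cauchy--Schwarz with a second-moment bound $\E\bigl((L^{\tilde X^{\Lambda_R}}_{\tau_D}(o))^2\bigr) = O((\log R)^2)$. Both routes are sound, and the Cauchy--Schwarz bound --- though cruder than the paper's conditional first moment --- still yields $O(R^{-1/2+o(1)})$. Your route is somewhat longer because it duplicates the essential content of Proposition~\ref{P:greenPK}; more importantly, the $o(1)$ in your optional-stopping identity $\E(\mathbf 1_E(L^{\tilde X}_T(o) - L^{\tilde X'}_T(o))) = -(\tilde a(x,o) - \tilde a(x',o)) + o(1)$ implicitly requires bounding the $E^c$ contributions $\E(\mathbf 1_{E^c}\,\tilde a(\tilde X_{T\wedge t},o))$ and $\E(\mathbf 1_{E^c}\,L^{\tilde X}_{T\wedge t}(o))$ (and their primed analogues), which you did not spell out; these follow from Lemma~\ref{L:subdiffusive}, the polylogarithmic growth of $\tilde a$ implied by Proposition~\ref{P:aprioriPK}, and a further Cauchy--Schwarz step, so this is a gap in exposition rather than in substance.
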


\begin{proof}
For this proof we will need the following lemma, which says that from any point there is a good chance to hit the boundary without returning to the point, whence the expected number of visits to that point before hitting the boundary is small.

\begin{lemma}\label{L:hitline}
  There exists a constant such that the following holds for all $k\ge 2$ and vertex $o$ of $\Gamma$. Let $L$ be a lattice line at distance $k$ from $o$ and of same class as $o$. Then
  \begin{equation}\label{E:hitline}
  \P_o(T_L < T^+_o) \gtrsim (\log k)^{-1},
  \end{equation}
  where $T_L$ is the hitting time of $L$, $T^+_o$ is the return time to $o$.
  \end{lemma}

\begin{proof}[Proof of Lemma \ref{L:hitline}]
We start by noticing that, up to a factor equal to the total conductance at $o$, the probability on the left-hand side is equal to the effective conductance (or inverse of the effective resistance $\Reff (o ; L)$) between $o$ and $L$. Since the total conductance at $o$ is bounded away from 0 and $\infty$, it suffices to show that
$$
\Reff (o; L) \lesssim  \log k.
$$
This can either be proved directly or by comparison with the analogous estimate on $\Z^2$ through Rayleigh's monotonicity principle (see Chapter II of \cite{LyonsPeres}). A direct proof is to construct a unit flow $\theta$ from $o$ to $L$ and estimating its Dirichlet energy $\cE (\theta) = \sum_e \theta(e)^2 \text{res}(e)$, where $\text{res}(e)$ denotes the resistance of $e$. Such a unit flow can be constructed by the method of random paths, as discussed in (2.17) of \cite{LyonsPeres}: we consider a cone of fixed aperture whose apex is at $o$ and intersects $L$, then choose a line at random in that cone starting at $o$ and whose angle is uniformly selected among the set of possibilities. We get a directed lattice path $\pi$ from $o$ to $L$ by selecting a lattice path staying as close as possible to this random line (with ties broken in some arbitrary way), staying on the same sublattice as $o$ and $L$. Note that this path never uses long range edge along the real line, and in fact jumps only by $\pm 2e_i, i = 1, 2,$ at any given steps. A unit flow $\theta$ from $o$ to $L$ is obtained by setting $\theta (e) = \P ( e \in \pi) - \P ( -e \in \pi)$ (where $-e$ denotes the reverse of the edge $e$). Then if $e$ is at distance $j$ from $o$, $$
|\theta (e)| \le \P(e \in \pi) + \P ( - e \in \pi)  \lesssim \frac1j$$
since there are $O(j)$ edges at distance $j$. Hence
$$
\cE( \theta) \le \sum_{j=1}^k O(j) \frac1{j^2} \lesssim \log k.
$$
Since the effective resistance is smaller than the energy of any flow from $o$ to $L$, we get the desired bound.
\end{proof}

Now let us return to the proof of Proposition \ref{P:greenPK_gen}. We apply the full plane coordinatewise mirror coupling of Proposition \ref{P:greenPK} (that is, with the parameter $r$ chosen to be $R/(\log R)^2$), until the time $S_R$ one of the walks leaves the ball $B_R = B_R(0)$. If they have not coupled before $S_R$, we consider this a failure and will not try to couple them after: we let them evolve independently.

Then note that (we write $\tilde X$ for $\tilde X^{\Lambda_R}$ for simplicity)
\begin{align}
   \tilde G^{\Lambda_R} (x, o) - \tilde G^{\Lambda_R} (x', o)& = \E_x( L^{\tilde X}_{S_R} (o) )  - \E_{x'}( L^{\tilde X'}_{S_R} (o) ) \label{PKbigbox} \\
   & \ \ + \E \big( L^{\tilde X}_{(S_R, \infty)} (o)   -  L^{\tilde X'}_{(S_R, \infty)} (o) \big)\label{PKnotbigbox}
\end{align}
Note that the term in \eqref{PKbigbox} converges to $-(\tilde a(x,o) - \tilde a(x', o))$ by Proposition \ref{P:greenPK}. So it suffices to show that the term in \eqref{PKnotbigbox} converges to zero. However, this is an easy consequence of the following facts:

\begin{itemize}
\item If the coupling was successful before $S_R$, then the random variable in the expectation of \eqref{PKnotbigbox} is zero.

\item The probability that the coupling has failed (i.e., that the walks did not meet before leaving $B_R$) is  $\lesssim( \log R)^{4}/R$, by \eqref{couplingball}.

\item Conditionally on not having coupled by time $S_R$, the expected number of visits to $y$ after that time is  $\lesssim \log R$ by Lemma \ref{L:hitline} and by assumption on the Dirichlet part $\partial_{D}\Lambda_R$. (In fact, the lemma is stated for hitting an infinite line, but it is easy checked that the argument shows it is a segment of macroscopic size that is being hit with the stated probability).
\end{itemize}
This completes the proof.
\end{proof}

We apply this to the bulk effective random walk of Section \ref{S:inverseKfinite}.
This yields the following corollary which also concludes the proof of Theorem~\ref{P:infinite_vol_intro}.

\begin{corollary}
  \label{C:infinitevol} Let $D_n$ be an increasing sequence of domains such that $\cup_n D_n = \Z^2\cap \H$ as in Theorem~\ref{P:infinite_vol_intro}. Consider the free boundary dimer model on $D_n$ with weights as described in Corollary~\ref{C:finitevol}. Then, the inverse Kasteleyn matrix converges pointwise as $n \to \infty$ to a matrix indexed by the vertices of $ \Z^2\cap \H$, called the \textbf{coupling function}, and given in matrix notation by
    $$
  C = -AK^*,
   $$
  where
  \[
  A(u,v)=\frac1{2D(v,v)}\tilde a(u,v)=\frac1{D(v,v)} a(u,v)
  \]
is the normalised potential kernel associated with the effective (odd and even) bulk (nonlazy) random walks.

 In particular, $\mu_n$ converges weakly as $n \to \infty$ to a law $\mu$ which describes a.s.\ a random monomer dimer configuration on $\Z^2 \cap \H$.
\end{corollary}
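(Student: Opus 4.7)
The plan is to combine the random walk representation of $D_n^{-1}$ from Corollary~\ref{C:finitevol} with the convergence of Green's function differences to potential kernel differences from Proposition~\ref{P:greenPK_gen}. First I would establish pointwise convergence of $K_n^{-1}$ to $C = -AK^*$, and then deduce weak convergence of $\mu_n$ via Kasteleyn theory.

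Fix $u,v \in \Z^2 \cap \H$ and take $n$ large enough that both lie well inside $\cG_n^0$. Since $K^*$ has finite support in each column, the matrix identity $K_n^{-1} = D_n^{-1}K_n^*$ gives
$$K_n^{-1}(u,v) = \sum_w D_n^{-1}(u,w)K^*(w,v)$$
as a finite, $n$-independent sum over the at most five neighbours $w$ of $v$ in $\cG_n^0$. By Theorem~\ref{T:finitevol_intro}, only those $w$ lying in the same row-parity class (even or odd) as $u$ contribute. A short computation using the gauge-changed Kasteleyn weights of Section~\ref{S:bij} verifies a discrete Cauchy--Riemann identity $\sum_w K^*(w,v) = 0$ when $v$ is in the bulk (summed over $v\pm 1,v\pm i$ one finds $1-1-i+i=0$ irrespective of the parity of the row of $v$), and an analogous identity restricted to the contributing neighbours holds when $v \in V_0$ (where the triangular edges of weight $iz$ must also be accounted for). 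This cancellation lets me rewrite $K_n^{-1}(u,v)$ as a finite linear combination of \emph{differences} $D_n^{-1}(u,w) - D_n^{-1}(u,w_\star)$ for a fixed reference contributing neighbour $w_\star$.

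By Theorem~\ref{T:finitevol_intro}, each such difference equals, up to the global sign $(-1)^{\Re(u-v)}$ in the even case (constant over the contributing $w$'s, which differ by $2$ in the real direction), a difference of normalised Green's functions of the effective walk $\Zev$ or $\Zodd$ at two vertices of the same class. The approximating rectangles $\cG_n$ of Theorem~\ref{P:infinite_vol_intro} are designed precisely so that both effective walks are killed on a macroscopic portion (half of the upper row) of $\partial \cG_n$, exactly the Dirichlet hypothesis required by Proposition~\ref{P:greenPK_gen}. Applying that proposition, each Green's function difference converges to $-[a(u,w) - a(u,w_\star)]/D(w,w)$, where $a = \tilde a/2$ is the non-lazy version of the potential kernel from Definition~\ref{D:potkerninf} (the factor of two converting between lazy and non-lazy chains). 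Summing the finitely many contributions and recognising $A(u,w) = a(u,w)/D(w,w)$ yields $K_n^{-1}(u,v) \to -(AK^*)(u,v) = C(u,v)$.

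For the weak convergence of $\mu_n$: by the bijection of Section~\ref{S:bij}, the free boundary dimer measure on $\cG_n$ corresponds to a standard dimer measure on $\cG_n^0$, and Kasteleyn theory expresses the probability of any cylinder event (any event depending on the presence or absence of finitely many edges) as a Pfaffian of a fixed-size principal submatrix of $K_n^{-1}$. The pointwise convergence of $K_n^{-1}$ just established therefore implies convergence of all such Pfaffians, hence of all finite-dimensional marginals of $\mu_n$, which is exactly convergence in the product topology on $\{0,1\}^{E(\H)}$. The main conceptual work --- constructing the potential kernel and proving the coupling estimate \eqref{eq:tailcoupling} underpinning it --- has already been carried out in Section~\ref{S:IVL}; what remains here is essentially bookkeeping to verify the discrete Cauchy--Riemann cancellation in each configuration of $v$ (particularly the triangular contributions when $v\in V_0$) and to track signs carefully, especially the $(-1)^{\Re(u-v)}$ factor in the even case that must combine with the Kasteleyn weights to produce the clean formula $C = -AK^*$.
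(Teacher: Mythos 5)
Your plan matches the paper's: expand $K_n^{-1} = D_n^{-1}K_n^*$, use the block structure and the random walk representation from Theorem~\ref{T:finitevol_intro} / Corollary~\ref{C:finitevol}, interpret $K^*$ as a difference operator so that only differences of $D_n^{-1}$ survive, and invoke Proposition~\ref{P:greenPK_gen} to pass to potential kernel differences; then use Kasteleyn theory for weak convergence. The lazy-to-nonlazy factor of two in $A$, and the check that the domains of Theorem~\ref{P:infinite_vol_intro} provide the macroscopic Dirichlet portion needed by Proposition~\ref{P:greenPK_gen}, are both handled correctly.

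There is, however, a concrete gap in the claim that the discrete Cauchy--Riemann cancellation ``restricted to the contributing neighbours holds when $v \in V_0$.'' Suppose $v \in V_0$ and $u$ lies in an odd row. The contributing neighbours $w$ of $v$ (those in the \emph{odd} block of $D_n$, so that $D_n^{-1}(u,w) \neq 0$) are $v+i \in V_1$ together with the two triangle vertices $t_1,t_2 \in V_{-1}$ of $\cG_n^0$, with Kasteleyn weights $K(v,v+i) = -i$ and $K(v,t_j) = iz$; these sum to $-i + 2iz = i(2z-1)$, which is nonzero for $z \neq 1/2$. So the rewrite into $\sum_w \bigl[D_n^{-1}(u,w)-D_n^{-1}(u,w_\star)\bigr]K^*(w,v)$ leaves a divergent residual $D_n^{-1}(u,w_\star)\sum_w K^*(w,v)$. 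Worse, the entries $D_n^{-1}(u,t_j)$ with $t_j \in V_{-1}$ are not covered by Corollary~\ref{C:finitevol} at all: the effective walks live on $\Vo(\cG)$ and $\Ve(\cG)$, which exclude $V_{-1}$, and these $V_{-1}$ entries individually diverge as the triangle row grows. One clean repair is to first prove the convergence for all $(u,v)$ with $v \notin V_0$ --- which goes through exactly as you describe, since for any such $v$ the contributing weights do cancel --- and then obtain the remaining case $u \in \Vo$, $v \in V_0$ from the antisymmetry $K_n^{-1}(u,v) = -K_n^{-1}(v,u)$: in the latter expression the contributing neighbours of $u$ are the bulk vertices $u \pm i$, whose weights $\mp i$ do cancel. (The case $u,v$ both in $V_0$ is fine as you have it: the contributing $w$'s are then $v\pm 1$ with weights $\pm 1$.)
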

\begin{proof}
The first part of the statement follows from the random walk representation of $K^{-1}$ in finite volume from Corollary~\ref{C:finitevol}, the interpretation of $K^*$ as a difference operator,
and the convergence of differences of Green's functions of the bulk effective walk from Proposition~\ref{P:greenPK_gen}.

The convergence in law is a standard application of Kasteleyn theory.
Indeed, this follows from the fact the local statistics of $\mu_n$ are described by local functions of the inverse Kasteleyn matrix (which we will for instance recall in Theorem~\ref{thm:Kasteleyn}).
It is also clear that $\mu$ is supported on monomer-dimer configurations on $\Z^2\cap \H$.
\end{proof}

\section{Scaling limit of discrete derivative of potential kernel}
\label{S:PKscaling}
Let $x, y \in \bar \cG_\delta := (\delta \Z)^2 $. The purpose of this section will be to prove a scaling limit for the discrete derivatives of the potential kernel $\tilde a(x,y) - \tilde a(x',y)$ associated to the lazy (odd) effective random walk. (Contrary to the previous section, the second variable will more typically be called $y$ than $o$ in this section). As mentioned at the beginning of Section \ref{S:IVL}, the same result holds for both the even and odd walk, but for convenience (and also because this is a slightly more complicated case) we write our proofs in the odd case.

\begin{thm} \label{T:PKscaling}
Let $x'= x \pm 2\delta e_i \in  \bar \cG_\delta$, $i =1, 2$, and $y \in \bar \cG_\delta $. Suppose $\Im (x) \Im (y) \ge 0$ and $\min( |\Im (x)|, | \Im (y) | )\ge \rho$ for some fixed $\rho>0$.
Then there exists $\eps>0$ such that as the mesh size $\delta \to 0$, uniformly over such points $x, y$,
\begin{equation}
  \tilde a(x', y) - \tilde a(x,y) =
  \begin{cases}
    \dfrac{2}{\pi} \Re\Big (\dfrac{x'-x}{x-\bar y}\Big) + o(\delta^{1+ \eps}) & \text{ if  $x,y$ are of different class}\\
\dfrac{4}{\pi} \Re \Big(\dfrac{x'-x}{x-y}\Big)
-  \dfrac{2}{\pi} \Re \Big(\dfrac{x'-x}{x-\bar y}\Big)  + o(\delta^{1+ \eps}) + O( \frac{\delta}{|x-y|})^{2}
  & \text{ if  $x,y$ are of the same class}
  \end{cases}\label{eq:PKscaling}
\end{equation}
\end{thm}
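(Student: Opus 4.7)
The strategy is to introduce an intermediary \emph{coloured random walk} $Z^{\text{col}} = (X, C)$ on $2\Z^2 \times \{0, 1\}$, where $X$ is the standard lazy simple random walk on $2\Z^2$ and the colour $C$ flips with a fixed probability $p \in (0,1)$ at each visit of $X$ to $\R$ (and remains constant otherwise). The purpose of $Z^{\text{col}}$ is to cleanly isolate the two mechanisms at work: the bulk diffusion, identical to that of ordinary lazy full-plane SRW, and the class-randomisation induced by excursions to the real line. The proof then splits into the two reductions referred to in the introduction as Propositions \ref{P:eff_col} and \ref{P:col_half}: first, that the potential kernel of the effective bulk walk $Z$ from Section~\ref{S:inverseKfinite} agrees with that of $Z^{\text{col}}$ up to an error $o(\delta^{1+\eps})$; and second, that the potential kernel of $Z^{\text{col}}$ admits a closed-form expression in terms of the full-plane SRW potential kernel via the reflection principle.

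\textbf{Step 1: from coloured walk to SRW.} Let $N_n$ denote the number of visits of $X$ to $\R$ by time $n$. The conditional probability of an even number of flips in $N_n$ Bernoulli$(p)$ trials equals $\tfrac12(1 + (1-2p)^{N_n})$, which tends to $\tfrac12$ exponentially fast in $N_n$. Combined with the reflection principle for $X$ on $\Z^2$, namely $\P_x(X_n = y, N_n = 0) = p_n^{\text{full}}(x, y) - p_n^{\text{full}}(x, \bar y)$ for $x, y \in \H$, this gives, up to a remainder controlled by the a priori heat-kernel bound \eqref{aprioriHK} and the exponential decay of $(1-2p)^{N_n}$,
\begin{align*}
\P_x(X_n = y,\, C_n = C_0) &= p_n^{\text{full}}(x, y) - \tfrac12 p_n^{\text{full}}(x, \bar y) + \text{negl.},\\
\P_x(X_n = y,\, C_n \ne C_0) &= \tfrac12 p_n^{\text{full}}(x, \bar y) + \text{negl.}
\end{align*}
Summing $p_n(y, y) - p_n(x, y)$ in $n$ and plugging in the sharp asymptotic for the lazy full-plane SRW potential kernel, $a_{\text{full}}(z) = \tfrac{4}{\pi} \log|z| + \kappa + O(|z|^{-2})$ (Chapter 4 of \cite{LawLim}), yields
\begin{align*}
\tilde a^{\text{col}}_{=}(x, y) = a_{\text{full}}(x - y) - \tfrac12 a_{\text{full}}(x - \bar y) + \chi_{=}(y),\qquad \tilde a^{\text{col}}_{\ne}(x, y) = \tfrac12 a_{\text{full}}(x - \bar y) + \chi_{\ne}(y),
\end{align*}
where $\chi_{=}$ and $\chi_{\ne}$ depend only on $y$. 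Taking discrete gradients in $x$ now reproduces the two formulas in \eqref{eq:PKscaling}: the $O((\delta/|x-y|)^2)$ correction in the same-class case is the contribution from the $O(|z|^{-2})$ remainder in $a_{\text{full}}$ near $z = x - y$, while the analogous correction near $z = x - \bar y$ is absorbed into $o(\delta^{1+\eps})$ since $|x - \bar y| \ge 2\rho$.

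\textbf{Step 2: from the effective walk to the coloured walk.} It remains to check that for an appropriately chosen $p$, the effective walks $Z_{\text{odd}}$ and $Z_{\text{even}}$ of Section~\ref{S:inverseKfinite} have the same potential kernel as $Z^{\text{col}}$, with error $o(\delta^{1+\eps})$. I would build a coupling running the two walks in parallel in the bulk (where their transition kernels given by \eqref{BulkRW} and \eqref{Rev} coincide with ordinary lazy SRW on $2\Z^2$) and synchronising them during excursions to $\R$ using a variant of the coordinatewise mirror coupling of Section~\ref{S:suitability}. The exponentially decaying tails of the boundary jumps of $Z$ (Lemma~\ref{T:eff}), combined with the exact formula \eqref{eq:PKformula} for the one-dimensional potential kernel, should allow one to show that after each boundary excursion the exit laws of $Z$ and $Z^{\text{col}}$ (in horizontal position and class) are close in total variation by $\delta^{c}$ for some $c > 0$. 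Summing over the excursions performed during the relevant time window and reprising the summation scheme behind Proposition~\ref{P:aprioriPK} then produces the required $o(\delta^{1+\eps})$ bound on the difference of potential kernels.

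\textbf{Main obstacle.} The hardest step will be the calibration of $p$ and the quantitative control of the equipartition of class after a single boundary excursion: one must establish, with an error of size $\delta^{\eps'}$ for some $\eps' > 0$, that the class of $Z$ on exiting an excursion to $\R$ is $(\tfrac12, \tfrac12)$-distributed up to subpolynomial corrections. This requires a delicate interplay between the exact one-dimensional potential-kernel analysis of Section~\ref{sec:auxiliary} and the two-dimensional excursion geometry near $\R$, and the unbounded (though exponentially tailed) jumps of $Z_{\text{odd}}$ along $\R$ are what makes this genuinely nontrivial. Once this quantitative equipartition is in place, everything else, including the concluding Taylor expansion of Step~1, is essentially routine.
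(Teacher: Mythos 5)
Your proposed decomposition is exactly the one the paper uses: a coloured random walk on $(2\delta\Z)^2$ with per-visit colour-flip probability $p$, a comparison of its potential kernel with that of the effective walk (Proposition~\ref{P:eff_col}), and a comparison of the coloured kernel with $\tfrac12$ of the usual lazy SRW kernel via the reflection principle (Proposition~\ref{P:col_half}). So the route is the paper's route; the issue is in the justification of your Step~1.

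The claim that the remainder term $\tfrac12\sum_{k\ge 1}\P_x(X_n=y,\,N_n=k)\lambda^k$ is ``controlled by the a priori heat-kernel bound and the exponential decay of $(1-2p)^{N_n}$'' does not hold as stated. On the event that the walk reaches $\R$, the number of visits $N_n$ has a non-trivial mass on small values of $k$ (e.g.\ $k=1,2$), for which $\lambda^k$ is of order one; the exponential decay in $k$ is of no use for those trajectories, and they are not suppressed by any heat-kernel bound. The correct mechanism is a \emph{gradient cancellation}: one must observe that on the event $\{T_L<T_\R\}$ (the walks from $x$ and $x'$ couple at the mirror line before ever reaching $\R$), the contributions to $\nabla_x\bigl(\tilde q_t(x,\bar y)\,\E_{\br}[\lambda^N]\bigr)$ from $x$ and $x'$ agree exactly — this is the paper's Lemma~\ref{L:B3}. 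That cancellation leaves only the trajectories with $T_\R<T_L$, which have probability $O(\delta)$ by gambler's ruin, and this $O(\delta)$ must then be combined with an estimate showing that such trajectories go on to visit $\R$ at least $(\log t)^2$ times except on a further bad event of probability $\lesssim \delta\,(\log t)^c/\sqrt{t}$ (the paper's events $B_1$, $B_2$). Your sketch contains none of this; without the $\{T_L<T_\R\}$ cancellation, summing the gradient of the remainder over $n$ does not produce anything better than $O(\delta)$, which is not $o(\delta^{1+\eps})$.

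Your ``Main obstacle'' paragraph also slightly misidentifies the difficulty in Step~2. The per-visit colour-flip rate is not something to be calibrated approximately: by choosing $p$ as in \eqref{paritychange}, the effective walk and the coloured walk flip class with \emph{exactly} the same probability on each boundary visit, and the Parity coin in the coupling can be shared. The genuine source of error there is spatial, not chromatic: the effective walk makes unbounded (though exponentially tailed) horizontal jumps on $\R$ while the coloured walk moves by $\pm 2\delta$, so a position discrepancy accumulates and must be repaired in the bulk via the mirror phases; the paper controls this via the martingale estimate of Lemma~\ref{L:discrep} and Lemma~\ref{L:successfulcoupling}. A per-excursion total-variation bound of size $\delta^c$ on the joint position-and-class exit law is not the right target and would in any case not be true for the position marginal.
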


To prove this theorem, we will first show that the potential kernel can be compared to that of a \textbf{coloured random walk} on the lattice. {The coloured random walk is a lazy simple random walk on the lattice $(2\delta\Z)^2$ which carries a black or white colour (in addition to its position). Its position moves like simple random walk on the lattice. It changes colour with some fixed probability $p\in (0,1)$ each time it touches the real line independently of the rest, and otherwise remains constant. If $X$ is a coloured random walk, we will use $\sigma(X_s)$ to denote the colour of the coloured walk $X$ at time $s$ (and again, this is different from the colour of the vertex $X_s$): thus, we will write $\sigma(X_s)=\bullet$ if $X$ is black at time $s$, and $\sigma(X_s)=\circ$ if $X$ is white at time $s$. Although $X_s$ consists both of a position $x \in (2\delta \Z)^2$ and a colour, we will sometimes with an abuse of notation refer to $X_s$ as only a position.

\begin{remark}We warn the reader that this should \emph{not} be confused with the black/white colouring (which we call class precisely to avoid confusion) of the vertices of our graph $\bar \cG_\delta$: indeed, the position of the coloured walk is in $(2\delta \Z)^2$ and so its ``class'' in $\bar \cG_\delta$ remains constant.
\end{remark}

Note that $x$ and $x'$ are necessarily of the same class (hence the same colour). However, $y$ may be of a different colour. 
We will choose $p$ to correspond to the probability that the odd effective walk makes a jump of odd length when it touches the real line: thus,
\begin{equation}\label{paritychange}
p ={\tfrac 14 \sum_{k\in \mathbb Z} q^{\infty}_{0,(2k+1)e_1}}
\end{equation}
where $q^{\infty}$ is as in \eqref{qinfty}.

We will prove the following two results. Let $y, x \in (2\delta\Z)^2$ and choose a colour among $\{ \circ, \bullet\}$, say $\bullet$. Let $\tilde a^{\bullet} (x,y)$ denote the potential kernel of the coloured random walk, constructed as in Definition \ref{D:pk} but only counting visits to $y$ with the predetermined colour $\bullet$: that is,
$$
\tilde a^\bullet(x,y) = \sum_{s=1}^\infty \P ( {X_s = y; \sigma (X_s) = \bullet})
$$
where $X$ is a coloured walk starting from $x$ with initial  colour $\bullet$.
The fact that the series defining $\tilde a^\bullet$ converges is an immediate consequence of the arguments in Section \ref{S:suitability}, which apply much more directly here.}

The first result below shows that the potential kernel of the lazy effective walk and of the coloured walk are quite close to one another, in the sense that the difference in their discrete derivatives are of lower order than $\delta$, our target for Theorem \ref{T:PKscaling}. In the next statement we write $y\not \sim x$ to denote that $x$ and $y$ are of different class.

\begin{prop}\label{P:eff_col}
Fix $\rho>0$. Let
$x, y \in \cG_\delta$, and let $z = x + \delta \mathbf 1_{y \not \sim x }$ (resp $z' = x' + \delta \mathbf 1_{y \not \sim x}$), so that $z$ and $z'$ are of the same class as $y$. Let us write $\nabla_x f(x) $ for $f(x') - f(x)$ (resp. $\nabla_z f(z) = f(z' ) - f(z)$).
Then there exists $\eps>0$ such that as $\delta \to 0$,
$$
  | \nabla_x \tilde a(x, \bar y) - \nabla_z \tilde a^\bullet (z, \bar y) | \lesssim \delta^{1+ \eps},
$$
uniformly over $x,y$, with $\min (\Im (x), \Im (y)) \ge \rho$.
\end{prop}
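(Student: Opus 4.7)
The plan is to couple the lazy effective walk $X$ with the lazy coloured walk $Y$, and to compare the two potential kernels via a time-by-time control of transition probabilities, exploiting the calibration \eqref{paritychange} which equates $p$ to the marginal class-flip probability of $X$ on the real line.

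First I would construct the coupling as follows. When both walks are at points with $\Im > 0$, I let them move in parallel using shared laziness and coordinate coins, so their positions stay exactly synchronised. When $X$ is on (or lands on) the real line and makes its long-range move of signed size $k$ along $V_1$, I use auxiliary randomness to select $Y$'s next horizontal step from $\{+2\delta,-2\delta\}$ in a manner correlated with the sign of $k$, and I let $Y$ flip colour exactly when $k$ is odd (i.e., when $X$ flips class). By \eqref{paritychange} this is consistent with $Y$'s marginal transitions; the class of $X$ and the colour of $Y$ then remain perfectly aligned, while the position discrepancy $\Delta_n := X_n - Y_n \in (\delta\Z)^2$ grows only through the excesses $|k|-2$ of the horizontal jumps of $X$, each a mean-zero variable with exponential tail by Lemma \ref{T:eff}.

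Second, I would compare $\tilde p^X_s(x, \bar y)$ and $\tilde p^Y_s(z, \bar y; \bullet)$ using this coupling. By construction the colour constraint on $Y$ is respected on the coupling event; the remaining discrepancy is driven by $\Delta_s$. Because $\bar y$ is at distance $\ge \rho$ from $\R$, both transition kernels at $\bar y$ enjoy a uniform Lipschitz bound in the starting point of the form $s^{-3/2}(\log s)^{O(1)}$ per lattice step, obtained from a standard parabolic gradient estimate for lazy simple random walk on $(2\delta\Z)^2$ (a process to which the position marginals of both $X$ and $Y$ reduce away from the boundary near $\R$). Combined with \eqref{aprioriHK}, this leads to
\[
|\tilde p^X_s(x, \bar y) - \tilde p^Y_s(z, \bar y; \bullet)| \lesssim \E\big[|\Delta_s| \wedge 1\big] \cdot s^{-3/2}(\log s)^{O(1)},
\]
with $\E|\Delta_s|$ controlled by the exponential tails of the jump excesses and the sublinear number of visits of $X$ to $\R$ by time $s$.

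Third, to obtain the discrete derivative estimate I would run the above coupling simultaneously for the two pairs $(x,z)$ and $(x',z')$, while \emph{also} coupling the two copies of $X$ to each other (and likewise for $Y$) via the coordinatewise mirror coupling of Section \ref{S:suitability}. Since $x$ and $x'$ are adjacent, the mirror coupling merges the two $X$-walks within an additional error of order $\delta \cdot s^{-3/2}(\log s)^{O(1)}$ in the transition-kernel difference (analogous to Proposition \ref{P:aprioriPK}), and likewise for the two $Y$-walks. Subtracting and summing over $s$ — with a cutoff at $s \simeq \delta^{-2+\eta}$ beyond which the tail estimates \eqref{eq:tailcoupling} for both walks give rapid decay — multiplies the factor $\delta$ from the mirror coupling by a factor $\delta^{\eps'}$ from the first coupling's error, yielding the claimed bound $O(\delta^{1+\eps})$. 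The main obstacle will be establishing the two Lipschitz bounds uniformly in $\delta$: the first for $\tilde p^Y_s(\cdot, \bar y; \bullet)$ at macroscopic distance from the real line (essentially classical for SRW, with a strong Markov decomposition at the last visit to $\R$ handling the colour constraint), and the second controlling the effect of shifting the starting point by $2\delta$. Additional bookkeeping is needed to handle the case $y \not\sim x$, where the shift $z = x + \delta\mathbf 1_{y\not\sim x}$ changes the sublattice and must be propagated through the coupling without breaking the class/colour alignment.
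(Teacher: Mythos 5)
Your high-level plan (couple the effective walk with a coloured walk so that class and colour stay aligned, then pass to the gradient by running the mirror coupling of Section~\ref{S:suitability} on top) is the paper's, but the $(X,Y)$-coupling you sketch has a structural flaw that prevents the final step from going through. You let the discrepancy $\Delta_n = X_n - Y_n$ accumulate forever and then hope to bound $|\tilde p^X_s - \tilde p^Y_s|$ by $\E[|\Delta_s|\wedge 1]\cdot\|\nabla \tilde p_s\|_\infty$ and the gradient difference by the \emph{product} of this with the mirror-coupling bound. That factorisation is not a valid deduction: knowing that $|f-g|$ is small and that $|\nabla f|$ is small does not control $|\nabla(f-g)|$. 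The paper instead writes
\[
\nabla_x \tilde p_t (x, o) - \nabla_z \tilde p^\bullet_t (z, o) = \E\big( \mathbf1_{\{X_t =o\}} - \mathbf1_{\{X'_t = o\}} - \mathbf1_{\{Z_t = o; \sigma(Z_t) = \bullet\}} + \mathbf1_{\{Z'_t = o; \sigma(Z'_t) = \bullet\}}\big),
\]
and to squeeze this below $t^{-3/2-\eps}$ one needs the four indicators to cancel \emph{exactly} on a high-probability event, so that the residual probability genuinely multiplies against $\P(X,X'\text{ not coupled by }T_\R\wedge t/2)$ and against $\sup_u \tilde p_u(\cdot,\bar y)\lesssim \delta^2(\log 1/\delta)^c$. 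Your coupling cannot deliver that: once $\R$ has been visited, $Y_t \neq X_t$ almost surely, and after $X,X'$ merge one has $Y_t-Y'_t=\Delta'_t-\Delta_t\neq0$ generically, so the four terms do not vanish together and you have no cancellation mechanism. The paper fixes exactly this by interleaving the $(X,Z)$-coupling with repeated \emph{reset phases} $[\tau_n,\sigma_{n+1}]$: between visits to $\R$ the two walks are mirror-coupled across a vertical line; when a phase succeeds the discrepancy is set back to zero; and visits to $\bar y$ (which sits at macroscopic height) only occur during such phases, so on $\bigcap_k \cG_k$ the cancellation is exact. Lemmas~\ref{L:backforth}--\ref{L:successfulcoupling} then bound $\P(\bigcup_k\cG_k^c)\lesssim t^{-2\eps}$, which is precisely the factor your heuristic ``$\delta^{\eps'}$ gain'' is trying to capture.

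Two further points. The ``standard parabolic gradient estimate'' you invoke for $\tilde p^X_s$ is not available off the shelf: $X$ is not a finite-range walk, and the bound $|\tilde p_s(x,o)-\tilde p_s(x',o)|\lesssim s^{-3/2}(\log s)^a$ used in the paper is itself the output of the coupling argument (display~\eqref{HKdiff}), not of a parabolic Harnack inequality; similarly the handling of the colour constraint on $\tilde p^\bullet_s$ ultimately rests on the explicit bridge computation in the proof of Proposition~\ref{P:col_half}, not merely on a strong Markov decomposition. Finally, your cutoff $s\simeq\delta^{-2+\eta}$ points the wrong way: $\sum_{s>s_0}s^{-3/2}(\log s)^a\lesssim s_0^{-1/2}(\log s_0)^a$ is still $\gg\delta$ at $s_0=\delta^{-2+\eta}$, so one must push the improved estimate~\eqref{eq:effcolHK} out to $s_0=\delta^{-2-\eps}$ (as the paper does) before the coarse tail bound can take over.
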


The next proposition says that the potential kernel of the coloured walk is close to $1/2$ that of the regular lazy simple random walk: this is because when the walk touches the real line, it does so many times in a row typically, and so is roughly equally likely to end up with the colour $\bullet$ or $\circ$. Moreover in the above setting the walk is forced to touch the real line in order to go from $x$ to $\bar y$. Let $\tilde b(x, y) = \tilde b(x-y)$ denote the potential kernel of lazy simple random walk on $(2\delta\Z)^2$.

\begin{prop}\label{P:col_half} In the same setting as Proposition \ref{P:eff_col},
$$
  | \nabla_x \tilde a^\bullet (z, \bar y) - \frac12\nabla_z \tilde b(z, \bar y)   | \lesssim \delta^{1+ \eps},
$$
for some $\eps>0$.
\end{prop}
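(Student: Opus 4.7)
The plan is to first decompose $\tilde a^\bullet$ via the colour dynamics. Starting from colour $\bullet$, the colour flips independently with probability $p$ at each visit of $X$ to $\R$, so conditionally on the trajectory of $X$,
\[
\P\bigl(\sigma(X_s) = \bullet \,\big|\, X\bigr) = \tfrac{1}{2}\bigl(1 + (1-2p)^{L_s}\bigr),
\]
where $L_s$ denotes the number of visits of $X$ to $\R$ in $(0, s]$. Working with Green's functions killed outside a large box (as in Section~\ref{S:IVL}) and taking the box size to infinity then yields
\[
\nabla_x \tilde a^\bullet(z, \bar y) - \tfrac{1}{2}\nabla_z \tilde b(z, \bar y) = \tfrac{1}{2}\,\nabla_z H(z, \bar y),
\]
where $H(w, \bar y) := \sum_{s\ge 0} \E_w\bigl[\mathbf{1}_{X_s = \bar y}(1-2p)^{L_s}\bigr]$; the absolute convergence of this series will follow from the bound below. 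The task thus reduces to showing $|\nabla_z H(z, \bar y)| = o(\delta^{1+\eps})$.

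Next I would establish the uniform sup-norm estimate
\[
|H(w, \bar y)| \lesssim \delta \log(1/\delta) \qquad \text{for } w \in B_{\rho/2}(z).
\]
Setting $\eta := |1-2p| < 1$, one has $|H(w, \bar y)| \le \bar H(w, \bar y) := \sum_s \E_w\bigl[\mathbf{1}_{X_s = \bar y}\eta^{L_s}\bigr]$, which is the Green's function of lazy SRW on $(2\delta\Z)^2$ killed independently with probability $1-\eta$ at each visit to $\R$. Since 2D SRW is recurrent, $\tau_{\bar y} < \infty$ a.s., and the strong Markov property at $\tau_{\bar y}$ gives
\[
\bar H(w, \bar y) = \E_w\bigl[\eta^{L_{\tau_{\bar y}}}\bigr] \cdot \bar H(\bar y, \bar y).
\]
Because $\Im w$ and $-\Im(\bar y)$ are bounded below by $\rho/2$, the vertical projection of $X$ (a lazy 1D SRW) must cross the origin before $X$ can reach $\bar y$; a classical gambler's-ruin computation shows that the number of visits to $0$ made by this projection before first hitting $\Im(\bar y)/(2\delta)$ is geometric with parameter $\asymp \delta/\rho$, and computing its Laplace transform yields $\E_w[\eta^{L_{\tau_{\bar y}}}] \lesssim \delta$. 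For the second factor, a one-step renewal decomposition gives $\bar H(\bar y, \bar y) = 1/(1-q)$ with $q \le 1 - p_{\R}(1-\eta)$, where $p_{\R}$ is the probability that a walk started at $\bar y$ visits $\R$ before returning to $\bar y$; the standard effective-resistance bound $\Reff(\bar y, \R) \asymp \log(1/\delta)$ gives $p_\R^{-1} \lesssim \log(1/\delta)$, hence $\bar H(\bar y, \bar y) \lesssim \log(1/\delta)$. Multiplying the two factors yields the claimed bound.

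Finally I would exploit discrete harmonicity. A first-step Markov decomposition shows that for $w \neq \bar y$ with $\Im w \ge 4\delta$ (so that no neighbour of $w$ lies on $\R$), one has $H(w, \bar y) = \sum_{w'} P(w, w') H(w', \bar y)$, i.e., $H(\cdot, \bar y)$ is discrete $P$-harmonic. The ball of continuum radius $\rho/2$ around $z$ lies entirely in this regime for $\delta$ small, so the standard discrete gradient estimate on a lattice ball of radius $\asymp \rho/\delta$ yields
\[
|\nabla_z H(z, \bar y)| \lesssim \frac{\delta}{\rho}\,\sup_{|w-z| \le \rho/2} |H(w, \bar y)| \lesssim \delta^2 \log(1/\delta),
\]
which is $o(\delta^{1+\eps})$ for any $\eps \in (0,1)$, as required.

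The main obstacle is the factorised sup-norm estimate $|H(w, \bar y)| \lesssim \delta \log(1/\delta)$: neither of the two factors $\E_w[\eta^{L_{\tau_{\bar y}}}] = O(\delta)$ and $\bar H(\bar y, \bar y) = O(\log(1/\delta))$ is individually small enough, and the infinite-volume divergence of the 2D Green's function enters delicately into the second factor, so both must be controlled together via the factorisation at $\tau_{\bar y}$. Once this pointwise bound is in place, promoting it to an $o(\delta^{1+\eps})$ gradient bound via discrete harmonicity is routine.
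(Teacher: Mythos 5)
Your proposal is correct, and it takes a genuinely different and rather cleaner route than the paper's. Both proofs start from the same exact colour-flip identity $\P(\sigma(X_s)=\bullet\mid X)=\tfrac12(1+\lambda^{L_s})$ with $\lambda=1-2p$, and hence from the decomposition $\tilde p^\bullet_s=\tfrac12\tilde q_s+\tfrac12\,\E[\mathbf 1_{X_s=\bar y}\lambda^{L_s}]$. But from there the paths diverge. The paper stays at the level of heat kernels: for each $t$ it bounds $|\nabla_x(\tilde q_t(x,\bar y)\E_{\br}[\lambda^N])|$ by isolating two bad bridge events $B_1,B_2$ (Lemmas \ref{L:B1}--\ref{L:B2}) and then exploiting an \emph{exact} reflection cancellation on $\{T_L<T_\R\}$ (Lemma \ref{L:B3}), followed by a sum over $t$. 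You instead pass immediately to the absolutely convergent Green's-function remainder $H(w,\bar y)=\sum_s\E_w[\mathbf 1_{X_s=\bar y}\lambda^{L_s}]$, prove the uniform sup bound $|H(w,\bar y)|\lesssim\delta\log(1/\delta)$ on a macroscopic ball via the strong-Markov factorisation $\bar H(w,\bar y)=\E_w[\eta^{L_{\tau_{\bar y}}}]\bar H(\bar y,\bar y)$ (with the two factors controlled by vertical gambler's ruin and by $\Reff(\bar y,\R)\asymp\log(1/\delta)$), and then convert the sup bound into a gradient bound via discrete harmonicity of $H(\cdot,\bar y)$ on $\{\Im w\geq 4\delta\}$. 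This avoids the bridge conditioning, the term-by-term sum over $t$, and the reflection cancellation entirely, and it actually gives a slightly stronger error $O(\delta^2\log(1/\delta))$, i.e.\ any $\eps<1$ rather than an unspecified $\eps>0$; the price is that one needs the finiteness and harmonicity of $H$, which is why the sup bound must be established first. (One immaterial remark: with the paper's sign convention from Definition~\ref{D:potkerninf}, the displayed identity should read $\nabla_x\tilde a^\bullet(z,\bar y)-\tfrac12\nabla_z\tilde b(z,\bar y)=-\tfrac12\nabla_z H(z,\bar y)$, but of course the sign is irrelevant for the absolute-value bound.)
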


\begin{proof}[Proof of Theorem \ref{T:PKscaling} given Proposition \ref{P:eff_col} and Proposition \ref{P:col_half}]
It is enough to combine Propositions \ref{P:eff_col} and \ref{P:col_half} as well as known estimates on the two-dimensional simple random walk potential kernel.

Let us give a few details. Suppose we are in the first case where $x, y$ are of different class. This means only walks going through the boundary have the possibility to contribute to the potential kernel. By the reflection symmetry, the walks from $x$ to $y$ going through the boundary have the same weight as the walks from $x$ to $\bar y$.
 In the full plane for simple random walk,
(see e.g. Theorem 4.4.4. in \cite{LawLim}), the potential kernel has the form
$$
b(z, 0) = \frac{2}{\pi} \log |z | + C + o( |z|^{-1})
$$
for some constant $C>0$, as $z \to \infty$. Let us rescale the lattice so that it becomes $\delta \Z^2$, and let us adopt complex notation, so $\log |x| = \Re ( \log x)$, and let $h = x'- x = \pm 2\delta e_i$. Then
\begin{align}
b(x, \bar y) - b ( x',\bar y )  &= \frac2{\pi} \Re ( \log (x- \bar y + h ) - \log (x -\bar y)) + o( \delta)\nonumber \\
& = \frac{2}{\pi}\Re \left ( \frac{h}{x-\bar y}  \right) + o( \delta).\label{boundary}
\end{align}
Now, multiplying by 2 to account for laziness, and by $1/2$ to account for the loss at the boundary (the real line) (Proposition \ref{P:col_half}) and we get the first line in \eqref{eq:PKscaling}.

To get the second line, we observe that if $x$ and $y$ are of the same class, there are two types of effective random walks to consider: the effective random walks going from $x$ to $y$ in the full plane without touching the boundary (type I), and those which do touch the boundary (type II). The effective random walks of type I can be written as all simple random walks going from $x$ to $y$
in the plane (type III) minus simple random walks going from $x$ to $y$ through the boundary (type IV).
By Propositions \ref{P:eff_col} and \ref{P:col_half}, the walks of type IV contribute roughly twice as much as those of type II. So we have to count walks of type III minus those of type II. Those of type III contribute $\tfrac{4}{\pi} \Re ( \tfrac{x'- x}{x-y} ) + O( \tfrac{\delta}{|x-y|})^{2}$ to the gradient of the potential kernel (the factor in front is twice that of \eqref{boundary} due to laziness, the error term comes from Corollary 4.4.5 in \cite{LawLim}). The contribution of type II on the other hand is exactly counted by the first line of \eqref{eq:PKscaling}. This proves Theorem \ref{T:PKscaling}.
\end{proof}

Now we derive the version which is useful for later, which includes folding the plane onto itself so that the walk is reflected on the real line, and is not lazy.

\begin{corollary} \label{C:PKscaling}
Let us assume that $x'= x \pm 2\delta e_i \in  \delta \Z^2 \cap \H$, $i =1, 2$. Let $y \in \delta \Z^2 \cap \H $.
Then there exists $\eps>0$ such that as the mesh size $\delta \to 0$, uniformly over points $x,y$ such that $\min (\Im (x), \Im (y) ) \ge \rho>0$,
\begin{align}
   a(x', y) -  a(x,y) =
  \begin{cases}
    \dfrac{2}{\pi} \Re \Big(\dfrac{x'-x}{x-\bar y}\Big) + o(\delta^{1+ \eps}) & \text{ if  $x,y$ are of different class,}\\
\dfrac{2}{\pi} \Re \Big (\dfrac{x'-x}{x-y}\Big)+ o(\delta^{1+ \eps}) +
O( \frac{\delta}{|x-y|})^{2}
  & \text{ if  $x,y$ are of the same class.}
  \end{cases}\label{eq:PKscaling2}
\end{align}
\end{corollary}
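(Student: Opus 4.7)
The plan is to reduce Corollary~\ref{C:PKscaling} to Theorem~\ref{T:PKscaling} by exploiting the reflection symmetry $z \mapsto \bar z$ of the walk on $\Gamma$ to fold it onto the upper half-plane, and then to remove laziness. First, for a large box $\Lambda \subset \H$ with absorbing boundary on its non-real part, each visit of the non-lazy reflected walk on $\H$ at a vertex $y \in \Lambda$ corresponds bijectively to a visit of the non-lazy symmetric walk on $\Lambda \cup \bar\Lambda$ at either $y$ or $\bar y$. Combining this with the standard relation $\tilde G = 2 G$ between lazy and non-lazy Green's functions one gets
\[
G^{\mathrm{refl}}_\Lambda(x, y) = \tfrac{1}{2}\bigl[\tilde G^\Gamma_{\Lambda\cup\bar\Lambda}(x,y) + \tilde G^\Gamma_{\Lambda\cup\bar\Lambda}(x,\bar y)\bigr].
\]
Subtracting this identity at $x'$ and $x$, letting $\Lambda \uparrow \H$, and invoking Proposition~\ref{P:greenPK_gen} to turn each Green's function difference into a potential-kernel difference yields the key decomposition
\[
a(x', y) - a(x, y) = \tfrac{1}{2}\bigl[\tilde a(x',y) - \tilde a(x,y)\bigr] + \tfrac{1}{2}\bigl[\tilde a(x', \bar y) - \tilde a(x, \bar y)\bigr].
\]

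I would then compute the two brackets separately. The first is handled directly by Theorem~\ref{T:PKscaling}, since $x, y \in \H$ with $\min(\Im x, \Im y) \ge \rho$ satisfies its hypotheses. The second bracket involves the target $\bar y \in -\H$, which is precisely the setting in which Propositions~\ref{P:eff_col} and~\ref{P:col_half} were stated (both compute $\nabla_x \tilde a(x, \bar y)$ for $x, y \in \cG_\delta$ with $\min(\Im x, \Im y) \ge \rho$). Composing them, and using that the auxiliary shift $z = x + \delta\mathbf{1}_{y \not\sim x}$ introduces an additional error of only $O(\delta^2/|x-\bar y|^2) = O(\delta^2)$ because $|x - \bar y| \ge 2\rho$, I obtain
\[
\tilde a(x', \bar y) - \tilde a(x, \bar y) = \tfrac{1}{2}\bigl[\tilde b(x', \bar y) - \tilde b(x, \bar y)\bigr] + o(\delta^{1+\eps}) = \tfrac{2}{\pi}\Re\!\Bigl(\tfrac{x'-x}{x - \bar y}\Bigr) + o(\delta^{1+\eps}),
\]
where the last step uses the classical asymptotic $\tilde b(z,0) = \tfrac{4}{\pi}\log|z| + C + o(|z|^{-1})$ for the lazy simple random walk on $(2\delta\Z)^2$ (Theorem~4.4.4 of \cite{LawLim} doubled for laziness). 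Crucially this formula holds regardless of whether $x$ and $y$ are of the same class, because $\bar y$ always sits on the opposite side of $\R$ from $x$, so every contributing trajectory must touch the real line and therefore triggers the colour-averaging factor of $1/2$ from Proposition~\ref{P:col_half}.

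To finish I would combine the two contributions case by case. In the different-class case, Theorem~\ref{T:PKscaling} gives $\tilde a(x', y) - \tilde a(x, y) = \tfrac{2}{\pi}\Re(\tfrac{x'-x}{x-\bar y}) + o(\delta^{1+\eps})$, so the two halves add up to $\tfrac{2}{\pi}\Re(\tfrac{x'-x}{x-\bar y})$, matching the first line of~\eqref{eq:PKscaling2}. In the same-class case T:PKscaling gives $\tfrac{4}{\pi}\Re(\tfrac{x'-x}{x-y}) - \tfrac{2}{\pi}\Re(\tfrac{x'-x}{x-\bar y})$ plus the stated errors; after dividing by $2$ and adding the $\tfrac{1}{\pi}\Re(\tfrac{x'-x}{x-\bar y})$ coming from the $\bar y$-bracket, the two $\bar y$-terms cancel exactly, leaving $\tfrac{2}{\pi}\Re(\tfrac{x'-x}{x-y})$ with the error $o(\delta^{1+\eps}) + O\bigl((\delta/|x-y|)^{2}\bigr)$ inherited from T:PKscaling. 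The main (mild) obstacle in this derivation is keeping careful track of the additive constant implicit in the folding identity, which however is automatic once one works with differences and passes to the large-box limit; all the genuine analytic effort has already been expended in the proof of Theorem~\ref{T:PKscaling} and its two ingredient propositions.
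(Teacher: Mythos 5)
Your proof is correct and follows the same reflection/folding strategy as the paper's own (very brief) argument: express $a$ as half the sum of $\tilde a(\cdot,y)$ and $\tilde a(\cdot,\bar y)$, handle the first bracket by Theorem~\ref{T:PKscaling}, and observe that the $\bar y$-bracket, being forced across the real line, always yields the colour-averaged value $\tfrac{2}{\pi}\Re\bigl(\tfrac{x'-x}{x-\bar y}\bigr)$. Your write-up is slightly more careful than the paper's in one respect worth keeping: you note that the pair $(x,\bar y)$ violates the hypothesis $\Im(x)\Im(y)\ge 0$ of Theorem~\ref{T:PKscaling} and therefore argue the $\bar y$-bracket directly from Propositions~\ref{P:eff_col} and~\ref{P:col_half}, which is what the paper leaves implicit when it asserts that ``both are counted by the same formula.''
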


\begin{proof}[Proof of Corollary \ref{C:PKscaling} given Theorem \ref{T:PKscaling}.] As before the first case (when $x, y$ are of different classes) is easiest to compute. Since the walk is now nonlazy, we need to multiply the values of the potential kernel by $1/2$, but also add the walks from $x$ to $\bar y$; both are counted by the same formula in the first line of \eqref{eq:PKscaling}, and so the factor remains $2/\pi$ overall.

In the second case when $x,y$ are of the same class, we note that the number of lazy walks from $x $ to $y$ that don't touch the boundary are, as observed above, given by $\tfrac{4}{\pi} \Re ( \tfrac{x'- x}{x-y} )$ (type I). On the other hand, when we do the folding, we must add the walks that touch boundary and go from $x$ to $y$, to those going from $x $ to $\bar y$. This gives us one extra group of walks of type II and so these cancel. Multiplying by $1/2$ to account for non-laziness gives us the second line of \eqref{eq:PKscaling2}.
\end{proof}

Thus it remains to prove the two propositions \ref{P:eff_col} and \ref{P:col_half}. We do so in the following two subsections.

\subsection{Proof of Proposition \ref{P:eff_col}.}

We will prove this by coupling.
We will need to compare $\nabla_x \tilde p_t(x, o)$ and $\nabla_z \tilde p^\bullet_t (z, o)$, where
$\tilde p^\bullet_t (z, o) = \P_z( X_t = o, \sigma(X_t) = \bullet)$ for the coloured walk, where we take $o=\bar y$, and $z$ is a vertex chosen as in Proposition~\ref{P:eff_col}. We will see that by coupling our effective walks with coloured walks we will gain an order of magnitude compared with \eqref{eq:tailcoupling}: that is, we will show that
\begin{equation}\label{eq:effcolHK}
\big| \nabla_x \tilde p_t(x, o)- \nabla_z \tilde p^\bullet_t (z, o)\big| \le t^{-3/2 - \eps}; \ \ \  t \le \delta^{-2- \eps},
\end{equation}
for some $\eps>0$. Given \eqref{eq:effcolHK}, reasoning as in the proof of Proposition \ref{P:aprioriPK} (with $R = \delta^{-1}$, and using the improved \eqref{eq:effcolHK} instead of \eqref{PKsum2} in the range up to $t = \delta^{-2 - \eps}$), we immediately deduce Proposition \ref{P:eff_col}.

We will couple the effective walk $X$ and a coloured walk $Z$ as follows; as in the previous section we work with lazy versions. The coupling will be similar to the one in Section \ref{S:suitability}, but it is simpler since we are allowed to choose the starting point of $Z$. We will choose $z$ so that $X$ and $Z$ start immediately from the same horizontal line; as in the previous coupling this property will be preserved forever under the coupling, (so essentially only the last stage, stage 4, needs to be described). More precisely, we set $z=x$ if $x$ and $\bar y$ are of the same class, and $z = x+ \delta $ otherwise. In any case $Z$ will always be of the same class as $o$.
Until hitting the real line, we take $X$ and $Z$ to evolve in parallel, with equal jumps. After hitting the real line, we may arrange the coupling so that they are always on the same horizontal line by always first tossing the {\sf C}oordinate coin, so that any movement in the vertical coordinate is replicated for both walks no matter what. Beyond the {\sf C}oordinate and {\sf L}aziness coins, we will need a third coin which we use to indicate changes in the sublattice (for $X$) and in colour (for $Z$). This coin is only used when the walks are on the real line and a horizontal movement is to take place. We call this coin {\sf P}arity. Unlike the other two coins, {\sf P}arity comes up heads with the fixed probability $p \in (0,1)$ from \eqref{paritychange} which in general is not $1/2$.

It remains to specify what to do if the {\sf C}oordinate coin indicates a horizontal movement. To describe this, we need to introduce the following stopping times. Let $\sigma_0 = \inf\{ t \ge 0: X_t \in \R\}$ denote the hitting of $\R$ by $X$ (or equivalently by $Z$), and let $\tau_0 = \inf \{ t \ge \sigma_0: \Im (X_t) \le  \Im(\bar y)/3\}$ be the hitting time of the line $$\Delta = \{  z\in \C: \Im (z) = \lfloor \Im (\bar y) / 3\rfloor \}$$
by $X$ (or equivalently $Z$, since $X$ and $Z$ are always on the same horizontal line). Then define $\sigma_n, \tau_n$ inductively as follows: $$\sigma_n = \inf \{ t \ge \tau_{n-1}: X_t \in \R\};  \ \ \tau_n = \inf \{t \ge \sigma_n: X_t \in \Delta\}.$$

Write $X_t = (u_t, v_t)$ and $Z_t = (u'_t, v'_t)$ with $v_t = v'_t$ as explained above.


\begin{itemize}

\item If $X_t, Z_t \in \R$. Toss the {\sf P}arity coin: if it comes heads, let $X_t$ take a jump from its conditional distribution given that it is odd, and let $Z_t$ change colour and make an independent jump. If it is tails, let $X_t$ take a jump from its conditional distribution given that it is even, and let $Z_t$ keep its current colour and make an independent jump.

  \item Now suppose $X_t, Z_t \notin \R$. If $X_t$, $\bar y$ are of a different class, then let $X_t$ and $Z_t$ evolve in parallel (with equal jumps). This will remain so until hitting again the real line, where there will be a chance to change class again.

  \item $X_t$, $\bar y$ are of the same class, and thus also of the same class as $Z_t$. In that case, the evolution depends on whether $t \in [\sigma_n, \tau_n]$ for some $n\ge 0$ or $t \in (\tau_n, \sigma_{n+1})$ for some $n$: If $t \in [\sigma_n, \tau_n]$ then the walks evolve in parallel. Otherwise,
      we use {\sf L}aziness to first ensure that $u_t - u'_t = 0 \mod 4\delta$ after a number of steps which has geometric tail. Once that is the case, we let $u_t$ and $u'_t$ evolve in mirror from one another, so $(u_{t+1} - u_t) = - (u'_{t+1} - u'_t)$.
\end{itemize}

In general the walks get further from each other during a phase of the form $[\sigma_n, \tau_n]$ but get closer together again during the phase $[\tau_n, \sigma_{n+1}]$. Note that a visit to $o$ necessarily occurs during such a phase. In fact we will see that typically the walks agree (if they are on the same sublattice) by the time they reach $2\Delta$ or return to $\R$. Furthermore, only a small number of phases need to be considered if $t \le \delta^{-2 - \eps}$ (of order at most $\delta^{-\eps}$). Let us say that a non coupled visit to $o$ occurs at time $t$ if $\{X_t = o\} \triangle \{ Z_t = o, \sigma(Z_t) = \bullet\}$ occurs (where $\triangle$ denotes symmetric difference).

The coupling between $X$ and $X'$ on the one hand, and between $X$ and $Z$ on the other hand, induce a coupling between four processes: $X,X'$ (effective walks starting from $x,x'$) and $Z,Z'$ (coloured walks started from $z,z'$). Here we take $z'- z = x' - x = \delta$, as in the statement of Proposition \ref{P:eff_col}. The difference between the gradient of the transition probabilities can be written as an expectation
\begin{equation}\label{eq:gradient_cancell}
\nabla_x \tilde p_t (x, o) - \nabla_z \tilde p^\bullet_t (z, o) = \E( \mathbf1_{\{X_t =o\}} - \mathbf1_{\{X'_t = o\}} - \mathbf1_{\{Z_t = o; \sigma(Z_t) = \bullet\}} + \mathbf1_{\{Z'_t = o; \sigma(Z'_t) = \bullet\}})
\end{equation}
To get a nonzero contribution it is necessary that $X$ did not couple with $X'$ by time $(T_\R \wedge t/2)$ or that $Z$ did not couple with $Z'$ by time $t/2$. Both have a probability which is given by $(\log t)^a/t^{1/2}$ by a slight modification of \eqref{eq:tailcoupling} (in fact, since the walks start far from the real line, the proof is much simpler than what is given in Section \ref{S:suitability}, and follows directly from gambler's ruin).
Furthermore, given this, it is also necessary that a non coupled visit to $o$ occurs at time $t$ by $(X,Z)$ or by $(X',Z')$.

To estimate the latter conditional probability, we may condition on everything which happened until time $T_\R \wedge t/2$, and we will call $s$ the remaining amount of time until time $t$, i.e., $s = t - (T_\R \wedge t/2) \in [t/2, t ]$ so $s \asymp t$. Since at that time the walks have yet not touched the real line, the discrepancy between $X$ and $Z$ is therefore equal to the initial discrepancy $z - x \in \{0, \delta e_1\}$.

\begin{lemma}\label{L:backforth}
Suppose $s \le \delta^{-2-\eps}$.
Let $N_s = \max\{k: \tau_k \le s\}$. Then there exists some $c_1,c_2>0$ such that $\P( N_s \ge c_1\delta^{-\eps})\le \exp( - c_2 \delta^{-\eps} )$.
\end{lemma}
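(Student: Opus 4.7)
My plan is to exploit the fact that each excursion from $\Delta$ to $\R$ and back forces the walk to undergo a macroscopic vertical displacement of order $\rho$, and hence must take time of order at least $\delta^{-2}$ with constant probability. A Chernoff-type concentration estimate for the sum of these excursion durations then yields the desired exponentially small bound. Concretely, I would set $T_0 := \tau_0$ and $T_i := \tau_i - \tau_{i-1}$ for $i \ge 1$, so that $\{N_s \ge k\} = \bigl\{\sum_{i=0}^{k} T_i \le s\bigr\}$. By the strong Markov property applied at each $\tau_{i-1}$, the conditional law of $T_i$ given $\cF_{\tau_{i-1}}$ is that of a full cycle of the lazy effective walk started from $X_{\tau_{i-1}} \in \Delta$.

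The key observation, which I would establish first, is that the vertical coordinate $v_t$ of $X$ is a martingale with uniformly bounded increments $|v_{t+1} - v_t| \le 2\delta$, whether or not the walk sits on the real line. Indeed, the exotic long-range jumps along $\R$ described in Sections~\ref{S:inverseKfinite}--\ref{S:IVL} are purely horizontal, while away from $\R$ the walk is lazy simple random walk with steps in $\{0, \pm 2\delta e_1, \pm 2\delta e_2\}$. Completing a cycle requires $v$ to traverse at least $|\Im(\bar y)|/3 \ge \rho/3$ (in fact twice), so Azuma--Hoeffding yields
$$
\P_z\bigl(T_1 < n\bigr) \le 2\exp\!\left(-\frac{(\rho/3)^2}{8\,n\,\delta^2}\right),
$$
uniformly in $z \in \Delta$. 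Choosing $n = c\delta^{-2}$ with $c = c(\rho)$ sufficiently small gives $\P_z(T_1 \ge c\delta^{-2}) \ge 1/2$, uniformly in the starting point.

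Setting $B_i := \mathbf{1}_{\{T_i \ge c\delta^{-2}\}}$, the preceding step combined with the strong Markov property shows that $(B_i)_{i \ge 0}$ stochastically dominates an i.i.d.\ $\mathrm{Bernoulli}(1/2)$ sequence. A standard Chernoff bound then provides $c' > 0$ with
$$
\P\!\left(\sum_{i=0}^{k} B_i < k/4\right) \le e^{-c'k}.
$$
On the complementary event, $\sum_{i=0}^{k} T_i \ge (k/4)\,c\delta^{-2}$, which exceeds $s \le \delta^{-2-\eps}$ as soon as $k \ge 4\delta^{-\eps}/c$. Taking $c_1 := 4/c + 1$, the assumption $s \le \delta^{-2-\eps}$ forces
$$
\P(N_s \ge c_1\delta^{-\eps}) \le \P\!\left(\sum_{i=0}^{c_1\delta^{-\eps}} B_i < c_1\delta^{-\eps}/4\right) \le \exp(-c_2\delta^{-\eps}),
$$
with $c_2 := c'c_1$, which is the claimed bound.

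The one step requiring genuine care is the identification of $v_t$ as a martingale with uniformly bounded increments: this is really the only place where the specific structure of the effective walk (and in particular the fact that its long-range jumps along $\R$ are strictly horizontal) enters the argument. Once this is verified from the explicit transition probabilities in Section~\ref{S:inverseKfinite}, everything else is routine: an Azuma--Hoeffding estimate on one coordinate followed by Chernoff concentration for a sum of conditionally Bernoulli random variables.
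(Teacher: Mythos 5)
Your proof is correct and follows essentially the same route as the paper's: decompose $\{N_s \ge k\}$ into $k$ round trips between $\R$ and $\Delta$ fitting in time $s$, observe each round trip takes time $\gtrsim \delta^{-2}$ with probability bounded below by a constant, and close with a Chernoff bound. The only addition is that you make the paper's ``fixed positive probability $p$'' step quantitative, by noting that the long-range jumps on $\R$ are purely horizontal so the vertical coordinate $v_t$ is a bounded-increment martingale, and then applying the maximal form of Azuma--Hoeffding — a useful detail, but the overall argument is the same.
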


\begin{proof}
  Each journey between $\R$ and $\Delta$ and back may take more than $\delta^{-2}$ with fixed positive probability $p$, independently of one another. Hence the probability in the lemma is bounded by the probability that a Binomial random variable with parameters $c_1\delta^{-\eps}$ and $p$, is less than $\delta^{-\eps}$. Choosing $c_1$ such that $c_1  p>1$, the result follows from straightforward large deviations of binomial random variables.
\end{proof}

We will need to control the discrepancy between $X$ and $Z$ at the beginning of a coupling stage, of the form $\tau_k$ (for $0 \le k \le \delta^{-\eps}$), assuming that $\sigma(Z_{\tau_k}) = \bullet$ or equivalently that $X_{\tau_k} \sim \bar y$. Let us say that this coupling phase \textbf{succeeds} if by the time the walks next hit $\R$ or $2\Delta$, the discrepancy has been reduced to zero.

We note that the discrepancy between $X$ and $Z$ is typically accumulated when the two walks hit the real line; on the other hand they tend to be reduced to zero during a coupling phase, meaning a coupling phase is likely to be successful. However, we will not aim to control the discrepancy if at any point the coupling phase does not succeed.

The key argument will be to say that so long as there has been no unsuccessful coupling phase, the discrepancy at the beginning of any coupling phase is small. To this end, we introduce $\rho_n$ the first time that the real line has been visited more than $n$ times by either (both) walks. We let $\Delta_n$ the (horizontal) discrepancy accumulated by the walks at this $n$th visit: that is,
$$
\Delta_n =  \langle (X_{\rho_n +1} - X_{\rho_n})  - (Z_{\rho_n +1} - Z_{\rho_n}) ; e_1 \rangle
$$
Note that by construction of the coupling, $\Delta_n$ are i.i.d. and centered random variables with exponential moments (each of them of order the mesh size $\delta$). We then introduce the martingale
$$
M_n = \sum_{i=0}^n \Delta_i
$$
which counts the accumulated discrepancy at the $n$th visit to the real line. If $0\le u \le s$ is a time, let us call $n(u)$ the number of visits to $\R$ by time $u$. At the end of a successful coupling phase $\sigma_k$, the discrepancy is reduced to zero, so in fact in the future (until the beginning of the next coupling phase at time $\tau_k$), the discrepancy will be of the form $M_{n(u)} - M_{n(\sigma_k)}$.

\begin{lemma}\label{L:discrep}
  With probability at least $1 - s^{-2\eps}$, we have
  $$
  \max_{0 \le k \le N_s} | X_{\tau_k} - Z_{\tau_k}|\mathbf1_{\cG_k} \le \delta s^{1/4 + \eps},
  $$
  where $\cG_k$ is the good event that there was no unsuccessful coupling by time $\sigma_k$.
\end{lemma}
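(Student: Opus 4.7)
The plan is to express the discrepancy $X_{\tau_k}-Z_{\tau_k}$ on $\cG_k$ as a martingale increment, and then combine martingale concentration with a sharp bound on the number of visits to $\R$. On the good event $\cG_k$, every previous coupling phase was successful, so by the construction of the coupling the discrepancy was reset to zero at $\sigma_k$. During $[\sigma_k,\tau_k]$ the two walks move in parallel except at the times when they sit on $\R$, where (after the {\sf C}oordinate coin has demanded a horizontal move) they make independent horizontal jumps. Consequently, writing $M_n=\sum_{i=0}^{n}\Delta_i$ as in the excerpt, on $\cG_k$ we have
\[
|X_{\tau_k}-Z_{\tau_k}|=|M_{n(\tau_k)}-M_{n(\sigma_k)}|,
\]
where $n(u)$ is the number of visits of $X$ to $\R$ by time $u$. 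The $\Delta_i$ are i.i.d., centered and have exponential moments on scale $\delta$ (since the horizontal jumps of $X$ on $\R$ have exponential tails with typical size $\delta$, while those of $Z$ are bounded by $2\delta$).

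The crucial input is a $\sqrt{s}$-type bound on $n(s)$. The vertical coordinate $v_t$ of $X$ is unaffected by any horizontal jump along $\R$, and between horizontal jumps it performs a lazy simple random walk on $\Z_{\ge 0}$ reflected at $0$. Thus $n(s)$ is bounded by the local time at $0$ of this reflected lazy SRW, and by standard one-dimensional local time estimates (obtainable, for instance, along the lines of Lemma~\ref{L:subdiffusive} together with gambler-ruin), one has
\[
\P\bigl(n(s)\ge L^{*}_{s}\bigr)\le s^{-K}\qquad\text{for any } K>0,
\]
where $L^{*}_{s}=\sqrt{s}(\log s)^{C}$ and $C$ is a suitable constant. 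On the complement of this bad event, Freedman's martingale inequality applied to $M$ gives
\[
\P\Bigl(|M_{n(\tau_k)}-M_{n(\sigma_k)}|\ge \delta s^{1/4+\eps}\,\Bigm|\,\cF_{\sigma_k}\Bigr)\le 2\exp\!\Bigl(-c\,\tfrac{s^{1/2+2\eps}}{L^{*}_{s}}\Bigr)\le 2\exp\!\Bigl(-c\,s^{2\eps}/(\log s)^{C}\Bigr),
\]
which is stretched-exponentially small in $s$.

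It remains to take a union bound over $k$. By Lemma~\ref{L:backforth}, $N_s\le c_1\delta^{-\eps}$ with probability at least $1-\exp(-c_2\delta^{-\eps})$, and under the standing assumption $s\le \delta^{-2-\eps}$ one has $\delta^{-\eps}\le s^{\eps/(2+\eps)}\le s^{\eps/2}$. Hence at most $s^{\eps/2}$ values of $k$ need to be considered, and the above stretched-exponential bound easily absorbs this union-bound cost as well as the $s^{-K}$ error from the control of $n(s)$ and the $\exp(-c_2\delta^{-\eps})$ error from $N_s$; the final probability is bounded by $s^{-2\eps}$ as required. The main obstacle is the sharp $\sqrt{s}$ bound on $n(s)$: a naive bound $n(s)\le s$ would only produce $|X_{\tau_k}-Z_{\tau_k}|\lesssim \delta\sqrt{s}$, which is far too weak. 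The saving by a factor $s^{1/4}$ comes precisely from identifying $n(s)$ with a one-dimensional local time and exploiting the fact that the horizontal moves on $\R$ do not affect the vertical coordinate.
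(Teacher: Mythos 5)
Your proposal is correct in substance, but it follows a genuinely different route from the paper's.

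The paper's proof uses only a second-moment bound. On $\cG_k$ the discrepancy at $\tau_k$ is written as a martingale increment $M_{n(\tau_k)}-M_{n(\tau_j)}$ (same reduction as yours, up to indexing), which is at most $2\max_{n\le n(\tau_{N_s})}|M_n|$. Then Chebyshev combined with \emph{Doob's $L^2$ maximal inequality} handles the supremum over $k$ in one stroke, reducing everything to $\E\big(M^2_{n(\tau_{N_s})}\big)$. Since $M_n^2-c\delta^2 n$ is a martingale, optional stopping gives $\E\big(M^2_{n(\tau_{N_s})}\big)=c\delta^2\,\E\big(L_\R(s)\big)\lesssim\delta^2\sqrt s$, and dividing by $\delta^2 s^{1/2+2\eps}$ yields $s^{-2\eps}$ directly. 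No union bound over $k$, no control on $N_s$, no exponential concentration, and only a \emph{first-moment} bound on the local time $L_\R(s)$ is needed.

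You instead apply Freedman's inequality per $k$, feed it a high-probability $\sqrt{s}\,(\log s)^C$ bound on $n(s)$, and then union-bound over $k\le N_s$ using Lemma~\ref{L:backforth}. This works and produces a much stronger (stretched-exponential) per-$k$ estimate, but it is overkill: only $s^{-2\eps}$ is required, which the paper's $L^2$/Doob argument already delivers with fewer ingredients. Your route also brings two extra technical burdens that the paper avoids: (i) Freedman requires bounded increments, whereas $\Delta_i$ only have exponential tails, so a truncation step (as in Lemma~\ref{L:subdiffusive}) is genuinely needed and should be made explicit; and (ii) you need the local-time bound at the level of tails rather than in expectation. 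You correctly identify the crucial saving — replacing the trivial bound $n(s)\le s$ by $n(s)\lesssim\sqrt s$ is what gains the factor $s^{1/4}$ — and this diagnosis matches the paper; the difference is purely in how cheaply one can exploit that saving. The paper's second-moment argument is the more economical way to do so here.
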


\begin{proof}
  Fix $0 \le k \le N_s$. Let $j = j (k) = \max\{ j \le k : \text{ the coupling starting at $\tau_j$ was successful} \}$. Suppose that the event $\cG_k$ holds otherwise there is nothing to prove. Then as observed above, the discrepancy at time $\tau_k$ is given by
  \begin{align*}
  |X_{\tau_k} - Z_{\tau_k}| & = |M_{n(\tau_k)} - M_{n(\tau_j)}| \le 2 \max_{n \le n (\tau_k)} |M_n|.
  \end{align*}
  By Chebyshev's inequality and Doob's maximal inequality,
  \begin{align*}
   \P \Big(\max_{0 \le k \le \delta^{- \eps}} | X_{\tau_k} - Z_{\tau_k}|\mathbf 1_{\cG_k} \ge \delta s^{1/4 + \eps}\Big)
  & \lesssim \frac1{\delta^2 s^{1/2 + 2 \eps}} \E \Big(  \max_{n \le n (\tau_{N_s})} |M_n|^2\Big)\\
  & \lesssim  \frac1{\delta^2 s^{1/2 + 2 \eps}} \E\Big(M_{n (\tau_{N_s})} ^2\Big).
  \end{align*}
  Now, $M_n^2 - c\delta^2 n$ is a martingale for some  constant $c>0$ corresponding to the (rescaled) variance of the increments of the martingale $M$, so (since $n( \tau_{N_s}) $ is trivially bounded by $s$),
  $$
  \E\left(M_{n (\tau_{N_s})} ^2\right) = c \delta^2 \E ( n (\tau_{N_s})) = c \delta^2 \E ( L_\R (s)),
  $$
  where $L_\R (s)$ denote the number of visits to $\R$ by both (either) walks by time $s$. Since the vertical coordinate performs a delayed simple random walk on the integers, this is less than the expected number of visits to 0 by time $s$ of a one-dimensional walk starting from zero, which is at most $\lesssim \sqrt{s}$. Hence
  $$
  \P \Big(\max_{0 \le k \le \delta^{- \eps}} | X_{\tau_k} - Z_{\tau_k}| \mathbf 1_{\cG_k} \le \delta s^{1/4 + \eps}\Big)\lesssim  \frac1{s^{2\eps}}
  $$
  as desired.
\end{proof}

We now deduce that all coupling phases are successful with high probability.
\begin{lemma}
  \label{L:successfulcoupling} We have that for $\eps$ small enough (fixed),
  $$
  \P \left( \cup_{k=0}^{N_s} \cG_k^c\right) \lesssim s^{-2\eps}.
  $$
\end{lemma}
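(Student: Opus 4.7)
The plan is to combine three ingredients: the bound $N_s \le c_1 \delta^{-\eps}$ from Lemma \ref{L:backforth} (except on a super-polynomially small event), the discrepancy control from Lemma \ref{L:discrep} giving $|X_{\tau_k} - Z_{\tau_k}|\mathbf{1}_{\cG_k} \le \delta s^{1/4+\eps}$ with probability at least $1 - s^{-2\eps}$, and a per-phase failure estimate. Union-bounding over $k \le N_s$ then yields the result.

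For the per-phase estimate, I would fix $k$ and work conditionally on $\cG_k$ and on the value $D_k := |X_{\tau_k} - Z_{\tau_k}|$. During the phase $[\tau_k, \sigma_{k+1}]$, after a geometric-tail initial alignment ensuring $u_t - u'_t \equiv 0 \pmod{4\delta}$, the mirror coupling forces the horizontal difference $u_t - u'_t$ to evolve as a 1D random walk with step $\pm 4\delta$ starting at $D_k$, while the vertical coordinate performs a lazy SRW that must travel back from $\Delta$ (depth $\asymp \rho$ below $\R$) to $\R$. Because the {\sf C}oordinate coin routes horizontal and vertical updates to disjoint time slots, the two processes, in their own clocks, are essentially independent 1D SRWs, and the phase succeeds iff the horizontal difference hits $0$ before the vertical walk returns to $0$.

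Set $\tilde D_k = D_k/(4\delta)$ and $\tilde h \asymp \rho/\delta$, and let $T_h, T_v$ be the respective hitting times of $0$ for these two 1D SRWs (in their own step counts). I would use the standard tail $\P(T_h > t \mid D_k) \lesssim \tilde D_k/\sqrt{t}$ together with the Brownian-type estimate $\E[1/\sqrt{T_v}] \asymp 1/\tilde h$ (which follows from the arcsine/inverse-Gaussian distribution of hitting times, giving $T_v \asymp \tilde h^2 / Z^2$ for a standard normal $Z$) to obtain
\[
\P(T_v < T_h \mid D_k) \le \E\bigl[\P(T_h > T_v \mid T_v, D_k)\bigr] \lesssim \tilde D_k \,\E[1/\sqrt{T_v}] \lesssim D_k/\rho.
\]
Hence the per-phase failure probability is $\lesssim D_k \le \delta s^{1/4+\eps}$. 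Combining with $N_s \le c_1 \delta^{-\eps}$ and $s \le \delta^{-2-\eps}$, so that $\delta s^{1/4+\eps} \lesssim \delta^{1/2 - O(\eps)}$, the union bound yields
\[
\P(\cG_{N_s}^c) \;\lesssim\; s^{-2\eps} + \delta^{-\eps} \cdot \delta s^{1/4+\eps} \;\lesssim\; s^{-2\eps} + \delta^{1/2 - O(\eps)} \;\lesssim\; s^{-2\eps}
\]
for $\eps$ small enough.

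The main obstacle is the per-phase estimate: the horizontal coupling walk has only a $\sim 1/\sqrt{t}$ tail on its hitting time, so a naive bound based on fixing a deterministic time $T$ is not tight enough to survive the union bound over $\delta^{-\eps}$ phases. The key observation that makes the plan work is that the vertical walk's return time $T_v$ is itself heavy-tailed with typical scale $\tilde h^2 \asymp \delta^{-2}$, and the precise interplay between the two tails --- encoded in the identity $\E[1/\sqrt{T_v}] \asymp 1/\tilde h$ --- is exactly what produces the $O(D_k)$ per-phase failure bound, which is small enough thanks to the subdiffusive control of $D_k$ established in Lemma \ref{L:discrep}.
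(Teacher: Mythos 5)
Your proposal follows the same overall strategy as the paper: combine Lemma \ref{L:backforth} and Lemma \ref{L:discrep} with a per-phase failure bound, then union bound over the $\lesssim \delta^{-\eps}$ phases. The paper simply asserts the per-phase bound as a ``gambler's ruin estimate''; you spell out a proof by conditioning on the vertical clock and integrating the gambler's ruin tail of the horizontal coupling against the law of the vertical exit time, which is a perfectly reasonable way to justify the claimed bound. The union bound at the end and the smallness condition on $\eps$ are the same as the paper's.

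There is, however, a directional slip in your per-phase estimate. You characterise success as ``the horizontal difference hits $0$ before the vertical walk returns to $0$'', i.e.\ before $T_v$, the first hitting time of $\R$. The paper's definition of success is that coupling happens before the walk next hits $\R$ \emph{or} $2\Delta$, i.e.\ before the exit time $T_v'$ from the strip between $\R$ and $2\Delta$ (this is needed because a non-coupled visit to $\bar y$ can occur as soon as the walk passes $2\Delta$). Since $T_v' \le T_v$ almost surely, the failure event $\{T_h > T_v'\}$ strictly contains $\{T_h > T_v\}$, so your calculation only establishes a \emph{lower} bound on the failure probability, not the upper bound the lemma requires. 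The fix is immediate: carry out the same conditional argument with $T_v'$ in place of $T_v$. One must then check $\E[1/\sqrt{T_v'}] \lesssim 1/\tilde h$; this still holds even though $T_v'$, unlike $T_v$, is \emph{not} heavy-tailed --- it is concentrated around $\tilde h^2$ with $\P(T_v' < a\tilde h^2) \lesssim \exp(-c/a)$ by the reflection principle, which gives the required bound. (Monotonicity alone goes the wrong way here, since $\E[1/\sqrt{T_v'}] \ge \E[1/\sqrt{T_v}]$.) With this replacement the per-phase failure is again $\lesssim D_k/\rho \lesssim \delta s^{1/4+\eps}$ and the rest of your argument stands. A side remark: your closing intuition attributes the per-phase bound to the heavy tail of the one-sided hitting time, but it is really the $O(\tilde h^2)$ typical scale of the light-tailed two-sided exit time that drives the $O(D_k)$ estimate, i.e.\ a standard strip harmonic-measure fact rather than an inverse-Gaussian tail effect.
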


\begin{proof}
  We may work on the event $\cN = \{ N_s \lesssim \delta^{-\eps}\}$ and the event $\cD$ of Lemma \ref{L:discrep}. On $\cN\cap \cD$ the probability of an unsuccessful coupling starting from time $\tau_k$ may be bounded as follows. Supposing that $\sigma(Z_{\tau_k}) = \bullet$ (or equivalently $X_{\tau_k} \sim \bar y$), the walks $X$ and $Z$ start a mirror coupling at time $\tau_k$ and they are initially spaced by no more than $\delta s^{1/4 + \eps}$, if $\cG_{k-1}$ holds. By the gambler's ruin estimate, the probability for $X$ to avoid the reflection line until hitting either $\R$ or $2 \Delta$ is then at most $\delta s^{1/4 + \eps}$. Hence
  $$
  \P ( \cG_k^c ; \cG_{k-1 } \cap \cN \cap\cD) \le \delta s^{1/4 + \eps} \lesssim \delta^{1/2 - 3 \eps}.
  $$
  Summing over $k \le \delta^{-\eps}$, we get
  $$
  \P \left( \cup_{k=0}^{N_s} \cG_k^c ; \cN \cap \cD\right) \lesssim \delta^{1/2 - 4 \eps}.
  $$
  We conclude by Lemma \ref{L:discrep} and Lemma \ref{L:backforth}.
\end{proof}

\begin{proof}[Proof of Proposition \ref{P:eff_col}.]
We estimate the right hand side of \eqref{eq:gradient_cancell}. For the random variable in the right hand side to be nonzero, it is necessary that:
\begin{itemize}
  \item $X$ and $X'$ did not couple prior to time $T_\R \wedge t/2$;
  \item one of the $\cG_k^c$ occurs for some $k \le N_s$;
  \item and still one of the four walks must visit $\bar y$ at exactly time $t$.
\end{itemize}
The first event has probability bounded by $\lesssim 1/\sqrt{t}$ by straightforward gambler's ruin. The second has probability at most $1/ t^{2\eps}$ by Lemma \ref{L:successfulcoupling} (since $ s\asymp t$). To bound the probability of the third event, we observe the following: if $w \in 2\Delta$, the maximum over all times $u$ of the probability to visit $\bar y$ at the specific time $u$ is small:

\begin{lemma}
  \label{L:visit}
  We have
  $$
  \sup_{w \in 2\Delta} \sup_{u \ge 0} \tilde p_u(w, \bar y) \le \delta^2 (\log 1/\delta)^c,
  $$
  for some $c>0$.
\end{lemma}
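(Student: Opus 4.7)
Writing $R := |w-\bar y|/\delta$ for the distance in the unrescaled graph $\Gamma$, the hypotheses $\Im(w)\asymp -2\rho/3$ and $\Im(\bar y) = -\Im(y) \le -\rho$ give $|w-\bar y| \gtrsim \rho$ in the rescaled metric, hence $R \asymp 1/\delta$. The plan is to bound $\tilde p_u(w,\bar y)$ uniformly in $w \in 2\Delta$ and $u\ge 0$ by splitting into three time regimes with thresholds $T_* := \delta^{-2}/(C\log(1/\delta))^2$ and $T_{**} := \exp(C'\sqrt{\log(1/\delta)})$, for large constants $C, C'$ to be fixed at the end.

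For $u \ge T_*$, the Nash-type heat kernel upper bound \eqref{aprioriHK} immediately gives $\tilde p_u(w,\bar y) \lesssim 1/u \lesssim \delta^2(\log 1/\delta)^2$. For $T_{**} \le u < T_*$, the event $\{X_u = \bar y\}$ forces the walk to have already exited the ball of radius $R$ around $w$, so $\tilde p_u(w,\bar y) \le \P_w(T_R \le u)$. The upper bound $u < T_*$ ensures $R \ge c_1\sqrt{u}\log u$ as long as $C$ is chosen large, so Lemma~\ref{L:subdiffusive} applies and yields $\tilde p_u(w,\bar y) \lesssim \exp(-c_2(\log u)^2)$; the lower bound $u \ge T_{**}$ then forces $(\log u)^2 \ge {C'}^2 \log(1/\delta)$, and choosing $C'$ large makes this bound at most $\delta^2$.

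The remaining short-time regime $u < T_{**}$ is handled by a jump-tail argument. To reach $\bar y$ from $w$ in fewer than $T_{**}$ steps, the walk must execute at least one jump of size $\gtrsim R/u \gtrsim (1/\delta)\exp(-C'\sqrt{\log(1/\delta)})$, which is only possible along the real line. Since these long-range transitions have exponentially decaying tails (see Lemma~\ref{T:eff}), a union bound over the $u \le T_{**}$ steps gives $\tilde p_u(w,\bar y) \lesssim u\exp(-cR/u)$; the quotient $R/u \gtrsim (1/\delta)\exp(-C'\sqrt{\log(1/\delta)})$ grows faster than any power of $\log(1/\delta)$, so this bound is superpolynomially small in $\delta$ and hence easily $\lesssim \delta^2$. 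The main (mild) obstacle is purely bookkeeping: one must fix the thresholds $T_*, T_{**}$ and the constants $C, C'$ in a consistent order so that the three regimes together cover all $u \ge 0$ and each contributes a bound of order $\delta^2(\log 1/\delta)^c$. The three underlying estimates (the Nash heat-kernel bound, Lemma~\ref{L:subdiffusive}, and the exponential jump tails of the effective walk on the real line) are all already established earlier, so no new probabilistic input is required.
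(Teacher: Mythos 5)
Your proof is correct, and the two main regimes match the paper's argument exactly: for $u$ larger than $\delta^{-2}/(\log 1/\delta)^c$ the Nash on-diagonal bound \eqref{aprioriHK} gives $\tilde p_u \lesssim 1/u \lesssim \delta^2(\log 1/\delta)^c$, and for $u$ below that threshold the subdiffusivity estimate of Lemma~\ref{L:subdiffusive} kicks in. Where you depart from the paper is in introducing a third, very-short-time regime $u < T_{**}$ handled by a jump-tail argument. That extra regime is sound but unnecessary: once you note that $\tilde p_u(w,\bar y) \le \P_w(T_R \le u)$ and that the event $\{T_R \le u\}$ is \emph{monotone in $u$}, you can bound $\P_w(T_R \le u)$ uniformly over all $u \le T_*$ by $\P_w(T_R \le T_*)$ and then apply Lemma~\ref{L:subdiffusive} once, at $n = T_*$, to get $\lesssim \exp(-c_2(\log T_*)^2) \asymp \exp(-c'(\log 1/\delta)^2)$ for all $u \le T_*$ at one stroke. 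This is what the paper's terse phrasing ``at most $\exp(-(\log 1/\delta)^2)$ by subdiffusivity'' is implicitly relying on. Your concern that a direct application of Lemma~\ref{L:subdiffusive} at $n=u$ degenerates for $u = O(1)$ is entirely correct — the bound $\exp(-c_2(\log u)^2)$ is useless there — so the gap you perceived is real if one insists on setting $n=u$; the monotonicity trick sidesteps it without the jump-tail machinery. Both routes are valid, and yours has the minor virtue of being fully explicit, at the cost of an extra threshold $T_{**}$ and a pigeonhole/union-bound step that the paper avoids.
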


\begin{proof}
  This follows from the facts (already used before, so we will be brief) that if $u \le \delta^{-2}/(\log 1/\delta)^c$ then the probability to be at $\bar y$ at time $u$ is at most $\exp( - (\log 1/\delta)^2)$ by subdiffusivity, while for $u \ge \delta^{-2}/(\log 1/\delta)^c$ we have a bound of the form $1/ u$ thanks to \eqref{aprioriHK}.
\end{proof}
All in all, putting these three events together we find
$$
\big| \nabla_x \tilde p_t(x, o)- \nabla_z \tilde p^\bullet_t (z, o)\big| \lesssim
t^{-1/2} \times t^{-2\eps} \times \delta^2 (\log 1/\delta)^c
$$
Summing over $t \in [\delta^{-2}/\log (1/\delta)^c, \delta^{-2 - \eps}]$ we see that this is at most
$
(\log 1/\delta)^c \delta^{ 1+ 7 \eps/2},
$
which is sufficient.
\end{proof}

\subsection{Proof of Proposition \ref{P:col_half}}

At this point we may work exclusively with the simple random walk on $(2 \Z) \times (2\Z)$ or the coloured simple random walk on the same lattice.
Let us write $\P_{\br}$ for the law of a random walk bridge, i.e., the law of a (lazy) simple random walk on $(2\Z)^2$ conditioned to go from $ x $ to $\bar y$ in time $t$.

Let $\tilde q_t(x,y)$ denote the transition probability for (lazy) simple random walk on $(2\Z)^2$. Then note that
$$
\tilde p^\bullet_t (x,y) = \tilde q_t(x,y) \P_{\br} ( \sigma(X_t) = \bullet),
$$
where $\sigma (X_t)$ is the colour of the process which changes with probability $p$ every time this process touches the real line. Now, let $N$ denote the number of visits to $\R$ and observe that by conditioning on $N$,
$$
\P_{\br} ( \sigma(X_t) = \bullet | N = n )  = \frac12 \pm \frac12\lambda^n
$$
where $\lambda = 1- 2p$ is the eigenvalue of the $2$-state Markov chain which switches state with probability $p$ at each step, and the $\pm$ sign depends on the initial colour $\sigma(X_0)$. Therefore,
\begin{align*}
  \nabla_x \tilde p^\bullet_t (x,\bar y) & = \frac12 \nabla_x \tilde q_t(x,\bar y) \pm \frac12 \nabla_x \Big( \tilde q_t(x, \bar y) \E_{\br} (\lambda^N) \Big) .
\end{align*}
Since $\sum_{t=0}^\infty \frac12 \nabla_x \tilde q_t(x,\bar y)$ is by definition the potential kernel of the (lazy) simple random walk $\frac12\nabla_x \tilde b(x,\bar y)$, to prove Proposition \ref{P:col_half}, as we already observed before, it suffices to show that there exists $\epsilon'>0$ such that
\begin{equation}\label{eq:gradientexp}
   \Big|\tilde  q_t(x, \bar y) \E_{\br} [ \lambda^N ] - \tilde q_t (x', \bar y) \E_{\brp } [\lambda^N] \Big | \lesssim \frac1{t^{3/2+ \eps'}},
\end{equation}
for $t \in [\delta^{-2}/(\log \delta)^2, \delta^{-2  - \eps} ]$.  
We recall first that if $0 \le u \le t$ and $E \in \cF_u = \sigma(X_0, \ldots, X_u)$, then by the Markov property:
\begin{equation}\label{bridge}
\P_{\br} (E) = \E_x \Big( \mathbf 1_{E} \frac{\tilde q_{t-u} (X_u, \bar y)}{\tilde q_t(x, \bar y)}\Big).
\end{equation}
Let $T_L$ denote the hitting time of the reflection line bisecting $x$ and $x'$; and let $T_\R$ denote the hitting time of $\R$. We introduce the following bad events:
\begin{itemize}
\item $B_1 = \{T_\R > t - s\}$, where $s = [t / (\log  t)^2 ]\wedge [ \delta^{-2}/ (\log \delta)^2] $.
\item $B_2 = \{ T_\R \le t - s\} \cap \{ T_L > T_\R\} \cap \{ N_{t- s/2} \le (\log t)^2 \}$, where $N_u$ is the number of visits to $\R$ by time $u$.
\end{itemize}
We will first show that both events are highly unlikely. In words, $B_1$ is unlikely because it requires going to $\bar y$ in the remaining $s$ units of time starting from above $\R$, which means $\bar y$ is too far away compared to the time remaining. $B_2$ is unlikely because it requires avoiding the reflection line for a long time (until touching $\R$) \emph{and} thereafter making very few visits to $\R$.

\begin{lemma}
  \label{L:B1} For $t \in [\delta^{-2}/(\log \delta)^2, \delta^{-2  - \eps} ]$, we have
  $$
  \P_{\br} (B_1) \lesssim \exp ( - (\log t)^2) \tilde q_t(x, \bar y)^{-1}.
  $$
\end{lemma}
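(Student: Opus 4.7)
The plan is to apply the bridge decomposition \eqref{bridge} at time $u = t-s$. Since the event $B_1 = \{T_\R > t-s\}$ is measurable with respect to $\cF_{t-s}$, we have
\[
\P_{\br}(B_1) \;=\; \frac{1}{\tilde q_t(x,\bar y)}\,\E_x\!\left[\mathbf{1}_{\{T_\R > t-s\}}\,\tilde q_s(X_{t-s},\bar y)\right],
\]
so it suffices to show that $\E_x[\mathbf{1}_{\{T_\R > t-s\}}\,\tilde q_s(X_{t-s},\bar y)] \lesssim \exp(-c(\log t)^2)$ for some $c > 0$.

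The key observation is the following: on the event $\{T_\R > t-s\}$, the walker $X_{t-s}$ lies in the strict upper half-plane, while $\bar y$ has imaginary part at most $-\rho$, so that $|X_{t-s} - \bar y| \geq \rho$ in continuum units, i.e.\ at least $\rho/(2\delta)$ in lattice units. On the other hand, the constraint $s \leq \delta^{-2}/(\log\delta)^2$ gives $\sqrt{s}\,\log s = O(\delta^{-1})$. Thus, by choosing the constant $c_1$ in Lemma~\ref{L:subdiffusive} smaller than $\rho/2$, the hypothesis $R \geq c_1\sqrt s\,\log s$ is satisfied with $R = |X_{t-s} - \bar y|$. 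Since the event $\{X_s = \bar y\}$ (started at $z$) forces the walker to cover distance at least $|z - \bar y|$ by time $s$, the lemma yields the pointwise bound
\[
\tilde q_s(z,\bar y) \;\leq\; \P_z\!\bigl(T_{|z-\bar y|} \leq s\bigr) \;\lesssim\; \exp\!\bigl(-c_2(\log s)^2\bigr) \;\lesssim\; \exp\!\bigl(-c(\log t)^2\bigr),
\]
uniformly over $z$ in the upper half-plane, using that $\log s \asymp \log t$ throughout the range $t \in [\delta^{-2}/(\log\delta)^2,\delta^{-2-\eps}]$. Combined with the trivial bound $\P_x(T_\R > t-s) \leq 1$, this yields the claim.

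The main potential obstacle I anticipate is verifying that Lemma~\ref{L:subdiffusive}, which is formulated for the effective walk on the graph $\Gamma$, applies to the lazy simple random walk on $(2\delta\Z)^2$ underlying $\tilde q$. However, this is essentially immediate: the proof of that lemma relies only on the symmetry of jumps together with Freedman's martingale inequality, and for the bounded-range lazy simple random walk considered here the argument simplifies considerably (the truncation step used there to produce a bounded-jump martingale becomes unnecessary). A secondary technical point is to handle small values of $t-u$ in the argument, but for $t-u$ smaller than the continuum distance $\rho/(2\delta)$, $\tilde q_{t-u}(z,\bar y)$ simply vanishes by the bounded-range property, so there is nothing to check.
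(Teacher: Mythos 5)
Your proposal is correct but replaces the paper's key ingredient with a different one. Both proofs start identically, applying the bridge decomposition \eqref{bridge} at time $t-s$ to reduce the claim to a pointwise estimate on $\tilde q_s(X_{t-s},\bar y)$ on the event $\{T_\R > t-s\}$. The paper then invokes the sharp Gaussian off-diagonal upper bound for simple random walk (Theorem 2.3.11 of \cite{LawLim}), $\tilde q_s(z,\bar y)\lesssim s^{-1}\exp(-c|z-\bar y|^2/s)$, and plugs in $|z-\bar y|\gtrsim\delta^{-1}$ and $s\le\delta^{-2}/(\log\delta)^2$ to conclude. You instead observe that $\{X_s=\bar y\}$ forces $T_{|z-\bar y|}\le s$, check that $|z-\bar y|\gtrsim\delta^{-1}\gtrsim\sqrt{s}\log s$ in this range, and invoke the subdiffusivity estimate (Lemma~\ref{L:subdiffusive}, built on Freedman's martingale inequality) to obtain directly $\tilde q_s(z,\bar y)\lesssim\exp(-c(\log s)^2)$. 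Both routes land on the same $\exp(-c(\log t)^2)$ bound after using $\log s\asymp\log t\asymp\log(1/\delta)$ over the stated range of $t$. Your version has the virtue of reusing a lemma that the paper already proves, rather than citing an external Gaussian estimate, at the modest cost of needing to note (as you do) that Lemma~\ref{L:subdiffusive} applies even more readily to the bounded-jump simple random walk than to the effective walk for which it was stated. The paper's choice is more standard and requires no such remark. Both are valid; there is no gap in your argument.
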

\begin{proof}
Note that by \eqref{bridge},
$$
\P_{\br} (B_1) \le \E_{\br}\Big (\mathbf1_{T_\R > t - s} \frac{\tilde q_s( X_{t-s}, \bar y)}{\tilde q_t (x, \bar y)} \Big ).
$$
Now, $\tilde q_t(x,\bar y)$ satisfies the Gaussian behaviour $\tilde q_t(x, \bar y ) \asymp  (1/t) \exp ( - \frac{| x- \bar y |^2}{2t} )$ in the range $t \geq \delta^{-2}/ (\log \delta)^2$ (see Theorem 2.3.11 in \cite{LawLim}). Since $ | X_{t - s } -\bar y | \gtrsim \delta^{-1}$ when $T_{\R} > t -s$, and since $|x - \bar y| \lesssim \delta^{-1}$, we deduce that for some constant $c>0$,
$$
 {\tilde q_s( X_{t-s}, \bar y)}\le \exp( -c  {\delta^{-2}}/{s} ) \le \exp ( - c(\log 1/\delta)^2)
$$
on the event $T_\R > t -s$, where we used that $s\leq \delta^{-2}/ (\log \delta)^2$. The desired inequality follows since $t\leq \delta^{-2  - \eps}$.
\end{proof}

\begin{lemma}
  \label{L:B2}
For  $t \in [\delta^{-2}/(\log \delta)^2, \delta^{-2  - \eps} ]$, we have
  $$
  \P_{\br} (B_2) \lesssim \frac1{t^{3/2+ \eps'}}(\log t)^6 \tilde q_t(x, \bar y)^{-1},
  $$
where $\eps'=\frac{1-\eps}{2+\eps}$.
\end{lemma}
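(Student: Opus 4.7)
The plan is to estimate $\P_{\br}(B_2)$ by first converting to an unconditioned probability via the bridge formula \eqref{bridge} applied at time $t-s/2$. Since $B_2 \in \mathcal{F}_{t-s/2}$, this yields
\[
\P_{\br}(B_2) \leq \frac{1}{\tilde q_t(x, \bar y)} \Bigl(\sup_z \tilde q_{s/2}(z, \bar y)\Bigr)\, \P_x(B_2) \lesssim \frac{1}{s\, \tilde q_t(x, \bar y)}\, \P_x(B_2),
\]
where the supremum is $\lesssim 1/s$ by the heat kernel bound \eqref{aprioriHK} applied to simple random walk on $(2\delta\Z)^2$. It therefore remains to show $\P_x(B_2) \lesssim \delta (\log t)^c / \sqrt{t}$ for some fixed power $c$.

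To bound $\P_x(B_2)$, I would use the strong Markov property at $T_\R$ to decouple two independent estimates. First, $\P_x(T_L > T_\R) \lesssim \delta$ by a one-dimensional gambler's ruin argument (Proposition~5.1.5 of \cite{LawLim}): the horizontal and vertical projections of simple random walk on $(2\delta\Z)^2$ are independent lazy one-dimensional walks; the vertical coordinate must travel from macroscopic height $\Im(x) \geq \rho$ to $\R$, taking time $\asymp \delta^{-2}$, and the horizontal projection starts at distance $\delta$ from $L$, so avoiding a crossing of $L$ for time $\asymp \delta^{-2}$ has probability $\lesssim \delta$. Second, conditionally on $X_{T_\R}\in \R$, the number $N'$ of subsequent visits to $\R$ up to time $t-s/2-T_\R \geq s/2$ is the local time at $0$ of a lazy one-dimensional SRW started at $0$, whose tail satisfies $\P(N' \leq k) \asymp k/\sqrt{s}$. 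Since $s \gtrsim t/(\log t)^2$, this gives $\P(N' \leq (\log t)^2) \lesssim (\log t)^3/\sqrt{t}$.

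Multiplying the two estimates, $\P_x(B_2) \lesssim \delta (\log t)^3/\sqrt{t}$, and combining with the bridge reduction yields $\P_{\br}(B_2) \lesssim \delta (\log t)^5 t^{-3/2}\, \tilde q_t(x, \bar y)^{-1}$. Finally, the assumed range $t \leq \delta^{-2-\eps}$ gives $\delta \leq t^{-1/(2+\eps)} \leq t^{-(1-\eps)/(2+\eps)} = t^{-\eps'}$, so the factor $\delta$ is absorbed into the claimed $t^{-\eps'}$ decay, producing a bound $\lesssim (\log t)^6 t^{-3/2-\eps'}\, \tilde q_t(x, \bar y)^{-1}$ (any slightly smaller log exponent is of course harmless). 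The main technical point is to perform the decoupling at $T_\R$ cleanly despite the compound form of $B_2$ (the event $\{T_L > T_\R\}$ lives on $\cF_{T_\R}$, the restriction on $N_{t-s/2}$ lives after $T_\R$), and to ensure that the independence-of-coordinates argument for gambler's ruin as well as the one-dimensional local-time tail estimate are tight in the lazy setting — both facts being elementary but requiring care with constants and parities.
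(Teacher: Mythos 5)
Your proposal follows essentially the same route as the paper: the bridge formula at time $t-s/2$, the diagonal heat-kernel bound $\sup_z\tilde q_{s/2}(z,\bar y)\lesssim 1/s$, the strong Markov property at $T_\R$, the gambler's-ruin estimate $\P_x(T_\R<T_L)\lesssim\delta$, a tail bound on the number of visits to $\R$, and finally absorbing $\delta$ into $t^{-\eps'}$ via $\delta\le t^{-1/(2+\eps)}$. The only local variation is in bounding $\P_z(N_{s/2}\le(\log t)^2)$: the paper decomposes into inter-visit times $T_i$ and applies a union bound, getting $(\log t)^3/\sqrt s$, whereas you invoke the local-time tail $\P(L_u\le k)\asymp k/\sqrt u$ directly, getting the marginally sharper $(\log t)^2/\sqrt s$; this saves one logarithm and is harmless either way. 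One shared imprecision worth noting: your line ``since $s\gtrsim t/(\log t)^2$'' is not literally correct, because $s=\bigl[t/(\log t)^2\bigr]\wedge\bigl[\delta^{-2}/(\log\delta)^2\bigr]$ and for $t$ near the top of the admissible range ($t\gg\delta^{-2}$) the second term is the minimum, so $1/\sqrt s$ is then $\asymp\delta\log(1/\delta)$ rather than $(\log t)/\sqrt t$. The paper makes the same tacit simplification when it writes $(\log t)^3/\sqrt s=(\log t)^4/\sqrt t$; in the complementary regime the chain of estimates still produces a bound of order $\delta^4(\log t)^{O(1)}\tilde q_t(x,\bar y)^{-1}$, which is $o\bigl(t^{-3/2}\bigr)$ for $\eps<2/3$, so the conclusion of the lemma (a bound of the form $t^{-3/2-c}(\log t)^{O(1)}\tilde q_t^{-1}$ for some $c>0$) survives, though strictly one may have to accept a slightly smaller positive exponent than the stated $\eps'=(1-\eps)/(2+\eps)$ in that range.
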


\begin{proof}
  Using \eqref{bridge},
  \begin{align*}
    \P_{\br} (B_2) & \le \E_x\Big ( \mathbf1_{\{T_\R < t - s, T_\R  < T_L\}}\mathbf1_{\{ N_{t - s/2} \le (\log t)^2 \}} \frac{\tilde q_{s/2} (X_{t - s/2}, \bar y)}{\tilde q_t (x, \bar y)} \Big).
  \end{align*}
  We estimate the off-diagonal heat kernel term $\tilde q_{s/2}(X_{t - s/2}, \bar y)$ by its diagonal behaviour which is at most $\lesssim 1/s = [(\log t)^2/t]\vee [(\log \delta)^2 \delta^2 ]\lesssim (\log t)^2 t^{-\tfrac2{2+\epsilon}}$, where in the last bound we used that $t\leq \delta^{-2-\eps}$ and hence $\delta \leq t^{-\tfrac1{2+\eps}}$.
  Therefore
  \begin{equation}\label{boundB2}
  \P_{\br} (B_2) \lesssim t^{-\tfrac2{2+\epsilon}}\frac{(\log t)^2}{ \tilde q_t (x, \bar y)}\P_x( T_\R < t-s, T_\R< T_L, N_{t-s/2} \le (\log t)^2).
  \end{equation}
  We already know by gambler's ruin estimates that, since $x$ is at distance $O(\delta)$ from $L$ and at distance $\gtrsim 1$ from $\R$ that $\P_x(T_\R < T_L) \le O(\delta)$. Conditioning on everything up to time $T_\R$, and applying the strong Markov property at this time,
  $$
  \P_x( T_\R < t-s, T_\R< T_L, N_{t-s/2} \le (\log t)^2) \lesssim \delta \sup_{z \in \R} \P_z( N_{s/2} \le (\log t)^2).
  $$
  Let $T_i$ denote the length of the intervals between successive visits to the real line. Thus $T_i$ are i.i.d. and $\P( T_i \ge r) \asymp 1/\sqrt{r}$ when $r \to \infty$ by elementary one-dimensional random walk arguments. Fix $z \in \R$. Then by a union bound,
  \begin{align*}
    \P_z( N_{s/2} \le (\log t)^2) & = \P_z \Big ( \sum_{i=1}^{(\log t)^2} T_i \ge s/2 \Big) \\
    & \le \P( T_i \ge s/(2 (\log t)^2) \text{ for some $1\le i\le (\log t)^2$})\\
    & \lesssim (\log t)^3/\sqrt{s} = (\log t)^4/\sqrt{t}.
  \end{align*}
  Therefore, plugging this into \eqref{boundB2}, we find
  $$
  \P_{\br} (B_2)  \lesssim t^{-1/2-\tfrac2{2+\epsilon}} \frac{\delta(\log t)^6}{\tilde q_t (x, \bar y)} \leq    t^{-1/2-\tfrac3{2+\epsilon}} \frac{\delta(\log t)^6}{\tilde q_t (x, \bar y)}= t^{-3/2-\eps'}\frac{(\log t)^6 }{ \tilde q_t(x, \bar y)},
  $$
where we again used that $\delta \leq t^{-\tfrac1{2+\eps}}$.
\end{proof}

Finally, we turn to the remaining contribution. Together with Lemma \ref{L:B1} and Lemma \ref{L:B2}, this shows that \eqref{eq:gradientexp} holds with any $0<\eps <1/2$.
\begin{lemma}
  \label{L:B3}
  \begin{equation}\label{eq:B3}
    \big| \nabla_x \big( \tilde q_t(x , \bar y) \E_{\br} ( \lambda^N ; (B_1 \cup B_2)^c) \big)\big | \le |\lambda|^{(\log t)^2} .
  \end{equation}
where $\lambda = (1- 2p)<1$.
\end{lemma}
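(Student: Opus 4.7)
The plan is to split the event $(B_1 \cup B_2)^c$ according to the relative order of $T_L$ and $T_\R$. Noting that $(B_1\cup B_2)^c$ requires $T_\R \le t-s$, I would write $(B_1 \cup B_2)^c = E_{\text{early}} \sqcup E_{\text{late}}$, where $E_{\text{early}} = \{T_\R \le t-s,\; T_L \le T_\R\}$ and $E_{\text{late}} = \{T_\R \le t-s,\; T_L > T_\R,\; N_{t-s/2} > (\log t)^2\}$, and bound the two pieces by very different means.

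On $E_{\text{late}}$ the definition forces $N \ge N_{t-s/2} > (\log t)^2$, hence $|\lambda|^N \le |\lambda|^{(\log t)^2}$ on this event. Consequently both $\tilde q_t(x,\bar y)\,|\E_{\br}[\lambda^N; E_{\text{late}}]|$ and its counterpart at $x'$ are at most $|\lambda|^{(\log t)^2}$, so this part contributes at most $2|\lambda|^{(\log t)^2}$ to the gradient via the triangle inequality. Since constants are harmless in the target bound, this piece is handled directly.

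For $E_{\text{early}}$ I would use a measure-preserving reflection bijection through $L$. Rewrite $\tilde q_t(x, \bar y)\, \E_{\br}[\lambda^N; E_{\text{early}}] = \E_x[\lambda^N \mathbf 1_{\{X_t = \bar y\}}\mathbf 1_{E_{\text{early}}}]$ and define the involution $\phi$ on path space that reflects the initial segment $(X_0,\ldots,X_{T_L})$ through $L$ and leaves the tail $(X_{T_L},\ldots,X_t)$ unchanged. Since $L$ bisects $x$ and $x'$ and reflection preserves the lazy step distribution, $\phi$ is a weight-preserving bijection between paths from $x$ and paths from $x'$. In the horizontal-gradient case ($x'-x = \pm 2\delta e_1$), $L$ is vertical and reflection through $L$ preserves $\R$ as a set, so $\phi$ preserves the endpoint $\bar y$, the number of visits $N$, and each condition defining $E_{\text{early}}$. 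The two $E_{\text{early}}$ contributions therefore agree exactly and cancel in $\nabla_x$.

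The main obstacle is the vertical-gradient case ($x'-x = \pm 2\delta e_2$), where $L$ is horizontal and reflection through $L$ sends $\R$ to a parallel line, so that $N$ is not preserved by $\phi$. I would handle it with a coordinatewise variant of the coupling: couple the walks from $x$ and $x'$ to take identical horizontal steps and mirror vertical steps so that they meet at $T_L$ and evolve jointly afterward; since $L$ sits at distance $\delta$ from $x$, the meeting time has a geometric tail and in particular is much smaller than $T_\R$ with probability $1 - O(|\lambda|^{(\log t)^2})$, on which event $N$ on the two walks agrees and the contributions cancel as before. The rare exceptional event is again absorbed by $|\lambda|^{(\log t)^2}$, and combining the two cases yields the claim.
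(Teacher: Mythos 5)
Your decomposition of $(B_1\cup B_2)^c$ and the direct $|\lambda|^{(\log t)^2}$ bound on $E_{\text{late}}$ are exactly the paper's, and your $E_{\text{early}}$ argument is also the paper's: the paper rewrites the bridge expectation as $\E_x\big(\lambda^N\mathbf 1_{\{T_L<T_\R\}}\mathbf 1_{\{X_t=\bar y\}}\big)$ and observes that under the mirror coupling through $L$, on $\{T_L<T_\R\}$ the two walks couple before either touches $\R$, so $N$ and the endpoint agree and the terms cancel. The paper treats the two gradient directions uniformly, and your reason for singling out the vertical case is a misdiagnosis. Since $\phi$ reflects only the pre-$T_L$ segment, and on $E_{\text{early}}$ that segment never visits $\R$ while its reflection through the horizontal $L$ lies strictly above $\R$ (being strictly above $L$, which in turn lies above $\R$), and the tail of the path --- which carries every visit to $\R$ --- is left unchanged, $N$ \emph{is} preserved by $\phi$ in the vertical case too. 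Your supplementary coupling estimate is therefore aimed at a nonexistent obstruction; it is not wrong (the geometric tail of the coupling time is indeed crushed by the deterministic bound $T_\R\gtrsim\rho/\delta$), but that sort of estimate is only needed to tidy up a subtlety you and the paper both leave implicit: $L$ bisects two adjacent points of $(2\delta\Z)^2$, so the walk never actually lands on $L$, and the involution $\phi$ (equivalently, the mirror coupling) must be realised through laziness, which makes the matching between the $E_{\text{early}}$ events for $x$ and $x'$ only approximate rather than an exact bijection.
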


\begin{proof} On $(B_1 \cup B_2)^c$, we see that $T_\R < t-s$, and either $T_L < T_\R$ or $N_{t-s/2} \ge (\log t)^2$. In the latter case, $|\lambda^{N}| = |\lambda|^{N_t} \le |\lambda|^{(\log t)^2}$, so this event contributes at most the right hand side of \eqref{eq:B3} to the expectation. To conclude, it therefore suffices to show
\begin{equation}\label{eq:vanishgr}
\tilde q_t(x, \bar y) \E_{\br} (\lambda^N ; T_L < T_\R) = \tilde q_t(x', \bar y)\E_{\brp} ( \lambda^N ; T_L < T_\R)
\end{equation}
so that the contribution of this event to the left hand side of \eqref{eq:B3} vanishes exactly. To see this, let us rewrite the left hand side of \eqref{eq:vanishgr} as an expectation involving random walk rather than bridge, and observe that when $T_L < T_\R$ the walks from $x$ and $x'$ are coupled \emph{before} hitting the real line, so that the overall number of visits to the real line is the same for both walks. Hence
  \begin{align*}
  \tilde q_t(x, \bar y) \E_{\br} ( \lambda^N ; T_L < T_\R)&= \E_x (\lambda^N1_{\{T_L < T_\R\}}1_{\{X_t = \bar y\}})\\
  & = \E_{x'} (\lambda^N1_{\{T_L < T_\R\}}1_{\{X_t = \bar y\}})\\
  & = \tilde q_t(x', \bar y) \E_{\brp} ( \lambda^N ; T_L < T_\R) ,
  \end{align*}
  as desired.
\end{proof}

As explained above, this concludes the proof of Proposition \ref{P:col_half}, and thus also of Theorem \ref{T:PKscaling}.

\section{Convergence to the Neumann Gaussian Free field} \label{S:SL}

From now on we work in the upper-half plane $\H$ with the local (infinite volume) limit $\mu$ (depending on $z$) of the free boundary dimer model from Theorem~\ref{P:infinite_vol_intro}.
We will write $\mu$ to denote both the probability and expectation with respect to $\mu$.

\subsection{Infinite volume coupling function and its scaling limit}

\begin{figure}
\begin{center}
\includegraphics[width=0.45\textwidth]{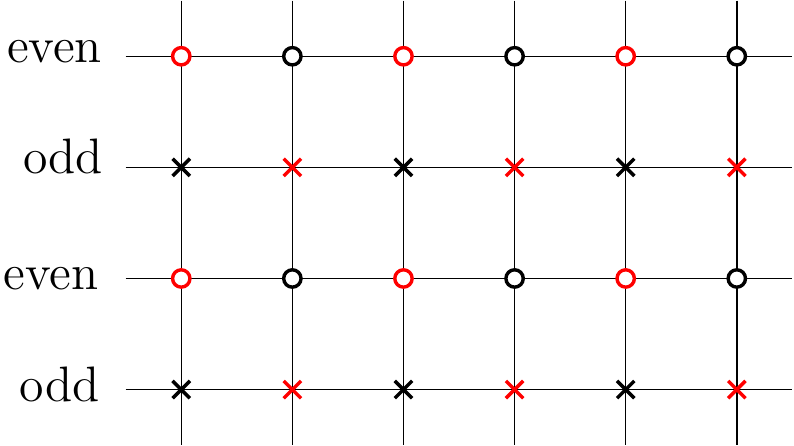}
\caption{Different types of vertices. The black vertices are drawn in red, and the white vertices are drawn in black.}
\label{fig:vertex_type}
\end{center}
\end{figure}

Let $C$ be the \textbf{coupling function}, as defined in Corollary~\ref{C:infinitevol}, i.e., the pointwise limit, as $n\to \infty$, of the inverse Kasteleyn matrix on $\cG_n$ given in matrix notation by
\begin{equation}\label{coupling_master}
  C = -A K^*,
\end{equation}
where $A(x,y)=\frac1{D(y,y)}a(x,y)$ is the normalised potential kernel of the infinite volume bulk effective (nonlazy) walk.
We write $A_{\text{even}}$ and $A_{\text{odd}}$ for the restriction of $A$ to the even and odd rows respectively.
When we unpack \eqref{coupling_master} we find that its meaning is different depending on the respective type of the pair of vertices. We denote the black and white vertices in the even and odd rows by the symbols $\BE$, $\WE$, $\BO$, $\WO$ respectively as illustrated in Figure~\ref{fig:vertex_type}.
Fix $v_1, v_2$ two vertices in $\H \cap \Z^2$. Suppose for instance that $v_1 \in \WE$ and $v_2 \in \BE$. Then \eqref{coupling_master} says
\begin{align*}
C(v_1, v_2) &=
a(v_1,v_2+1)-a (v_1 , v_2-1) \\
& : =  \frac{\delta}{\delta x_2} A_{\text{even}} (v_1, v_2),
\end{align*}
where the second identity follows from
In the following, $\tfrac{\delta }{ \delta x_2}$ (resp.\ $\tfrac{\delta}{\delta y_2}$) will denote the discrete derivative in the $x$ (resp. $y$) direction of the second coordinate of the Green's function. Note that
\[
\frac{\delta}{\delta x_2} \sim 2\delta \frac{\partial}{\partial x_2} \quad \text{as} \quad \delta\to 0.
\]
Likewise, if instead we have $v_1 \in \WE$ and $v_2 \in \BO$, then
$$
C(v_1, v_2) =  i \frac{\delta}{\delta y_2}A_{\text{even}}  (v_1, v_2).
$$
We summarise these computations in a table:
$$
\begin{array}{c|c|c|c|c}
v_1 \backslash v_2 &  v_2 \in \WO & v_2 \in \WE & v_2 \in \BE & v_2 \in \BO \\
\hline
v_1 \in \WO   &  \tfrac{\delta}{\delta x_2} \Aodd  &  i \tfrac{\delta}{\delta y_2} \Aodd &
 i \tfrac{\delta}{\delta y_2} \Aodd&
 \tfrac{\delta}{\delta x_2} \Aodd  \\
\hline
v_1 \in   \WE &  i \tfrac{\delta}{\delta y_2} \Aev &   \tfrac{\delta}{\delta x_2} \Aev &  \tfrac{\delta}{\delta x_2}  \Aev   &   i \tfrac{\delta}{\delta y_2} \Aev  \\
\end{array}
$$
Furthermore, when $v_1$ is in the black lattice ($v_1 \in \BE$ or $v_1 \in \BO$) we obtain the corresponding table simply by translation invariance:
$$
\begin{array}{c|c|c|c|c}
v_1 \backslash v_2 &  v_2 \in \WO & v_2 \in \WE & v_2 \in \BE & v_2 \in \BO \\
\hline
v_1 \in \BO & \tfrac{\delta}{\delta x_2} \Aodd & i \tfrac{\delta}{\delta y_2} \Aodd &    i\tfrac{\delta}{\delta y_2} \Aodd & \tfrac{\delta}{\delta x_2} \Aodd \\
\hline
v_1 \in \BE & i \tfrac{\delta}{\delta y_2} \Aev & \tfrac{\delta}{\delta x_2} \Aev   &   \tfrac{\delta}{\delta x_2} \Aev &i \tfrac{\delta}{\delta y_2}  \Aev \\
\end{array}
$$

\begin{remark}
It is useful to point out that the terms involving mixed colours and those involving matching colours behave very differently: indeed, if both $v_1$ and $v_2$ are of the same colour, then the arguments of the corresponding potential kernel are of different colours. This corresponds to only considering walks that go through the boundary in the definition of the potential kernel (see Corollary~\ref{C:PKscaling}).
\end{remark}

There is a convenient algebraic rewriting of these different values. Suppose $v_1$ and $v_2$ are two {arbitrary} vertices (of any colour), and let
$$
s(v) = (-1)^{\text{row $\#$ of $v$}}
$$
be the signed parity of the row of $v$.
Then we have
\begin{align}
C(v_1, v_2) & = \frac14 \left[ \left( {1+ s(v_1) s(v_2)} \right) \frac{\delta}{\delta x_2} + \left( {1 - s(v_1) s(v_2)} \right) i \frac{\delta}{\delta y_2}\right] \nonumber \\
&  \times \left[ \left ( {1 - s(v_1)}\right) \Aodd(v_1,v_2) + \left( {1+ s(v_1)} \right) \Aev(v_1,v_2) \right]\label{masterCgreen}.
\end{align}

We will now combine \eqref{masterCgreen} with Corollary~\ref{C:PKscaling} to obtain the scaling limit of the inverse Kasteleyn matrix in the upper half-plane.
\begin{thm}\label{T:couplingscaling}
Let $z$ and $w$ be two vertices on $\delta \Z^2 \cap \H$, and fix $\rho>0$. Then there exists $\eps>0$ such that uniformly over $z \neq w$ with $ \min (\Im (z), \Im (w) ) \ge \rho$, as the mesh size $\delta \to 0$,
   $$
  C(z,w)
 = \begin{cases}
   -\dfrac{\delta}{2 \pi} \left( s(z) s(w)  \dfrac1{z-w} +  \dfrac{1}{\bar z - \bar w} \right)  + o(\delta^{1+ \eps}) + O( \frac{\delta}{|z-w|})^{2}& \text{ if $z,w$ are of different class}\\
   \dfrac{\delta}{2\pi}  \left(s(z) \dfrac1{z -\bar w} +  s(w) \dfrac1{ \bar z - w} \right) + o(\delta^{1+ \eps}) & \text{ if $z,w$ are of the same class}.
 \end{cases}
  $$
\end{thm}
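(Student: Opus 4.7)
The plan is to combine the algebraic identity \eqref{masterCgreen} with the scaling asymptotics of the potential kernel given in Corollary~\ref{C:PKscaling}. The first step is to observe that, in the regime $\min(\Im z, \Im w) \ge \rho > 0$, every vertex involved lies in the bulk and hence satisfies $D(v,v) = 4$, so that $\Aev = \tfrac{1}{4} a_{\text{even}}$ and $\Aodd = \tfrac{1}{4} a_{\text{odd}}$ with $a_{\text{even}}, a_{\text{odd}}$ the (un-normalised) potential kernels of the infinite-volume effective walks. The formula \eqref{masterCgreen} then expresses $C(z,w)$ as either a horizontal discrete derivative $\tfrac{\delta}{\delta x_2}\Aev$ or $\tfrac{\delta}{\delta x_2}\Aodd$ (when $s(z)s(w) = 1$) or a vertical one $i\tfrac{\delta}{\delta y_2}\Aev$ or $i\tfrac{\delta}{\delta y_2}\Aodd$ (when $s(z)s(w) = -1$), the choice of $\Aev$ vs.\ $\Aodd$ being dictated by the row parity of $z$.

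Next, using the symmetry $a(u,v) = a(v,u)$ of the potential kernel noted in Section~\ref{S:IVL}, each such discrete derivative can be rewritten in the form $a(x', y) - a(x, y)$ with $x' - x = \pm 2\delta e_i$ and $y = z$, which is exactly the form in which Corollary~\ref{C:PKscaling} applies. The key observation for the case analysis is the following: a shift by $\pm\delta$ (horizontally or vertically) flips the bipartite class of a vertex. Therefore the two endpoints $w \pm \delta e_i$ of the discrete derivative are of \emph{opposite} bipartite class from $w$, and it follows that the corollary's ``different class'' case applies precisely when $z$ and $w$ share the \emph{same} bipartite class (and vice-versa). This is the reason the case split in the target formula is the mirror image of the case split in Corollary~\ref{C:PKscaling}.

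The third step is to expand the asymptotic expressions provided by Corollary~\ref{C:PKscaling} in powers of $\delta$. For the horizontal derivative, the leading contribution is
\[
\tfrac{2}{\pi}\Re\Big(\tfrac{2\delta}{w - \bar z}\Big) \quad \text{or} \quad \tfrac{2}{\pi}\Re\Big(\tfrac{2\delta}{w - z}\Big),
\]
depending on the case, and similarly for the vertical derivative one obtains expressions involving $\tfrac{2i\delta}{w - \bar z}$ or $\tfrac{2i\delta}{w - z}$. Using the identity $\Re(\zeta) = \tfrac12(\zeta + \bar\zeta)$, the combination of the $1/4$ normalisation, the possible factor of $i$ coming from the vertical derivative, and the parity signs $s(z)$, $s(w)$ carried by \eqref{masterCgreen}, reassembles into the announced closed-form expression. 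Concretely, in the ``same class'' case of the theorem the two reflected kernels $\tfrac{1}{z - \bar w}$ and $\tfrac{1}{\bar z - w}$ arise with coefficients $s(z)$ and $s(w)$ respectively, while in the ``different class'' case one obtains $\tfrac{1}{z-w}$ (signed by $s(z)s(w)$) together with its conjugate $\tfrac{1}{\bar z - \bar w}$; the additional $O((\delta/|z-w|)^2)$ error in that case is inherited directly from Corollary~\ref{C:PKscaling}.

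The main obstacle is not conceptual but combinatorial: one must verify in each of the four sub-cases (even/even, odd/odd, even/odd, odd/even rows for $z$ and $w$) that the signs produced by $s(z)$, $s(w)$, the factor of $i$ from the vertical derivative, and the Re vs.\ Im parts of the complex expansions combine correctly to give the unified formulas in the statement. This bookkeeping is routine and reduces to the elementary identities
\[
\Re\Big(\tfrac{1}{w - \bar z}\Big) = \tfrac12\Big(\tfrac{1}{w - \bar z} + \tfrac{1}{\bar w - z}\Big), \qquad \Re\Big(\tfrac{i}{w - \bar z}\Big) = \tfrac{1}{2i}\Big(\tfrac{1}{\bar w - z} - \tfrac{1}{w - \bar z}\Big),
\]
which together with their $\bar z \leftrightarrow z$ analogues account exactly for the appearance of the factors $s(z)$, $s(w)$ in the final formulas.
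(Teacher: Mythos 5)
Your plan is correct and matches the paper's proof in all essential respects: both rely on rewriting $C(z,w)$ as a $\pm\delta$-symmetric discrete derivative of the normalised potential kernel, invoking the symmetry $a(u,v)=a(v,u)$ to put it in the form $a(x',y)-a(x,y)$ with $x'-x = \pm 2\delta e_i$, and then applying Corollary~\ref{C:PKscaling}; the paper in fact explicitly remarks that one could use the master formula \eqref{masterCgreen} but instead does the case-by-case check via the tables, which is exactly the bookkeeping you defer to. The two genuine insights you identify — that the $\pm\delta$ shift inverts the class comparison so the corollary's cases apply ``crosswise,'' and that the signs $s(z),s(w)$ arise from the choice of horizontal vs.\ vertical derivative combined with $\Re(\zeta)=\tfrac12(\zeta+\bar\zeta)$ and $\Re(i\zeta)=-\Im(\zeta)$ — are precisely the points on which the paper's verification turns, and the $1/4$ from $D(v,v)=4$ in the bulk is handled correctly. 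One small caveat worth making explicit when you carry out the bookkeeping: replacing $x=w\mp\delta$ by $w$ inside the corollary's fractions produces an additional $O((\delta/|w-z|)^2)$ (resp.\ $O((\delta/|w-\bar z|)^2)=O(\delta^2)$) error, which you should note is absorbed into the stated error terms.
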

\begin{proof}
We could use the master formula \eqref{masterCgreen} but in order to avoid making mistakes it is perhaps easier to consider all the possible cases for the types of vertices $z$ and $w$ using the tables above.
We start with the case when $z$ and $w$ are of different colour. We will use the symmetry of the potential kernel $a(z,w)=a(z,w)$ and Corollary~\ref{C:PKscaling}
(applied to the case when the arguments of the potential kernel are of the same colour).
\begin{itemize}
\item For $z, w$ with $s(z)=s(w)=-1$, we have
\begin{align*}
C(z, w) &= \frac{\delta}{\delta x_2} \Aodd(z,w) \\
&= \frac14(a(w+\delta ,z)- a(w-\delta ,z))
\\&=  \frac14\times \frac2\pi \Re\left ( \frac {2\delta} {w-z}\right) +o(\delta^{1+\eps})+ O\left(\frac{\delta}{|z-w|}\right)^2\\
 &= -\frac\delta\pi \Re\left ( \frac {1} {z-w}\right) +o(\delta^{1+\eps}) + O\left(\frac{\delta}{|z-w|}\right)^2.
\end{align*}
The factor $1/4$ comes from the fact that $\Aodd$ is normalised by the degree of $w$ which is equal to $4$ (see~\eqref{godd}).

\item For $z, w$ with $s(z)=-1$ and $s(w)=1$, we have
\begin{align*}
C(z,w)&=i\frac{\delta}{\delta y_2} \Aodd(z,w) \\
&=-i\frac\delta\pi \Re\left ( \frac {i} {z-w}\right) +o(\delta^{1+\eps}) + O\left(\frac{\delta}{|z-w|}\right)^2
\\& =i\frac\delta\pi \Im\left ( \frac {1} {z-w}\right) +o(\delta^{1+\eps}) + O\left(\frac{\delta}{|z-w|} \right)^2.
\end{align*}

\item
For $z, w$ with $s(z) =s(w)=1$, since $z, w$ are of different colors, $z$ and $w\pm\delta$ are of the same colour.
Note that $\Gev$ is a signed function such that $\Gev(z,w)<0$ for $z,w$ of different colors, and $\Gev(z,w)>0$ for $z,w$ of the same colour. Therefore we have
\begin{align*}
C(z,w)=\frac{\delta}{\delta x_2} \Aev(z,w) =-\frac\delta\pi \Re\left ( \frac {1} {z-w}\right) + o(\delta^{1+\eps}) + O\left(\frac{\delta}{|z-w|} \right)^2.
\end{align*}

\item For $z, w$ with $s(z) =1$ and $s(w)=-1$, $z$ and $w\pm\delta i$ are again of the same colour. We have
\begin{align*}
C(z,w) &= i\frac{\delta}{\delta y_2} \Aev(z,w) \\
& =-i \frac\delta\pi \Re\left ( \frac {i} {z-w}\right) + o(\delta^{1+\eps}) + O\left(\frac{\delta}{|z-w|} \right)^2 \\
&=i\frac\delta\pi \Im\left ( \frac {1} {z-w}\right) + o(\delta^{1+\eps}) + O\left(\frac{\delta}{|z-w|}\right)^2.
\end{align*}
\end{itemize}

Let us now consider the case where $z$ and $w$ are of different colors, by applying Corollary~\ref{C:PKscaling} (when the arguments are of different colors).
\begin{itemize}
\item For $z, w$ with $s(z)=s(w)=-1$, we have
\[
C(z,w) = \frac{\delta}{\delta x_2} \Aodd(z,w) = -\frac\delta\pi \Re\left ( \frac {1} {\bar z-w}\right) + o(\delta^{1+\eps}),
\]

\item For $z,w$ with $s(z)=-1$ and $s(w)=1$, we have
\begin{align*}
C(z,w) = \frac{\delta}{\delta y_2} \Aodd(z,w) =\frac\delta\pi \Im\left ( \frac {1} {\bar z-w}\right) + o(\delta^{1+\eps}).
\end{align*}

\item For $z, w$ with $s(z) =s(w)=1$, since $z, w$ are of the same colour, $z$ and $w\pm\delta$ are of different colors. We have
\[
C(z,w) = \frac{\delta}{\delta x_2} \Aev(z,w) = \frac\delta\pi \Re\left ( \frac {1} {\bar z-w}\right) + o(\delta^{1+\eps}).
\]
\item For $z, w$ with $s(z) =1$ and $s(w)=-1$, $z$ and $w\pm\delta i$ are again of different colors. We have
\begin{align*}
C(z,w)= i \frac{\delta}{\delta y_2} \Aodd(z,w) =-i\frac\delta\pi \Im\left ( \frac {1} {\bar z-w}\right) + o(\delta^{1+\eps}).
\end{align*}
\end{itemize}
Combined, we have proved the theorem.
\end{proof}

\subsection{Pfaffians and Kasteleyn theory} \label{S:KT}
In this section we recall basics of Kastelyn theory. In particular we will express local statistics of $\mu$ in terms of the coupling function $C$.

Let $A$ be $2k \times 2k$ matrix indexed by vertices $w_1, b_1, \ldots, w_k, b_k$ of $k$ edges $(w_1, b_1), \ldots, (w_k, b_k)$. Then a Pfaffian can be expressed as a sum over matchings of these $2k $ vertices, in a similar way as the determinant can be expressed as a sum over permutations. Let $M$ be such a perfect matching. We can write it as $(i_1, j_1) ,\ldots, (i_k, j_k)$ where:

\begin{itemize}
\item in each pair $(i,j)$ the $i$-vertex comes from an edge $(w_i, b_i)$ that is listed before the edge $(w_j, b_j)$ which contains the $j$-vertex. If the two vertices belong to the same edge,
then a white vertex comes before a black vertex,
\item we require that $i_1< \ldots< i_k$.
\end{itemize}
This defines a permutation
$$
\pi_M = \left[
\begin{array}{ccccccc}
1 & 2 & 3 &4 & \ldots & 2k-1 & 2k \\
i_1 & j_1 & i_2 & j_2 & \ldots & i_{k} & j_k
\end{array}
\right].
$$
Then we have
\begin{align} \label{eq:Pfmatching}
\Pf (A) = \sum_{M \text{ matching}} \text{sgn} (\pi_M) a_{i_1, j_1} \ldots a_{i_k, j_k}.
\end{align}
Based on Kasteleyn's theorem Kenyon derived the following description of local statistics for the dimer model~\cite{Kenyon97}.
Recall that $\mu$ denotes the probability measure of the infinite volume free boundary dimer model on $\H \cap \Z^2$. 
\begin{thm} \label{thm:Kasteleyn}
Let $E$ be a set of pairwise distinct edges $e_1 = (w_1, b_1) , \ldots, e_k = (w_k, b_k)$ with the convention that the white vertex comes first. Then
$$
\mu(e_1, \ldots, e_k \in \cM) = a_E \Pf (C),
$$
where $\cM$ is the random monomer-dimer configuration under $\mu$, and where $C=C(v_1,v_2)$ is the coupling function restricted to the vertices $v_1, v_2 \in \{w_1, \ldots, w_k\} \cup \{b_1, \ldots, b_k\}$ (implicit here is the fact that the vertices are ordered from black to white, and from $1$ to $k$), and where
$$
a_E = \prod_{i=1}^k K(w_i, b_i)
$$
is the product of the Kasteleyn weight of each edge, oriented from white to black.
\end{thm}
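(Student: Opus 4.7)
The plan is to derive this theorem as a straightforward consequence of the Pfaffian version of Kasteleyn theory applied to the augmented finite graphs $\cG_n^0$, combined with the bijection of Lemma~\ref{L:mon_dim} and the infinite volume convergence of the inverse Kasteleyn matrix from Corollary~\ref{C:infinitevol}. I would establish the analogous identity first at the finite level, then pass to the limit $n\to\infty$.

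For each finite $n$, I would invoke the Pfaffian version of Kasteleyn theory for non-bipartite planar graphs (Kasteleyn; see also Cimasoni--Reshetikhin, or Kenyon's lecture notes): using the Kasteleyn orientation of $\cG_n^0$ described in Section~\ref{S:IKM}, the dimer partition function of $\cG_n^0$ equals the Pfaffian of the associated signed adjacency matrix (up to a global sign), and the probability that a prescribed set of edges $e_1 = (w_1, b_1), \ldots, e_k = (w_k, b_k)$ all belong to a uniformly sampled dimer cover of $\cG_n^0$ is given by $a_E \, \Pf(K_n^{-1}|_{V_E})$, where $a_E = \prod_i K_n(w_i, b_i)$ and $V_E = \{w_1, b_1, \ldots, w_k, b_k\}$ with the ordering specified in the statement. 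One subtle point worth noting here is that the gauge change $\tilde K_n \mapsto K_n$ of Section~\ref{S:IKM} preserves antisymmetry of $K_n^{-1}$, since $K_n = U \tilde K_n U$ for a diagonal matrix $U$ and $\tilde K_n^{-1}$ is antisymmetric; thus the Pfaffian $\Pf(K_n^{-1}|_{V_E})$ is well-defined. Since the edges $e_i$ belong to $\cG_n \subseteq \cG_n^0$ (and are not triangular edges), the measure-preserving bijection of Lemma~\ref{L:mon_dim} between $\mathcal{D}(\cG_n^0)$ and $\mathcal{MD}(\cG_n)$ identifies the event $\{e_1, \ldots, e_k \in \cM\}$ under the two measures, giving
\[
\mu_n(e_1, \ldots, e_k \in \cM) = a_E \, \Pf(K_n^{-1}|_{V_E}).
\]

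The passage to the limit is then straightforward. The event $\{e_1, \ldots, e_k \in \cM\}$ is local, and Theorem~\ref{P:infinite_vol_intro} gives $\mu_n \to \mu$ weakly, so the left-hand side converges to $\mu(e_1, \ldots, e_k \in \cM)$. On the right-hand side, Corollary~\ref{C:infinitevol} gives pointwise convergence $K_n^{-1}(u,v) \to C(u,v)$ for all $u,v$, and since the Pfaffian is polynomial in the matrix entries, $\Pf(K_n^{-1}|_{V_E}) \to \Pf(C|_{V_E})$. Combining the two limits yields the claimed formula.

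The main piece of bookkeeping is checking the signs and phases: the ordering of vertices (white before black within each edge, edges listed in the given order), the Kasteleyn signs coming from the chosen orientation, and the complex factors of the gauge change to $K_n$ must all combine coherently so that $a_E \, \Pf(K_n^{-1}|_{V_E})$ is a nonnegative real number agreeing with the probability on the left. I would verify these conventions by matching against the explicit two-edge computation already displayed in the introduction (which exhibits the antisymmetric form directly), and then invoke the general Pfaffian-matching combinatorics of \eqref{eq:Pfmatching} to extend to arbitrary $k\ge 3$ by the same pattern. No conceptually new input is needed beyond what is available in Section~\ref{S:IKM} and Corollary~\ref{C:infinitevol}.
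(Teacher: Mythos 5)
The paper does not give an explicit proof of Theorem~\ref{thm:Kasteleyn}: it is stated as a known consequence of Kasteleyn theory, citing \cite{Kenyon97} for the derivation of local statistics. Your reconstruction — the finite-volume Pfaffian identity on the augmented non-bipartite graph $\cG_n^0$, transferred via the bijection of Lemma~\ref{L:mon_dim}, and then passed to the infinite-volume limit using the pointwise convergence of $K_n^{-1}$ to $C$ from Corollary~\ref{C:infinitevol} together with the weak convergence $\mu_n\to\mu$ from Theorem~\ref{P:infinite_vol_intro} — is exactly the argument the paper implicitly relies on, and the antisymmetry check (writing $K_n = U\tilde K_n U$ with $U$ diagonal, so $K_n^{-1}=U^{-1}\tilde K_n^{-1}U^{-1}$ remains antisymmetric) correctly justifies that the Pfaffian is well defined.
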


To compute the scaling limit of the height function, we will need to study the centered dimer-dimer {correlations}. When expanding the Pfaffian into matchings, this leads to a simplification which is the analogue of Lemma 21 in \cite{Kenyon_ci}:
\begin{lemma}\label{L:centeredPfaffian}
In the setting as above,  we have
  \begin{align*}
  \mu[ (\mathbf 1_{\{e_1 \in \cM\}} - \mu( e_1 \in \cM) )& \ldots (\mathbf 1_{\{e_k \in \cM\}} - \mu( e_k \in \cM) ) ] \\
  & = a_E \sum_{M \text{restricted matching}} \sgn(\pi_M) \mathop{\prod_{\{ u,v\} \in M}}_{u<v}  C(u,v)
  \end{align*}
  where a \textbf{restricted matching} is a matching $M$ such that $w_i$ cannot be matched to $b_i$ for any $1\le i \le k$,
  and where $u<v$ means that $u$ comes before $v$ in the fixed order on vertices.
\end{lemma}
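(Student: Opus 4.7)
The approach is inclusion--exclusion applied to Theorem \ref{thm:Kasteleyn}, followed by a combinatorial cancellation that kills every non-restricted matching. First I would expand the centered product by distributivity and take expectations:
\begin{align*}
\mu\Bigl[\prod_{i=1}^k (\mathbf 1_{e_i \in \cM} - \mu(e_i \in \cM))\Bigr] = \sum_{S \subseteq \{1,\ldots,k\}} (-1)^{|S^c|} \mu\Bigl(\bigcap_{i \in S} \{e_i \in \cM\}\Bigr) \prod_{j \in S^c} \mu(e_j \in \cM).
\end{align*}
Applying Theorem \ref{thm:Kasteleyn} to the joint probability (with Kasteleyn prefactor $a_{E_S} = \prod_{i \in S} K(w_i, b_i)$) and noting that $a_{E_S} \prod_{j \notin S} K(w_j, b_j) = a_E$, this rewrites as
\begin{equation*}
a_E \sum_S (-1)^{|S^c|} \Pf(C_S) \prod_{j \in S^c} C(w_j, b_j),
\end{equation*}
where $C_S$ is the submatrix of $C$ indexed by $\{w_i, b_i : i \in S\}$.

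Next, I would expand each $\Pf(C_S)$ via \eqref{eq:Pfmatching} as a signed sum over matchings $M_S$ of $\{w_i, b_i : i \in S\}$, and reorganize the resulting double sum according to the \emph{full} matching $M := M_S \cup \{\{w_j, b_j\} : j \in S^c\}$ of $\{w_1, b_1, \ldots, w_k, b_k\}$. For a given matching $M$, set $F(M) := \{i : \{w_i, b_i\} \in M\}$. The pairs $(S, M_S)$ producing a given $M$ are parametrised by $T := F(M) \cap S \subseteq F(M)$, with $S = F(M)^c \cup T$ and $M_S$ obtained from $M$ by deleting the pairs $\{w_j, b_j\}$ for $j \in F(M) \setminus T$.

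The key algebraic observation is the sign identity $\sgn(\pi_{M_S}) = \sgn(\pi_M)$. This is because the pairs we delete in passing from $M$ to $M_S$ are all of the form $\{w_j, b_j\}$; by the ordering convention (white before black within an edge, and edges in their fixed order) $w_j$ immediately precedes $b_j$, so each such deletion removes a consecutive block from the permutation defining $\pi_M$ and leaves the sign invariant, by an elementary induction on the number of deletions. Granted this, grouping contributions by $M$ yields the coefficient
\begin{equation*}
\sum_{T \subseteq F(M)} (-1)^{|F(M) \setminus T|} = (1-1)^{|F(M)|},
\end{equation*}
which vanishes unless $F(M) = \emptyset$, i.e.\ unless $M$ is a restricted matching. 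The claimed formula follows. The only mildly delicate point is the sign identity above, but it is a routine combinatorial check; everything else is inclusion--exclusion bookkeeping.
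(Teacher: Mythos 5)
Your proof is correct and takes essentially the same route as the paper's: expand by inclusion--exclusion, apply Theorem \ref{thm:Kasteleyn} to each term, expand the resulting Pfaffians as sums over matchings, reorganize by the full matching $M$, and kill the non-restricted contributions via $(1-1)^{|F(M)|}$, all resting on the sign invariance $\sgn(\pi_{M_S}) = \sgn(\pi_M)$. The only point of divergence is in how that invariance is justified: you argue by direct inversion counting that deleting the pair $\{w_j,b_j\}$ --- which sits in two consecutive columns of the permutation array and carries the two consecutive values $2j-1,2j$ --- changes the inversion count by an even amount, whereas the paper deduces it from the graphical arc-crossing formula \eqref{eq:signcrossing} together with the remark that the arc from $w_i$ to $b_i$ crosses no other arc; both checks are equally routine and valid.
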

\begin{proof}
Let $M$ be a matching of the vertices of $e_1, \ldots, e_k$.
We will call a pair of matched vertices an $M$-edge to distinguish it from the edges of the underlying graph.
We mark the vertices $w_1,b_1,\ldots,w_k,b_k$ (see Figure~\ref{fig:graphicalrep}) in the order from left to right on the real line.
For each $M$-edge, draw an arc (a simple continuous curve) in the upper half-plane connecting the vertices matched by this $M$-edge (see Figure\ \ref{fig:graphicalrep}). Moreover, draw the arcs in such a way that any two arcs cross at most once.
A standard result that can be checked by induction says that
\begin{align} \label{eq:signcrossing}
\sgn(\pi_M)=(-1)^{\#\text{ arc crossings}}.
\end{align}
Note that an arc connecting $b_i$ to $w_i$ does not cross any other arc.
Using this and Theorem~\ref{thm:Kasteleyn}, we can write
\begin{align*}
 \mu [ (1_{\{e_1 \in \cM\}} - \mu( e_1 \in \cM) )& \ldots ( 1_{\{e_k \in \cM\}} - \mu( e_k \in \cM) ) ] \\
 &= \sum_{E'\subseteq E} (-1)^{|E\setminus E'|}\mu(e\in \cM \text{ for all } e \in E' ) \prod_{e\in E\setminus E'} \mu(e \in \cM) \\
 & = a_E\sum_{E'\subseteq E} (-1)^{|E\setminus E'|} \sum_{M \in \Pi( E')} \sgn(\pi_M)\mathop{\prod_{\{ u,v\} \in M}}_{u<v}  C(u,v) \mathop{\prod_{\{ u,v\} \in E\setminus E'}}_{u<v}C(u,v)\\
  & = a_E\sum_{E'\subseteq E} (-1)^{|E\setminus E'|}\mathop{ \sum_{M \in \Pi( E)}}_{E\setminus E'\subset M} \sgn(\pi_M)\mathop{\prod_{\{ u,v\} \in M}}_{u<v}  C(u,v) \\
    & = a_E \sum_{M \in \Pi( E)} \sgn(\pi_M)\mathop{\prod_{\{ u,v\} \in M}}_{u<v}  C(u,v)\Big (\sum_{E'\subseteq E\cap M} (-1)^{|E'|}\Big)
  \end{align*}
where $\Pi( E')$ is the set of matchings of the vertices of $E'$. To finish the proof it is enough to notice that the sum of signs in the last expression is equal to one if $M\cap E=\emptyset$ and
it vanishes otherwise.
\end{proof}

\subsection{Matchings and permutations}
In this section we discuss the combinatorics of matchings and permutations which will be used in the computation of moments of the height function in Section~\ref{P:infinite_vol_intro}.

Let $M$ be a restricted matching of the vertices of $e_1, \ldots, e_k$ (recall that restricted means that the endpoints of an edge cannot be matched with each other).
We stress the fact that the objects paired by the matching are the vertices of the edges and not the edges themselves. This will be important in the following combinatorial considerations.
We will call a pair of matched vertices an $M$-edge to distinguish it from the edges of the underlying graph.

We can turn a matching $M$ into a \textbf{directed matching} {$\m$} by assigning to each $M$-edge a direction in such a way that each edge $e_i$ has exactly one outgoing and one incoming $M$-edge.
Let $\mathfrak{S}_k^*$ be the set of permutations on $k$ elements with no fixed points.
Observe that a directed restricted matching {$\m$} defines a permutation $\sigma \in \mathfrak{S}_k^*$ of the $k$ edges: indeed,
simply define $\sigma(e_i)=e_j$ where $e_j$ is the edge pointed to by the unique outgoing $M$-edge emanating from~$e_i$.
We will say that the directed matching is \textbf{compatible} with the permutation $\sigma$. Note that since the matching is restricted, $\sigma$ does not have fixed points.
Let $\mathsf{DM}_\sigma$ be the class of restricted directed matchings compatible with $\sigma \in \mathfrak{S}_k^*$. Note that if $\sigma$ has $n=n(\sigma)$ cycles and $\m\in \mathsf{DM}_\sigma$, then there are $2^n$ oriented matchings that correspond to the same unoriented matching as $\m$ (one can choose the orientation of each cycle independently of the choice for other cycles).

Fix $\sigma \in \mathfrak{S}_k^*$. We now describe how to encode a directed matching $\m$ compatible with $\sigma$  by a sequence of signs $\nu\in \{-1, 1\}^k$.
The sign $\nu_i$ denotes the choice of the vertex of $e_i$ from which the outgoing edge of $\m$ will emanate, i.e., if
$\nu_i =+1$ (resp.\ $-1$) then the outgoing edge of $e_i$ emanates from the black (resp.\ white) vertex of $e_i$.
This choice implies that the directed $M$-edge corresponding to the pair $(i,j)$ such that $\sigma(i)=j$ points to the white (resp.\ black) vertex of $e_j$ if $\nu_j=+1$ (resp.\ $\nu_j=-1$).
The resulting map
\begin{equation}\label{bij_DM}
\{-1, 1\}^k\to \mathsf{DM}_\sigma
\end{equation}
is clearly a bijection.

We can now rewrite the truncated correlation function from Lemma \ref{L:centeredPfaffian} as follows:
  \begin{align}
  & \mu \left[ (\mathbf 1_{\{e_1 \in \cM\}} - \mu( e_1 \in \cM) ) \ldots ( \mathbf 1_{\{e_k \in \cM\}} - \mu( e_k \in \cM) ) \right] \nonumber \\
  & = a_E \sum_{\sigma \in \mathfrak{S}_k^* }\sum_{M  \in \mathsf{DM}_\sigma} \sgn(\pi_M) \frac1{2^n}  \prod_{(u,v) \in M} C(u,v) (-1)^{\mathbf1_{{u >v}}}
  \label{correl_OM}
  \end{align}
where $n=n(\sigma)$ is, as above, the number of cycles of $\sigma$. To explain \eqref{correl_OM}, we simply recall that each undirected matching that can be oriented as to be compatible with $\sigma$ corresponds to $2^n$ directed matchings by choosing the orientation of each cycle arbitrarily.
The factor $(-1)^{\mathbf 1_{{u >v}}}$ comes from the fact that $C(u,v)$ is antisymmetric and
that we always have $i_l < j_l$ in the expansion of the Pfaffian as a sum over matchings \eqref{eq:Pfmatching}.
Here $u >v$ means that $u$ comes later than $v$ in the order defined by $w_1,b_1,\ldots,w_k,b_k$.

We will later need the following lemma. What is specifically interesting to us in the expression below is that the right hand side depends very little on the permutation $\sigma$, given the signs $\nu$.

\begin{lemma} \label{lem:signinversions}
Let {$\m$} be the restricted directed matching compatible with $\sigma\in \mathfrak{S}^*_k$ and encoded by $\nu \in \{-1,1 \}^k$ by the map \eqref{bij_DM}. We have
\begin{align} \label{eq:signcomparison}
 \prod_{(u,v) \in \m} (-1)^{\mathbf 1_{{u >v}}} \sgn(\pi_M) = (-1)^n \prod_{i=1}^k \nu_i,
\end{align}
 where $n=n(\sigma)$ is the (total) number of cycles in $\sigma$.
 \end{lemma}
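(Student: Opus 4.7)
The plan is to encode the directed matching $\m$ as a single permutation $\tau\in \mathfrak S_{2k}$ of the positions $1,2,\ldots,2k$ labelling the vertices $w_1,b_1,\ldots,w_k,b_k$ in order, and to recognize the left-hand side of \eqref{eq:signcomparison} as $\sgn(\tau)$. For each $i$, I would set $\tau(2i-1)$ to be the position of the outgoing vertex of the $M$-edge leaving $e_i$ (that is, $b_i$ if $\nu_i=+1$ and $w_i$ if $\nu_i=-1$), and $\tau(2i)$ to be the position of the incoming vertex at $e_{\sigma(i)}$. Since the outgoing and incoming vertices partition the $2k$ vertices into complementary sets of size $k$, this $\tau$ is indeed a bijection of $\{1,\ldots,2k\}$.

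The first step is the algebraic identity $\sgn(\pi_M)\prod_{(u,v)\in\m}(-1)^{\mathbf{1}_{u>v}} = \sgn(\tau)$. This is a standard reconciliation of the two equivalent definitions of the Pfaffian: for any permutation $\tau$ whose consecutive pairs $\{\tau(2l-1),\tau(2l)\}$ form the matching $M$, one has $\sgn(\pi_M) = \sgn(\tau)\cdot s_\tau$, where the within-pair sign $s_\tau := \prod_l \sgn(\tau(2l)-\tau(2l-1))$ accounts for the ordering conventions. With my choice of $\tau$, the pair at position $l$ is an $M$-edge $(u,v)\in\m$ with $u=\tau(2l-1)$ outgoing and $v=\tau(2l)$ incoming, so $s_\tau$ is exactly $\prod_{(u,v)\in\m}(-1)^{\mathbf{1}_{u>v}}$, and the identity follows.

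The second step is to compute $\sgn(\tau)$, starting with the baseline case $\nu\equiv(+1,\ldots,+1)$, where $\tau(2i-1)=2i$ and $\tau(2i)=2\sigma(i)-1$. In this case the $\tau$-orbit of $2c-1$ traces out $2c-1\to 2c \to 2\sigma(c)-1\to 2\sigma(c)\to\cdots$, so every $m$-cycle $(c_1,\ldots,c_m)$ of $\sigma$ lifts to a single $2m$-cycle of $\tau$, contributing $(-1)^{2m-1}=-1$. Multiplying over the $n$ cycles of $\sigma$ gives $\sgn(\tau)=(-1)^n$ when $\nu\equiv+1$. For general $\nu$, I would verify that flipping a single $\nu_i$ exchanges the roles of $w_i$ and $b_i$ as outgoing/incoming vertex, and this translates precisely to exchanging the values $2i-1$ and $2i$ of $\tau$ at their respective preimages $2i-1$ and $2\sigma^{-1}(i)$; equivalently, $\tau_{\text{new}} = (2i-1,2i)\circ \tau_{\text{old}}$. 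Thus each flip of a $\nu_i$ multiplies $\sgn(\tau)$ by $-1$, and iterating from the baseline yields $\sgn(\tau)=(-1)^n\prod_i\nu_i$, which is the claim.

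The most delicate point is the first step, where the ordering conventions defining $\pi_M$ and $\tau$ have to be reconciled with care; once that identification is in place, the remaining steps reduce to routine bookkeeping on cycles and transpositions in $\mathfrak S_{2k}$.
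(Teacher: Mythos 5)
Your proof is correct, and it takes a genuinely different route from the paper's. The paper's argument is geometric: it uses the graphical representation of matchings as arcs in the upper half-plane, the identity $\sgn(\pi_M)=(-1)^{\#\text{crossings}}$, and then an induction on $k$ that contracts a triple of arcs into a single arc while tracking crossings. Your argument instead lifts the oriented matching $\m$ to a permutation $\tau\in\mathfrak S_{2k}$, uses the standard Pfaffian identity $\sgn(\pi_M)\prod_l\sgn(\tau(2l)-\tau(2l-1))=\sgn(\tau)$ to identify the left side of \eqref{eq:signcomparison} with $\sgn(\tau)$, reads off $\sgn(\tau)=(-1)^n$ in the baseline $\nu\equiv+1$ from the fact that an $m$-cycle of $\sigma$ lifts to a $2m$-cycle of $\tau$, and shows each sign flip is a left-composition by the transposition $(2i-1,2i)$. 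The effect is to replace both the geometric crossing formula and the induction by a direct cycle-structure computation in $\mathfrak S_{2k}$, which is arguably cleaner and makes transparent exactly where the $(-1)^n$ and the $\prod_i\nu_i$ come from. One small point worth spelling out in a final write-up: the paper's ordering convention for $\pi_M$ (pairs ordered by which edge is listed first, ties broken white-before-black) coincides with the usual numerical convention $i_l<j_l$ on the vertex labels $w_1,b_1,\ldots,w_k,b_k$, which is what makes the standard reconciliation formula apply without adjustment; your step 1 implicitly relies on this.
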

\begin{proof}
Mark the vertices $w_1,b_1,\ldots,w_k,b_k$ in the order from left to right on the real line as in Lemma~\ref{L:centeredPfaffian} and recall formula~\eqref{eq:signcrossing}.
\begin{figure}
\begin{center}
\includegraphics{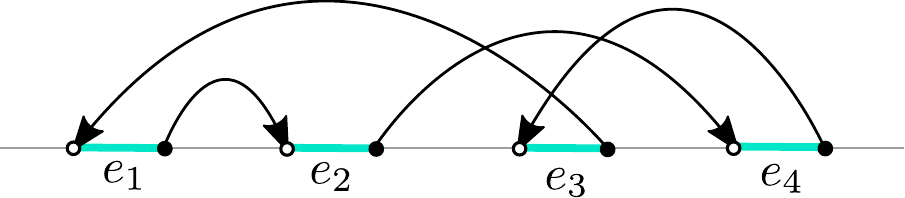}
\caption{Graphical representation of the directed matching $M=\{(b_1,w_2), (b_2,w_4), (b_3, w_1), (b_4,w_3) \}$ corresponding to the cyclic permutation $\sigma=1\to2\to4\to3\to1$ and signs $\nu=(1,1,1,1)$.
We have $\sgn(\pi_M)=-1$ since the number of arc crossings is odd}
\label{fig:graphicalrep}
\end{center}
\end{figure}
Note that if we flip exactly one sign $\nu_i$, then both sides of \eqref{eq:signcomparison} change sign since the parity of the number of
crossings between arcs changes (we either cross or uncross the arcs ending at~$e_i$ and we do not change the number of crossings for other pairs of arcs), and since the number of \emph{decreasing edges} $(u,v)$ of $\m$, i.e., satisfying $u>v$, does not change.
We can hence assume that $\nu_i = +1$ for all $i$.

\begin{figure}
\begin{center}
\includegraphics[scale=0.9]{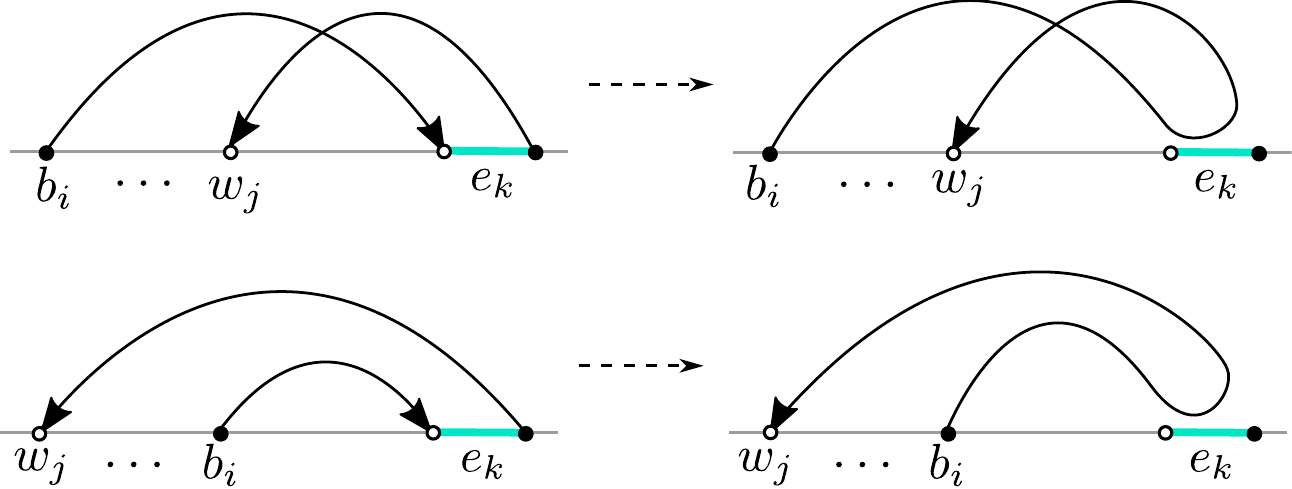}
\caption{The induction step from the proof of Lemma~\ref{lem:signinversions}}
\label{fig:crossingsign}
\end{center}
\end{figure}

Equipped with the graphical representation as in Lemma~\ref{L:centeredPfaffian} we proceed by induction on $k$. One can check that the statement is true for $k=2$.
We therefore assume that $k>2$.
Let $\m$ be a directed restricted matching on $e_1,\ldots,e_{k}$, and let $\sigma\in \mathfrak{S}_{k}^*$ be the permutation associated with $M$.
Let $i,j$ be such that $\sigma(i)=k$ and $\sigma(k)=j$. Consider a graphical representation of {$\m$}.
Imagine infinitesimally deforming the path composed of the arcs connecting $e_i$ to $e_{k}$ and $e_{k}$ to $e_j$
together with the line segment representing $e_{k}$ in such a way that the path is fully contained in $\H$.
This path hence becomes an arc (modulo a possible self-crossing) representing an {$\m'$}-edge $(b_i,w_j)$, where {$\m'$} is a directed restricted matching on $e_1,\ldots,e_{k-1}$.
Let $\sigma'\in \mathfrak{S}_{k-1}$ be the permutation associated to {$\m'$}. Note that $\sigma'$ has the same number of cycles as~$\sigma$.

In this transformation we replaced an increasing edge $(b_i,w_{k})$ and a decreasing edge $(b_{k},w_j)$ by the edge $(b_i,w_j)$.
For topological reasons, the deformed path representing the {$\m'$}-edge $(b_i,w_j)$ has a self-crossing if and only if $(b_i,w_j)$ is an increasing edge (see Figure~\ref{fig:crossingsign}).
To finish the proof we use \eqref{eq:signcrossing} to evaluate and compare \eqref{eq:signcomparison} for {$\m$} and {$\m'$}, and we use the induction assumption.
\end{proof}


\subsection{Moments of the height function} \label{S:moments}
In this section we compute the scaling limit of the pointwise moments of the height function on $\delta \Z^2 \cap \H$, which is the penultimate step in establishing its convergence as a random distribution.

We fix $k \ge 1$, and $2k$ faces $a_1, b_1, \ldots, a_k, b_k$ of $\delta \Z^2 \cap \H$. We consider disjoint paths $\gamma_i$ in the dual lattice $(\delta \Z^2 \cap \H)^*$ connecting $a_i$ to $b_i$ for $1\leq i\leq k$. The following is the analogue of Proposition 20 in \cite{Kenyon_ci}. Let $D$ denote the minimal distance in the complex plane between any pair of points within $\{a_i,b_i\}_{1\le i \le k}$.

\begin{prop} \label{P:kpoint_dimer} Let $k \ge 1$. Let $\rho >0$ be fixed and let $\beta>0$ be sufficiently small (possibly depending on $k$). As $\delta \to 0$,
  \begin{align}
  & \Big| \mu\left[ (h^{\delta}(a_1)- h^{\delta}(b_1))  \cdots(h^{\delta}(a_k)- h^{\delta}(b_k))  \right]  - \\
  &\sum_{\m \in \cM(1 , \ldots, k)} \prod_{(i,j) \in \m}     -\frac{1}{2\pi^2} \Re \log \frac{(a_i - a_j)(b_i - b_j)(\bar a_i - a_j) (\bar b_i - b_j)}{ (a_i - b_j)(b_i - a_j)(\bar a_i - b_j)(\bar b_i - a_j)} \Big| \to 0,
  \end{align}
   uniformly over the choice of $a_1, b_1, \ldots, a_k, b_k$ such that $ D \ge \delta^\beta$ and $\min_{1\le i \le k} ( \Im (a_i), \Im (b_i)) \ge \rho$.
\end{prop}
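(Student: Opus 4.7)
First, I express each height difference as a telescoping sum of centered edge indicators:
$$h^\delta(a_i) - h^\delta(b_i) = \sum_{e \in \gamma_i} \epsilon_{i,e}\bigl(\mathbf{1}_{e \in \cM} - \mu(e \in \cM)\bigr),$$
where $\epsilon_{i,e} \in \{-1,+1\}$ records the orientation of $e$ relative to $\gamma_i$. Expanding the $k$-fold product and applying Lemma~\ref{L:centeredPfaffian}, the left-hand side of the proposition becomes a multiple discrete sum of terms of the form $a_E \sum_M \mathrm{sgn}(\pi_M) \prod_{\{u,v\} \in M} C(u,v)$ over restricted matchings $M$ of the $2k$ endpoints of $e_1, \ldots, e_k$, multiplied by the orientation signs $\prod_i \epsilon_{i,e_i}$.

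Second, I use~\eqref{correl_OM} together with the bijection~\eqref{bij_DM} to recast the matching sum as a sum over pairs $(\sigma, \nu)$ with $\sigma \in \mathfrak{S}_k^*$ and $\nu \in \{-1,+1\}^k$, the signs simplifying via Lemma~\ref{lem:signinversions} to $(-1)^{n(\sigma)} \prod_i \nu_i / 2^{n(\sigma)}$. In this rewriting, the product of $C$-entries factorises over the cycles of $\sigma$; for fixed $\sigma$, summing over $\nu_i$ and over $e_i \in \gamma_i$ is equivalent to a discrete contour integration along $\gamma_i$, since $\nu_i$ selects the endpoint of $e_i$ emitting the outgoing $M$-edge, which is precisely the direction of discrete differentiation encoded by the Kasteleyn matrix $K$.

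Third, I substitute the asymptotic from Theorem~\ref{T:couplingscaling}. Each factor $C(u,v)$ splits as a ``bulk'' kernel of order $\pm\tfrac{\delta}{2\pi}(z-w)^{-1}$ plus a ``reflected'' kernel of order $\pm\tfrac{\delta}{2\pi}(\bar z - w)^{-1}$, with parity-dependent signs involving $s(z), s(w)$. Discrete integration of these two kernels along $\gamma_i$ converges to the holomorphic primitives $\log(w - a_i) - \log(w - b_i)$ and $\log(w - \bar a_i) - \log(w - \bar b_i)$ respectively; the parity factors $s(\cdot)$ combine with $\epsilon_{i,e}$ and $\nu_i$ to give a bona fide Riemann sum. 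The non-uniformity of Theorem~\ref{T:couplingscaling} near the diagonal (the $O(\delta/|z-w|)^2$ error) and near the real line is controlled by choosing the paths $\gamma_i$ mutually separated by $\gtrsim D$ and remaining at distance $\gtrsim \rho$ from $\R$, with the assumption $D \geq \delta^\beta$ ensuring a cumulative error of size $O(\delta^{\beta'})$ for some $\beta' > 0$.

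Fourth, the key combinatorial step is to show that in the limit only permutations $\sigma$ whose cycles are all transpositions survive, exactly as in \cite[Proposition~20]{Kenyon_ci}. For a cycle of length $\ell \geq 3$ in $\sigma$, after the $\nu$-sum and the limiting integration one obtains a closed contour integral of a meromorphic function of the form $\prod_t 1/(z_t - z_{t+1})$ (or one of its reflected variants) over the concatenation of the paths $\gamma_{i_1}, \ldots, \gamma_{i_\ell}$; this vanishes by Cauchy's theorem since no pole lies inside the domain of integration. What remains is a sum over perfect matchings $\m \in \cM(1,\ldots,k)$ of the $k$ indices: for each pair $(i,j) \in \m$, the product of the two $C$-factors expands as $(\mathrm{bulk}+\mathrm{reflected})^2$, and after adding the two cycle orientations and collecting the parity-dependent signs from Lemma~\ref{lem:signinversions} and Theorem~\ref{T:couplingscaling}, the eight resulting terms collapse to
$$-\frac{1}{2\pi^2} \Re \log \frac{(a_i - a_j)(b_i - b_j)(\bar a_i - a_j)(\bar b_i - b_j)}{(a_i - b_j)(b_i - a_j)(\bar a_i - b_j)(\bar b_i - a_j)}.$$
The main obstacle is the sign bookkeeping, which is more intricate than in~\cite{Kenyon_ci} because of the Pfaffian (rather than determinantal) structure and the two parity classes; Lemma~\ref{lem:signinversions} together with the clean splitting in Theorem~\ref{T:couplingscaling} is what makes the cancellation explicit.
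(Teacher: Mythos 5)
Your overall framework matches the paper's: telescoping the height differences over the paths $\gamma_i$, rewriting the Pfaffian via \eqref{correl_OM} and the bijection \eqref{bij_DM} as a sum over $(\sigma,\nu)$, simplifying signs with Lemma~\ref{lem:signinversions}, substituting the asymptotics of Theorem~\ref{T:couplingscaling}, and then reducing to matchings. However, there are two places where your argument diverges from the paper's in a way that leaves genuine gaps.

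The first gap is in how you pass from the discrete sums to Riemann sums. You assert that ``the parity factors $s(\cdot)$ combine with $\epsilon_{i,e}$ and $\nu_i$ to give a bona fide Riemann sum,'' but this is precisely the point where the paper has to work hardest. The asymptotics of Theorem~\ref{T:couplingscaling} produce, for each directed matching, a polynomial in the variables $s(w_i),s(b_i)$; most of its monomials (all those that contain exactly one of $s(w_i),s(b_i)$ for some $i$) are \emph{not} of Riemann-sum type and do not converge to any integral. The paper kills them by the $\alpha$--$\beta$ pairing: consecutive edges of $\gamma_i$ are grouped in pairs of opposite microscopic type, and replacing $\alpha^i_{t_i}$ by $-\beta^i_{t_i}$ simultaneously flips both $s(w_i)$ and $s(b_i)$, the sign of $a_E$, and the sign of the corresponding term in the expansion, while leaving the coefficient in the Theorem~\ref{T:couplingscaling} asymptotics unchanged. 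Only the monomials containing $s(w_i)s(b_i)$ together (or neither) for every $i$ survive this cancellation, and it is only for those that the identity \eqref{dz} converts $a_E\cdot\nu_i\cdot s(\cdot)$-factors into discrete $dz_i^{\pm1}$'s. Your proposal skips this cancellation entirely; without it, the expansion does not reduce to a Riemann sum.

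The second gap is the elimination of cycles of length $\ell\ge3$. You appeal to Cauchy's theorem for ``the concatenation of the paths $\gamma_{i_1},\ldots,\gamma_{i_\ell}$,'' but the $\gamma_i$ are open paths from $a_i$ to $b_i$, do not concatenate into a closed loop, and the resulting expression is a multiple integral over $\ell$ independent paths, not a single contour integral. Cauchy's theorem is simply not available in this form. The paper instead argues at the level of the integrand, before any integration: odd cycles are killed by a sign argument (the substitution $\nu\to-\nu$, $\eps\to-\eps$ leaves every factor in \eqref{E:sumpermut} invariant except $\prod_i\nu_i$, which flips sign), and even cycles of length $>2$ are killed by the purely algebraic identity of Lemma~\ref{L:k_cancel}, namely $\sum_{\sigma\in\mathfrak C_k}\prod_i (x_i-x_{\sigma(i)})^{-1}=0$ for pairwise distinct $x_i$ when $k>2$ is even. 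That identity holds pointwise for any values of the $z_i$'s, so no analyticity or contour manipulation is required; it is also what makes the cancellation robust to the $o(\delta^{1+\eps})$ error terms. You would need to replace your Cauchy argument by these two observations (or a correct substitute) to close the proof.
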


\begin{proof}
  As in Kenyon \cite{Kenyon_ci} we can assume without loss of generality that the paths $\gamma_i$ are piecewise parallel to the axes and that each straight portion is of even length. In this way, we can pair the edges of a straight portion of the path in groups of two consecutive edges. In order to distinguish between the two edges in a given pair it will be useful to have a notation which emphasises this difference, and following the notations of Kenyon we will call a generic pair of edges $\alpha $ and $\beta$ respectively; an $\alpha$-edge will have a black vertex on the right while a $\beta$-edge will have a black vertex on its left. The point is that considering their contributions together will lead to cancellations that are crucial in the computation. Also, in this way the contribution from a pair of edges does not depend anymore on the microscopic types of its vertices and has a scaling limit which depends only the macroscopic position.

  Let $\alpha^{i}_{t}$ (resp. $\beta_t^i$) be the indicator that the $t$-th $\alpha$-edge (resp. $\beta$-edge) in the path $\gamma_i$ is present in the dimer cover, minus its expectation. In this way due to the definition of the height function and the choice of reference flow,
  $$
  h(a_i) - h(b_i) =  \sum_t \alpha^i_t  - \beta_t^i.
  $$
  (Note we do not have a factor 4 as in Kenyon because our choice of reference is slightly different in order to deal directly with a centered height function: more precisely, the total flow out of a vertex is one instead of four in Kenyon's work \cite{Kenyon_ci}).
  We are ignoring here possibly one term on the boundary if the faces $a_i$ and $b_i$ do not have the correct parity; but in any case it is clear that the contribution of a single term in such a sum is of order $O(\delta)$ and so can be ignored in what follows.

  We therefore have
  \begin{equation}\label{E:alphabeta}
  \mu[ (h^{\delta}(a_1)- h^{\delta}(b_1)) \cdots(h^{\delta}(a_k)- h^{\delta}(b_k))  ] =  \sum_{t_1, \ldots, t_k}  \mu [ (\alpha^1_{t_1}  - \beta^1_{t_1}) \cdots (\alpha^k_{t_k} - \beta^k_{t_k}) ].
  \end{equation}
  We fix a choice of $t_i$s and analyse this product. We first expand this product into a sum of $2^k$ terms containing for each $i$ a term which is either $\alpha_{t_i}^i$ or $- \beta^i_{t_i}$. Consider for simplicity the term containing all of the $\alpha^i_{t_i}$. Write $w_i, b_i$ for the white and black vertices of the edge corresponding to $\alpha^i_{t_i}$. Let $E$ be the set of edges $(w_1,b_2),\ldots,(w_k,b_k)$ and let
$a_E = \prod_{e \in E} K(e)$. Then by \eqref{correl_OM} we have
  \begin{align*}
    \mu( \alpha^1_{t_1} \ldots \alpha^k_{t_k} )  & = a_E \sum_{\sigma \in \mathfrak{S}^*_k} \sum_{\m \in \mathsf{DM}_\sigma} \sgn(\m) \frac1{2^n} \prod_{(u,v) \in \m }C(u, v) (-1)^{1_{u>v}}.
  \end{align*}
  We rewrite the sum over directed matchings $\m \in \mathsf{DM}_\sigma$ as a sum over $(\nu_i)_{1\le i \le k}$ using \eqref{bij_DM}, and get (writing $\m$ for the unique directed matching determined by $\sigma$ and $\nu = (\nu_i)_{1\le i \le k} \in \{ - 1, 1\}^k$),
  \begin{align*}
    &  a_E \sum_{\nu } \sum_{\sigma \in \mathfrak{S}_k^*} \sgn(\m) \frac1{2^n} \prod_{(u,v) \in \m }C(u, v) (-1)^{1_{u>v}} = a_E \sum_{\nu} (\prod_{i=1}^k \nu_i) \sum_{\sigma \in \mathfrak{S}_k^*} (-1)^n \frac1{2^n} \prod_{(u,v) \in \m} C(u,v)
  \end{align*}
  using Lemma \ref{lem:signinversions}.

  Fix $\nu$ and $\sigma$ (i.e., we fix a directed matching $\m$) and use Theorem \ref{T:couplingscaling} to approximate $C(u,v)$. Let $(u,v) \in \m$ and let $\nu$ and $\nu'$ be the respective values of the variables $\nu_i$ associated with the two edges containing the vertices $u$ and $v$. Note that if $\nu \nu' = 1$ then $u$ and $v$ must be of different colours and so we fall in case 2 of the approximation given by Theorem \ref{T:couplingscaling}, while if $\nu \nu' = -1$ then $u$ and $v$ are of the same colour and so we fall in the first case of this approximation. Hence we get
    $$
  C(u,v)
 = \begin{cases}
   - \frac{\delta}{2\pi} \left[ s(u) s(v)  \frac1{u-v} + \frac{1}{\bar u - \bar v} \right]  + o(\delta^{1+ \eps}) + O( \frac{\delta}{|u-v|})^{2}& \text{ if } \nu \nu' = 1 \\
   \frac{\delta}{2\pi }\left[ s(u) \frac1{u - \bar v} + s(v) \frac1{\bar u - v} \right] + o(\delta^{1+ \eps}) & \text{ if } \nu \nu' = -1.
 \end{cases}
  $$
The terms $o(\delta^{1+\eps})$ here are uniform on $u,v$ (subject only to the imaginary parts being $\ge \rho$). Since $|u - v | \ge D \ge \delta^\beta$, we see that $O( \frac{\delta}{|u-v|})^{2} \le O(\delta^{2- 2 \beta}) = o ( \delta^{1+ \eps})$ if $\beta$ is sufficiently small. We can thus absorb the term $O( \frac{\delta}{|u-v|})^{2} $ into the term $o(\delta^{1+\eps})$ under our assumptions on $D$.

We expand $\prod_{(u, v) \in \m} C(u,v)$ using the above formula. This gives us another sum of $2^k$ terms, which we view as a polynomial in the variables $s(w_i), s(b_i)$. We group the terms by their monomials; and since $s(z)^2 = 1$ for any $z$, these monomials can only be of degree at most one in each variable. We now claim that any monomial such that $s(b_i)$ appears but not $s(w_i)$ for some $1\le i \le k$, or vice-versa, will contribute $o(\delta)$ when we take into account the equivalent term coming from the same expansion where $\alpha^i_{t_i}$ has been replaced by $- \beta^i_{t_i}$. Indeed, since $\sigma$ and $\nu$ have been fixed, consider what happens when $\alpha^i_{t_i}$ is replaced by $ - \beta^i_{t_i}$:
  \begin{itemize}
    \item There is a $-$ sign coming from the change $\alpha^i_{t_i} \to - \beta^i_{t_i}$.

    \item The sign of $a_E$ changes by $-1$ always (consider separately the cases of a horizontal or vertical edge to see this).

    \item Crucially both $s(b_i)$ \textbf{and} $s(w_i)$ change.

    \item Yet the coefficients accompanying $s(b_i)$ and $s(w_i)$ (both of which are terms of the form $\tfrac1{z - w} + o(\delta), \ldots $ or $ \tfrac1{\bar z - \bar w} + o(\delta)$) do not change in the scaling limit, since this term is determined only by the choice of $\nu$, which is fixed.
  \end{itemize}

As a consequence, as we sum over all choices of $\alpha$ and $\beta$ in the $2^k$ terms of \eqref{E:alphabeta}, and we expand in terms of monomials as described above, we only keep terms that contain for each $1 \le i \le k$ either, simultaneously $s(b_i)$ and $s(w_i)$, or neither of them.

As it turns out, given $\sigma $ and $\nu$, only very few terms do not cancel out. In fact, for each cycle of $\sigma$ there will be only two terms. For example, consider the case $k = 4$, $\sigma = (1234)$ a four-cycle, and $\nu = +-++$. This means we are expanding
$$
C(b_1, b_2) C(w_2, w_3) C(b_3, w_4) C(b_4, w_1).
$$
Letting $z_i$ be the point in the middle of the edge $(b_i, w_i)$, the expansion looks like
\begin{align*}
& \frac{\delta}{2\pi }\left [s(b_1) \frac1{z_1 - \bar z_2} + s(b_2) \frac1{\bar z_1 - z_2 } \right ] \\
\times & \frac{\delta}{2\pi } \left [s(w_2) \frac1{z_2 - \bar z_3} + s(w_3) \frac1{\bar z_2 - z_3} \right ]\\
\times & (- \frac{\delta}{2\pi}) \left [s(b_3) s(w_4) \frac1{z_3 - z_4} +\frac1{\bar z_3 - \bar z_4} \right ]\\
\times & (-\frac{\delta}{2\pi})\left [s(b_4) s(w_1) \frac1{z_4 - z_1} + \frac1{\bar z_4 - \bar z_1} \right]  + o( \delta^{4+ \eps}/D^4).
\end{align*}
The only terms that survive this expansion with the above requirements are the monomials corresponding to $s(b_1) s(w_1) s(b_3)s(w_3) s(b_4) s(w_4)$ and $s(b_2) s(w_2)$: indeed, choosing or not the term containing $s(b_1)$ in the first line imposes a choice on every other line, which is why just two terms survive this expansion.

Furthermore, crucially, in the corresponding coefficients of the surviving monomials, the variables $z_i$ or $\bar z_i$ occurs exactly twice, either twice in the type $z_i$ or twice in the type $\bar z_i$ (but never in a mixed fashion). For instance, in the above example, the coefficient will involve either $z_1, \bar z_2, z_3, z_4$ or the other way around: $\bar z_1, z_2, \bar z_3, \bar z_4$. Note that the dependence on $z_i$ or $\bar z_i$ is consistent with the choice of signs coming from $\nu$: more precisely, for $z \in \C $ and $\eps = \pm 1$, define $z^\eps $ to be $z$ if $\eps = +1$ and $\bar z$ if $\eps = -1$. Then for a cyclic permutation $\sigma$, the two monomials which survive the expansion have a coefficient proportional to
$$
\prod_{i=1}^k \frac1{z_i^{\nu_i} - z_{\sigma(i)}^{\nu_{\sigma(i)}}} \quad \quad \text{ and } \quad \quad
\prod_{i=1}^k \frac1{\overline{z_i^{ \nu_i}} - \overline{z_{\sigma(i)}^{ \nu_{\sigma(i)}}}}
$$
 and a similar property holds for a general permutation $\sigma$ by considering each of its cycles separately.

 Note furthermore that each such coefficient comes with a factor $\pm (\delta/2\pi)^k \times 2^k$: indeed, when a monomial survives it arises exactly once in each of the $2^k$ terms from the $\alpha-\beta$ expansion \eqref{E:alphabeta}. The sign itself is determined purely by the parity of the cycle of the permutation: indeed, for an even length cycle the number of times the colour changes as we follow the directed matching must be even; while it must be odd for an odd length cycle.

 Suppose $C = \{c_1, \ldots, c_n\}$ is the cycle structure of $\sigma$. We will use variables $(\eps_c)_{c \in C} \in \{-1, 1\}^{n}$ to denote which type of monomials we consider. Thus the right hand side of \eqref{E:alphabeta} (still for a fixed choice of $t_i$'s) becomes
 \begin{equation}\label{E:sumpermut}
 = a \delta^k \sum_{\nu} (\prod_{i=1}^k \nu_i)\sum_{\sigma \in \mathfrak{S}_k^*} (-1)^n \frac1{2^n} \sum_{\eps} \prod_{i=1}^k \frac1{\pi} \frac1{z_i^{\nu_i \eps_{c(i)}} - z_{\sigma(i)}^{\nu_{\sigma(i)} \eps_{c(\sigma(i))}} }[s(b_i) s(w_i)]^{(1 +\eps_{c(i)} \nu_i)/2 } + o( \delta^{k+\eps}/D^k)
 \end{equation}
 where $c(i)$ is the cycle containing $i$.

 We now claim that if $\sigma \in \mathfrak{S}_k^*$ has any cycle $c$ of length $|c| >2$ then it contributes zero to the sum. We start by considering odd cycles. Indeed consider the case where $k$ is odd and $\sigma$ is a cyclic permutation of length $k$. Then apply the bijection $\nu \to - \nu$ and $\eps \to - \eps$ to find that all the terms are unchanged except for a negative sign coming from $\prod_{i=1}^k \nu_i$. Hence this contribution must be equal to zero, and a similar argument can easily be made when $\sigma$ contains a cycle of odd length.

 In particular, $k$ itself must be even for the contribution to be nonzero. To get rid of permutations containing cycles of even length $>2$, we will rely on the following lemma.

 \begin{lemma}\label{L:k_cancel} Let $k>2$ be even and let $(x_i)_{1\le i \le k}$ be pairwise distinct complex numbers. Let $\mathfrak{C}_k$ be the set of cyclic permutation of length $k$. Then
   $$
   \sum_{\sigma \in \mathfrak{C}^k} \prod_{i=1}^k \frac1{x_i - x_{\sigma(i)}} = 0.
   $$
 \end{lemma}
Note in particular that it follows from Lemma \ref{L:k_cancel} that if $A$ is the matrix $A_{ij} = 1_{i \neq j} 1/ (x_i - x_j)$ then $\det(A)  $ can be written as a sum over matchings (which can be thought of as permutations with no fixed points and where each cycle has length $2$):
\begin{equation}\label{E:detmatching}
\det(A) = \sum_{\m} \prod_{(u,v) \in \m} \frac1{ ( x_u - x_v)^2}
\end{equation}
 which is Lemma 3.1 of Kenyon \cite{KenyonGFF}.

\begin{proof}[Proof of Lemma \ref{L:k_cancel}.]
First of all, the case $k =4$ must be true because of \eqref{E:detmatching} (note that the odd cycles clearly give a zero contribution to the determinant by an argument similar to the above).

Using again \eqref{E:detmatching} but for $k =6$ gives the desired identity for $k = 6$, since the terms corresponding to $\mathfrak{C}_4$ in the expansion of the determinant into permutations contribute zero by the case $k = 4$. Proceeding by induction, we deduce the result for every even $k \ge 4$.
\end{proof}

By Lemma \ref{L:k_cancel}, the number of cycles $n$ is necessarily $k/2$. Note also that in a two-cycle we get a term of the form $C(z,w)$ and another one of the form $C(w,z) = - C(z,w)$, which results in a term of the form $- C(z,w)^2$. Hence the moment \eqref{E:sumpermut} becomes
\begin{align}
  & a \delta^k \sum_\nu (\prod \nu_i) \sum_{\m  \in \cM(1, \ldots, k)} (-\tfrac12)^{k/2} \times \nonumber \\
  \times & \prod_{(i,j) \in \m} -\left[ \frac{(s(b_i) s(w_i))^{(1+\nu_i)/2 } (s(b_j) s(w_j))^{(1+\nu_j)/2 }}{\pi^2(z_i^{\nu_i} - z_j^{\nu_j})^2} + \frac{(s(b_i) s(w_i))^{(1- \nu_i)/2} (s(b_j) s(w_j))^{(1-\nu_j)/2 }}{\pi^2 (\bar z_i^{\nu_i} - \bar z_j^{\nu_j})^2}\right] + o( \delta^{k+\eps}/D^k)
   \label{E:summatch}
\end{align}
where $\cM(1, \ldots, k)$ are the matchings of $1, \ldots, k$.

Recall that in the above expression $s(b_i)$ and $s(b_j)$ refer to the sign (parity) of the white and black vertex respectively of the $\alpha$-edge in position $t_i$ of the path $\gamma_i$ (we have already accounted for the corresponding $\beta$-edge). We will now sum over $i$ and interpret the corresponding sums as discrete Riemann sums converging to integrals.
For a horizontal edge $(w_i, b_i)$, we have $s(b_i) s(w_i) = 1$ whereas it is $-1$ for a vertical edge: this is simply because $s$ measures the parity of the row. We claim that (as in Kenyon's proof of Proposition 20 in \cite{Kenyon_ci}, see the equation between (20) and (21)),
\begin{equation}
\label{dz}
2 \delta (s(b_i) s(w_i))^{(1 + \nu_i) /2} \nu_i K(w_i, b_i) = -  i \delta z_i^{\nu_i}.
\end{equation}
Indeed, suppose for instance that $\gamma_i$ moves horizontally from left to right in step $t_i$. Then the corresponding $\alpha$-edge is vertical, and has a black vertex at the bottom so $K(w_i, b_i) = +i$. Furthermore, $\delta z_i = \delta \bar z_i = 2 \delta$ (since one step of the path corresponds to two faces of length $\delta$ each). The vertical cases can be checked similarly (keeping in mind the corresponding values of $\delta z_i $ and $\delta \bar z_i$).


From \eqref{dz}, we can multiply by $\nu_i$ both sides of the equation and take the product over $i$. Then, recalling that $a = \prod_i K(w_i, b_i)$,
%
%
and observing also that the second term in each bracket of the right hand side of \eqref{E:summatch} is the same as the first term but with $\nu_i$ replaced by $- \nu_i$ and $\nu_j$ replaced by $- \nu_j$, \eqref{E:summatch} becomes
\begin{align}
  &  \sum_\nu \sum_{\m  \in \cM(1, \ldots, k)} (-1)^{k}\frac1{2^{3k/2}} 
  \times  \prod_{(i,j) \in \m} - \left[
  \frac{\delta z_i^{\nu_i} \delta z_j^{\nu_j}}
  {\pi^2(z_i^{\nu_i} - z_j^{\nu_j})^2} +   \frac{\delta z_i^{-\nu_i} \delta z_j^{-\nu_j}}
  {\pi^2(z_i^{-\nu_i} - z_j^{-\nu_j})^2}\right] + o( \delta^{k+\eps}/D^k)
   \label{E:summatch2}
\end{align}
(We have kept a term $(-1)^k$ even though $k$ is even to indicate that this comes from $(-1)^{k/2}$ at the top of \eqref{E:summatch} and a factor $-1$ in each of the $k/2$ terms of the product in the bottom of the same equation. On the other hand, the coefficient $-1$ in each of the $k/2$ of the product in \eqref{E:summatch2} above comes from the coefficient $-i$ (squared) in the right hand side of \eqref{dz}.) Fixing the matching and summing over $\nu$ (so exchanging order of summation) we get
\begin{equation}
 (-1)^{3k/2}\frac1{2^{3k/2}}  \sum_{\m \in \cM(1, \ldots, k)} \prod_{(i,j) \in \m} 2 \left[ \frac{\delta z_i \delta z_j}{\pi^2(z_i - z_j)^2}
  + \frac{\delta \bar z_i \delta \bar z_j}{\pi^2(\bar z_i - \bar z_j)^2}
  + \frac{\delta \bar z_i \delta z_j}{\pi^2(\bar z_i - z_j)^2}
  + \frac{\delta z_i \delta \bar z_j}{\pi^2(z_i - \bar z_j)^2} \right] + o( \delta^{k+\eps}/D^k)
\end{equation}
(The term $(-1)^{3k/2}$ in front comes from the previous $(-1)^{k}$ in \eqref{E:summatch2} and another factor $(-1)$ in each of the $k/2$ terms of the product of the same equation.) Summing over the choice of $t_i$ in \eqref{E:alphabeta}, and since $k$ is even (so $(-1)^{3k/2} = (-1)^{k/2}$), we obtain
\begin{align}
    \mu[ (h^{\delta}(a_1)- h^{\delta}(b_1)) &\cdots(h^{\delta}(a_k)- h^{\delta}(b_k))  ]  =   \nonumber \\
&\frac{(-1)^{k/2}}{2^{k}}\sum_{\m \in \cM(1 , \ldots, k)} \prod_{(i,j) \in \m}  \int_{\gamma_i} \int_{\gamma_j}
   \Big [
    \frac{\diff z_i \diff z_j}{\pi^2(z_i - z_j)^2}
  + \frac{\diff \bar z_i \diff \bar z_j}{\pi^2(\bar z_i - \bar z_j)^2} \nonumber\\& \qquad\qquad
  + \frac{\diff \bar z_i \diff z_j}{\pi^2(\bar z_i - z_j)^2}
  + \frac{\diff z_i \diff \bar z_j}{\pi^2(z_i - \bar z_j)^2}
  + O\big( \frac{\delta}{D^6}\big) \Big] + o\big(\frac{\delta^\eps}{D^k}\big) .
\end{align}
To understand the bound on the error above, the term outside of the brackets corresponds to summing the error in \eqref{E:summatch2} over $k$ paths (each of length at most $O(\delta^{-1})$); the term inside corresponds to approximating a Riemann sum by an integral. When we do so, for each sum/integral, we make an error of size at most $O(\delta/D) |\sup f'|$, since each path is at least of length $D/\delta$, and $f$ is the function being integrated, so that here $\sup |f'| = O ( D^{-3})$. Furthermore, as these are double integrals, we need to multiply this error for a single integral by the overall value the other integral which we bound crudely by $O(1/D^2)$.

Now observe that
$$
\int_{\gamma_i} \int_{\gamma_j}
    \frac{\diff z_i \diff z_j}{(z_i - z_j)^2} = \log \frac{( a_i - a_j)(b_i - b_j)}{(a_i - b_j)(b_i - a_j)}
$$
Noting that the four integrals give two pairs of conjugate complex numbers, and recalling that $x + \bar x = 2 \Re(x)$, we obtain
\begin{align}
     \mu[ (h^{\delta}(a_1)- h^{\delta}(b_1))& \cdots(h^{\delta}(a_k)- h^{\delta}(b_k))  ] \\
& =\sum_{\m \in \cM(1 , \ldots, k)} \prod_{(i,j) \in \m}  -\frac1{2\pi^2} \Re \log \frac{(a_i - a_j)(b_i - b_j)(\bar a_i - a_j) (\bar b_i - b_j)}{ (a_i - b_j)(b_i - a_j)(\bar a_i - b_j)(\bar b_i - a_j)} + \text{err.} \nonumber  \end{align}
where
$$
\text{err.}  = o\left (\frac{\delta^\eps}{D^k} \right) + O\left( \frac{\delta}{D^6}\right) O\left(\log D\right)^{k/2-1} = o( \delta^{\eps - k \beta})
$$
for $\beta$ sufficiently small, since $D \ge \delta^\beta$. In particular, if $\beta$ is sufficiently small (depending on $k$ but not on anything else) then this error goes to zero as $\delta \to 0$.
This concludes the proof of Proposition~\ref{P:kpoint_dimer}.
\end{proof}

\subsection{Convergence of the height function} \label{S:final}
In this section we finish the proof of the scaling limit result from Theorem~\ref{T:NGFF_intro}.

One can think of the height function on $\Z^2\cap\H$ (which is defined up to a constant) as a random distribution (generalised function) acting on bounded test functions $f$ with compact support and mean zero.
We follow~\cite{BLR16} and write the action as
\begin{align} \label{eq:action}
(h^\delta,f) = \int_\H \int_\H (h^\delta(a)-h^\delta(b))\frac{f^+(a)f^-(b)}{Z_f}dadb,
\end{align}
where $f^\pm = \max \{ \pm f ,0\}$ and $Z_f=\int_{\H}f^+(a)da=\int_{\H}f^-(a)da $. Note that this is well defined as the additive indeterminate constant in $h^\delta$ cancels out in this expression.
One can also check that this gives the same result as just integrating the height function against $f$.
Note that by Fubini's theorem $( h^\delta,f)$ is centered as $\mu (h^\delta(a) -h^\delta(b))=0$
for all $a,b\in \mathbb H$ by our choice of the reference flow from Section~\ref{S:MD}.

The result in Proposition \ref{P:kpoint_dimer} is the key step to prove the main result of the paper, which we rephrase below for convenience. In fact, in \cite{Kenyon_ci}, no further justification beyond the analogue of Proposition \ref{P:kpoint_dimer} is provided (this is also the case in \cite{Russkikh}). The fact that an argument is missing was already pointed out by de Tili\`ere in \cite{BdT_quadritilings} (see Lemma 20 in that paper).
Here, we follow an approach similar to the one used in \cite{las2013lozenge} and in Toninelli's lecture notes \cite{toninelli_notes} (see in particular Theorem 5.4 and the following discussion), but tailored to our setup since our a priori error estimates are somewhat different.

\begin{thm} \label{thm:fieldmoments}
Let $\hg^{\textnormal{Neu}}_{\H}$ be the Neumann Gaussian free field in $\H$, and let $f_1,\ldots f_k \in \cD_0(\H)$ (smooth test functions of compact support and mean zero). Then for $l_1,\ldots,l_k \in \mathbb{N}$,
\[
\mu \Big[ \prod_{i=1}^k (h^{\delta},f_i)^{l_i}\Big] \to \mathbf{E} \Big[ \prod_{i=1}^k  (\tfrac1{\sqrt{2} \pi} \hg^{\textnormal{Neu}}_{\H},f_i)^{l_i} \Big], \quad \text{as } \delta \to 0,
\]
where $\mathbf E$ is the expectation associated with $\hg^{\textnormal{Neu}}_{\H}$.
\end{thm}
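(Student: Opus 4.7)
The plan is to reduce the statement to the pointwise moment asymptotics already contained in Proposition~\ref{P:kpoint_dimer}, combined with a dominated--convergence argument and Wick's theorem on the Gaussian side. I would first use the representation \eqref{eq:action} to write
\[
\prod_{i=1}^k (h^\delta, f_i)^{l_i} = \int \prod_{i,j} (h^\delta(a_{i,j}) - h^\delta(b_{i,j})) \, \prod_{i,j} \frac{f_i^+(a_{i,j}) f_i^-(b_{i,j})}{Z_{f_i}} \, da_{i,j}\, db_{i,j},
\]
where $j$ ranges over $1,\dots,l_i$. Since each $f_i$ has compact support in $\H$, there is a fixed $\rho>0$ such that all the $a_{i,j},b_{i,j}$ have imaginary part at least $\rho$. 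After applying Fubini, we are reduced to understanding the pointwise joint moment $\mu[\prod_{i,j}(h^\delta(a_{i,j})-h^\delta(b_{i,j}))]$ integrated against a smooth bounded density.

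Next, fix $\beta>0$ small enough for Proposition~\ref{P:kpoint_dimer} (depending on $N:=\sum_i l_i$). Split the integration domain into a \emph{bulk} part $\mathcal{B}_\delta$, where the minimal pairwise distance $D$ among the $2N$ integration points is at least $\delta^\beta$, and a \emph{diagonal} part $\mathcal{B}_\delta^c$. On $\mathcal{B}_\delta$, Proposition~\ref{P:kpoint_dimer} gives an error bound that tends uniformly to zero, and the pointwise moment converges to
\[
\sum_{\mathsf{m}\in\mathcal{M}} \prod_{(i,j)\in\mathsf{m}} -\frac{1}{2\pi^2}\Re\log\frac{(a_i-a_j)(b_i-b_j)(\bar a_i-a_j)(\bar b_i-b_j)}{(a_i-b_j)(b_i-a_j)(\bar a_i-b_j)(\bar b_i-a_j)},
\]
where the sum runs over pairings of the $N$ pairs. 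By \eqref{GFF_differences_Green}, this is exactly the covariance of $\frac{1}{\sqrt{2}\pi}\hg^{\textnormal{Neu}}_\H$ evaluated on the corresponding differences, and summing over matchings is Wick's theorem for the Gaussian moment. Integrating against the smooth density and using dominated convergence (with majorant a constant times a power of $\log(1/D)$, which is integrable) yields the desired Gaussian moment.

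The main obstacle, and the step requiring a genuine a priori input, is to show that the contribution of the diagonal region $\mathcal{B}_\delta^c$ is negligible as $\delta\to 0$. For this I would establish the a priori bound
\[
\bigl| \mu\bigl[ \prod_{m=1}^N (h^\delta(a_m)-h^\delta(b_m)) \bigr] \bigr| \;\lesssim\; C_N \prod_{m=1}^N \bigl(1+\log_+\tfrac{1}{|a_m-b_m|}\bigr)^{1/2},
\]
valid uniformly over all configurations with $|a_m-b_m|\geq \delta$ and $\Im(a_m),\Im(b_m)\geq \rho$. Such a bound can be obtained by the same Pfaffian expansion as in the proof of Proposition~\ref{P:kpoint_dimer}, but retaining only the a priori estimate $|C(u,v)|\lesssim \delta/(|u-v|+\delta)$ coming from Proposition~\ref{P:aprioriPK} and Theorem~\ref{T:couplingscaling}: one expresses each factor $h^\delta(a_m)-h^\delta(b_m)$ as a sum of $O(|a_m-b_m|/\delta)$ terms $\alpha^m_t-\beta^m_t$ along a dual path, and bounds the resulting Pfaffian sum by a product of double integrals of $(|u-v|+\delta)^{-2}$, each of which produces at most a logarithm. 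Since the bad region $\mathcal{B}_\delta^c$ has Lebesgue measure $O(\delta^{2\beta})$ (there must be at least one pair of integration points within distance $\delta^\beta$), and the integrand is bounded polynomially in $\log(1/\delta)$, its contribution is at most $\delta^{2\beta}(\log 1/\delta)^{N}$, which tends to $0$.

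Finally, the identification of the limit with $\mathbf{E}[\prod_i (\frac{1}{\sqrt{2}\pi}\hg^{\textnormal{Neu}}_\H, f_i)^{l_i}]$ is just Isserlis' theorem applied to the centered Gaussian vector $((\frac{1}{\sqrt{2}\pi}\hg^{\textnormal{Neu}}_\H, f_i))_{i=1}^k$, together with the linearity of the Neumann GFF and the integral representation of $(\cdot,f_i)$ via pointwise differences, which matches exactly the covariance structure appearing in the pointwise limit above. This completes the proof of Theorem~\ref{T:NGFF_intro}, as convergence of all joint moments of a tight family of centered random vectors to the corresponding Gaussian moments implies convergence in distribution.
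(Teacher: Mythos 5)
Your overall architecture matches the paper's: write moments via \eqref{eq:action}, split the domain of integration into a bulk region $\cD_\delta$ (all pairwise distances $\ge \delta^\beta$) and its complement, invoke Proposition~\ref{P:kpoint_dimer} with its \emph{uniform} error on the bulk, identify the limit with Gaussian moments via Wick/Isserlis using \eqref{GFF_differences_Green}, and kill the diagonal contribution with an a priori bound on the discrete moment combined with the fact that $\cD_\delta^c$ has volume $O(\delta^{2\beta})$. All of that is correct and is exactly what the paper does.

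The gap is in the derivation of the a priori bound on the diagonal region. You assert a bound of the form $\lesssim C_N\prod_m(1+\log_+ 1/|a_m-b_m|)^{1/2}$ ``by the same Pfaffian expansion \dots retaining only the a priori estimate on $C$'', bounding the Pfaffian sum term by term by a product of double integrals, ``each of which produces at most a logarithm''. Two things are missing. First, for a general restricted matching the sum over the positions $t_1,\dots,t_N$ along the dual paths does not factorize into a product of double integrals: it factorizes according to the \emph{cycle structure} of the matching viewed as a permutation of the $N$ chosen dimer-edges, and cycles of length $>2$ give longer cyclic integrals that you have to control separately (the cancellation of Lemma~\ref{L:k_cancel} is unavailable since you have thrown away signs). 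Second, and more importantly, the claim that each double integral is $O(\log)$ is not true for arbitrary path choices: precisely on $\cD_\delta^c$ some endpoints $a_i,b_i,a_j,b_j$ are within distance $\delta^\beta$ of each other, forcing the paths $\gamma_i,\gamma_j$ to approach each other near those common endpoints. If the paths come together and run parallel at lattice distance the resulting double sum is of order $\delta^{-1}$, not $\log(1/\delta)$, which destroys the claimed bound. The paper avoids this by first applying Cauchy--Schwarz (or H\"older, for higher moments), which reduces the a priori estimate to $\var_\mu(h^\delta(a;b))$ (resp.\ $\mu[(h^\delta(a;b))^{2k}]$) for a \emph{single} pair $(a,b)$; for such a quantity one can choose $2$ (resp.\ $2k$) dual paths from $a$ to $b$ which leave $a$ and $b$ along straight segments with \emph{distinct angles}, so that the number of edge pairs at distance $r$ is $O(r)$ and the double sum $\sum 1/\dist^2$ is genuinely $O(\log 1/\delta)$ (see the argument around \eqref{angles}). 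This H\"older reduction plus the geometric choice of paths diverging at distinct angles is the key technical input that your proposal is missing; without it the asserted bound is not established.
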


\begin{proof}
For a function $g$, we write $g(a;b)=g(a)-g(b)$.
To simplify the exposition, we only treat the case of the second moment; the other cases are similar but with heavier notation. To start with, note that
\begin{align} \label{eq:secondmoment}
\mu [ (h^{\delta},f_1) (h^{\delta},f_2) ] = \int_{\H^4} \mu [h^\delta(a_1;b_1) h^\delta(a_2;b_2)]\frac{f^+_1(a_1)f^-_1(b_1)}{Z_{f_1}}\frac{f^+_2(a_2)f^-_2(b_2)}{Z_{f_2}}da_1db_1da_2db_2.
\end{align}
Let $\rho>0$ be such that $\Im (z) \ge \rho$ whenever $z \in \text{Supp} (f_1) \cup \text{Supp} (f_2)$, and let \[
H(a_1,b_1,a_2,b_2)= -\frac1{2\pi^2} \Re \log \frac{(a_1 - a_2)(b_1 - b_2)(\bar a_1- a_2) (\bar b_1 - b_2)}{ (a_1 - b_2)(b_1- a_2)(\bar a_1- b_2)(\bar b_1 - a_2)}.
\]
By Proposition~\ref{P:kpoint_dimer}, since all relevant points have imaginary parts greater than $\rho$, we have
\begin{align} \label{eq:uniform}
\mu [h^\delta(a_1;b_1) h^\delta(a_2;b_2)]=H(a_1,b_1,a_2,b_2)+o(1),
\end{align}
where the error $o(1)$ is \textbf{uniform} over
\[
\cD_\delta := \{ (a_1,b_1,a_2,b_2) \in \H^4: D \geq \delta^\beta\},
\]
where, as before, $D=D(a_1,b_1,a_2,b_2)$ denotes the minimal distance in the complex plane between any pair of points within
{$\{a_1,b_1, a_2, b_2\}$}.
We now split the integral in \eqref{eq:secondmoment} into the integral over $\cD_\delta$ and over $\cD^c_\delta$. Now the important observation is that since the
the error is uniform, the limit of the integral over $\cD_\delta$ is given by
\[
 \int_{\H^4} H(a_1, \ldots, b_2)\frac{f^+_1(a_1)f^-_1(b_1)}{Z_{f_1}}\frac{f^+_2(a_2)f^-_2(b_2)}{Z_{f_2}} da_1db_1da_2db_2=\frac1{2\pi^2}\mathbf{E} [ (\hg,f_1) (\hg,f_2) ].
\]

Therefore, we are left with proving that the contribution to the integral \eqref{eq:secondmoment} coming from $\cD_\delta^c$ is negligible. To do that, we proceed somewhat crudely, noting that by Cauchy--Schwarz,
\begin{equation}\label{CS}
\mu [h^\delta(a_1;b_1) h^\delta(a_2;b_2)] \le \left ( \var_\mu ( h^\delta(a_1; b_1)) \var_\mu (h^\delta (a_2; b_2)) \right)^{1/2}.
\end{equation}
 Since the volume of $\cD_\delta^c$ is polynomially small in $\delta$, it will therefore suffice to show that
\begin{equation}\label{variance}
 \var_{\mu} (h^\delta(a;b) )=  O ( \log \delta)^C
\end{equation}
for some $C>0$ and arbitrary points $a,b$ within some fixed compact of $\H$.
To prove this, we will go back to the definition of the height function as a sum of increments over a path, and we will use our \emph{a priori} bound on $K^{-1}$ coming from Proposition \ref{P:aprioriPK}, which gives $$
K^{-1} (u,v)  = O\Big( (\log \dist (u,v))^C\frac{ 1 }{\dist(u,v)}\Big).
$$
Fix two paths $\gamma_1, \gamma_2$ from $a$ to $b$. It will be advantageous to take these paths at positive macroscopic distance from one another except near the endpoints, where they must necessarily come together. We will explain more precisely below how we construct them.
By the triangle inequality and Theorem \ref{thm:Kasteleyn}, we have
\begin{align*}
    \var_\mu (h^\delta(a;b)) & \le \sum_ {e_1 \in \gamma_1, e_2 \in \gamma_2}  |\cov_{\mu} (\mathbf 1_{\{ e_1 \in  \cM\}}, \mathbf 1_{\{ e_2 \in \cM\}})|\\
   & \lesssim \sum_{e_1 \in \gamma_1, e_2 \in \gamma_2}  \Big( (\log \dist(e_1, e_2))^C\frac{ 1}{\dist(e_1, e_2)}\Big)^2\\
   & \lesssim (\log \delta)^{2C}  \sum_{e_1 \in \gamma_1, e_2 \in \gamma_2}  \Big( \frac{ 1}{\dist(e_1, e_2)}\Big)^2.
\end{align*}
We may assume that near $a$ and $b$, the paths $\gamma_1$ and $\gamma_2$ form straight segments with different directions (say opposite directions), until they reach a fixed positive distance $\alpha$, taken to be small enough that these paths remain at positive distance from the real line. The segments near $a$ and $b$ are then joined by portions of paths staying at distance (in the plane) at least $\alpha/2$ from one another to form $\gamma_1$ and $\gamma_2$. Then
\begin{align}
  \sum_{e_1 \in \gamma_1, e_2 \in \gamma_2}  \Big( \frac{ 1}{\dist(e_1, e_2)}\Big)^2 & \le \sum_{r= 1}^{O( 1/\delta)} \frac1{r^2} \#\{ (e_1, e_2) : \dist(e_1, e_2) = r\}\nonumber \\
  &
    \lesssim \sum_{r= R}^{\alpha/(2\delta)}  \frac1{r^2} r + O(1) \le \log (\delta^{-1}). \label{angles}
\end{align}
We provide a brief explanation for the crucial point above, which is the bound on $\#\{ e_1, e_2 : \dist(e_1, e_2) = r\} $, and which comes from the choice of paths $\gamma_1$ and $\gamma_2$. Indeed, if $1\le r \le \alpha/2$ fixed, then elementary geometric considerations imply that any choice of $e_1$ in the segment of $\gamma_1$ at distance at most $r$ from $a$, will give at most one corresponding point $e_2$ on $\gamma_2$ such that $\dist (e_1, e_2) = r$. On the other hand, for $r \ge \alpha/2$, there are at most $O(\delta^{-2})$ pairs of edges on the whole path, so $ \# \{ (e_1, e_2): \dist (e_1, e_2) \ge \alpha / (2\delta)\} = O(\delta^{-2})$, so the contribution of such edges to the sum is indeed $O(1)$ as claimed above.

This proves \eqref{variance} and therefore completes the proof of Theorem \ref{thm:fieldmoments} in the case of second moments. In the general case of a moment of order $k\ge 2$, the same proof works, where we replace the use of Cauchy--Schwarz in \eqref{CS} by a H\"older inequality, so that it suffices to show that $\mu [ (h^\delta(a;b))^{2k} ] \lesssim (\log 1/\delta)^{C_k}$ (we wrote here a moment of order $2k $ rather than $k$ to account for the possibility that $k$ is odd). We therefore need to choose $2k$ paths leading from $a$ to $b$. As above, these paths may be chosen as being straight line segments up to a small distance $\alpha$ (in the plane) away from $a$ or $b$, and with distinct directions; we simply choose the angles between these segments to be $\pi/k$, and otherwise require that these paths stay at positive distance from one another. It is easy to check that the analogue of \eqref{angles} holds also in this case.
%
\end{proof}

A standard argument says that since all moments of $h^{\delta}$ converge to the corresponding moments of $\tfrac1{\sqrt{2} \pi} \hg^{\textnormal{Neu}}_{\H} $, and since $\hg^{\textnormal{Neu}}_{\H}$ is a Gaussian process, we can conclude that $h^{\delta}\to \tfrac1{\sqrt{2} \pi} \hg^{\textnormal{Neu}}_{\H}$ in distribution as $\delta \to 0$ (in the sense of finite dimensional distributions, where $\hg$ is viewed as a stochastic process indexed by smooth test functions with compact support and mean zero). As a result we have proved Theorem~\ref{T:NGFF_intro}.

\bibliographystyle{abbrv}
\bibliography{MonomerDimer}
\end{document}